\theoremstyle{plain}
\newtheorem{theorem}{Theorem}[section]
\newtheorem{lemma}[theorem]{Lemma}
\newtheorem{corollary}[theorem]{Corollary}
\newtheorem{proposition}[theorem]{Proposition}
\newtheorem{remark}[theorem]{Remark}
\newtheorem{conjecture}[theorem]{Conjecture}
\newtheorem{example}[theorem]{Example}
\newtheorem{definition}[theorem]{Definition}
\font\russ=wncyr10  1
\def\sha{\hbox{\russ\char88}}
\newcommand{\crys}{\text{\rm crys}}
 \newcommand{\et}{\text{\rm\'et}}
\newcommand{\ket}{\text{\rm k\'et}}
\newcommand{\Cech}{\v{C}ech}
\newcommand{\Zp}{\mathbb{Z}_p}
\newcommand{\riso}{\xrightarrow{\sim}}
\DeclareMathOperator{\Spf}{Spf}
\DeclareMathOperator{\Gal}{Gal}
\DeclareMathOperator{\Hom}{Hom}
\DeclareMathOperator{\Spec}{Spec}
\DeclareMathOperator{\im}{im}
\DeclareMathOperator{\Fr}{Fr}
\DeclareMathOperator{\tr}{tr}
\newcommand{\C}{\mathbb{C}}
\newcommand{\CC}{\mathbb{C}}
\newcommand{\FF}{\mathbb{F}}
\newcommand{\QQ}{\mathbb{Q}}
\newcommand{\Q}{\mathbb{Q}}
\newcommand{\RR}{\mathbb{R}}
\newcommand{\ZZ}{\mathbb{Z}}
\newcommand{\Z}{\mathbb{Z}}
\newcommand{\R}{\mathbb{R}}
\newcommand{\bz}{\mathbb{Z}}
\newcommand{\F}{\mathbb{F}}
\newcommand{\cale}{\mathcal E}
\newcommand{\cala}{\mathcal A}
\newcommand{\calf}{\mathcal F}
\newcommand{\gotu}{\mathfrak{U}}
\newcommand{\gotx}{\mathfrak{X}}
\newcommand{\calA}{\mathcal{A}}
\newcommand{\cok}{\mathrm{cok}}
\newcommand{\tors}{\mathrm{tors}}
\newcommand{\Ru}[1]{Ru_{{#1}/K}}
\begin{document}

\title[]{\sc On a refinement of the \\ Birch and Swinnerton-Dyer Conjecture\\ in positive characteristic}

\author{David Burns, Mahesh Kakde, Wansu Kim}%


\address{King's College London,
Department of Mathematics,
London WC2R 2LS,
U.K.}
\email{david.burns@kcl.ac.uk}

\address{Indian Institute of Science,
Department of Mathematics,
Bangalore 560012,
India}
\email{maheshkakde@iisc.ac.in}

\address{KAIST,
Department of Mathematical Sciences,
291 Daehak-ro, Yuseong-gu
Daejeon, 34141,
Republic of Korea}
\email{wansu.math@kaist.ac.kr}

\begin{abstract} We formulate a refined version of the Birch and Swinnerton-Dyer conjecture for abelian varieties over global function fields. This refinement incorporates both new families of algebraic relations between leading terms (at $s=1$) of Hasse-Weil-Artin $L$-series and restrictions on the Galois structure of Selmer complexes, and constitutes a natural analogue for abelian varieties over function fields of the equivariant Tamagawa number conjecture for abelian varieties over number fields. We provide strong supporting evidence for the conjecture including giving a full proof, modulo only the assumed finiteness of Tate-Shafarevich groups, in an important class of examples. \end{abstract}

\maketitle
\setcounter{tocdepth}{1}
\tableofcontents

\section{Introduction}\label{intro}

\subsection{}Let $A$ be an abelian variety that is defined over a function field $K$ in one variable over a finite field of characteristic $p$.

In \cite{tate} Artin and Tate formulated a precise conjectural formula for the leading term at $s=1$ of the Hasse-Weil $L$-series attached to $A$.

This formula constituted a natural `geometric' analogue of the Birch and Swinnerton-Dyer Conjecture for abelian varieties over number fields and was subsequently verified unconditionally by Milne \cite{milne68} in the case that $A$ is constant and by Ulmer \cite{ulmer2002} in certain other special cases. Further partial results have been obtained by many other authors and these efforts culminate in the main result of the seminal article of Kato and Trihan \cite{KT} which shows that the conjecture is valid whenever there exists a prime $\ell$ such that the $\ell$-primary component of the Tate-Shafarevich group of $A$ over $K$ is finite.

In this article we now formulate, and provide strong evidence for, a refined version of this conjecture that also incorporates new families of algebraic relations between the (suitably normalised) leading terms at $s=1$ of the Hasse-Weil-Artin $L$-series that are attached to $A$ and to irreducible complex characters (with open kernel) of the absolute Galois group of $K$. This conjecture is a natural analogue for abelian varieties over function fields of the equivariant Tamagawa number conjecture (`ETNC'), including the $p$-primary part, for the motive $h^1(A)(1)$ of abelian varieties $A$ over number fields.

To be a little more precise about our results we now fix a finite Galois extension $L$ of $K$ with group $G$.

Then, as a first step, we shall prove that the leading terms of the Hasse-Weil-Artin $L$-series that are attached to $A$ and to the irreducible complex characters of $G$ are interpolated by a canonical element of the Whitehead group $K_1(\R[G])$ of the group ring $\R[G]$. (This result is, a priori, far from clear and requires one to prove, in particular, that leading terms at irreducible symplectic characters are strictly positive.)

Our central conjecture is then a precise formula for the image of this element under the connecting homomorphism from $K_1(\R[G])$ to the relative algebraic $K_0$-group $K_0(\Z[G],\R[G])$ of the ring inclusion $\Z[G]\subset \RR[G]$.

The conjectural formula involves a canonical Euler characteristic element that is constructed by combining a natural `Selmer complex' of $G$-modules together with the classical N\'{e}ron-Tate height pairing of $A$ over $L$. This Selmer complex is constructed from the flat cohomology of the torsion subgroup scheme of the N\'eron model of $A$ over the projective curve $X$ with function field $K$ and, provided that the relevant Tate-Shafarevich groups are finite, is both perfect over $\ZZ[G]$ and has cohomology groups that are closely related to the classical Mordell-Weil and Selmer groups of $A^t$ and $A$ over $L$.

The formula also involves the Euler characteristic of an auxiliary perfect complex of $G$-modules that is constructed directly from the Zariski cohomology of an appropriate line bundle over $X$ and is necessary in order to compensate for certain choices of pro-$p$ subgroups that are made in the definition of the Selmer complex.

If $L=K$, then $K_0(\Z[G],\R[G])$ identifies with the quotient of the multiplicative group $\RR^\times$ by $\{\pm 1\}$ and we check that in this case our conjecture recovers the classical Birch and Swinnerton-Dyer conjecture for $A$ over $K$.

In the general case, the conjecture incorporates both a family of precise algebraic relations between the  normalised leading terms of Hasse-Weil-Artin $L$-series attached to $A$ and to characters of $G$ and also strong restrictions on the Galois structure of Selmer complexes (for more details see the discussion in \S\ref{cons2}).

To study the conjecture, we adapt (and, in some respects, clarify) certain constructions and arguments from \cite{KT} relating to syntomic cohomology complexes. In this way we are able to prove that, whenever there exists a prime $\ell$ such that the $\ell$-primary component of the Tate-Shafarevich group of $A$ over $L$ is finite, then our conjecture is valid modulo a certain finite subgroup $\mathcal{T}_{A,L/K}$ of $K_0(\Z[G],\R[G])$, the nature of which depends both on the reduction properties of $A$ and the ramification behaviour in $L/K$.

For example, if $A$ is semistable over $K$ and $L/K$ is tamely ramified, then $\mathcal{T}_{A,L/K}$ vanishes and so we obtain a full verification of our conjecture in this case.

In the worst case the group $\mathcal{T}_{A,L/K}$ coincides with the torsion subgroup of the subgroup $K_0(\Z[G],\Q[G])$ of $K_0(\Z[G],\R[G])$ and our result essentially amounts to proving a version of the main result of \cite{KT} for the leading terms of the Hasse-Weil-Artin $L$-series attached to $A$ and to each character of $G$.

However, even the latter result is new and of interest since, for example, it both establishes the `order of vanishing' part of the Birch and Swinnerton-Dyer conjecture for Hasse-Weil-Artin $L$-series and, in addition, plays a key role in a forthcoming complementary article that deals with, modulo the standard finiteness hypothesis on Tate-Shafarevich groups, the case of abelian varieties $A$ that are generically ordinary.

As a key step in the proof of our main result we shall combine Grothendieck's description of Hasse-Weil-Artin $L$-series in terms of the action of Frobenius on $\ell$-adic cohomology (for some prime $\ell\not= p$) together with a result of Schneider concerning the N\'eron-Tate height-pairing to show that our conjectural formula naturally decomposes as a sum of `$\ell$-primary parts' over all primes $\ell$.

It is thus of interest to note that in some related recent work Trihan and Vauclair \cite{trihanvauclair2} have adapted the approach of \cite{KT} in order to formulate and prove a natural main conjecture of ($p$-adic) non-commutative Iwasawa theory for $A$ relative to unramified $p$-adic Lie extensions of $K$ under the assumptions both that $A$ is semistable over $K$ and that certain Iwasawa-theoretic $\mu$-invariants vanish.

In addition, for each prime $\ell \neq p$, Witte \cite{Witte} has used techniques of Waldhausen $K$-theory to deduce an analogue of the main conjecture of non-commutative Iwasawa theory for $\ell$-adic sheaves over arbitrary $p$-adic Lie extensions of $K$ from Grothendieck's formula for the Zeta function of such sheaves.

It seems likely that these results can be combined with the descent techniques developed by Venjakob and the first author in \cite{BV2} and the explicit interpretation of height pairings in terms of Bockstein homomorphisms that we use below to give an alternative, although rather less direct, proof of the $\ell$-primary part of our main result for any $\ell\not =p$ and of the $p$-primary part of our main result in the special case that $L/K$ is unramified and suitable $\mu$-invariants vanish.

However, even now, there are still no ideas as to how one could formulate a main conjecture of (non-commutative) Iwasawa theory for $A$ relative to any general class of ramified $p$-adic Lie extensions of $K$.

It is thus one of the main observations of the present article that the techniques developed by Kato and Trihan in \cite{KT} are essentially themselves sufficient to prove refined versions of the Birch and Swinnerton-Dyer conjecture without the need to develop an appropriate formalism of non-commutative Iwasawa theory (and hence without the need to assume the vanishing of relevant $\mu$-invariants). 

This general philosophy also in fact underpins the complementary work of the first two authors regarding generically ordinary abelian varieties.

In a little more detail, the main contents of this article is as follows.

Firstly, in \S\ref{leading term section} we use the leading terms of the Hasse-Weil-Artin $L$-series attached to complex characters of $G$ to define a canonical element of $K_1(\RR[G])$. Then, in \S\ref{sect: arith}, we define a natural family of `Selmer complexes' of $G$-modules and establish some of its key properties.

In \S\ref{sect: statements} we formulate our main conjecture, describe some of its explicit consequences and state the main supporting evidence for the conjecture that we prove in later sections.

In \S\ref{sect: preliminary} we prove certain useful preliminary results including a purely $K$-theoretic observation that plays a key role in several subsequent calculations. We also show that our conjecture is consistent in some important respects and use a result of Schneider to give a useful reformulation of the conjecture.

In \S\ref{sec:syncom} and \S\ref{sect:tame} we investigate the syntomic cohomology complexes introduced by Kato and Trihan in \cite{KT}, with
a particular emphasis on understanding conditions under which these complexes can be shown to be perfect.

In \S\ref{sec:lfunctions} we analyse when certain morphisms of complexes that arise naturally in the theory are `semisimple' (in the sense of Galois descent) and deduce, modulo the assumed finiteness of Tate-Shafarevich groups, the order of vanishing part of the Birch and Swinnerton-Dyer conjecture for Artin Hasse-Weil $L$-series.

Then, in \S\ref{sect:proof}, we combine the results established in earlier sections to prove our main results.

There are also two appendices to this article. In the first of these, we show that coherent cohomology over a `separated' formal fs log scheme can be computed via the \v{C}ech resolution with respect to an affine Kummer-\'etale covering. (This result plays an important role in the arguments of \S\ref{sect:tame} and, whilst it is surely well-known to experts, we have not been able to find a good reference for it.) 

Then, in the second appendix, we extend the notion of overconvergent $\Lambda$-$F$-isocrystal for a finite extension $\Lambda$ of $\QQ_p$ whose residue field is not necessarily contained in the field of constants of the base curve, and also the Lefschetz trace formula for rigid cohomology with such coefficients. (This result is needed to obtain Theorem~\ref{order-leading} without further restriction, and the proof is a mere repetition of the proof of Etesse and Le Stum in \cite{ELS}.)

\subsection{} To end the introduction we collect together certain notation and conventions that are to be used in the sequel.

We fix a prime number $p$ and a function field $K$ in one variable over a finite field of characteristic $p$. We write $X$ for the proper smooth connected curve over $\F_p$ that has function field $K$.

Let $A$ be an abelian variety over $K$. Let $U$ be a dense open subset of $X$ such that $A/K$ has good reduction on $U$. We write $\mathcal{A}$ for the N\'eron model of $A$ over $X$.

Let $F$ be a finite extension of $K$. Let $X_F$ denote the proper smooth curve over $\FF_p$ that has function field $F$. We will denote the `base extension' of an object $\ast$ over either $K$ or $X$ to that over $F$ of $X_F$ by a subscript $\ast_{F}$. For example $A_F$ and $U_F$ denote $A \times_K F$,  $U \times_X X_F$ respectively. If there is no danger of confusion we often omit the subscript $F$.

If $M$ is an abelian group or complex of abelian groups, we denote its Pontryagin dual $ \Hom(M, \Q/\Z)$ by $M^\ast$.  If $W$ is a $\QQ_\ell$-module or complex of $\QQ_\ell$-modules for some prime $\ell$, we denote its linear dual $\Hom_{\QQ_\ell}(V,\QQ_\ell)$ by $V^\vee$ (regarding $\ell$ as clear from context). If either $M$ or $V$ has a left action of a group, then we endow $M^\ast$ and $V^\vee$ with the corresponding left contragredient action.  

We fix an algebraic closure $\QQ^c$ of $\Q$ and for every prime $\ell$ an algebraic closure $\QQ^c_{\ell}$ of $\Q_{\ell}$ and  the $\ell$-adic completion $\C_{\ell}$ of $\QQ^c_{\ell}$. For every prime $\ell$, we also fix an embedding  $\Q^c\to \Q^c_{\ell}$.

For each natural number $n$ the $n$-torsion subgroup of an abelian group $M$ is denoted by $M[n]$. The full torsion subgroup of $M$ is denoted by $M_{\rm tor}$ and, for each prime $\ell$, the $\ell$-primary part of $M_{\rm tor}$ is denoted by $M\{\ell\}$.

For a finite group $G$ we write ${\rm Ir}(G)$ for the set of its irreducible complex valued characters and ${\rm Ir}^{s}(G)$ for the subset of ${\rm Ir}(G)$ comprising characters that are symplectic. We write $\check\chi$ for the contragredient of each $\chi$ in ${\rm Ir}(G)$ and ${\bf 1}_G$ for the trivial character  of $G$. 

For any commutative ring $R$ we write $R[G]$ for the group ring of $G$ over $R$ and denote its centre by $\zeta(R[G])$. We identify $\zeta(\CC[G])$ with ${\prod}_{{\rm Ir}(G)}\CC$ in the standard way. \\

\noindent{\bf Acknowledgement:} We are very grateful indeed to both Fabien Trihan and Takashi Suzuki for several extremely helpful conversations, e-mail exchanges and remarks concerning  previous drafts of this article. We also thank the anonymous referee for the careful reading and helpful comments. The second named author is supported by DST-SERB grant SB/SJF/2020-21/11, SERB SUPRA grant SPR/2019/ 000422, DST FIST program - 2021 [TPN - 700661], and Tata Education and Development Trust.
The third named author was supported by the National Research Foundation of Korea (NRF) grant funded by the Korea government (MSIT) (No. RS-2023-00208018).

\section{Leading terms of Hasse-Weil-Artin $L$-series}\label{leading term section}

We fix a finite Galois extension $L/K$ with Galois group $G$, and choose $U$ not to contain any place that ramifies in $L/K$.
For each $\chi$ in ${\rm Ir}(G)$ we write $L_U(A, \chi, s)$ for the Hasse-Weil-Artin $L$-series of the pair $(A,\chi)$ that is truncated by removing the Euler factors for all places outside $U$.

We now show that there exists a canonical element of the Whitehead group $K_1(\RR[G])$ that naturally interpolates the leading terms  $L^*_U(A,\chi,1)$ at $s=1$ in the Taylor expansions of the functions $L_U(A,\chi,s)$ as $\chi$ ranges over ${\rm Ir}(G)$.

This `$K$-theoretical leading term' will then play an important role in the conjecture that we discuss in subsequent sections (but also see Remark \ref{positivity remark} in this regard).

To define the element we use the fact that the algebra $\mathbb{R}[G]$ is semisimple and hence that the classical reduced norm construction induces a homomorphism ${\rm Nrd}_{\RR[G]}$ of abelian groups from $K_1(\RR[G])$ to the subgroup $\zeta(\RR[G])^\times$ of ${\prod}_{{\rm Ir}(G)}\CC^\times$.

\begin{theorem}\label{lt def} There exists a unique element $L^*_{U}(A_{L/K},1)$ of $K_1(\RR[G])$ with the property that ${\rm Nrd}_{\RR[G]}(L^*_{U}(A_{L/K}, 1))_\chi = L^*_U(A,\chi, 1)$ for all $\chi$ in ${\rm Ir}(G)$.\end{theorem}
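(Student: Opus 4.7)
The plan is to reduce the statement to an explicit description of the image of the reduced norm $\mathrm{Nrd}_{\RR[G]}\colon K_1(\RR[G]) \to \zeta(\RR[G])^\times$, and then to verify that the tuple $(L^*_U(A,\chi,1))_{\chi \in \mathrm{Ir}(G)}$ belongs to that image. Uniqueness will follow from the injectivity of $\mathrm{Nrd}_{\RR[G]}$: indeed $\RR[G]$ decomposes, by Wedderburn, as a finite product of matrix algebras over the division rings $\RR$, $\CC$ and $\mathbb{H}$ (the factors being indexed respectively by the orthogonal, complex and symplectic characters of $G$), and $SK_1$ of each such division ring is trivial, so that $SK_1(\RR[G])=0$.

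Combining the same Wedderburn decomposition with the Hasse--Schilling--Maass norm theorem and with the identification $\zeta(\CC[G]) \cong \prod_{\chi} \CC$ fixed in \S1.2, one identifies the image of $\mathrm{Nrd}_{\RR[G]}$ with the subgroup $\Xi$ of $\prod_{\chi} \CC^\times$ comprising the tuples $(z_\chi)$ which satisfy both $z_{\bar\chi} = \overline{z_\chi}$ for every $\chi \in \mathrm{Ir}(G)$ (this being the condition that the tuple actually lies in $\zeta(\RR[G])^\times$) and $z_\chi \in \RR_{>0}$ for every $\chi \in \mathrm{Ir}^s(G)$ (this being the constraint imposed by the quaternionic Wedderburn factors). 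It therefore suffices to show that the leading-term tuple lies in $\Xi$.

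The complex-conjugation compatibility is straightforward: the identity $L_U(A,\bar\chi,s) = \overline{L_U(A,\chi,\bar s)}$ holds termwise on the Euler product defining the $L$-series, via Grothendieck's description of the local factors at places in $U$ through Frobenius eigenvalues on the $\ell$-adic Tate module of $A$ together with the corresponding identity for the Artin factors at the places outside $U$; passing to leading Taylor coefficients at $s=1$ yields the required relation.

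The substantive point is positivity at symplectic $\chi$; as the introduction stresses, this is \emph{a priori} far from clear and will be the main obstacle. My plan is to use Grothendieck's formula to write $L_U(A,\chi,s)$ as an alternating product of characteristic polynomials of $\Frob\cdot q^{-s}$ on the compactly supported $\ell$-adic cohomology groups $H^i_c(U_{\overline{\F}_p}, \calF_\chi)$, where $\calF_\chi$ is the lisse $\overline{\QQ}_\ell$-sheaf on $U$ obtained by twisting the $\ell$-adic Tate module of $A$ by a realisation of the symplectic representation underlying $\chi$, for any chosen $\ell \neq p$. The symplectic form on this representation and the Weil pairing on the Tate module combine to equip $\calF_\chi$ with a canonical perfect \emph{symmetric} autoduality of appropriate weight, so that Poincar\'e duality on $U$ induces Frobenius-equivariant non-degenerate symmetric pairings on each $H^i_c(U_{\overline{\F}_p}, \calF_\chi)$. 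The Frobenius eigenvalues distinct from $1$ then occur in pairs $\{\alpha,\, q/\alpha\}$ whose joint contributions to $L^*_U(A,\chi,1)$ collect into positive real squares, while the eigenvalues equal to $1$ account only for the order of vanishing and are divided out in passing to the leading coefficient. Careful bookkeeping of the interaction of this self-duality with the local Euler factors at the places in $X\setminus U$ will be the principal technical difficulty; once it is in place, $(L^*_U(A,\chi,1))_\chi$ lies in $\Xi$ and its unique preimage under $\mathrm{Nrd}_{\RR[G]}$ is the sought element $L^*_U(A_{L/K},1)$.
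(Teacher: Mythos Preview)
Your reduction to the Hasse--Schilling--Maass description of the image of $\mathrm{Nrd}_{\RR[G]}$ and your treatment of the complex-conjugation condition match the paper's argument. The divergence, and the difficulty, is entirely in the positivity step for symplectic $\chi$.

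There your outlined approach has a genuine gap. On an \emph{open} curve $U$, Poincar\'e duality pairs $H^i_c(U^c,\calF_\chi)$ with $H^{2-i}(U^c,\calF_\chi^\vee)$, not with $H^{2-i}_c$. Even granting the symmetric self-duality $\calF_\chi\cong\calF_\chi^\vee(1)$ (which is correct: alternating Weil pairing tensored with the alternating symplectic form yields a symmetric form), what you obtain is a perfect pairing between $H^i_c(U^c,\calF_\chi)$ and $H^{2-i}(U^c,\calF_\chi)$, and the forget-support map $H^i_c\to H^i$ is in general neither injective nor surjective. So there is no non-degenerate self-pairing on $H^i_c$ alone, and hence no clean $\{\alpha,q/\alpha\}$ pairing of Frobenius eigenvalues on the very spaces that enter $L_U$. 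The ``careful bookkeeping at places outside $U$'' that you defer is exactly this obstruction, and it is not a matter of bookkeeping: the boundary contributions in the localisation sequence relating $H^\bullet_c$ and $H^\bullet$ are governed by local monodromy and need not organise themselves into positive squares.

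The paper (Proposition~\ref{prop:positive}) takes a quite different route that sidesteps self-duality of the sheaf altogether. For any real-valued $\chi$ the polynomials $Q_{\chi,i}(u)=\det(1-u\varphi_q\mid H^i_c)$ have real coefficients, so non-real Frobenius eigenvalues pair with their complex conjugates and each such pair contributes a strictly positive factor. Deligne's weight bounds then show $Q_{\chi,0}(q^{-1})>0$ and reduce the question for $i=1,2$ to the real eigenvalues of maximal absolute value; the remaining issue is the parity of the number of eigenvalues equal to $q^{3/2}$ on $H^2_c$, and this is handled by identifying $H^2_c$ (via duality with $H^0$) with $V_\ell(B)\otimes(\chi^\vee)^\Delta$ for the $L\FF_q^c/K\FF_q^c$-trace $B$ and using Clifford theory to see that $(\chi^\vee)^\Delta=0$ unless $\chi$ is trivial. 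The trivial case is the one already done by Kato--Trihan.
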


\begin{proof} Since the natural map $\RR[G]^\times \to K_1(\RR[G])$ is surjective, the Hasse-Schilling-Maass norm theorem implies both that ${\rm Nrd}_{\RR[G]}$ is injective and that its image is equal to the subgroup of ${\prod}_{{\rm Ir}(G)}\CC^\times$ comprising elements $(x_\chi)_\chi$ that satisfy both of the following conditions
\begin{equation*}\label{hsm} \begin{cases} x_{\tau\circ\chi} = \tau(x_\chi), &\text{ for all $\chi$ in ${\rm Ir}(G)$, and}\\
                                          x_\chi \in \mathbb{R} \,\text{ and }\, x_\chi > 0, &\text{ for all $\chi$ in ${\rm Ir}^{\rm s}(G)$,}\end{cases}\end{equation*}
where $\tau$ denotes complex conjugation. (For a proof of this result see \cite[Th. (45.3)]{curtisr}.)

The injectivity of ${\rm Nrd}_{\RR[G]}$ implies that there can be at most one element of $K_1(\RR[G])$ with the stated property and to show that such an element exists it is enough to show that the element $(L^*_U(A,\chi, 1))_\chi$ of ${\prod}_{{\rm Ir}(G)}\CC^\times$ satisfies the above displayed conditions.

This fact is established in Proposition \ref{prop:positive} below. \end{proof}

The following result extends an observation of Kato and Trihan from \cite[Appendix]{KT}.

\begin{proposition} \label{prop:positive}  The following claims are valid for every $\chi$ in ${\rm Ir}(G)$.
\begin{itemize}
\item[(i)] For every automorphism $\omega$ of $\CC$ one has $\tau(L^*_U(A,\chi, 1)) = L^*_U(A,\tau\circ\chi, 1)$. In particular, one has $L^*_U(A,\chi, 1)\in \RR$ if $\chi$ is real valued.

\item[(ii)] Write $\mathbb{F}_{q}$ and $\mathbb{F}_{q'}$ for the total field of constants of $K$ and $L$ respectively. Then if $\chi$ is both real valued and not inflated from a non-trivial one dimensional representation of $\Gal(\mathbb{F}_{q'}/\mathbb{F}_q)$, one has $L^*_U(A, \chi, 1)>0$.
\end{itemize}
\end{proposition}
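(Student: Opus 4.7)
The proof extends the argument in the appendix of \cite{KT}, which treats the trivial character case. I fix an auxiliary prime $\ell \neq p$ and use Grothendieck's cohomological formula
\begin{equation*}
L_U(A,\chi,s) = \prod_{i=0}^{2} \det\bigl(1 - F q^{-s} \,\bigm|\, H^i_c(\bar U, V_\ell(A) \otimes \rho_\chi)\bigr)^{(-1)^{i+1}},
\end{equation*}
where $F$ is the geometric Frobenius and $V_\ell(A) \otimes \rho_\chi$ the natural twisted $\ell$-adic sheaf. The characteristic polynomials appearing have algebraic coefficients in a number field depending only on $\chi$ and independent of $\ell$.

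For part (i), complex conjugation on the coefficient field carries $\rho_\chi$ to $\rho_{\bar\chi}$, so the formula gives $\overline{L_U(A,\chi,s)} = L_U(A,\bar\chi, \bar s)$. Specialising to real $s$ near $1$ and passing to leading terms gives the claim.

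For part (ii), since $\chi$ is real-valued each factor in the Grothendieck formula has real coefficients, so the Frobenius eigenvalues $\alpha$ are either real or occur in complex conjugate pairs. Applying Deligne's Weil-II bound to the pure sheaf $V_\ell(A) \otimes \rho_\chi$ bounds $|\alpha|$ on $H^i_c$ by $q^{(i+1)/2}$. At $s=1$, each complex conjugate pair contributes $|1 - \alpha q^{-1}|^{2} > 0$; each real $\alpha$ with $|\alpha| < q$ contributes $1 - \alpha q^{-1} > 0$; an eigenvalue $\alpha = q$ on $H^1_c$ produces a simple zero contributing the positive factor $\log q$ to the leading term via Taylor expansion; and $\alpha = -q$ contributes $2 > 0$. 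Boundary real eigenvalues on $H^0_c$ and $H^2_c$ (potentially of absolute value up to $q^{3/2}$) require separate analysis: via Poincar\'e duality these cohomology groups are Tate twists of $\pi_1(\bar U)$-invariants and coinvariants of the twisted sheaf, and the polarisation on $A$ forces the real Weil numbers $\pm q^{1/2}$ of Frobenius on $V_\ell(A_0)$ (for the $K$-constant part $A_0$ of $A$) to appear with even multiplicity, so that the corresponding denominator factors $(1 \mp q^{1/2})^{2m}$ are positive. The exclusion of characters inflated from a non-trivial one-dimensional representation of $\Gal(\F_{q'}/\F_q)$ is precisely what is needed to ensure that this even-pairing structure survives the twist by $\rho_\chi$ and does not produce an uncompensated negative factor in the denominator.

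I expect the main obstacle to lie in this last step, namely the detailed analysis of $H^0_c$ and $H^2_c$ under the twist and the verification that the stated exclusion condition is exactly what is required to prevent sign reversals coming from a non-trivial constant part of $A$. The rest of the argument is a direct extension of Kato-Trihan's strategy via grouping complex conjugate pairs of Frobenius eigenvalues and invoking the Weil bounds.
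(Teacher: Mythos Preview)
Your overall strategy matches the paper's: Grothendieck's formula, Deligne's weight bounds, grouping complex-conjugate eigenvalues, and reducing the sign question to the real Frobenius eigenvalues on $H^2_c$. Part~(i) and the bulk of part~(ii) are fine. Two corrections are needed.

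First, a minor point: $H^0_c$ never causes a sign problem. The weight bound there gives $|\alpha|\le q^{1/2}$, so every real eigenvalue satisfies $1-\alpha q^{-1}\ge 1-q^{-1/2}>0$. Only $H^2_c$ needs separate treatment.

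Second, and more important, your proposed mechanism for $H^2_c$ is not the one that works, and the role of the exclusion hypothesis is different from what you describe. You suggest that the polarisation forces even multiplicity of the dangerous real eigenvalues and that the exclusion hypothesis ``preserves this even-pairing under the twist''. The paper's argument is cleaner and of a different nature: by Poincar\'e duality $H^2_c$ is controlled by $H^0(U^c,V_\ell(A^t)\otimes V_{\check\rho})$, and since the geometric $\pi_1$ acts trivially on the $L\mathbb{F}_q^c/K\mathbb{F}_q^c$-trace $B$ of $A^t$, this space is $V_\ell(B)\otimes (V_{\check\rho})^{\Delta}$ with $\Delta=\Gal(L\mathbb{F}_q^c/K\mathbb{F}_q^c)$. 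One then applies Clifford's theorem: since $G/\Delta\cong\Gal(\mathbb{F}_{q'}/\mathbb{F}_q)$ is cyclic, the restriction $\mathrm{res}^G_\Delta(\check\rho)$ is a sum of $G$-conjugate irreducibles, so $(V_{\check\rho})^{\Delta}\ne 0$ forces $\rho$ to be inflated from $\Gal(\mathbb{F}_{q'}/\mathbb{F}_q)$. Under your hypothesis this means $\rho$ is trivial, which is the case already handled in the Kato--Trihan appendix; otherwise $(V_{\check\rho})^{\Delta}=0$ and $H^2_c$ vanishes entirely. So the exclusion condition is used not to preserve a parity, but to force the relevant cohomology to be zero. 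Your polarisation argument, as stated, also runs into the difficulty that $B$ is defined over $\mathbb{F}_{q'}$ rather than $\mathbb{F}_q$, so the symplectic structure controls the $q'$-Frobenius eigenvalues, not directly the $q$-Frobenius eigenvalues you need.
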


\begin{proof} 
At the outset we fix a prime $\ell$ with $\ell\not= p$ and write $\QQ^c$ for the algebraic closure of $\QQ$ in $\CC$. We also fix an isomorphism $\CC\cong \CC_\ell$ that we suppress from the notation.

In particular, for each $\rho$ in ${\rm Ir}(G)$ we fix a realisation $V_\rho$ of $\rho$ over $\QQ^c$ and do not distinguish between it and the space $\QQ^c_\ell\otimes_{\QQ^c}V_\rho$.

Now for every $\rho$ in ${\rm Ir}(G)$ Grothendieck \cite{Gro} (see also the proof of \cite[Chap. VI, Th. 13.3]{milne:etale}) has proved that there is an equality of power series
\begin{equation}\label{deligne}
L_U(A, \rho, s) = {\prod}_{i=0}^{i=2}Q_{\rho,i}(q^{-s})^{(-1)^{i+1}},\end{equation}
where each $Q_{\rho,i}(u)$ is a polynomial in $u$ over $\QQ^c$ that can be computed as
\[
Q_{\rho,i}(u) := {\rm det}\bigl(1- u\cdot\varphi_q | H^i_{\et,c}\bigl(U^c, V_\rho\otimes_\QQ V_{\ell}(A)\bigr)(-1)\bigr)
\]
Here $U^c$ denotes $U\times_{\mathbb{F}_q}\mathbb{F}^c_q$, the vector space $V_{\ell}(A)$ is the $\QQ_\ell$-space spanned by the $\ell$-adic Tate module of $A$ and $\varphi_q$ the $q$-th power Frobenius map. We claim that, for each $i$ and every   automorphism $\omega$ of $\CC$, one has 
\begin{equation} \label{first inv} 
\omega(Q_{\rho,i}(u)) =  (Q_{\omega\circ\rho,i}(u)).
\end{equation}

In fact, since Grothendieck's result implies that the polynomial $Q_{\rho, i}(u)$ has coefficients in $\QQ^c$, it is enough to consider automorphisms $\omega$ of $\QQ^c$. Then, for each such $\omega$, the natural isomorphism of $(\QQ_\ell\otimes_\QQ\QQ^c)$-spaces 
\[
\QQ^c \otimes_{\QQ^c,\omega}(V_\rho\otimes_{\QQ}V_\ell(A)) \cong (\QQ^c \otimes_{\QQ^c,\omega} V_\rho)\otimes_\QQ V_\ell(A) \cong V_{\rho^\omega}\otimes_\QQ V_\ell(A)
\]
induces a similar isomorphism of the corresponding sheaves over $U^c$ and hence an isomorphism of cohomology groups
\[
\QQ^c \otimes_{\QQ^c,\omega} H^i_{\et,c}(U^c, V_{\rho}\otimes_\QQ V_\ell(A))\cong H^i_{\et,c}(U^c, V_{\rho^\omega}\otimes_\QQ V_\ell(A)) 
\]
under which $(1 \otimes \varphi_q )$ on the first space corresponds to $\varphi_q$ on the second space. This proves the claimed equality (\ref{first inv}).

The equalities (\ref{first inv}) (for each $i \in \{0,1,2\}$) can then be combined with (\ref{deligne}) to deduce that the orders of vanishing at $s=1$ of the series $L_U(A, \rho, s)$ and $L_U(A, \omega\circ\rho, s)$ are equal and moreover that
\[ \omega(L^*_U(A, \rho, 1)) = L^*_U(A, \omega\circ\rho, 1),\]
as required. The final assertion of claim (i) then follows immediately upon applying this equality with $\omega = \tau$. 

To prove claim (ii) we assume $\rho$ is real-valued and hence, by (\ref{first inv}) with $\omega=\tau$, that each polynomial $Q_{\rho,i}(u)$ belongs to $\mathbb{R}[u]$.

For each $i$ we set 
\[ d_{\rho,i} := {\rm dim}_{\QQ_\ell^c}(H^i_{\et,c}(U^c, V_{\rho}\otimes_\QQ V_\ell(A))(-1))\]
and write the eigenvalues, counted with multiplicity, of $\varphi_q$ on this space as $\{\alpha_{ia}\}_{1\le a\le d_{\rho,i}}$.

Now, since the weight on $U$ of $(V_{\rho}\otimes_\QQ V_{\ell}(A))(-1)$ is one, Deligne \cite{deligne:weil2} has shown that $|\alpha_{ia}| \leq q^{(i+1)/2}$ for all values of $i$ and $a$.

Further, as the space $H^2_{\et,c}(U^c, V_{\rho}\otimes_\QQ V_\ell(A))(-1)$ is dual to
$H^0_{\et}(U^c, V_{\check\rho}\otimes_\QQ V_\ell(A^t))(1)$, and the weight on $U$ of the representation $(V_{\check\rho}\otimes_\QQ V_\ell(A^t))(1)$ is $-3$, one has  $|\alpha_{2a}| = q^{\frac{3}{2}}$ for all $a$.
Therefore neither of the terms $Q_{\rho,0}(q^{-1})$ or $Q_{\rho,2}(q^{-1})$ vanish.


In particular, if $m$ denotes the order of vanishing of $L_U(A,\rho, s)$ at $s=1$, then one has
\begin{equation}\label{lt prod} L^*_U(A,\rho, 1) = Q_{\rho,0}(q^{-1})^{-1}Q_{\rho,2}(q^{-1})^{-1}\cdot \lim_{s \rightarrow 1} (s-1)^{-m}Q_{\rho,1}(q^{-s}).\end{equation}

To prove that this quantity is a strictly positive real number we shall split it into a number of subproducts and show that each separate subproduct is a strictly positive real number. 

At the outset we note that if an eigenvalue $\alpha_{ia}$ is not real, then (since $Q_{\rho,i}(u)$ belongs to $\mathbb{R}[u]$) there must exist an index $a\not= a'$ such that $\alpha_{ia'} = \tau(\alpha_{ia})$ and then  the product $(1- \alpha_{ia}q^{-1})(1- \alpha_{ia'}q^{-1})$ is a strictly positive real number.

We need therefore only consider eigenvalues $\alpha_{ia}$ that are real and to do this we define for each $i\in \{0,1,2\}$ sets of indices
\[ J'_i := \{ a: 1 \leq a \leq d_{\rho,i} \,\,\text{ with }\, \alpha_{ia} = q^{(i+1)/2} \}
 \subset J_i := \{a: 1 \leq a \leq d_{\rho,i} \text{ with } \alpha_{ia} \in \mathbb{R} \}.
 \]
Now if either $i = 0$ and $a \in J_0$ or if $1 \leq i \leq 2$ and $a \in J_i \setminus J'_i$, then one checks easily that
$(1- \alpha_{ia} q^{-1}) > 0$.

Furthermore, one has $m = |J'_1|$ and
\[
\lim_{s \rightarrow 1} (s-1)^{-m} {\prod}_{a \in J'_1} (1- \alpha_{1a}q^{-s}) = (\lim_{s \rightarrow 1} (s-1)^{-1}(1- q^{1-s}))^m = (\log(q))^m > 0
\]
is a strictly positive real number.

To prove the quantity in (\ref{lt prod}) is strictly positive we are therefore reduced to showing that the product 
\[ {\prod}_{a \in J'_2} (1-\alpha_{2a}q^{-1}) = {\prod}_{a \in J'_2} (1-q^{1/2})\]
is strictly positive, or equivalently that $|J'_2|$ is even.

To do this we set $\Delta := \Gal(L\mathbb{F}^c_q/ K\mathbb{F}^c_q)$ and recall that 
$H^2_{\et,c}(U^c, V_{\rho}\otimes_\QQ V_{\ell}(A))$ is dual to the $(1)$-twist of the space %
\begin{align*} 
H^0_{\et}(U^c, V_{\check\rho}\otimes_\QQ V_\ell(A^t)) & =\, 
\bigl(V_{\check\rho}\otimes_\QQ V_\ell(A^t)\bigr)^{\Gal(K^c/K\mathbb{F}^c_q)} \\ 
& =\, \bigl(V_{\check\rho}\otimes_\QQ V_{\ell}(A^t)^{\Gal(K^c/L\mathbb{F}^c_q)}\bigr)^\Delta \\
& \cong\, \bigl(V_{\check\rho}\otimes_\QQ V_{\ell}(B)\bigr)^\Delta \\
& =\, V_{\check\rho}^{\Delta}\otimes_\QQ V_{\ell}(B).
\end{align*}

Here the first equality is obvious and the second is true because the restriction of $\rho$ to $\Gal(K^c/L\mathbb{F}^c_q)$ is trivial, $B$ is the $L/\mathbb{F}_{q'}$ trace of $A^t$ (see \cite[Chap. VIII, \S 3, Th. 6]{lang} and note that $L/ \mathbb{F}_{q'}$ is primary i.e. the algebraic closure of $\mathbb{F}_{q'}$ in $L$ is purely inseparable extension of $\mathbb{F}_{q'}$) and the last equality is true because $B$ is defined over $\mathbb{F}_{q'}$.

In particular, if the representation $\check\rho^{\Delta}$ vanishes, then $|J'_2|= d_2 = 0$ is even and we are done.

We claim now that $\check\rho^{\Delta}$ does indeed vanish unless $\rho$ is trivial. To show this we note that $\Delta$ identifies with a normal subgroup of $G$ in such a way that the quotient is isomorphic to the cyclic group $H :=\Gal(\mathbb{F}_{q'}/\mathbb{F}_q)$.

Thus, if $\eta$ is any irreducible subrepresentation of ${\rm res}^G_\Delta(\check\rho)$, then Clifford's theorem (cf. \cite[Th. 11.1(i)]{curtisr}) implies that ${\rm res}^G_\Delta(\check\rho)$ is the direct sum of conjugates of $\eta$ and hence that ${\rm res}^G_\Delta(\check\rho)^\Delta$ does not vanish if and only if $\eta$ is trivial.

It follows that ${\rm res}^G_\Delta(\check\rho)^\Delta$ does not vanish if and only if ${\rm res}^G_\Delta(\check\rho)$ is trivial and this happens if and only if $\check\rho$, and hence also $\rho$ itself, is inflated from a representation of $H$.

Hence, since we have assumed that $\rho$ is both irreducible and not inflated from a non-trivial representation of $H$, the representation ${\rm res}^G_\Delta(\check\rho)^\Delta$ does not vanish if and only if $\rho$ is the trivial representation of $G$.

We have now verified the assertion of claim (ii) for all but the trivial representation of $G$ and in this case the claimed result is proved by Kato and Trihan in \cite[Appendix]{KT}. \end{proof}

\begin{remark}\label{contra just} {\em Fix a prime $\ell$ and an $\ell$-adic representation $V$ of $G_K$. Then the tensor product $\QQ_\ell^c[G]\otimes_{\QQ_\ell} V$ is a (left) module over $G\times G_K$ via the rule $(g,\sigma)(x\otimes v) := gx\overline{\sigma}^{-1}\otimes \sigma(v)$ for $g \in G, \sigma\in G_K, x \in \QQ_\ell^c[G]$ and $v \in V$, where $\overline{\sigma}$ is the image of $\sigma$ under the restriction map $G_K \to G$. In particular, if we fix $\chi$ in ${\rm Ir}(G)$ and a realisation $V_\chi$ over $\QQ_\ell^c$, then, with respect to this action, there is a canonical isomorphism of $\ell$-adic representations of $G_K$
\begin{equation*}\label{contra justification} V_\chi\otimes_{\QQ_\ell} V \cong \Hom_{\QQ_\ell^c[G]}(V_{\check\chi}, \QQ_\ell^c[G]\otimes_{\QQ_\ell}V),\end{equation*}
where $G_K$ acts diagonally on $V_\chi\otimes_{\QQ_\ell} V$ and on the Hom-group via only $\QQ_\ell^c[G]\otimes_{\QQ_\ell}V$. This isomorphism is induced by the canonical composite identification 
\begin{align*} H^0(G,\Hom_{\QQ_\ell^c}(V_{\check\chi}, \QQ_\ell^c[G]\otimes_{\QQ_\ell}V))\cong&\, H^0(G,\Hom_{\QQ_\ell^c}(V_{\check\chi},\QQ_\ell^c)\otimes_{\QQ_\ell^c}(\QQ_\ell^c[G]\otimes_{\QQ_\ell}V))\\
\cong & H_0(G,\Hom_{\QQ_\ell^c}(V_{\check\chi},\QQ_\ell^c)\otimes_{\QQ_\ell^c}(\QQ_\ell^c[G]\otimes_{\QQ_\ell}V)) \\
\cong&\, \Hom_{\QQ_\ell^c}(V_{\check\chi},\QQ_\ell^c)\otimes_{\QQ_\ell^c[G]}(\QQ_\ell^c[G]\otimes_{\QQ_\ell}V) \\
\cong&\, \Hom_{\QQ_\ell^c}(V_{\check\chi},\QQ_\ell^c)\otimes_{\QQ_\ell}V.\end{align*}
Here the second isomorphism is induced by the inverse of the canonical norm map (since the order of $G$ is invertible in $\QQ_\ell^c$), and all other isomorphisms are clear. }
\end{remark}

\section{Arithmetic complexes}\label{sect: arith}

In this section we construct certain canonical complexes of Galois modules whose Euler characteristics will occur in the formulation of our refined Birch and Swinnerton-Dyer conjecture.

In the sequel, for any noetherian ring $R$ we shall write $D^{\rm perf}(R)$ for the full triangulated subcategory of the derived category $D(R)$ of (left) $R$-modules comprising complexes that are `perfect' (that is, isomorphic in $D(R)$ to a bounded complex of finitely generated projective $R$-modules).

\subsection{Selmer groups} The Tate-Shafarevich group and, for any natural number $n$, the $n$-torsion Selmer group of $A$ over any finite extension $F$ of $K$ are respectively defined to be the kernels
\[
\sha(A/F) := \ker(H^1(F, A) \rightarrow {\bigoplus}_{v} H^1(F_v, A))
\]
and
\[
{\rm Sel}_n(A/F) := \ker(H^1(F, A[n]) \rightarrow {\bigoplus}_v H^1(F_v, A)).
\]
Here the groups $H^1(F, A), H^1(F, A[n])$ and $H^1(F_v, A)$ denote flat cohomology and in both cases $v$ runs over all places of $F$ and the arrow denotes the natural diagonal restriction map.

One then defines Selmer groups of $A$ over $F$ via the natural limits
\[
{\rm Sel}_{\Q/\Z}(A/F) := \varinjlim_n {\rm Sel}_{n}(A/F) \,\,\text{ and }\,\, {\rm Sel}_{\hat{\Z}}(A/F) := \varprojlim_n {\rm Sel}_{n}(A/F)
\]
and, for convenience, we write $X(A/F)$ for the Pontryagin dual of ${\rm Sel}_{\Q/\Z}(A/F)$.

\begin{remark}\label{eq:sel1}{\em We make much use in the sequel of the fact that the above definitions lead naturally to canonical exact sequences
\[ 0 \rightarrow A(F) \otimes_{\Z} \hat{\Z} \rightarrow {\rm Sel}_{\hat{\Z}}(A/F) \rightarrow \varprojlim_{n} \sha(A/F)[n] \rightarrow 0
\]
and
\[ 0 \rightarrow (\sha(A/F)_{\tors})^{\vee} \rightarrow X(A/F) \rightarrow \Hom_{\Z}(A(F), \hat{\Z}) \rightarrow 0.\]}
\end{remark}

\subsection{Arithmetic cohomology} For each place $v$ of $F$ outside $U_F$ we fix a pro-$p$ open subgroup $V_v$ of $A(F_v)$ and denote the family $(V_v)_{v \notin U_F}$ by $V_{U_F}$, or more simply by either $V_F$ or $V$ when the context is clear.

We then follow Kato and Trihan \cite{KT} in defining the `arithmetic cohomology' complex $R\Gamma_{ar,V}(U_F, \calA_{\tors})$ to be the mapping fibre of the morphism
\begin{equation}\label{fibre morphism}
R\Gamma_{\rm fl}(U_F, \mathcal{A}_{\tors}) \oplus ({\bigoplus}_{v \notin U_F} V_v \otimes_\Z^{\mathbb{L}} \Q/\Z)[-1]) \xrightarrow{(\kappa_1, \kappa_2)} {\bigoplus}_{v \notin U_F} R\Gamma_{\rm fl}(F_v, \calA_{\tors}).
\end{equation}
Here $\kappa_1$ denotes the natural diagonal localisation morphism in flat cohomology
and $\kappa_2$ the restriction of the morphism
\[
(A(F_v) \otimes_\Z^{\mathbb{L}} \Q/\Z)[-1] \rightarrow {\bigoplus}_{v \notin U_F} R\Gamma_{\rm fl}(F_v, \calA_{\tors})
\]
that is obtained by applying $- \otimes_\Z^{\mathbb{L}}\Q/\Z$ to the morphism $A(F_v)[-1] \rightarrow R\Gamma_{\rm fl}(F_v, \varprojlim_n A[n])$ in $D(\ZZ[G])$ induced by the fact that $H^0_{\rm fl}(F_v, \varprojlim_n A[n])$ vanishes whilst $A(F_v)$ is canonically isomorphic to a submodule of $H^1_{\rm fl}(F_v, \varprojlim_n A[n])$.

\begin{proposition} \label{prop:arithcoho} The complex $C^{\rm ar}_V := R\Gamma_{ar,V}(U_F, \calA_{\tors})$ is acyclic outside degrees $0, 1$ and $2$. In addition, there exists a canonical exact sequence
\[
0 \rightarrow H^0(C^{\rm ar}_V) \rightarrow A(F)_{\tors} \rightarrow {\bigoplus}_{v \notin U_F} A(F_v)/V_v \rightarrow H^1(C^{\rm ar}_V) \rightarrow {\rm Sel}_{\Q/\Z}(A/F) \rightarrow 0,
\]
and a canonical isomorphism $H^2(C^{\rm ar}_V) \cong \Hom_{\Z}({\rm Sel}_{\hat{\Z}}(A^t/F), \Q/\Z)$.
\end{proposition}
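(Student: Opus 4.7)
The plan is to unravel the defining mapping-fibre triangle for $C^{\rm ar}_V$ via its long exact cohomology sequence, after first recasting it so that local and global pieces separate cleanly. For each $v \notin U_F$, set
\[
K_v := \mathrm{cone}\bigl((V_v \otimes^{\mathbb{L}}_\Z \Q/\Z)[-1] \xrightarrow{\kappa_{2}} R\Gamma_{\rm fl}(F_v, \calA_{\tors})\bigr).
\]
Interpreting the mapping fibre as a homotopy pullback then yields a triangle
\[
C^{\rm ar}_V \to R\Gamma_{\rm fl}(U_F, \calA_{\tors}) \to \bigoplus_{v \notin U_F} K_v \to C^{\rm ar}_V[1].
\]

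The crucial local step is to show that $H^0(K_v) \cong A(F_v)/V_v$, $H^1(K_v) \cong H^1(F_v, A)$, and $H^i(K_v) = 0$ otherwise. Applying $-\otimes^{\mathbb{L}}_\Z \Q/\Z$ to $0 \to V_v \to A(F_v) \to A(F_v)/V_v \to 0$, and using $(A(F_v)/V_v)\otimes^{\mathbb{L}}_\Z \Q/\Z \simeq (A(F_v)/V_v)[1]$ (by finiteness of $A(F_v)/V_v$), produces a triangle whose cone is $A(F_v)/V_v$ in degree $0$. On the other hand, the limit Kummer sequence $0 \to A(F_v) \otimes \Q/\Z \to H^1_{\rm fl}(F_v, \calA_{\tors}) \to H^1(F_v, A) \to 0$, together with the vanishing $H^2_{\rm fl}(F_v, \calA_{\tors}) = 0$ (which follows from $H^2_{\rm fl}(F_v, A) = 0$ and the torsion-ness of $H^1(F_v, A)$, hence $H^1(F_v, A) \otimes \Q/\Z = 0$), shows that the cone of $(A(F_v) \otimes^{\mathbb{L}}_\Z \Q/\Z)[-1] \to R\Gamma_{\rm fl}(F_v, \calA_{\tors})$ is quasi-isomorphic to $H^1(F_v, A)[-1]$. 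The octahedral axiom then combines these into the claimed description of $K_v$.

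Granted these local identifications and the canonical isomorphism $H^0_{\rm fl}(U_F, \calA_{\tors}) = A(F)_{\tors}$ from the N\'eron mapping property, the long exact sequence of the displayed triangle has initial segment
\[
0 \to H^0(C^{\rm ar}_V) \to A(F)_{\tors} \to \bigoplus_{v \notin U_F} A(F_v)/V_v \to H^1(C^{\rm ar}_V) \to H^1_{\rm fl}(U_F, \calA_{\tors}) \to \bigoplus_{v \notin U_F} H^1(F_v, A).
\]
The desired five-term exact sequence is then obtained by factoring the penultimate map through its image, which equals $\Sel_{\Q/\Z}(A/F)$ because classes in $H^1_{\rm fl}(U_F, \calA_{\tors})$ automatically satisfy the Selmer condition at all places of good reduction. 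Acyclicity in negative degrees and in degrees $\geq 3$ then follows easily, using the vanishing of the constituent complexes there together with the standard cohomological dimension bound $H^i_{\rm fl}(U_F, \calA_{\tors}) = 0$ for $i \geq 3$.

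The main obstacle is the identification of $H^2(C^{\rm ar}_V)$: from the long exact sequence, it sits in an extension of $H^2_{\rm fl}(U_F, \calA_{\tors})$ by $\coker\bigl(H^1_{\rm fl}(U_F, \calA_{\tors}) \to \bigoplus_{v \notin U_F} H^1(F_v, A)\bigr)$. I would identify this extension with $\Hom_\Z(\Sel_{\hat\Z}(A^t/F), \Q/\Z)$ by combining the flat-cohomology Artin--Verdier duality for the N\'eron model on the curve $X_F$ with local Tate duality relating $H^1(F_v, A)$ to $A^t(F_v)$; together these recast the above extension as a presentation of the Pontryagin dual of the $\hat\Z$-Selmer group of the dual abelian variety.
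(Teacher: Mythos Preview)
The paper does not give its own proof of this proposition: it simply cites \cite[\S2.5]{KT}. So there is no detailed argument in the paper to compare against; your sketch is effectively a reconstruction of what Kato--Trihan do there. With that caveat, your outline is the right one and the local computation of $K_v$ is correct.

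Two places deserve more care. First, the identification of the kernel of
\[
H^1_{\rm fl}(U_F,\calA_{\tors}) \longrightarrow \bigoplus_{v\notin U_F} H^1(F_v,A)
\]
with $\Sel_{\Q/\Z}(A/F)$ requires more than the remark that the Selmer condition is automatic at places of good reduction. You also need that restriction to the generic point $H^1_{\rm fl}(U_F,\calA_{\tors}) \to H^1(F,A_{\tors})$ is injective, and that its image is \emph{exactly} the set of classes locally trivial in $H^1(F_v,A)$ at every $v\in U_F$ (both inclusions, and for the $p$-part this is a statement about flat cohomology of finite flat group schemes over the local rings of $U_F$). This is standard, but it is the substance of the identification and should be stated rather than absorbed into a single clause.

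Second, the $H^2$ step is where the real content lies, and your last paragraph only names the ingredients. The flat Artin--Verdier/Poitou--Tate duality needed here (over a curve in characteristic $p$, including the $p$-part, for N\'eron models with possibly non-trivial component groups at places outside $U$) is genuinely delicate; in \cite{KT} this is precisely what \S2.5 sets up. Saying ``combine global duality with local Tate duality'' is the correct strategy, but producing the canonical isomorphism $H^2(C^{\rm ar}_V)\cong \Hom_\ZZ(\Sel_{\hat\ZZ}(A^t/F),\Q/\Z)$ from it is not a one-line deduction; it is exactly the work that the citation is carrying.
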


\begin{proof} This is proved in \cite[\S2.5]{KT}.\end{proof}

Since $R\Gamma_{ar,V}(U_F, \calA_{\tors})$ is a complex of torsion groups it decomposes naturally as a direct sum of $\ell$-primary component complexes $R\Gamma_{ar,V}(U_F, \calA_{\tors})_\ell$.

\begin{remark}\label{flat-etale}{\em For any prime $\ell\not= p$ the definition of $R\Gamma_{ar,V}(U_F, \calA_{\tors})_\ell$ via the morphism in (\ref{fibre morphism}) implies that it identifies with the compactly supported \'etale cohomology complex $R\Gamma_{\et,c}(U_F, \mathcal{A}\{\ell\})$ of the (\'etale) sheaf $\mathcal{A}\{\ell\}$ on $U_F$ comprising all  $\ell$-primary torsion in $\mathcal{A}$.}\end{remark}

\subsection{Pro-$p$ subgroups} To make the complex $R\Gamma_{ar,V}(U_F, \calA_{\tors})$ constructed above amenable for our purposes we need to make an appropriate choice of the family $V$. We now explain how to make such a choice following the approach of Kato and Trihan in  \cite[\S~6]{KT}.

To do this we fix a finite Galois extension $L/K$ and set $G:= \Gal(L/K)$. We let $X_L$ be the proper smooth curve with function field $L$, and let $U_L\subset X_L$ be the preimage of $U$ (and we will later `shrink' $U$ so that $L/K$ is unramified at places in $U$). For any place $w$ of $L$ we write $G_w$ for its decomposition subgroup in $G$.

We write $\cala'$ for the N\'eron model of $A_L$ over $X_L$, and $\cala_{X_L}$ for the pull back of $\cala$.

\begin{lemma}\label{lem:KT6.4} There exists a $G$-invariant divisor $E={\sum}_{w\notin U_L} n(w)w$ on $X_L$ with $\mathrm{supp}(E)=X_L\setminus U_L$ and for each place  $w\notin U_L$ over $v\notin U$ a $G_w$-stable pro-$p$ open subgroup $V'_w$ of $A_L(L_w)$ and an open $\mathcal{O}_v[G_w]$-submodule $W'_w$ of ${\rm Lie}(A_L(L_w))$  that satisfy all of the following properties.
\begin{enumerate}
\item For $w\notin U_K$, we have $\cala'(\mathfrak{m}_w^{2n(w)})\subset V'_w\subset \cala_{X_L}(\mathfrak{m}_w^{n(w)})$.
\item\label{lem:KT6.4:W} For $w\notin U_L$, we have ${\rm Lie}(\cala')(\mathfrak{m}_w^{2n(w)})\subset W'_w\subset
{\rm Lie}(\cala_{X_L})(\mathfrak{m}_w^{n(w)})$.
\item\label{lem:KT6.4:exp} For $w\notin U_L$, the canonical isomorphism
\[
\cala'(\mathfrak{m}_w^{n(w)})/\cala'(\mathfrak{m}_w^{2n(w)}) \cong
{\rm Lie}(\cala')(\mathfrak{m}_w^{n(w)})/{\rm Lie}(\cala')(\mathfrak{m}_w^{2n(w)})
\]
sends the image of $V_w'$ to $W_w'$.
\item\label{lem:KT6.4:CohTriv}  For each place $v$ outside $U$ the products ${\prod}_{w|v} V'_w$ and ${\prod}_{w|v} W'_w$ are stable under the action of $G$ and for each natural number $i$ the associated cohomology groups $H^i(G, {\prod}_{w|v}V_w')$ and $H^i(G, {\prod}_{w|v}W_w')$ vanish.
\end{enumerate}
We can furthermore require $E$ to be the pull back of some divisor $E_0$ of $X$.
\end{lemma}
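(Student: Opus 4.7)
The plan is to construct the divisor $E$ and the local data $(V'_w, W'_w)_{w \notin U_L}$ place-by-place at each $v \notin U$, following the approach of Kato--Trihan in \cite[\S 6]{KT} but now enforcing the additional cohomological requirement \eqref{lem:KT6.4:CohTriv}. Fix $v \notin U$ and a single place $w_0$ of $L$ above $v$; every other place of $L$ above $v$ has the form $g \cdot w_0$ for some $g \in G$, with decomposition group $g G_{w_0} g^{-1}$. A $G_{w_0}$-stable choice of $V'_{w_0}$ (resp.\ $W'_{w_0}$) therefore extends uniquely to a $G$-stable family $(V'_w)_{w|v}$ (resp.\ $(W'_w)_{w|v}$) by $G$-translation, and one obtains a canonical $G$-equivariant isomorphism $\prod_{w|v} V'_w \cong \mathrm{Ind}_{G_{w_0}}^G V'_{w_0}$ (and similarly for $W'$). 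Shapiro's lemma then reduces condition \eqref{lem:KT6.4:CohTriv} to the vanishing of $H^i(G_{w_0}, V'_{w_0})$ and $H^i(G_{w_0}, W'_{w_0})$ for all $i \geq 1$.

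Two ingredients drive the construction. First, by the standard theory of formal groups over complete discrete valuation rings applied to the completion of $\cala'$ along its identity section at $w_0$, there exists an integer $m_0 = m_0(v)$ such that for all $m \geq m_0$ the formal logarithm induces a $G_{w_0}$-equivariant isomorphism $\log : \cala'(\mathfrak{m}_{w_0}^m) \xrightarrow{\sim} \mathrm{Lie}(\cala')(\mathfrak{m}_{w_0}^m)$, and moreover the canonical morphism $\cala_{X_L} \to \cala'$ arising from the N\'eron mapping property becomes an isomorphism on these deep formal neighbourhoods, so that $\cala_{X_L}(\mathfrak{m}_{w_0}^m) = \cala'(\mathfrak{m}_{w_0}^m)$ (and likewise on Lie algebras) for $m \geq m_0$. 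Second, by a careful analysis of the $\mathcal{O}_v[G_{w_0}]$-module structure of powers of the maximal ideal $\mathfrak{m}_{w_0}$ via the different of $L_{w_0}/K_v$ (compare Serre, \emph{Local Fields}, Ch.\ IV--V), there exists $N = N(v)$ such that for every $n \geq N$ the graded pieces $\mathfrak{m}_{w_0}^n/\mathfrak{m}_{w_0}^{n+1}$ are $G_{w_0}$-cohomologically trivial; combined with the Lie description afforded by the formal logarithm, this forces both $H^i(G_{w_0}, \cala'(\mathfrak{m}_{w_0}^n))$ and $H^i(G_{w_0}, \mathrm{Lie}(\cala')(\mathfrak{m}_{w_0}^n))$ to vanish for all $i \geq 1$ once $n$ is sufficiently large.

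Now choose $n(v)$ with $n(v) \cdot e(w_0|v) \geq \max\{m_0, N\}$, and set
\[
V'_{w_0} := \cala'(\mathfrak{m}_{w_0}^{n(v) e(w_0|v)}), \qquad W'_{w_0} := \mathrm{Lie}(\cala')(\mathfrak{m}_{w_0}^{n(v) e(w_0|v)}),
\]
propagating to all $w | v$ by $G$-translation. The lower bound in (1) is tautological, while the upper bound $V'_w \subset \cala_{X_L}(\mathfrak{m}_w^{n(v)e(w|v)})$ holds with equality by the identification arranged in the first ingredient; condition \eqref{lem:KT6.4:W} follows identically for the Lie version. Condition \eqref{lem:KT6.4:exp} holds because the canonical isomorphism between $\cala'(\mathfrak{m}_w^m)/\cala'(\mathfrak{m}_w^{m+1})$ and its Lie counterpart is precisely the graded formal logarithm, which by construction carries (the image of) $V'_w$ onto $W'_w$. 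Finally, condition \eqref{lem:KT6.4:CohTriv} follows from the Shapiro reduction combined with the cohomological triviality arranged in the second ingredient. Setting $E := \sum_{v \notin U} n(v) \cdot v$ gives a divisor on $X$ whose support is exactly $X \setminus U$.

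The main technical obstacle lies in the second ingredient, namely establishing higher $G_{w_0}$-cohomology vanishing on the deep filtration pieces in the wildly ramified case: when $p \nmid |G_{w_0}|$ the result is automatic, but when the wild inertia is non-trivial one must choose $n(v)$ genuinely larger than the valuation of the different of $L_{w_0}/K_v$ in order to force each graded piece $\mathfrak{m}_{w_0}^n/\mathfrak{m}_{w_0}^{n+1}$ to become $G_{w_0}$-cohomologically trivial. It is precisely the freedom to enlarge $n(v)$ --- and thereby the support divisor $E$ --- that makes the lemma achievable in full generality.
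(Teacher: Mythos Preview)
Your overall strategy---choose data at one place $w_0$ above each $v$, propagate by $G$-translation, and reduce condition (\ref{lem:KT6.4:CohTriv}) via Shapiro's lemma to the vanishing of $H^i(G_{w_0},-)$---matches the paper's approach exactly. The problem lies in your ``second ingredient'' and in the specific choice $W'_{w_0}=\mathrm{Lie}(\cala')(\mathfrak{m}_{w_0}^{n(v)e(w_0|v)})$.

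The claim that the graded pieces $\mathfrak{m}_{w_0}^n/\mathfrak{m}_{w_0}^{n+1}$ become $G_{w_0}$-cohomologically trivial for $n$ large is false whenever the wild inertia $P_{w_0}$ is non-trivial. Indeed, $P_{w_0}$ lies in the kernel of the fundamental character $I_{w_0}\to k(w_0)^\times$, so $P_{w_0}$ acts \emph{trivially} on every graded piece $\mathfrak{m}_{w_0}^n/\mathfrak{m}_{w_0}^{n+1}\cong k(w_0)$; hence $H^1(P_{w_0},\mathfrak{m}_{w_0}^n/\mathfrak{m}_{w_0}^{n+1})=\Hom(P_{w_0},k(w_0))\neq 0$ for all $n$. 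More directly, multiplication by $\pi_v^{-n(v)}$ gives an isomorphism of $\mathcal{O}_v[G_{w_0}]$-modules $\mathfrak{m}_{w_0}^{n(v)e}\cong\mathcal{O}_{w_0}$, so your choice yields $W'_{w_0}\cong\mathcal{O}_{w_0}^{\dim A}$ regardless of $n(v)$. In a totally wildly ramified cyclic extension of degree $p$ one has $\hat H^0(G_{w_0},\mathcal{O}_{w_0})=\mathcal{O}_v/\mathrm{Tr}(\mathcal{O}_{w_0})\neq 0$, whence by periodicity $H^2(G_{w_0},\mathcal{O}_{w_0})\neq 0$. Thus enlarging $n(v)$ cannot help: the $\mathcal{O}_v[G_{w_0}]$-isomorphism class of $\mathfrak{m}_{w_0}^{ne}$ is independent of $n$.

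What the paper (following \cite[Lem.~6.2(1)]{KT}) actually does is different: one uses a normal-basis-type argument to produce a \emph{free} $\mathcal{O}_v[G_{w_0}]$-lattice $W'_{w_0}$ inside $\mathrm{Lie}(\cala_{X_L})(\mathcal{O}_{w_0})$ satisfying
\[
\mathrm{Lie}(\cala_{X_L})(\mathfrak{m}_{w_0}^{ne+c(w_0)})\subset W'_{w_0}\subset \mathrm{Lie}(\cala_{X_L})(\mathfrak{m}_{w_0}^{ne})
\]
for a constant $c(w_0)$ depending only on $w_0$. The point is that $W'_{w_0}$ is genuinely \emph{not} of the form $\mathrm{Lie}(\cala')(\mathfrak{m}_{w_0}^m)$ in the wildly ramified case; it is only sandwiched between two such filtration levels, which is precisely why the lemma records the double inclusion in (1) and (\ref{lem:KT6.4:W}) rather than an equality. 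One then chooses $n(v)$ large enough that the gap $c(w_0)$ fits inside the window $[n(v)e,\,2n(v)e]$, defines $V'_{w_0}$ from $W'_{w_0}$ via property (\ref{lem:KT6.4:exp}), and deduces the cohomological vanishing for $V'_{w_0}$ from that for $W'_{w_0}$ by the argument of \cite[Lem.~6.2(2)]{KT}.
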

In the application (\emph{cf.} \S~\ref{sect:tame}\emph{ff}) we need $E$ to be the pull back of a divisor $E_0$ of $X$.
\begin{proof} This result is only a slight adaptation of \cite[Lem. ~6.4]{KT} (see Remark \ref{KT issue} below). For this reason we only sketch the proof, following closely the argument of \cite[\S6]{KT}.

The key point is that it suffices to construct a divisor $E$ and a family of subgroups $\{W'_w\}_w$ with the properties stated in Lemma~\ref{lem:KT6.4}, since the family $\{W'_w\}_w$ uniquely determines the family $\{V'_v\}_v$ by property (\ref{lem:KT6.4:exp}) and then the latter family can be shown to satisfy property (\ref{lem:KT6.4:CohTriv}) by repeating the proof of \cite[Lem. ~6.2(2)]{KT}.

Now, by the argument of \cite[Lem. ~6.2(1)]{KT}, for each place $w$ of $L$ outside $U_L$ there exists a constant $c(w)$ such that for any integer $n\geqslant0$ there exists a $G_w$-stable $\mathcal{O}_v$-lattice $W'_w$ of ${\rm Lie}(\cala_{X_L})(\mathcal{O}_w)$ such that both
\[
{\rm Lie}(\cala_{X_L})(\mathfrak{m}_w^{n+c(w)})\subset W'_w \subset {\rm Lie}(\cala_{X_L})(\mathfrak{m}_w^{n}). 
\]
and the group $H^i(G_w,W'_w)$ vanishes for all $i\geqslant1$.

By the argument of \cite[Lem. ~6.3]{KT}, we may in addition assume that the subgroups $W'_w$ satisfy property (\ref{lem:KT6.4:W}), at least provided that $n(w)$ is sufficiently large and divisible by the ramification index $e(w|v)$. (We would like $n(w)$ to be divisible by $e(w|v)$ in general since we are only allowed to multiply $W'_w$ by $\mathcal{O}_v$-multiple; recall that $W'_w$ is only an $\mathcal{O}_v$-submodule, not an $\mathcal{O}_w$-submodule.)

To ensure that the product ${\prod}_{w\notin U_L} W'_w$ is stable under the action of $G$, we first fix a place $w$ over each $v\notin U$ and a subgroup  $W'_w$ that has property (\ref{lem:KT6.4:W}) and is such that $H^i(G_w,W'_w)$ vanishes for all $i\geqslant1$.

For each $\sigma$ in $G$, we then set $W'_{\sigma(w)}:= \sigma(W'_w) \subset {\rm Lie}(\cala_{X_L})(\mathcal{O}_{\sigma(w)})$ (which, we note, only depends on $\sigma(w)$). Then the collection of subgroups $\{W'_w\}_{w\notin U_L}$ clearly has both of the properties (\ref{lem:KT6.4:W}) and (\ref{lem:KT6.4:CohTriv}). 

To ensure that $E$ is a pull back of some divisor $E_0$ of $X$, we may replace $E$ with $\pi^*(\pi_*E)$ and replace $\{W'_w\}_{w|v}$ by some suitable power of uniformiser of $\mathcal{O}_w$.
\end{proof}

\begin{remark}\label{KT issue}{\em Lemma \ref{lem:KT6.4} only differs from \cite[Lem. ~6.4]{KT} in that we require each group $W'_w$ to be an open $\mathcal{O}_v[G_w]$-submodule of ${\rm Lie}(\cala')(\mathcal{O}_w)$ rather than an open $\mathcal{O}_w$-submodule as in loc. cit. In fact, in \cite[Lem.~6.2(1)]{KT}, it is claimed that $W'_w$ can be chosen as an $\mathcal{O}_w$-sublattice of ${\rm Lie}(\cala_{X_L})(\mathcal{O}_w)$, but the indicated proof seems only to show that it can be chosen as a $G_w$-stable $\mathcal{O}_v$-lattice.}\end{remark}

\begin{remark}\label{good unramified} {\em The proof of Lemma \ref{lem:KT6.4} shows that for any place $v$ of $K$ that is both unramified in $L$ and of  good reduction for $A$, the subgroup $V'_w$ can be chosen as $\mathcal{A}(\mathfrak{m}_w)$.}\end{remark}

\subsection{Selmer complexes}\label{selmer section} For each place $w$ outside $U_L$ we now fix a choice of subgroups $V'_w$ as in Lemma \ref{lem:KT6.4}. For any subgroup $J$ of $G$ and for any place $v$ outside $U_{L^J}$ we then define a group
\[ V_v :=\big({\prod}_{w|v}V'_w\big)^J\]
and we denote the associated families of subgroups $(V'_w)_{w \notin U_L}$ and $(V_v)_{v \notin U_{L^J}}$ by $V_L$ and $V_{L^J}$, respectively. We may occasionally write $V$ for $V_K$ when $J=G$.

In the following result we use these subgroups to construct a canonical `Selmer complex' ${\rm SC}_{V_L}(A, L/K)$ that will play a key role in the formulation of our refined Birch and Swinnerton-Dyer conjecture.

We also use the $G$-module $X_\ZZ(A/F)$ that is defined as the pre-image of $\Hom_\ZZ(A(F),\ZZ)$ under the natural surjective homomorphism $X(A/F) \to \Hom_\ZZ(A(F),\hat \ZZ)$ (see Remark \ref{eq:sel1}).

\begin{proposition}\label{prop:perfect} The following claims are valid.
\begin{itemize}
\item[(i)] $R\Gamma_{ar,V_L}(U_L, \calA_{\tors}')^*[-2]$ is an object of $D^{\rm perf}(\hat{\Z}[G])$ that is acyclic outside degrees $0, 1$ and $2$.
\item[(ii)] If the groups $\sha(A/L)$ and $\sha(A^t/L)$ are both finite, then there exists a complex ${\rm SC}_{V_L} := {\rm SC}_{V_L}(A, L/K)$ in $D^{\rm perf}(\Z[G])$ that is acyclic outside degrees $0, 1$ and $2$, is unique up to isomorphisms in $D^{\rm perf}(\Z[G])$ that induce the identity map in all degrees of cohomology and also has both of the following properties:
\begin{itemize}
\item[(a)] One has $H^0({\rm SC}_{V_L}) = A^t(L)$, $H^1({\rm SC}_{V_L})$ contains $X_\ZZ(A/L)$ as a submodule of finite index and $H^2({\rm SC}_{V_L})$ is finite.
\item[(b)] There exists a canonical isomorphism in $D^{\rm perf}(\hat{\Z}[G])$ of the form 
\[ \hat{\Z} \otimes_{\Z} {\rm SC}_{V_L}
\cong R\Gamma_{ar,V_L}(U_L, \calA_{\tors})^*[-2].\]
\end{itemize}
\end{itemize}
\label{cor:arithperf}
\end{proposition}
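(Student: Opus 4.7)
The plan is to prove (i) by decomposing the complex along primes and then prove (ii) by identifying the cohomology of the dualised complex and descending from $\hat\Z[G]$ to $\Z[G]$.

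For (i), write $C := R\Gamma_{ar,V_L}(U_L, \calA'_{\tors})^*[-2]$. Acyclicity of $C$ outside degrees $0,1,2$ is immediate: Proposition~\ref{prop:arithcoho} confines the cohomology of $R\Gamma_{ar,V_L}(U_L, \calA'_{\tors})$ to $\{0,1,2\}$, Pontryagin duality reverses cohomological degrees, and the shift $[-2]$ restores the range. For perfectness over $\hat\Z[G] = \prod_\ell \Z_\ell[G]$, decompose $C$ into its $\ell$-primary components. For $\ell \neq p$, Remark~\ref{flat-etale} identifies the relevant complex with $R\Gamma_{\et,c}(U_L, \calA'\{\ell\})$, whose perfectness over $\Z_\ell[G]$ is standard (constructibility of $\calA'\{\ell\}$ on the smooth curve $U_L$ together with Galois descent along the finite cover $U_L \to U_K$). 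For $\ell = p$, perfectness follows from the Kato--Trihan syntomic/flat analysis in \cite[\S2, \S6]{KT}, the essential input being the $G$-cohomological triviality of each $\prod_{w\mid v}V'_w$ provided by Lemma~\ref{lem:KT6.4}(\ref{lem:KT6.4:CohTriv}).

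For (ii), assume the finiteness of $\sha(A/L)$ and $\sha(A^t/L)$ and first identify the cohomology of $C$. Pontryagin-dualising the data of Proposition~\ref{prop:arithcoho} and applying Remark~\ref{eq:sel1}, one finds: $H^0(C) \cong \mathrm{Sel}_{\hat\Z}(A^t/L) \cong A^t(L)\otimes_\Z\hat\Z$ (using finiteness of $\sha(A^t/L)$); $H^2(C)$ is the Pontryagin dual of a subgroup of the finite group $A(L)_{\tors}$, hence finite; and $H^1(C)$ sits in a short exact sequence
\[
0 \to X(A/L) \to H^1(C) \to F \to 0,
\]
where $X(A/L) = \mathrm{Sel}_{\Q/\Z}(A/L)^*$ and $F$ is a finite subquotient of $\bigl(\bigoplus_w A(L_w)/V'_w\bigr)^*$ (finite because each $V'_w$ is an open pro-$p$ subgroup of $A(L_w)$). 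Finiteness of $\sha(A/L)$ combined with Remark~\ref{eq:sel1} yields $X(A/L) \cong X_\Z(A/L)\otimes_\Z\hat\Z$, so $H^1(C) \cong N\otimes_\Z\hat\Z$ for a finitely generated $\Z[G]$-module $N$ containing $X_\Z(A/L)$ as a submodule of finite index. Setting $M^0 := A^t(L)$, $M^1 := N$ and $M^2 := H^2(C)$ gives the cohomology groups demanded by (ii)(a).

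To realise $\mathrm{SC}_{V_L}$ as a strictly perfect three-term complex of f.g.\ projective $\Z[G]$-modules with cohomology $M^\bullet$ satisfying $\hat\Z\otimes_\Z\mathrm{SC}_{V_L}\cong C$, I build it from projective resolutions of the $M^i$. The crucial descent point is that the Yoneda gluing data of $C$, a priori in $\Ext^i_{\hat\Z[G]}(M^a\otimes\hat\Z, M^b\otimes\hat\Z)$, descends canonically to $\Ext^i_{\Z[G]}(M^a, M^b)$: since $M^2$ is finite and $M^0, M^1$ are finitely generated over $\Z[G]$, flat base change of Ext along $\Z\to\hat\Z$ together with finiteness of the relevant source Ext-groups makes the comparison map bijective in the range we need. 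The same comparison yields uniqueness of $\mathrm{SC}_{V_L}$ up to quasi-isomorphism inducing the identity on cohomology. The main obstacle is exactly this descent step: producing $\mathrm{SC}_{V_L}$ as a perfect complex over $\Z[G]$, not merely identifying its cohomology, from the $\hat\Z[G]$-complex $C$. The Tate--Shafarevich finiteness hypothesis enters here precisely because it forces $H^2(C)$ to be finite (rather than to involve Tate modules of $\sha$), which is what makes the Ext-comparison bijective and allows the Yoneda gluing data of $C$ to descend canonically to $\Z[G]$.
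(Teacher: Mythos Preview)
Your approach to (ii) is essentially the same as the paper's: identify the cohomology of the dualised complex using Proposition~\ref{prop:arithcoho} and Remark~\ref{eq:sel1}, then descend from $\hat\Z[G]$ to $\Z[G]$ via the observation that the relevant Ext groups are finite and therefore unchanged under $\hat\Z$-completion. The paper packages this descent step as a separate general lemma (Lemma~\ref{prop:zzhatcom}), proved by induction on the number of nonzero cohomology groups via truncation triangles and a Verdier spectral sequence computation of $\Hom_{D(\hat\Z[G])}$. One point you leave implicit: having built a complex over $\Z[G]$ with the correct cohomology and gluing data, you still need to explain why it lies in $D^{\rm perf}(\Z[G])$. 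The paper does this by noting that a finitely generated $G$-module is cohomologically trivial if and only if its $\hat\Z$-completion is, and then using a standard resolution argument. Your assertion that ${\rm SC}_{V_L}$ can be realised as a \emph{three-term} complex of finitely generated projectives is stronger than what is claimed or needed, and is not obviously true.

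For (i) you take a genuinely different route. The paper argues uniformly: by a standard criterion, a cohomologically bounded complex of $\hat\Z[G]$-modules with finitely generated cohomology is perfect provided that for every subgroup $J$ one has $\Z\otimes^{\mathbb L}_{\Z[J]} C \cong C_J$ in $D(\hat\Z)$, and this descent isomorphism is exactly \cite[Lem.~6.1]{KT}, valid for all primes simultaneously. Your prime-by-prime decomposition is correct in principle for $\ell\neq p$ (via Remark~\ref{flat-etale} and standard constructibility arguments), but your $\ell=p$ case is too vague: pointing to ``the Kato--Trihan syntomic/flat analysis'' and the cohomological triviality of $\prod_{w\mid v}V'_w$ does not constitute an argument. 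In fact those ingredients are precisely what goes into the proof of \cite[Lem.~6.1]{KT}, so you are implicitly relying on the same descent statement the paper invokes directly; you would do better to cite it and use the uniform criterion rather than splitting into cases.
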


\begin{proof} For each subgroup $J$ of $G$ we set $C^{\rm ar}_{V,J} := R\Gamma_{ar,V_{L^J}}(U_{L^J}, \calA_{\tors})$ and we abbreviate $C^{\rm ar}_{V,J}$ to $C^{\rm ar}_{V}$ when $J$ is the trivial subgroup.

Then, since $H^i(C^{{\rm ar},*}_{V,J}[-2]) = H^{2-i}(C^{\rm ar}_{V,J})^*$ in all degrees $i$, the result of Proposition \ref{prop:arithcoho} implies that  each complex $C^{{\rm ar},*}_{V,J}[-2]$ is acyclic in all degrees outside $0, 1$ and $2$ and that its cohomology is finitely generated over $\hat \Z$ in all degrees.

By a standard criterion, it follows that $C^{{\rm ar},*}_V$, and hence also $C^{{\rm ar},*}_V[-2]$, belongs to $D^{\rm perf}(\hat\Z[G])$, and so claim (i) is valid, if for every subgroup $J$ of $G$ there is an isomorphism in $D(\hat\Z)$ of the form $\,\Z \otimes_{\Z[J]}^\mathbb{L} C^{{\rm ar},*}_V \cong C^{{\rm ar},*}_{V,J}$.

In view of the natural isomorphisms $\Z \otimes_{\Z[J]}^\mathbb{L} C^{{\rm ar},*}_V \cong R\Hom_{\ZZ[J]}(\ZZ,C^{{\rm ar}}_V)^*$ we are therefore reduced to showing the existence of isomorphisms in $D(\hat\Z)$ of the form
\begin{equation}\label{KT descent} R\Hom_{\ZZ[J]}(\ZZ,R\Gamma_{{\rm ar},V_L}(U_{L}, \calA_{\tors})) \cong R\Gamma_{{\rm ar},V_{L^J}}(U_{L^J}, \calA_{\tors})\end{equation}
and this is proved by Kato and Trihan in \cite[Lem. 6.1]{KT}.

Turning to claim (ii), we note that claim (i) combines with the general result of Lemma \ref{prop:zzhatcom} below to imply it suffices to show that, under the stated hypotheses, the group $H^{0}(C^{{\rm ar}}_V)^*$ is finite, the group $H^{2}(C^{{\rm ar}}_V)^*$ is naturally isomorphic to $\hat\ZZ\otimes_\ZZ A^t(L)$ and there exists a finitely generated $G$-module $M^1$ that contains $X_\ZZ(A/L)$ as a submodule of finite index and is such that there is a canonical isomorphism $\hat\ZZ\otimes_\ZZ M^1 \cong H^1(C^{{\rm ar}}_V)^*$ of $\hat\Z[G]$-modules.

In this direction, the exact sequence in Proposition \ref{prop:arithcoho} implies directly that $H^0(C^{{\rm ar}}_V)^*$ is finite.

In addition, since the limit $\varprojlim_{n} \sha(A^t/L)[n]$ vanishes under the assumption $\sha(A^t/L)$ is finite, the displayed isomorphism in Proposition \ref{prop:arithcoho} combines with the first exact sequence in Remark \ref{eq:sel1} to give a canonical isomorphism
\[ H^2(C^{{\rm ar}}_V)^* \cong (\hat\ZZ\otimes_\ZZ A^t(L))^{**} = \hat\ZZ\otimes_\ZZ A^t(L)\]
of the required form.

Next we note that, since $\sha(A/L)$ is assumed to be finite, the second exact sequence in Remark \ref{eq:sel1} implies $X_\ZZ(A/L)$ is finitely generated.

Thus, if we write $Y$ for the (finite) cokernel of the map $A(L)_{\tors} \rightarrow {\bigoplus}_{v \notin U_L} A(L_v)/V_v$ that occurs in Proposition \ref{prop:arithcoho}, then the natural map of finite groups
\[ {\rm Ext}^1_G(Y^*,X_\ZZ(A/L)) = \hat\ZZ\otimes_\ZZ{\rm Ext}^1_G(Y^*,X_\ZZ(A/L)) \to {\rm Ext}^1_{\hat\Z[G]}(Y^*,X(A/L))\]
is bijective and so there exists an exact commutative diagram of $G$-modules
 \[ \begin{CD}
 0 @> >> X_\ZZ(A/L) @> >> M @> >> Y^*@> >> 0\\
 @. @V VV @V VV @\vert\\
 0 @> >> X(A/L) @> >> H^{1}(C^{{\rm ar}}_V)^* @> >> Y^*@> >> 0\end{CD}\]
in which the first vertical arrow is the natural inclusion, and so induces an isomorphism $\hat\ZZ\otimes_\ZZ X_\ZZ (A/L) \cong X(A/L)$, and the lower row is induced by the Pontryagin dual of the long exact sequence in Proposition \ref{prop:arithcoho}.

In particular, from the upper row of the above diagram we can deduce that $M$ is finitely generated and hence is a suitable choice for the module $M^1$ that we seek.
\end{proof}

In the sequel, for a ring $\Lambda$ and integer $a$, we write $\tau_{\ge a}$ and $\tau_{\le a}$ for the truncation functors on $D(\Lambda)$ in degrees at least $a$ and at most $a$ respectively. 

We also recall that a $G$-module is said to be `cohomologically-trivial', or `c-t' for short in the sequel, if for every integer $i$ and every subgroup $J$ of $G$ the Tate cohomology group $\hat H^i(J,M)$ vanishes.

\begin{lemma} \label{prop:zzhatcom} Let $\hat{C}$ be a cohomologically-bounded complex of $\hat{\Z}[G]$-modules and assume to be given in each degree $j$ a finitely generated $G$-module $M^j$ and an isomorphism of $\hat \Z[G]$-modules of the form $\iota_j: \hat \Z\otimes_\Z M^j \cong H^j(\hat C)$.

Then there exists an object $C$ of $D(\Z[G])$ with all of the following properties:
\begin{itemize}
\item[(i)] $H^j(C) = M^j$ for all $j$.
\item[(ii)] There exists an isomorphism $\iota: \hat\Z\otimes_\Z C \cong \hat C$ in $D(\hat{\Z}[G])$ for which in each degree $j$ one has $H^j(\iota) = \iota_j$.
\item[(iii)] $C$ belongs to $D^{\rm perf}(\Z[G])$ if and only if $\hat C$ belongs to $D^{\rm perf}(\hat{\Z}[G])$.
\end{itemize}
Any such complex $C$ is unique to within an isomorphism $\kappa$ in $D(\ZZ[G])$ for which $H^j(\kappa)$ is the identity automorphism of $M^j$ in each degree $j$.
\end{lemma}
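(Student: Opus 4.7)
The plan is a Postnikov-style induction on the cohomological amplitude of $\hat C$. The base case, in which the non-zero cohomology of $\hat C$ is concentrated in a single degree $j$, is handled by setting $C:=M^j[-j]$ and using $\iota_j$. For the inductive step, suppose the cohomology of $\hat C$ lies in degrees $[a,b]$ with $b>a$, and consider the canonical truncation triangle
\[
\tau^{<b}\hat C \to \hat C \to H^b(\hat C)[-b] \xrightarrow{\partial} \tau^{<b}\hat C[1].
\]
The inductive hypothesis applied to $\tau^{<b}\hat C$ yields a complex $C'\in D(\Z[G])$ with a preferred isomorphism $\hat\Z\otimes_\Z C'\cong\tau^{<b}\hat C$ inducing $\iota_j$ for $j<b$. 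The remaining task is to lift the $k$-invariant $\partial$ along the natural scalar-extension map
\[
\rho\colon \Hom_{D(\Z[G])}(M^b[-b],C'[1]) \to \Hom_{D(\hat\Z[G])}(\hat\Z\otimes M^b[-b],\hat\Z\otimes C'[1]);
\]
the fibre of any lift $\tilde\partial$ will then serve as the required $C$, and the identification $H^j(C)=M^j$ together with the induced isomorphism $\hat\Z\otimes_\Z C\cong\hat C$ compatible with the $\iota_j$'s will follow from the triangulated category axioms and the long exact cohomology sequence.

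The crux is to show that $\rho$ is bijective, and for this I would invoke the hyperext spectral sequence
\[
E_2^{p,q}=\Ext^p_{\Z[G]}(M^b,H^{q+b+1}(C')) \Longrightarrow \Hom_{D(\Z[G])}(M^b[-b],C'[1+p+q]).
\]
Since $C'$ has no cohomology in degrees $\geq b$, the non-vanishing entries on the line $p+q=0$ all occur at $p\geq 2$, and for each such $p$ the group $\Ext^p_{\Z[G]}(M^b,M^{b+1-p})$ is finitely generated over $\Z$ (by Noetherianness of $\Z[G]$) and torsion (by flat base change along $\Z\to\Q$ and semisimplicity of $\Q[G]$), hence finite. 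Since $\hat\Z\otimes_\Z F=F$ for any finite abelian group $F$, and since flat base change identifies the analogous spectral sequence over $\hat\Z[G]$ termwise with $\hat\Z\otimes_\Z-$ applied to the one above, the map $\rho$ is an isomorphism on $E_2$-pages and hence on abutments.

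The uniqueness assertion in (ii) is then handled by a parallel induction: for two candidate lifts $C_1,C_2$, an inductively constructed isomorphism $\tau^{<b}C_1\cong\tau^{<b}C_2$ compatible with the isomorphism of truncations of $\hat C$ must intertwine the respective $k$-invariants by injectivity of the same $\rho$, and so extends to a morphism $C_1\to C_2$; the long exact cohomology sequence then identifies this as a quasi-isomorphism inducing the identity on each $M^j$.

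Finally, for (iii), the forward implication is immediate since $\hat\Z\otimes_\Z-$ sends finitely generated projective $\Z[G]$-modules to finitely generated projective $\hat\Z[G]$-modules. For the converse, since $C$ has bounded finitely generated cohomology, perfectness reduces to verifying finite Tor-dimension over $\Z[G]$; using the decomposition $\hat\Z[G]\cong\prod_\ell\Z_\ell[G]$, each factor $\Z_\ell\otimes_\Z C$ is a direct summand of the perfect complex $\hat\Z\otimes_\Z C$ and so is itself perfect over $\Z_\ell[G]$, and the standard prime-by-prime criterion for projectivity of finitely generated $\Z[G]$-modules then promotes this to perfectness of $C$ over $\Z[G]$. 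The genuinely non-routine step in the entire argument is the bijectivity of $\rho$, which rests on the finiteness of higher Ext groups in this setting; everything else reduces to routine diagram chases and a standard local-to-global perfectness descent.
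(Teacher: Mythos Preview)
Your proposal is correct and follows essentially the same approach as the paper: a Postnikov induction via truncation at the top cohomological degree, with the key step being the bijectivity of the scalar-extension map on $k$-invariants established by a hyperext spectral sequence (the paper cites Verdier's version) together with the finiteness of the relevant higher $\Ext$ groups. Your treatment of claim (iii) via local-to-global descent of perfectness is a minor variant of the paper's direct resolution-and-cohomological-triviality argument, but both are standard and equivalent.
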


\begin{proof} We prove this by induction on the number of non-zero cohomology groups of $\hat C$.

If there is only one non-zero such group, in degree $d$ say, then $\hat C$ is isomorphic in $D(\hat \Z[G])$ to $(\hat\Z\otimes_\Z M^d)[-d]$ and we write $C$ for the complex $M^d[-d]$ in $D(\ZZ[G])$.

In this case claim (i) is clear and claim (ii) is true with $\iota$ the morphism induced by $\iota_d$. Finally, since any finitely generated module over either $\ZZ[G]$ or $\hat \ZZ[G]$ that is c-t has a finite projective resolution, $C$ belongs to $D^{\rm perf}(\ZZ[G])$ if and only if $M^d$ is c-t and $\hat C$ belongs to $D^{\rm perf}(\hat\ZZ[G])$ if and only if $\hat M^d$ is c-t. This implies claim (iii) since a finitely generated $G$-module $N$ is c-t if and only if 
$\hat \Z\otimes_\Z N$ is c-t as a consequence of the fact that in each degree $i$ and for each subgroup $J$ of $G$ the natural map $\hat H^i(J,N) \to \hat H^i(J,\hat\Z\otimes_\Z N)$ is bijective.

To deal with the general case we assume $\hat C$ is not acyclic and write $d$ for the unique integer such that $H^d(\hat C) \not= 0$ and $H^i(\hat C) = 0$ for all $i > d$. We then abbreviate the complexes $\tau_{\le d-1}\hat C$ and $\tau_{\ge d}\hat C$ to $\hat C_1$ and $\hat C_2$ and recall that there is a canonical exact triangle in $D(\hat \Z[G])$ of the form 
\[ \hat C_1 \to \hat C \to \hat C_2 \xrightarrow{\hat\theta} \hat C_1[1].\]
We note that this triangle induces an isomorphism $\kappa$ in $D(\hat \Z[G])$ between $\hat C$ and ${\rm Cone}(\hat \theta)[-1]$, where we write ${\rm Cone}(\alpha)$ for the mapping cone of a morphism $\alpha$.

Now, since $H^j(\hat C_1) = H^j(\hat C)$ for $j < d$ and $H^j(\hat C_1)=0$ for all $j \ge d$, the inductive hypothesis implies the existence of $C_1$ in $D(\ZZ[G])$ and an isomorphism $\iota_1: \hat \Z\otimes_\Z C_1 \cong \hat C_1$ in $D(\hat \Z[G])$ such that in each degree $j$ with $j < d$ one has $H^j(\iota) = \iota_j$.

In addition, since $\hat C_2$ is acyclic outside degree $d$, the argument given above shows the existence of a complex $C_2$ in $D(\ZZ[G])$ and an isomorphism $\iota_2: \hat \Z\otimes_\Z C_2 \cong \hat C_2$ in $D(\hat \Z[G])$ with $H^d(\iota) = \iota_d$.

Next we recall that the group $\Hom_{D(\hat \Z[G])}(\hat C_2,\hat C_1[1])$ is equal to $H^0(R\Hom_{\hat \Z [G]}(\hat C_2,\hat C_1[1]))$ and so can be computed by using the spectral sequence
\[ E_2^{p,q} = {\prod}_{a\in\ZZ}
{\rm Ext}^p_{G}(H^a(\hat C_2),H^{q+a}(\hat C_1[1])) \Rightarrow H^{p+q}(R\Hom _{\hat \Z[G]}(C_2,C_1[1]))\]
constructed by Verdier in \cite[III, 4.6.10]{verdier}. We also note that there is no degree in which the complexes $\hat C_2$ and $\hat C_1[1]$ have cohomology groups that are both non-zero and that any group of the form ${\rm Ext}^p_{G}(-,-)$ vanishes for $p < 0$ and is torsion for $p>0$. Given these facts, the above spectral sequence implies that $\Hom_{\!D(\hat \Z[G])}(\hat C_2,\hat C_1[1])$ is finite and hence that the diagonal localisation map  $\Hom_{\!D(\Z[G])}(C_2,C_1[1]) \to \Hom_{\!D(\hat \Z[G])}(\hat C_2,\hat C_1[1])$ is bijective.

We now write $\theta $ for the pre-image of $\hat \theta$ under the latter isomorphism and claim that the mapping fibre $C := {\rm Cone}(\theta)[-1]$ has all of the required properties.

Firstly, this definition implies directly that $H^j(C)$ is equal to $H^j(C_1)$ if $j < d$ and to $H^j(C_2)$ if $j \ge d$, and so claim (i) follows immediately from the given properties of $C_1$ and $C_2$. The definition also implies directly that $\hat \Z\otimes_\Z C$ is isomorphic to ${\rm Cone}(\hat\theta)[-1]$ and hence that $\kappa$ induces an isomorphism in $D(\hat \Z[G])$ between $\hat \Z\otimes_\Z C$ and $\hat C$ with the property described in claim (ii).

To prove claim (iii) it suffices to check that $C$ belongs to $D^{\rm perf}(\Z[G])$ if $\hat C$ belongs to $D^{\rm perf}(\hat \Z[G])$. To do this we can assume, by a standard resolution argument (as described, for example, in \cite[Rapport, Lem. 4.7]{del}), that $C$ is a bounded complex of finitely generated $G$-modules in which all but the first (non-zero) module, $M$ say, is free. If we then also assume that the complex $\hat C$ is isomorphic in $D(\hat \Z[G])$ to a bounded complex of finitely generated projective $\hat \Z[G]$-modules $Q$, then there exists a quasi-isomorphism $\pi: Q \to \hat\Z \otimes_\Z C$ of complexes of $\hat \Z[G]$-modules.

Now, since all terms of $Q$ and $\hat\Z \otimes_\Z C$ are projective $\hat \Z[G]$-modules, except possibly for $\hat \Z\otimes_\Z M$, the acyclicity of ${\rm Cone}(\pi)$ implies that the $\hat \Z[G]$-module $\hat \Z\otimes_\Z M$ is c-t. This in turn implies that $M$ is c-t and hence has a finite projective resolution. It follows that $C$ belongs to $D^{\rm perf}(\Z[G])$, as claimed. \end{proof}

\subsection{Coherent cohomology}\label{zariski section} The Selmer complex that is constructed in Proposition \ref{prop:perfect}(ii) depends on the choice of subgroups $V_L$. We shall therefore need to introduce an auxiliary perfect complex that will be used to compensate for this dependence in the formulation of our conjecture.

To do this for each place $v$ outside $U$ we choose a place $w$ of $X_L$ above $v$ and the $\mathcal{O}_v[G_w]$-submodule $W'_w$ of ${\rm Lie}(A_L(L_w))$ that corresponds in Lemma \ref{lem:KT6.4} to the subgroup $V'_w$ fixed at the beginning of \S\ref{selmer section}. For any other place $w'|v$ of $X_L$, we choose $\gamma\in G$ such that $w'=\gamma\cdot w$ and set $W'_{w'}$ denote the image of the isomorphism ${\rm Lie}(A_L(L_w))\xrightarrow{\cong}{\rm Lie}(A_L(L_{w'}))$ induced by $\gamma$. Note that $W'_{w'}$ does not depend on the choice of $\gamma$ as $W'_w$ is $G_w$-stable.

For any place $v$ outside $U$ we then set
\[ W_v:=\big({\prod}_{w|v}W'_w\big)^G \]
and we denote the associated families of subgroups $(W'_w)_{w \notin U_L}$ and $(W_v)_{v \notin U}$ by $W_L$ and $W_K$ respectively.

We then define $\mathcal{L}$ to be the coherent $\mathcal{O}_X$-submodule of ${\rm Lie}(\cala)$ that extends ${\rm Lie}(\cala|_U)$ and is such that $\mathcal{L}_v = W_v\subset {\rm Lie}(\cala)(\mathcal{O}_v)$ for each $v \notin U$.

We similarly define $\mathcal{L}_L$ to be the $G$-equivariant coherent $\mathcal{O}_X$-submodule of $\pi_*{\rm Lie}(\cala_{X_L})$ with $\mathcal{L}_{L,v} = {\prod}_{w|v}W'_w$ for each $v \notin U$, where we write $\pi:X_L\to X$ for the natural projection. 

\begin{lemma}\label{zar lemma} The complex $R\Gamma(X, \mathcal{L}_L)^*$ belongs to $D^{\rm perf}(\mathbb{F}_p[G])$, and hence to $D^{\rm perf}(\ZZ_p[G])$.
\end{lemma}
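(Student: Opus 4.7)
The plan is to show that $\mathcal{L}_L$ is Zariski-locally free as an $\mathcal{O}_X[G]$-module, and then to exhibit a perfect representative of $R\Gamma(X,\mathcal{L}_L)$ by taking the \v{C}ech complex with respect to a finite affine open cover of $X$. Shrinking $U$ if necessary, one may assume $\pi\colon X_L\to X$ is \'etale over $U$; by \'etale-Galois descent $\pi_*\mathcal{O}_{X_L}|_U$ is locally free of rank one over $\mathcal{O}_U[G]$, and hence $\mathcal{L}_L|_U = \pi_*{\rm Lie}(\cala_{X_L})|_U$ is locally free of rank $\dim(A)$ over $\mathcal{O}_U[G]$.

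At each place $v\notin U$, the completed stalk of $\mathcal{L}_L$ equals $\prod_{w|v}W'_w$, which is finite free over $\wh{\mathcal{O}}_v$ and cohomologically trivial as a $G$-module by Lemma~\ref{lem:KT6.4}(4). The key observation is that a uniformizer $\varpi_v$ of $\mathcal{O}_v$ is $G$-invariant, so multiplication by $\varpi_v$ on the completed stalk is $G$-equivariant; the long exact cohomology sequence of $0 \to \wh{\mathcal{L}}_{L,v}\xrightarrow{\varpi_v}\wh{\mathcal{L}}_{L,v}\to \wh{\mathcal{L}}_{L,v}/\varpi_v \to 0$ then transfers cohomological triviality to the residual $\mathbb{F}_v[G]$-module. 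Since $\mathbb{F}_v[G]$ is self-injective, this residue is $\mathbb{F}_v[G]$-projective, and Nakayama's lemma over the complete DVR $\wh{\mathcal{O}}_v$ lifts this to projectivity of $\wh{\mathcal{L}}_{L,v}$ over $\wh{\mathcal{O}}_v[G]$. Faithful flatness of completion then yields Zariski-local freeness of $\mathcal{L}_L$ as an $\mathcal{O}_X[G]$-module in a neighborhood of $v$.

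With $\mathcal{L}_L$ established as locally free over $\mathcal{O}_X[G]$, for any finite affine open cover $\{V_i\}$ of $X$ each section module $\mathcal{L}_L(V_i)$ is projective over $\mathcal{O}_X(V_i)[G]$, and hence also projective over $\mathbb{F}_p[G]$ (since $\mathcal{O}_X(V_i)$ is a free $\mathbb{F}_p$-module). The associated \v{C}ech complex is then a bounded complex of $\mathbb{F}_p[G]$-projective modules computing $R\Gamma(X,\mathcal{L}_L)$; its cohomology is finite-dimensional over $\mathbb{F}_p$ since $\mathcal{L}_L$ is coherent on the proper curve $X$, so a standard truncation argument replaces it by a quasi-isomorphic bounded complex of finitely generated projective $\mathbb{F}_p[G]$-modules. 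Pontryagin duality preserves perfectness, giving $R\Gamma(X,\mathcal{L}_L)^*\in D^{\rm perf}(\mathbb{F}_p[G])$; the passage to $D^{\rm perf}(\mathbb{Z}_p[G])$ is then standard since every finitely generated projective $\mathbb{F}_p[G]$-module has $\mathbb{Z}_p[G]$-projective dimension one via the resolution $0 \to \widetilde{P}\xrightarrow{p}\widetilde{P}\to P\to 0$ coming from a projective $\mathbb{Z}_p[G]$-lift $\widetilde{P}$. I expect the main obstacle to be the projectivity-lifting step at bad places: although conceptually standard, it relies delicately on the $G$-invariance of the uniformizer $\varpi_v$ to transport the cohomological triviality provided by Lemma~\ref{lem:KT6.4}(4) across the reduction mod $\varpi_v$.
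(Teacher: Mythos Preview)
Your approach is genuinely different from the paper's and is essentially correct, but there are two subtleties worth flagging.

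\textbf{Comparison.} The paper does not prove local freeness of $\mathcal{L}_L$ over $\mathcal{O}_X[G]$ at all. Instead it invokes the standard perfectness criterion used already for Proposition~\ref{prop:perfect}(i): a cohomologically bounded complex of $\mathbb{F}_p[G]$-modules with finite cohomology is perfect provided that for every subgroup $J$ of $G$ the derived $J$-invariants agree with the corresponding complex over the fixed field, i.e.\ $R\Hom_{\mathbb{F}_p[J]}(\mathbb{F}_p,R\Gamma(X,\mathcal{L}_L))\cong R\Gamma(X,(\mathcal{L}_L)^J)$. This descent isomorphism is then quoted from \cite[p.~585]{KT}. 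Your route---establishing that $\mathcal{L}_L$ is locally free as an $\mathcal{O}_X[G]$-module and then reading off perfectness from an affine {\v C}ech complex---is more explicit and geometric; it also makes transparent that the {\v C}ech terms are (infinite-rank) free $\mathbb{F}_p[G]$-modules, so that the standard truncation produces a perfect representative. The paper's route is shorter because it recycles the descent machinery already set up, but it hides the local structure that your argument exposes.

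\textbf{Two caveats.} First, the phrase ``shrinking $U$ if necessary'' is not innocuous here: $\mathcal{L}_L$ is \emph{defined} in terms of the given $U$ (it equals $\pi_*{\rm Lie}(\cala_{X_L})$ on $U$ and the chosen $W'_w$ outside), so you cannot change $U$ without changing the sheaf. If $L/K$ happens to be wildly ramified at some $v\in U$, then the completed stalk there is $\prod_{w\mid v}{\rm Lie}(\cala')(\mathcal{O}_w)\cong\mathcal{O}_w^{\dim A}$ as an $\mathcal{O}_v[G_w]$-module, and this is \emph{not} projective in general, so your local-freeness claim fails. In practice this is harmless: the paper explicitly notes in \S\ref{selmer section} that $U$ will later be shrunk so that $L/K$ is unramified over $U$, and Conjecture~\ref{conj:ebsd}(iii) imposes exactly that hypothesis; but you should state it rather than hide it behind ``shrinking $U$''.

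Second, Lemma~\ref{lem:KT6.4}(\ref{lem:KT6.4:CohTriv}) literally asserts only that $H^i(G,\prod_{w\mid v}W'_w)=0$ for $i\ge 1$, which is strictly weaker than cohomological triviality (the latter requires vanishing of $\hat H^i(J,-)$ for \emph{all} subgroups $J$). Your passage from ``c-t'' to ``$\mathbb{F}_v[G]$-projective'' genuinely needs the stronger statement. The fix is painless: the construction in \cite[Lem.~6.2]{KT} actually produces $W'_w$ that is $\mathcal{O}_v[G_w]$-projective (the ambient $K_v[G_w]$-module ${\rm Lie}(A)(L_w)\cong K_v[G_w]^{\dim A}$ is free, so projective lattices exist and the construction picks one), and induction then gives $\mathcal{O}_v[G]$-projectivity of $\prod_{w\mid v}W'_w$. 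You should cite this rather than Lemma~\ref{lem:KT6.4}(\ref{lem:KT6.4:CohTriv}). Note that the paper's own proof also tacitly needs this stronger input, since the descent isomorphism it quotes must hold for every subgroup $J$.
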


\begin{proof} For each subgroup $J$ of $G$ the complex $R\Gamma(X, (\mathcal{L}_{L})^J)^*$ is represented by a complex of finite-dimensional $\mathbb{F}_p$-vector spaces that is acyclic outside degrees $0$ and $1$.

By the same argument as used to prove Proposition \ref{prop:perfect}(i) we are therefore reduced to proving that for each $J$ there is a natural isomorphism in $D(\mathbb{F}_p)$ of the form
\begin{equation}\label{second KT descent}
R\Hom_{\mathbb{F}_p[J]}(\mathbb{F}_p,R\Gamma(X, \mathcal{L}_L)) \cong R\Gamma(X, (\mathcal{L}_L)^J)\end{equation}
and this is proved by Kato and Trihan in \cite[p. 585]{KT}. \end{proof}

\begin{remark}{\em In view of Remark \ref{KT issue}, we have here defined $\mathcal{L}_L$ to be a $\mathbb{F}_p[G]$-equivariant vector bundle over $X$ rather than a vector bundle over $X_L$, as in \cite[\S~6.5]{KT}. This means that various arguments in loc. cit. that rely on the `geometric $p$-adic cohomology theory' over $X_L$ and will be referred to in later sections must in our case be carried out over $X$ by using the relevant push-forward constructions. This, however, is a routine difference that we do not dwell on.} 
 \end{remark}

\section{Statements of the conjecture and main results}\label{sect: statements}

In this section we formulate our refinement of the Birch and Swinnerton-Dyer Conjecture, establish some basic properties of the conjecture and state the main supporting evidence for it that we will obtain in the rest of the article.

\subsection{Relative $K$-theory} Before stating our conjecture we quickly review relevant aspects of relative algebraic $K$-theory.

For a Dedekind domain $R$ with field of fractions $F$, an $R$-order $\mathfrak{A}$ in a finite dimensional separable $F$-algebra $A$ and a field extension $E$ of $F$ we set $A_E := E\otimes_F A$. 
 
\subsubsection{}We use the relative algebraic $K_0$-group $K_0(\mathfrak{A},A_E)$ of the ring inclusion $\mathfrak{A}\subset A_E$, as described explicitly in terms of generators and relations by Swan in \cite[p. 215]{swan}.

We recall that for any extension field $E'$ of $E$ there exists an exact commutative diagram
\begin{equation} \label{E:kcomm}
\begin{CD} K_1(\mathfrak{A}) @> >> K_1(A_{E'}) @> \partial_{\mathfrak{A},E'} >> K_0(\mathfrak{A},A_{E'}) @> \partial'_{\mathfrak{A},E'} >> K_0(\mathfrak{A})\\
@\vert @A\iota AA @A\iota' AA @\vert\\
K_1(\mathfrak{A}) @> >> K_1(A_E) @> \partial_{\mathfrak{A},E}  >> K_0(\mathfrak{A},A_E) @> \partial'_{\mathfrak{A},E}  >> K_0(\mathfrak{A})
\end{CD}
\end{equation}
in which the upper and lower rows are the respective long exact sequences in relative $K$-theory of the inclusions $\mathfrak{A}\subset A_E$ and $\mathfrak{A}\subset A_{E'}$ and both of the vertical arrows are injective and induced by the inclusion $A_E \subseteq A_{E'}$. (For more details see \cite[Th. 15.5]{swan}.)

We further recall that the Whitehead group $K_1(A_E)$ comprises (isomorphism classes of) pairs of the form $\langle W,\theta \rangle$ in which $\theta$ is an automorphism of the finitely generated projective $A_E$-module $W$. In particular, if $W$ is spanned by a (finitely generated) projective $\mathfrak{A}$-module $P$, then the connecting homomorphism $\partial_{\mathfrak{A},E}$ in (\ref{E:kcomm}) sends $\langle W,\theta\rangle$ to the element of $K_0(\mathfrak{A},A_E)$ that corresponds to the triple $(P,\theta,P)$. 

If $R = \ZZ$ and for each prime $\ell$ we set $\mathfrak{A}_\ell := \ZZ_\ell\otimes_\ZZ \mathfrak{A}$ and $A_\ell:=
\QQ_\ell\otimes _\QQ A$, then we regard each group $K_0(\mathfrak{A}_\ell,A_\ell)$ as a subgroup of $K_0(\mathfrak{A},A)$ by means of the canonical composite homomorphism
\begin{equation}\label{decomp}
{\bigoplus}_\ell K_0(\mathfrak{A}_\ell,A_\ell) \cong K_0(\mathfrak{A},A)\subset K_0(\mathfrak{A},A_\RR).
\end{equation}
Here $\ell$ runs over all primes, the isomorphism is as described in the discussion following \cite[(49.12)]{curtisr} and the inclusion is induced by the relevant case of the map $\iota'$ in (\ref{E:kcomm}).

For each element $x$ of $K_0(\mathfrak{A},A)$ we write $(x_\ell)_\ell$ for its image in ${\bigoplus}_\ell K_0(\mathfrak{A}_\ell,A_\ell)$ under the isomorphism in (\ref{decomp}).

\subsubsection{}\label{na dets section} We shall construct elements of $K_0(\mathfrak{A},A_E)$ by using the formalism of `non-abelian determinants' described by Fukaya and Kato in \cite[\S1]{FukayaKato:2006}. To recall the relevant facts we write $\Sigma$ for the category  $D^{\rm perf}(\mathfrak{A})$. 

Following \cite[Def. 1.3.2]{FukayaKato:2006}, one can define a localized $K_1$-group $K_1(A_E,\Sigma)$. This abelian group is generated by pairs $(C,h)$, where $C$ is an object of $\Sigma$ for which the Euler characteristic of $E\otimes_RC$ in $K_0(A_E)$ vanishes and $h$ is a morphism ${\rm Det}_{A_E}(E\otimes_R C) \to {\rm Det}_{A_E}(0)$ in the category $\mathcal{C}_{A_E}$ constructed in \cite[\S 1.2.1]{FukayaKato:2006}; the relations between these generators are then the obvious analogues of the relations (1), (2) and (3) given as part of \cite[Def. 1.3.2]{FukayaKato:2006}. These relations in turn ensure that there exists a canonical group homomorphism  
\[ \iota_{\mathfrak{A},E} : K_1(A_E,\Sigma) \to K_0(\mathfrak{A},A_E).\]
The approach of \cite[Th. 1.3.15]{FukayaKato:2006} proves the existence of an exact sequence relating $K_1(A_E,\Sigma)$ to $K_1(A_E), K_0(\Sigma) = K_0(\mathfrak{A})$ and $K_0(A_E)$, and by comparing this sequence to (\ref{E:kcomm}), one can deduce that $\iota_{\mathcal{A},E}$ is surjective (but we omit details as we make no use of this fact). 

For each generator $(C,h)$ of $K_1(A_E,\Sigma)$, we set 
\[ \chi_{\mathfrak{A},E}(C,h) := \iota_{\mathfrak{A},E}((C,h))\in K_0(\mathfrak{A},A_E).\]
If  $E \otimes_R C$ is acyclic, then we further set 
\[ \chi_{\mathfrak{A},E}(C,0) := \chi_{\mathfrak{A},E}(C,h_{\rm can}),\]
with $h_{\rm can}$ the canonical morphism 
${\rm Det}_{A_E}(E\otimes_R C) \to {\rm Det}_{A_E}(0)$ in $\mathcal{C}_{A_E}$ (from \cite[\S1.2.8]{FukayaKato:2006}).

\begin{example}{\em Fix a bounded complex $C^\bullet$ of finitely generated projective $\mathfrak{A}$-modules, and set $C^{\rm even} := {\bigoplus}_{i\in \ZZ}C^{2i}$ and $C^{\rm odd} := {\bigoplus}_{i\in \ZZ}C^{2i+1}$. Then, in this case, specifying a morphism $h: {\rm Det}_{A_E}(E\otimes_R C) \to {\rm Det}_{A_E}(0)$ in $\mathcal{C}_{A_E}$ is equivalent to specifying data as follows: for some finitely generated projective $\mathfrak{A}$-module $P$, one is given an isomorphism of $A_E$-modules 
$\theta: E\otimes_R (C^{\rm even} \oplus P) \cong  E\otimes_R (C^{\rm odd} \oplus P)$ that is unique up to pre-composition with an automorphism of $E\otimes_R (C^{\rm even} \oplus P)$ whose image in $K_1(A_E)$ is specified (and so depends only on $h$). Then, in terms of the standard presentation of $K_0(\mathfrak{A},A_E)$,  
the element $\chi_{\mathfrak{A},E}(C^\bullet,h)$ corresponds to the triple $(C^{\rm even}\oplus P, \theta, C^{\rm odd}\oplus P)$, with the defining relations of $K_0(\mathfrak{A},A_E)$ ensuring that  this element is indeed independent of both $P$ and the specific choice of $\theta$} \end{example}

We next record some general properties of the elements $\chi_\mathfrak{A}(C,h)$ that will be used frequently in the sequel (often without explicit comment). 

Firstly, if 
 $C_1 \to C_2 \to C_3 \to C_1[1]$ is an exact triangle in $D^{\rm perf}(\mathfrak{A})$ for which the complex $F\otimes_R C_3$ is acyclic, then each morphism $h : {\rm Det}_{A_E}(E\otimes_R C_1) \to {\rm Det}_{A_E}(0)$ in $\mathcal{C}_{A_E}$ 
combines with the given triangle to induce a morphism $h': {\rm Det}_{A_E}(E\otimes_R C_2) \to {\rm Det}_{A_E}(0)$ in $\mathcal{C}_{A_E}$. The same approach as used to prove \cite[Lem. 1.3.4]{FukayaKato:2006} then shows that 
\[ \chi_{\mathfrak{A},E}(C_2,h') = \chi_{\mathfrak{A},E}(C_1,h) + \chi_{\mathfrak{A},E}(C_3,0).\]

Secondly, if $h$ and $h'$ are any two morphisms ${\rm Det}_{A_E}(E\otimes_R C) \to {\rm Det}_{A_E}(0)$ in $\mathcal{C}_{A_E}$, then the (obvious analogue of the) defining relation (3) in 
\cite[Def. 1.3.2]{FukayaKato:2006} (with $C'$ taken to be $0$) implies that 
\[ \chi_{\mathfrak{A},E}(C,h') = \chi_{\mathfrak{A},E}(C,h) + \partial_{\mathfrak{A},E}(h'\circ h^{-1}).\]
Here the last term denotes the image under $\partial_{\mathfrak{A},E}$ of the unique element of $K_1(A_E)$ that is determined by the morphism $h'\circ h^{-1}: {\rm Det}_{A_E}(0) \to {\rm Det}_{A_E}(0)$ in $\mathcal{C}_{A_E}$. 

We next assume $\mathfrak{A} = R[G]$ for a finite group $G$, and write $\iota_{R[G]}^\#$ for the involutions on each of the groups $K_1(R[G]), K_1(F[G])$ and $K_1(R[G],F[G])$ that are induced by the $R$-linear  anti-involution on $R[G]$ that inverts elements of $G$. Then, if $M$ is any finite $R[G]$-module that is c-t, its Pontryagin dual $M^\ast$ (endowed with contragredient $G$-action) is also c-t, and 
\begin{equation}\label{dual eq} 
\chi_{R[G],F[G]}(M^\ast[0],0) = \iota_{R[G]}^\#\bigl( \chi_{R[G],F[G]}(M[0],0)\bigr).\end{equation}
(By localisation, the verification of this equality reduces to the case that $R$ is a discrete valuation ring. In the latter case it then follows by explicit computation from the fact that a finite c-t $R[G]$-module has a free resolution of length one.) 

\begin{remark}{\em We often regard $E$ as clear from context and so write $\chi_\mathfrak{A}(-,-)$ in place of $\chi_{\mathfrak{A},E}(-,-)$. If $\mathfrak{A} = \ZZ[G]$, we further abbreviate $\chi_{\ZZ[G],E}(-,-)$  to $\chi_{G}(-,-)$, and the maps $\partial_{\ZZ[G],E}(-)$ and $\partial'_{\ZZ[G],E}$ to $\partial_{G}(-)$ and $\partial'_{G}(-)$ (again regarding $E$ as clear from context).}\end{remark}

\subsection{The refined Birch and Swinnerton-Dyer Conjecture}

\subsubsection{}In the sequel we write
 \[
h^{\rm NT}_{A,L}: A(L)\times A^t(L) \to \RR
\]
for the classical N\'eron-Tate height-pairing for $A$ over $L$.

This pairing is non-degenerate and hence, assuming $\sha(A/L)$ to be finite, combines with the properties of the Selmer complex ${\rm SC}_{V_L}(A, L/K)$ established in Proposition \ref{prop:perfect}(ii) to induce a canonical isomorphism of $\RR[G]$-modules
\begin{equation} \label{height triv}
h^{{\rm NT},{\rm det}}_{A,L}: {\rm Det}_{\RR[G]}(\RR \otimes_{\ZZ} {\rm SC}_{V_L}(A,L/K)) \cong {\rm Det}_{\RR[G]}(0).
\end{equation}

In particular, since ${\rm SC}_{V_L}(A,L/K)$ belongs to $D^{\rm perf}(\ZZ[G])$, we obtain an element of $K_0(\ZZ[G],\RR[G])$ by setting
\[
\chi_G^{\rm BSD}(A,V_L) := \chi_{G}({\rm SC}_{V_L}(A, L/K),h^{{\rm NT},{\rm det}}_{A,L}).
\]

Next we note that, since the complex $R\Gamma(X, \mathcal{L}_L)^*$ considered in Lemma \ref{zar lemma} belongs to $D^{\rm perf}(\mathbb{F}_p[G])$, it defines an object of $D^{\rm perf}(\ZZ[G])$ for which $\QQ\otimes_\ZZ R\Gamma(X, \mathcal{L}_L)^*$ is acyclic. The associated element 
\[ \chi_G^{\rm coh}(A,V_L) := \chi_{G}(R\Gamma(X, \mathcal{L}_L)^*,0)\]
therefore belongs to the image of the natural homomorphism
\begin{equation}\label{p-subgroup} K_0(\mathbb{F}_p[G]) \to K_0(\ZZ[G],\QQ[G]) \subset K_0(\ZZ[G],\RR[G]).\end{equation}

Finally, for each prime $\ell$, we shall use an explicit computation of Bockstein homomorphisms that arise naturally in arithmetic cohomology to define  a canonical, and computable, integer $a_\ell = a_{A,L,\ell}$ in $\{0,1\}$. We thereby obtain a canonical element of $K_0(\ZZ[G],\QQ[G])$ of order dividing two by setting
\[ \chi_G^{\rm sgn}(A) := {\sum}_{\ell}\partial_{G,\QQ}(\langle \QQ\cdot A^t(L),(-1)^{a_\ell}\rangle)_\ell,\]
where $\ell$ runs over all prime divisors of $|G|$. (Given the relatively minor role that this `sign-term' plays in our conjecture, and the involved nature of the relevant Bockstein homomorphisms, we prefer to delay giving explicit details regarding the integers $a_\ell$ until the respective computations are made in Proposition \ref{prop:htfrobcomp}(i) for $\ell =p$ and in equation (\ref{ell sign}) for $\ell\not=p$.)

\subsubsection{}We can now state our refined version of the Birch and Swinnerton-Dyer Conjecture for $A$ over $L$.

For each character $\psi$ in ${\rm Ir}(G)$, we fix an associated complex representation $V_\psi$ and  write 
$e_\psi$ for the primitive idempotent $\psi(1)|G|^{-1}{\sum}_{g \in G}\psi(g^{-1})g$ of $\zeta(\CC[G])$. We then set 
\[ r_{\rm alg}(\psi) := \psi(1)^{-1}\cdot {\rm dim}_{\CC}(e_{\psi}(\CC\otimes_\ZZ A^t(L))) = {\rm dim}_{\CC}(\Hom_{\CC[G]}(V_{\psi},\CC\otimes_\ZZ A^t(L))),\]
 and write 
\[ r_{\rm an}(\psi) := {\rm ord}_{s=1}L_U(A,\psi,s)\]
for the order of vanishing at $s=1$ of the series $L_U(A,\psi,s)$. We also use the `leading term' element $L^*_{U}(A_{L/K},1)$ of $K_1(\RR[G])$ that is defined in Theorem \ref{lt def}.

\begin{conjecture} \label{conj:ebsd} The following claims are valid.
\begin{itemize}
\item[(i)] For each character $\psi$ in ${\rm Ir}(G)$ one has $r_{\rm an}(\psi)= r_{\rm alg}(\psi)$.
\item[(ii)] The group $\sha(A/L)$ is finite.
\item[(iii)] Let $U$ be a dense open subset of $X$ comprising points at which both $L/K$ is unramified and $A/K$ has good reduction. Then, for every family of groups $V_L = V_{U_L}$ chosen as in \S\ref{selmer section}, there is an equality 
\[ \partial_{G}( L^*_{U}(A_{L/K}, 1)) = \chi_G^{\rm BSD}(A,V_{L}) - \chi_G^{\rm coh}(A,V_{L}) + \chi_G^{\rm sgn}(A)
\]
in $K_0(\ZZ[G],\RR[G])$.
\end{itemize}
\end{conjecture}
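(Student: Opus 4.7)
The plan is to establish Conjecture \ref{conj:ebsd} by prime-by-prime analysis, exploiting the direct sum decomposition (\ref{decomp}) of $K_0(\mathbb{Z}[G],\mathbb{Q}[G])$ into $\ell$-primary components. Part (ii) is taken as a hypothesis, following \cite{KT}, while part (i) will be obtained as a byproduct of the $\ell$-adic analysis that feeds into part (iii): namely, for any auxiliary prime $\ell\neq p$, a semisimplicity property of $\varphi_q-1$ on the eigenvalue-one eigenspace of $\ell$-adic cohomology, combined with Proposition \ref{prop:perfect}, identifies $r_{\rm alg}(\check\chi)$ with the order of vanishing computed from Grothendieck's factorisation (\ref{deligne}). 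This semisimplicity is precisely the content of the planned analysis of semisimple morphisms of complexes in \S\ref{sec:lfunctions}.

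For primes $\ell\neq p$, the $\ell$-primary component of Conjecture \ref{conj:ebsd}(iii) admits a purely \'etale-cohomological reformulation. By Remark \ref{flat-etale}, $R\Gamma_{ar,V_L}(U_L,\mathcal{A}_{\rm tors})_\ell$ coincides with $R\Gamma_{\et,c}(U_L,\mathcal{A}\{\ell\})$ and is independent of the choice of $V_L$; moreover, $\chi_G^{\rm coh}(A,V_L)_\ell$ vanishes since $R\Gamma(X,\mathcal{L}_L)^*$ is a complex of $\mathbb{F}_p$-modules. Grothendieck's formula (\ref{deligne}) expresses $L^*_U(A,\chi,1)$ as a product of characteristic polynomials of $q$-power Frobenius acting on these \'etale cohomology groups, and a theorem of Schneider identifies the $\mathbb{Z}_\ell$-linearised N\'eron--Tate pairing with the Bockstein homomorphism arising from the action of $\varphi_q-1$ on $R\Gamma_{\et,c}(U_L\times_{\mathbb{F}_{q'}}\overline{\mathbb{F}}_{q'},\mathcal{A}\{\ell\})$. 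Combining these two inputs yields the $\ell$-primary identity in $K_0(\mathbb{Z}_\ell[G],\mathbb{Q}_\ell[G])$ up to the sign contribution $\chi_G^{\rm sgn}(A)_\ell$, which compensates for the possible negativity of the regulator on symplectic eigenspaces.

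For $\ell=p$, which is the principal difficulty, I would adapt the syntomic-cohomology machinery of \cite{KT} to be developed in \S\ref{sec:syncom} and \S\ref{sect:tame}. The key step is to replace $R\Gamma_{ar,V_L}(U_L,\mathcal{A}_{\rm tors})_p$ by a perfect complex of $\mathbb{Z}_p[G]$-modules constructed from the syntomic cohomology of the log-crystalline realisation of $\mathcal{A}$, equipped with a Frobenius trivialisation that interpolates $L^*_U(A,\chi,1)$ at $p$. The auxiliary complex $R\Gamma(X,\mathcal{L}_L)^*$ enters precisely to cancel the dependence on the choices $V_L$ and $W_L$, and a $p$-adic analogue of Schneider's formula provides the required expression of the height pairing. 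Summing the resulting $\ell$-primary identities over all $\ell$, and matching signs via Proposition \ref{prop:positive}, yields Conjecture \ref{conj:ebsd}(iii).

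The principal technical obstacle is the integrality of the $p$-primary identity. The leading-term identity is readily established in $K_1(\mathbb{Q}_p[G])$, but its promotion to a genuine equality in $K_0(\mathbb{Z}_p[G],\mathbb{Q}_p[G])$ is obstructed by a finite subgroup $\mathcal{T}_{A,L/K}$ whose magnitude reflects the reduction type of $A$ and the wild ramification in $L/K$. A fully unconditional proof of Conjecture \ref{conj:ebsd}(iii) should therefore be expected only when $A$ is semistable over $K$ and $L/K$ is tamely ramified, in which case $\mathcal{T}_{A,L/K}$ can be shown to vanish; in the general case one can only hope for an identity modulo $\mathcal{T}_{A,L/K}$.
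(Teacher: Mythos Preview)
The statement you are asked to prove is a \emph{conjecture}, not a theorem: the paper does not prove Conjecture~\ref{conj:ebsd} in general and does not claim to. What the paper proves is Theorem~\ref{thm:thm1}, which establishes claims (i) and (ii) under the hypothesis that some $\ell$-primary part of $\sha(A/L)$ is finite, and establishes claim (iii) only modulo the finite group $\mathcal{T}_{A,L/K}$. Your proposal is therefore not a proof of the stated conjecture; it is, in effect, a sketch of the proof of Theorem~\ref{thm:thm1}, and you yourself concede in your final paragraph that the full identity in $K_0(\ZZ_p[G],\QQ_p[G])$ is obstructed by $\mathcal{T}_{A,L/K}$.

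With that caveat, your outline of the partial result is essentially the paper's own strategy. The paper first passes from the real leading term to Schneider's rational height via Proposition~\ref{bsd-schneider}, then splits the resulting identity in $K_0(\ZZ[G],\QQ[G])$ into $\ell$-primary pieces via (\ref{decomp}). For $\ell\neq p$ the argument runs exactly as you describe: Remark~\ref{flat-etale} identifies the Selmer complex with compactly supported \'etale cohomology, Grothendieck's formula (\ref{deligne}) computes the leading term, and Schneider's identification of the height pairing with the Bockstein (up to sign) supplies the semisimplicity input and the sign term $\chi_G^{\rm sgn}(A)_\ell$. For $\ell=p$ the paper uses the syntomic triangles of Lemma~\ref{KT triangles} and Proposition~\ref{perfect adapt}, the semisimplicity result of Proposition~\ref{prop:htfrobcomp}, and the $K$-theoretic Proposition~\ref{prop:triangle} to reduce to a leading-term computation (Theorem~\ref{order-leading}). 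The obstruction you name is exactly the paper's: the complex $I_{F'}$ is only known to be perfect over $\ZZ_p[G]$ when $A$ is semistable and $L/K$ is tame (Proposition~\ref{I tame}), and otherwise one must pass to a hereditary or maximal order, incurring the defect $\mathcal{T}_{A,L/K}$. So your sketch matches the paper's route to Theorem~\ref{thm:thm1}, but it is not and cannot be a proof of Conjecture~\ref{conj:ebsd} as stated.
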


\begin{remark}\label{consistent lemma}{\em If $L=K$, then $K_0(\ZZ[G],\RR[G])$ identifies with the multiplicative group $\RR^\times/\{\pm 1\}$ and in   Proposition \ref{classical bsd} below we shall show that this case of Conjecture \ref{conj:ebsd} recovers the classical Birch and Swinnerton-Dyer conjecture for $A$. In \S\ref{consistent sect} we also show that the validity of the equality in Conjecture \ref{conj:ebsd}(iii) is independent of the choices of open set $U$ and  family of subgroups $V_L$}. \end{remark}

\begin{remark}\label{rank remark} {\em Since $\CC\otimes_\ZZ A^t(L)$ is the scalar extension of the finitely generated $\QQ[G]$-module $\QQ\otimes_\ZZ A^t(L)$ one has 
 $r_{\rm alg}(\psi) = r_{\rm alg}(\omega\circ \psi)$ for all $\psi \in {\rm Ir}(G)$ and all automorphims $\omega$ of $\CC$. Conjecture \ref{conj:ebsd}(i) therefore implies that  $r_{\rm an}(\psi) = r_{\rm an}(\omega\circ\psi)$ for all such $\psi$ and $\omega$. The validity of these equalities can be derived directly from the equalities (\ref{deligne}) and (\ref{first inv})  that played the  key role in the proof of  Proposition \ref{prop:positive}. } 
\end{remark}

\begin{remark}\label{positivity remark}{\em Theorem \ref{lt def} allows us to formulate Conjecture \ref{conj:ebsd} directly in terms of the connecting homomorphism $\partial_G$. 
However, without using Theorem \ref{lt def}, one could still formulate an analogue of Conjecture \ref{conj:ebsd} in terms of the image of the element $(L^*_U(A,\chi,1))_{\chi \in {\rm Ir}(G)}$ of $\zeta(\RR [G])^\times$ under the `extended boundary' homomorphism $\zeta(\RR[G])^\times \to K_0(\ZZ[G],\RR[G])$ constructed by Flach and the first author in \cite[\S4.2, Lem. 9]{BF_Tamagawa}. This observation provides the link between the formulation of Conjecture \ref{conj:ebsd}(iii) in terms of relative algebraic $K$-theory and the formalism of `equivariant Tamagawa number conjectures'  that is discussed in loc. cit. and later refined by Fukaya and Kato in \cite{FukayaKato:2006}.}\end{remark} 

\subsubsection{}\label{cons2} Conjecture \ref{conj:ebsd} entails a variety of explicit consequences concerning the structure of Selmer complexes and relations between leading terms of Hasse-Weil-Artin $L$-series. 
 To help provide context, we now state two concrete results in this direction (though, for convenience, the proof of these results is deferred to \S\ref{cons props proof}). 

We fix a prime $p$ and assume, for simplicity, that $G$ is of $p$-power order. We write $M_{(p)}$ for the $p$-localisation of a (complex of) abelian groups $M$.

The first result concerns the Galois structure of the complex ${\rm SC}_{V_L} := {\rm SC}_{V_L}(A,L/K)$.

\begin{proposition} \label{cons prop} If $G$ is a group of $p$-power order, then Conjecture \ref{conj:ebsd} implies the following restrictions on the complex ${\rm SC}_{V_L}$.   
\begin{itemize}
\item[(i)] ${\rm SC}_{V_L}$ is isomorphic in $D^{\rm perf}(\ZZ[G])$ to  a bounded complex of finitely generated free $G$-modules.
\item[(ii)] If $A(K)[p]$ and $A^t(K)[p]$ both vanish, then ${\rm SC}_{V_L,(p)}$ is isomorphic in $D^{\rm perf}(\ZZ_{(p)}[G])$ to a complex $\ZZ_{(p)}[G]^t \xrightarrow{\phi} \ZZ_{(p)}[G]^t$, 
where the first term is placed in degree one. 
\end{itemize}
\end{proposition}

The second result we record describes families of algebraic relations between suitable normalisations of the leading terms $L^*_U(A,\psi, 1)$ for varying characters $\psi$. 

To state this result, we assume the hypotheses of Proposition \ref{cons prop}(ii) and  fix a representative of ${\rm SC}_{V_L,(p)}$ of the specified form. We then consider the composite isomorphism 
\[ \iota_{A,L}^{\rm NT}: 
\RR[G]^t \cong (\RR\cdot \ker(\phi)) \oplus
(\RR \cdot {\rm im}(\phi)) \xrightarrow{ h_{A,L,\ast}^{\rm NT} \oplus {\rm id}} (\RR\cdot \cok(\phi))
\oplus (\RR\cdot {\rm im}(\phi)) \cong \RR[G]^t\]
of $\RR[G]$-modules. 
Here the first and third maps are induced by a choice of
 $\RR[G]$-equivariant sections to the surjective maps from 
 $\RR[G]^t$ to $\RR\cdot{\rm im}(\phi)$ and $\RR\cdot \cok(\phi)$ that are respectively induced by $\phi$ and by the tautological projection. In addition, $h_{A,L,\ast}^{\rm NT}$ denotes the composite 
 \[ \RR\cdot \ker(\phi) \cong \RR\cdot A^t(L) \cong \Hom_{\RR}\bigl(\RR\cdot\! A(L),\RR\bigr)   \cong \RR\cdot \cok(\phi),\]
 in which the second isomorophism is induced by the non-degenerate pairing $h_{A,L}^{\rm NT}$ and the first and third by Proposition \ref{prop:perfect}(ii)(a) and the fixed identifications of $\ker(\phi)$ and $\cok(\phi)$ with $H^0({\rm SC}_{V_L})_{(p)}$ 
 and $H^1({\rm SC}_{V_L})_{(p)}$. 
 
We write $\chi(\mathcal{L})$ for the integer obtained as the Euler characteristic in $K_0(\FF_p) \cong \ZZ$ of the complex  $R\Gamma(X, \mathcal{L})^*$ of $\mathbb{F}_p$-modules. 

For each character $\psi$ in ${\rm Ir}(G)$, we then normalise the leading term of the associated Hasse-Weil-Artin $L$-series by setting 
\begin{equation}\label{normalise def} \mathscr{L}(A,\psi) := \frac{ p^{\psi(1)\chi(\mathcal{L})}\cdot L^\ast_U(A,\psi,1)}{(-1)^{r_{\rm alg}(\psi)a_p}\cdot\det(\iota_{A,L,\psi}^{\rm NT})},
\end{equation}
where $\iota_{A,L,\psi}^{\rm NT}$ is the automorphism of $\Hom_{\CC[G]}(V_\psi,\CC[G]^t)$ induced by  $\iota_{A,L}^{\rm NT}$.
We also write $\QQ(\psi)$ for the field generated over $\QQ$ by the set $\{\psi(g): g \in G\}$.

\begin{proposition}\label{cons prop 2} Assume the hypotheses of Proposition \ref{cons prop}(ii) and the validity of  Conjecture \ref{conj:ebsd}(i) and (ii). Then the following claims are valid. \
\begin{itemize}
\item[(i)] For all $\psi$ in ${\rm Ir}(G)$ and $\alpha$ in $\Gal(\QQ(\psi)/\QQ)$ one has 
\[ \mathscr{L}(A,\psi) \in \QQ(\psi)\,\,\text{ and }\,\, \alpha(\mathscr{L}(A,\psi)) = \mathscr{L}(A,\alpha\circ\psi).\]
\end{itemize}
In the rest of the result we also assume the validity of the equality in Conjecture \ref{conj:ebsd}(iii).
\begin{itemize} 

\item[(ii)] For every abelian subquotient $Q = H/J$ of $G$, there is a containment 
\[ \mathscr{L}(A_{L^H},{\bf 1}_{Q})\in \ZZ_{(p)}^\times\]
and, for each $\gamma$ in $Q$, a congruence
\[ {\sum}_{\psi \in {\rm Ir}(Q)}\psi(\gamma)^{-1}\mathscr{L}(A_{L^H},\psi) \equiv 0 \,\,({\rm mod}\,\, |Q|\cdot\ZZ_{(p)}).\] 
\item[(iii)] If, for each subgroup $H$ of $G$, $\psi_H$ is an irreducible character of the abelianisation $H^{\rm ab}$ of $H$ and $m_H$ an integer such that the virtual character ${\sum}_{H \le G}m_H \cdot{\rm ind}_H^G({\rm inf}_{H^{\rm ab}}^H(\psi_H))$ vanishes, then one has   
\[ {\prod}_{H \le G} \mathscr{L}(A_{L^H},\psi_H)^{m_H} = 1.\]
\end{itemize}
\end{proposition}

\begin{remark}{\em By developing methods introduced by the second author in \cite{kakde}, the first two authors give an explicit description of the Whitehead group $K_1(\ZZ_p[G])$ in \cite[Th. 2.1]{bk}. In the setting of Proposition \ref{cons prop 2}, this result has the following explicit consequence. For every cyclic subgroup $C$ of $G$, the properties in  Proposition \ref{cons prop 2}(ii) combine to imply that 
\[ \mathscr{L}(A,C) := |C|^{-1}{\sum}_{c \in C}{\sum}_{\psi \in {\rm Ir}(C)} \psi(c)^{-1}\mathscr{L}(A_{L^C},\psi)\cdot c\]
belongs to $\ZZ_{(p)}[C]^\times \subset \ZZ_{p}[C]^\times \cong K_1(\ZZ_{p}[C])$. Fix an embedding $j: \RR \to \CC_p$ and write $j_\ast$ for the induced embedding $K_0(\ZZ[G],\RR[G]) \to K_0(\ZZ_p[G],\CC_p[G])$ of relative $K_0$-groups. Then  \cite[Th. 2.1]{bk} implies that the validity of the image under $j_\ast$ of the equality in Conjecture \ref{conj:ebsd}(iii) is equivalent (under the hypotheses of Proposition \ref{cons prop}(ii)) to the validity of the family of equalities in Proposition \ref{cons prop 2}(iii) together with a single explicit congruence relation between the images of the individual elements $\mathscr{L}(A,C)$ under the respective induction maps $K_1(\ZZ_{p}[C]) \to K_1(\ZZ_p[G])$.}\end{remark}

\subsection{The main results}

In order to state our main result we must define the finite subgroup $\mathcal{T}_{A,L/K}$ of $K_0(\ZZ[G],\RR[G])$ that was discussed in the introduction.

If $\Xi$ is a quotient of a subgroup $\Delta$ of a finite group $\Gamma$, then we consider the composite homomorphism of abelian groups
\[ \pi^{\Gamma}_{\Xi}: K_0(\ZZ_p[\Gamma],\QQ_p[\Gamma]) \to K_0(\ZZ_p[\Delta],\QQ_p[\Delta]) \to K_0(\ZZ_p[\Xi],\QQ_p[\Xi]),\]
where the first map is restriction of scalars and the second is the natural coinflation homomorphism.

By the semistable reduction theorem, the set $\Sigma = \Sigma_{A,K}$ of finite Galois extensions of $K$ over which $A$ is semistable is non-empty. For each field $K'$ in $\Sigma$ we write $L'$ for the composite of $L$ and $K'$, set $G' := \Gal(L'/K)$ and $H' := \Gal(L'/K')$, write $P'$ for the normal subgroup of $H'$ that is generated by the Sylow $p$-subgroups of the inertia groups in $H'$ of each place in $K'$ and set $\pi_{K'} := \pi^{G'}_{H'/P'}$. We then define 
\begin{equation}\label{calT def} \mathcal{T}_{A,L/K} := K_0(\ZZ_p[G], \QQ_p[G])_{\rm tor} \cap \bigl( {\bigcap}_{K' \in \Sigma_1} \ker(\pi_{K'}) \bigr)\cap \bigl({\bigcap}_{K'\in \Sigma_2}\pi^{G'}_{G}(\ker(\pi_{K'}))\bigr),\end{equation}
where $\Sigma_1$ denotes the possibly empty subset of $\Sigma$ comprising fields $K'$ that are contained in $L$ (so that $G' = G$) and $\Sigma_2$ the possibly empty subset of $\Sigma$ comprising fields $K'$ that are not contained in $L$ but are such that $\sha(A/L')$ is finite. 

We recall that $K_0(\ZZ_p[G], \QQ_p[G])_{\rm tor}$ is finite and, in all cases, regard $\mathcal{T}_{A,L/K}$ as a subgroup of $K_0(\ZZ[G],\RR[G])$ via the natural embeddings
\[ K_0(\ZZ_p[G], \QQ_p[G])_{\rm tor} \subset K_0(\ZZ[G],\QQ[G])\subset K_0(\ZZ[G],\RR[G]).\]

We can now state the main evidence that we shall offer in support of Conjecture \ref{conj:ebsd}.

\begin{theorem} \label{thm:thm1} If the $\ell$-primary component of $\sha(A/L)$ is finite for some prime $\ell$, then the following claims are also valid.
\begin{itemize}
\item[(i)] Claims (i) and (ii) of Conjecture \ref{conj:ebsd} are valid.
\item[(ii)] The equality in Conjecture \ref{conj:ebsd}(iii) is valid modulo the finite subgroup $\mathcal{T}_{A,L/K}$ of $K_0(\ZZ[G],\RR[G])$.
\end{itemize}
\end{theorem}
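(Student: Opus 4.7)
The plan is to first establish parts (i) and (ii) of Conjecture \ref{conj:ebsd} under the hypothesis that $\sha(A/L)\{\ell\}$ is finite for some prime $\ell$, and then to attack the equality in Conjecture \ref{conj:ebsd}(iii) modulo $\mathcal{T}_{A,L/K}$ by decomposing both sides along primes via (\ref{decomp}) and handling each prime separately. Since $\mathcal{T}_{A,L/K}$ is supported at the single prime $p$, the defect should appear only in the $p$-adic comparison.

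For claim (i) on the order of vanishing, the key input is Grothendieck's formula (\ref{deligne}) together with Remark \ref{flat-etale}, which express $r_{\rm an}(\chi)$ as the multiplicity of the eigenvalue $1$ of $\varphi_q$ on an appropriate twist of $H^1_{\et,c}(U^c, V_\ell(\mathcal{A}))(-1)$. The analysis promised in \S\ref{sec:lfunctions}, whose technical core is Schneider's identification of $h^{\rm NT}_{A,L}$ with a Bockstein map arising from the Hochschild--Serre spectral sequence of $\mathbb{F}^c_q/\mathbb{F}_q$, then matches this multiplicity against $r_{\rm alg}(\check\chi)$. For claim (ii), finiteness of $\sha(A/L)\{\ell\}$ for a single prime propagates to full finiteness of $\sha(A/L)$ via standard Tate-duality and Galois-descent arguments from $L$ to $K$ already present in \cite{KT}.

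For the $\ell$-primary component of the equality in Conjecture \ref{conj:ebsd}(iii) with $\ell\neq p$, I would use Remark \ref{flat-etale} to identify $R\Gamma_{ar,V_L}(U_L, \mathcal{A}_{\rm tors})_\ell$ with $R\Gamma_{\et,c}(U_L, \mathcal{A}\{\ell\})$; Grothendieck's formula then reads $\partial_{G}(L^*_U(A_{L/K}, 1))_\ell$ as the Euler characteristic of this complex trivialised by $1-\varphi_q$. Schneider's theorem matches this trivialisation with the N\'eron--Tate trivialisation (\ref{height triv}) used to define $\chi_G^{\rm BSD}(A, V_L)_\ell$; the sign term $\chi_G^{\rm sgn}(A)_\ell$ absorbs the residual discrepancy coming from the eigenvalue analysis in Proposition \ref{prop:positive}(ii), while the coherent contribution $\chi_G^{\rm coh}(A, V_L)_\ell$ is automatically zero since it lies in the image of $K_0(\mathbb{F}_p[G])$. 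No $\mathcal{T}$-defect is needed here.

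The main work is then the $p$-primary case. Here I would invoke the syntomic cohomology complexes of Kato--Trihan reviewed in \S\ref{sec:syncom}--\S\ref{sect:tame}, which play the role of $\ell$-adic cohomology at $\ell=p$; their perfectness over $\ZZ_p[G]$ can be secured provided $A$ is semistable over $K$ and $L/K$ is tamely ramified, while the auxiliary complex $R\Gamma(X, \mathcal{L}_L)^*$ is engineered precisely to compensate for the dependence on the pro-$p$ lattice $V_L$. In this semistable-plus-tame case, an equivariant refinement of the Kato--Trihan leading-term calculation — obtained by tracking the Frobenius action on the Hodge filtration of $\mathcal{L}_L$ against the Galois action — should yield the $p$-primary part of Conjecture \ref{conj:ebsd}(iii) on the nose. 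The general case is then reduced to this one by choosing a Galois extension $K'/K$ over which $A$ becomes semistable and setting $L':=LK'$: the identity is first established at the $(H'/P')$-level, where tameness holds by construction; it then lifts to $K_0(\ZZ_p[G'], \QQ_p[G'])$ modulo $\ker(\pi_{K'})$ and pushes down via $\pi^{G'}_G$ to an identity in $K_0(\ZZ_p[G], \QQ_p[G])$ that holds modulo $\pi^{G'}_G(\ker(\pi_{K'})_{\rm tor})$. Intersecting over all admissible $K'$ yields the defect group $\mathcal{T}_{A,L/K}$ of (\ref{calT def}). The hardest step is the equivariant syntomic-to-arithmetic comparison, carried out compatibly with the height-pairing trivialisation (\ref{height triv}) at the level of determinants over $\ZZ_p[G]$; the torsion defect $\mathcal{T}_{A,L/K}$ is ultimately forced by the wild ramification of $L'/L$, which lies outside the reach of the tame-level syntomic calculation.
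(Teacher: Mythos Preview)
Your overall architecture matches the paper's: prove the order-of-vanishing and $\sha$-finiteness first, decompose the remaining equality along primes via (\ref{decomp}), handle $\ell\neq p$ by the \'etale/Schneider argument, and treat $\ell=p$ via the Kato--Trihan syntomic machinery, passing to a semistable extension $K'$ and exploiting the tame subextension $(L')^{P'}/K'$. But there is a genuine gap in your $p$-adic reduction.

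You correctly observe that proving the equality at the $(H'/P')$-level (where $A$ is semistable and the extension is tame, so Proposition~\ref{I tame} gives perfectness of $I_{F'}$ over $\ZZ_p[H'/P']$) and then applying the functoriality of Proposition~\ref{group independent} places $\chi(A,L'/K)_p$ in $\ker(\pi_{K'})$. However, $\mathcal{T}_{A,L/K}$ in (\ref{calT def}) is built from $\ker(\pi_{K'})_{\rm tor}$, not from all of $\ker(\pi_{K'})$, and the latter is in general strictly larger. Your proposal jumps from ``modulo $\ker(\pi_{K'})$'' to ``modulo $\pi^{G'}_G(\ker(\pi_{K'})_{\rm tor})$'' without justification. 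What is missing is an independent proof that $\chi(A,L'/K)_p$ is \emph{torsion} in $K_0(\ZZ_p[G'],\QQ_p[G'])$.

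The paper supplies this via a second, parallel application of the syntomic comparison (Proposition~\ref{stronger prop}), this time with $\mathfrak{N}_p$ taken to be a \emph{maximal} $\ZZ_p$-order $\mathfrak{M}_p$ in $\QQ_p[G']$. The point is that maximal orders are hereditary, so every cohomologically bounded complex with finitely generated cohomology is automatically perfect over $\mathfrak{M}_p$; hence the hypothesis of Proposition~\ref{perfect adapt} is satisfied with no tameness or semistability assumption whatsoever, and the comparison runs over $\mathfrak{M}_p$-coefficients for the full group $G'$. This shows that $\chi(A,L'/K)_p$ dies in $K_0(\mathfrak{M}_p,\QQ_p[G'])$, i.e.\ is torsion. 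Only after combining this torsion statement with the tame-level vanishing does one land in $\ker(\pi_{K'})_{\rm tor}$ as required. Your sketch omits this maximal-order step entirely.

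Two minor remarks. First, the order-of-vanishing statement is actually proved in the paper via the $p$-adic rigid cohomology route (Theorem~\ref{order-leading}(i)) rather than the $\ell$-adic route you sketch, though either works. Second, the sign $a_{A,L,\ell}$ entering $\chi^{\rm sgn}_G(A)_\ell$ comes from Schneider's Bockstein comparison (\ref{ell sign}), not from the positivity analysis of Proposition~\ref{prop:positive}.
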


\begin{remark}\label{sha finite rem}{\em  It is proved by Kato and Trihan in \cite{KT} that $\sha(A/L)$ is finite if and only if at least one of its $\ell$-primary components is finite. Thus, under the hypotheses of Theorem \ref{thm:thm1}, we can (and do) assume, without further comment, that $\sha(A/L)$ is finite (and hence that Conjecture \ref{conj:ebsd}(ii) is valid).}
\end{remark}

\begin{remark}{\em The main result that we prove here is, in principle, stronger than Theorem \ref{thm:thm1} but is more technical to state (for more details see Remark \ref{stronger main} below). One can also provide further evidence in support of Conjecture \ref{conj:ebsd} in the setting of generically ordinary abelian varieties, and we hope to discuss this elsewhere. }\end{remark}

\begin{remark}{\em Assume $G$ is a $p$-group, that the groups $A(K)[p]$ and $A^t(K)[p]$ both vanish and that some $\ell$-primary component of $\sha(A/L)$ is finite (where $\ell$ can be different from $p$). Then Theorem  \ref{thm:thm1} combines with Proposition \ref{cons prop 2} to imply the unconditional validity of the relations in Proposition \ref{cons prop 2}(i).}\end{remark}

In special cases it is possible to describe $\mathcal{T}_{A,L/K}$ explicitly and hence to make Theorem \ref{thm:thm1}(ii) much more concrete.

For example, if the sets $\Sigma_1$ and $\Sigma_2$ that occur in (\ref{calT def}) are both empty, then $\mathcal{T}_{A,L/K}$ is equal to $K_0(\ZZ_p[G], \QQ_p[G])_{\rm tor}$. On the other hand, if $A$ is semistable over $K$ and $L/K$ is tamely ramified, then the field $K'=K$ belongs to $\Sigma_1$ and is such that $G = G'= H'$ and $P'$ is trivial and so $\mathcal{T}_{A,L/K}$ vanishes. Hence, in the latter case, Theorem \ref{thm:thm1} has the following more explicit consequence.

\begin{corollary} Assume that $A$ is semistable, that $L/K$ is tamely ramified and that some $\ell$-primary component of $\sha(A/L)$ is finite. Then Conjecture \ref{conj:ebsd} is unconditionally valid. \end{corollary}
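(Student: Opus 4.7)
The proof will be a direct deduction from Theorem~\ref{thm:thm1}: once we show that the obstruction group $\mathcal{T}_{A,L/K}$ vanishes under the stated hypotheses, all three parts of Conjecture~\ref{conj:ebsd} follow immediately from Theorem~\ref{thm:thm1}(i)-(ii).

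The key observation is that the definition (\ref{calT def}) of $\mathcal{T}_{A,L/K}$ involves an intersection over all finite Galois extensions $K'/K$ over which $A$ becomes semistable. Since by hypothesis $A$ is already semistable over $K$, the choice $K' = K$ is permissible, and it suffices to show that the single term $\pi^{G'}_{G}(\ker(\pi_{K'})_{\mathrm{tor}})$ associated with this choice is trivial.

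With $K' = K$, the composite $L' = LK'$ equals $L$, and therefore $G' = \Gal(L'/K) = G$ while $H' = \Gal(L'/K') = \Gal(L/K) = G$, so that $H' = G'$. Because $L/K$ is tamely ramified, every inertia subgroup of $H' = G$ at a place of $K' = K$ has order prime to $p$, so its Sylow $p$-subgroup is trivial; consequently the normal subgroup $P'$ of $H'$ that they generate is itself trivial. It follows that the map
\[
\pi_{K'} = \pi^{G'}_{H'/P'} \colon K_0(\ZZ_p[G],\QQ_p[G]) \to K_0(\ZZ_p[G],\QQ_p[G])
\]
is the composite of the identity restriction-of-scalars map with the coinflation for the trivial quotient $G \twoheadrightarrow G/1 = G$, and hence is itself the identity. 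In particular $\ker(\pi_{K'}) = 0$, so $\pi^{G'}_{G}(\ker(\pi_{K'})_{\mathrm{tor}}) = 0$ and therefore $\mathcal{T}_{A,L/K} = 0$.

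Finally, by hypothesis the $\ell$-primary component of $\sha(A/L)$ is finite for some prime $\ell$, so Theorem~\ref{thm:thm1}(i) gives both Conjecture~\ref{conj:ebsd}(i) and Conjecture~\ref{conj:ebsd}(ii), while Theorem~\ref{thm:thm1}(ii) combined with the vanishing of $\mathcal{T}_{A,L/K}$ established above gives Conjecture~\ref{conj:ebsd}(iii) as an equality in $K_0(\ZZ[G],\RR[G])$. There is no genuine obstacle here beyond correctly unwinding the definition (\ref{calT def}); the real content lies in Theorem~\ref{thm:thm1} itself.
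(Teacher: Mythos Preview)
Your proof is correct and follows exactly the same approach as the paper: the paper notes (in the paragraph immediately preceding the corollary) that semistability of $A$ over $K$ permits the choice $K'=K$, whence $G'=G$, and that tame ramification of $L/K$ forces $P'$ to be trivial, so $\pi_{K'}$ is the identity and $\mathcal{T}_{A,L/K}$ vanishes. Your write-up simply unwinds these observations in more detail.
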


As far as we are aware, this result gives the first verification, modulo only the assumed finiteness of Tate-Shafarevich groups, of a refined version of the Birch-Swinnerton-Dyer conjecture in the context of ramified extensions.

\section{Preliminary results}\label{sect: preliminary}

In this section we first prove a purely algebraic result that is important for several subsequent arguments.

We then verify that the statement of Conjecture \ref{conj:ebsd} is consistent in certain key respects (as promised in Remark \ref{consistent lemma}).

Finally we use a result of Schneider to give a reinterpretation of the conjecture that plays an essential role in the proof of Theorem \ref{thm:thm1}.

\subsection{A result in $K$-theory} The following purely algebraic observation will underpin the proof of several subsequent results.

\begin{proposition}\label{prop:triangle} Let $R$ be a Dedekind domain with field of fractions $F$ and $\mathfrak{A}$ an $R$-order in a finite dimensional semisimple $F$-algebra $A$.

We suppose to be given exact triangles in $D^{\rm perf}(\mathfrak{A})$ of the form
\begin{equation}\label{triangles} C_\theta \to C_1 \xrightarrow{\theta} C_2 \to C_\theta[1]\,\,\,\text{ and }\,\,\, C_\phi \to C_1 \xrightarrow{\phi} C_2 \to C_\phi[1]\end{equation}
that satisfy all of the following conditions.
\begin{itemize}
\item[(a)] In each degree $i$ there are natural identifications $F\otimes_R H^i(C_1) = F\otimes_R H^i(C_2)$, with respect to which one has

\item[(b)] the composite tautological homomorphism of $A$-modules
\[ F\otimes_R\ker(H^i(\theta)) \subseteq F\otimes_R H^i(C_1) = F\otimes_R H^i(C_2)\to F\otimes_R{\rm cok}(H^i(\theta))\]
is bijective, and
\item[(c)] the map $H^i(\phi)$ induces the identity homomorphism on $F\otimes_R H^i(C_1) = F\otimes_R H^i(C_2)$.
\end{itemize}

Then the following claims are valid.

\begin{itemize}
\item[(i)] The bijectivity of the maps in (b) combines with the first triangle in (\ref{triangles}) to induce a canonical morphism
 $\tau_\theta: {\rm Det}_{A}(F\otimes_{R}C_\theta) \cong {\rm Det}_A(0)$ of (non-abelian) determinants.
\item[(ii)] In each degree $i$ the homomorphism $H^i(\theta)$ induces an automorphism $H^i(\theta)^{\diamond}$ of any $A$-equivariant complement to $F\otimes_R\ker(H^i(\theta))$ in $F\otimes_R H^i(C_1)$ in such a way that ${\rm Nrd}_{A}(H^i(\theta)^\diamond)$ is independent of the choice of complement.
\item[(iii)] The complex $F\otimes_RC_\phi$ is acyclic.
\item[(iv)]  In $K_0(\mathfrak{A},A)$ one has
\[ \chi_{\mathfrak{A}}(C_\theta,\tau_\theta) - \chi_{\mathfrak{A}}(C_\phi,0) = \partial_{\mathfrak{A},F}({\prod}_{i\in \ZZ}(H^i(\theta)_F^\diamond)^{(-1)^i}),\]
where we identify each automorphism $H^i(\theta)_F^\diamond$ with the associated element of $K_1(A)$.
\end{itemize}
\end{proposition}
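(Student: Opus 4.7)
I would address (iii) by tensoring the second triangle in (\ref{triangles}) with $F$: the long exact sequence in cohomology combined with condition (c) shows that $H^i(F \otimes_R \phi)$ is the identity map (under the identifications of (a)) and hence an isomorphism, so $F \otimes_R C_\phi$ is acyclic. For (ii), set $V_i := F \otimes_R H^i(C_1) = F \otimes_R H^i(C_2)$, $K_i := F \otimes_R \ker(H^i(\theta))$ and $I_i := F \otimes_R \image(H^i(\theta))$; condition (b) is equivalent to the direct sum decomposition $V_i = K_i \oplus I_i$. For any $A$-equivariant complement $W_i$ of $K_i$ in $V_i$, the map $H^i(\theta)^\diamond$ is the composite $W_i \hookrightarrow V_i \xrightarrow{H^i(\theta)_F} V_i \twoheadrightarrow W_i$, where the final arrow is projection along $K_i$. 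This composite is the transport, via the canonical isomorphism $W_i \cong V_i/K_i$, of the endomorphism of $V_i/K_i$ induced by $H^i(\theta)_F$, so its reduced norm does not depend on the choice of $W_i$.

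For (i), the long exact sequence of cohomology associated to the $\theta$-triangle (after tensoring with $F$) gives short exact sequences $0 \to F \otimes_R \cok(H^{i-1}(\theta)) \to F \otimes_R H^i(C_\theta) \to K_i \to 0$, in which (b) identifies the leftmost term with $K_{i-1}$. Hence $\Det_A(F \otimes_R C_\theta) \cong \bigotimes_i (\Det_A(K_{i-1}) \otimes \Det_A(K_i))^{(-1)^i}$, and this tensor product telescopes canonically to $\Det_A(0)$, furnishing $\tau_\theta$.

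For (iv), I would replace $C_1$ and $C_2$ by bounded complexes of finitely generated projective $\mathfrak{A}$-modules and represent both $\theta$ and $\phi$ by genuine chain maps. With these choices $C_\theta \simeq {\rm Cone}(\theta)[-1]$ and $C_\phi \simeq {\rm Cone}(\phi)[-1]$ share the same underlying graded $\mathfrak{A}$-modules $C_1^i \oplus C_2^{i-1}$. Consequently $\Det_\mathfrak{A}(C_\theta)$ and $\Det_\mathfrak{A}(C_\phi)$ coincide as graded invertible $\mathfrak{A}$-modules, both equal to $\Det_\mathfrak{A}(C_1) \otimes \Det_\mathfrak{A}(C_2)^{-1}$. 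Under this common identification, $\tau_\theta$ and the canonical trivialization of $\Det_A(F \otimes_R C_\phi)$ from (iii) are morphisms to $\Det_A(0)$ from the same object, so their ratio defines an element of $K_1(A)$. The standard relation $\chi_\mathfrak{A}(C, h') - \chi_\mathfrak{A}(C, h) = \partial_{\mathfrak{A},F}(h' \circ h^{-1})$ from the Fukaya-Kato formalism then reduces (iv) to identifying this ratio with $\prod_i (H^i(\theta)_F^\diamond)^{(-1)^i}$ in $K_1(A)$.

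I would establish this last equality by computing degree by degree using the splittings $V_i = K_i \oplus W_i$: the contribution of the acyclicity trivialization is the identity since $H^i(\phi)_F = \id$ by (c), whereas the contribution of $\tau_\theta$ comes from the map on $V_i/K_i$ induced by $H^i(\theta)_F$, which under the identification $V_i/K_i \cong W_i$ is precisely $H^i(\theta)_F^\diamond$; taking the alternating product of reduced norms over $i$ yields the desired element. The main obstacle is the careful bookkeeping of signs in the cone construction and the precise identification of $\tau_\theta$ and the acyclicity trivialization as morphisms on the common determinant $\Det_\mathfrak{A}(C_1) \otimes \Det_\mathfrak{A}(C_2)^{-1}$; in particular, one must confirm that it is the automorphism $H^i(\theta)^\diamond$ of the complement $W_i$ that appears, and not some variant such as the iso $H^i(\theta)_F|_{W_i} : W_i \to I_i$, whose reduced norm would differ by a choice-dependent factor.
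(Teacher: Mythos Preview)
Your treatment of claims (i), (ii) and (iii) is essentially the same as the paper's and is correct.

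For claim (iv) your strategy diverges from the paper's, and while the reduction you propose is sound, the step you flag as ``the main obstacle'' is exactly where the paper does all of the real work, and your sketch does not carry it out. You are right that after choosing projective representatives and chain maps, ${\rm Cone}(\theta')[-1]$ and ${\rm Cone}(\phi')[-1]$ have the same terms, so that $\Det_\mathfrak{A}(C_\theta)$ and $\Det_\mathfrak{A}(C_\phi)$ coincide as objects of the virtual category and the difference of Euler characteristics is $\partial_{\mathfrak{A},F}$ of the ratio of trivialisations. But your claim that this ratio can be computed ``degree by degree using the splittings $V_i = K_i\oplus W_i$'' hides the genuine difficulty: the passage-to-cohomology isomorphism for $\Det_A(F\otimes_R C_\theta)$ and the triangle isomorphism $\Det_A(F\otimes_R C_\theta)\cong \Det_A(F\otimes_R C_1)\otimes\Det_A(F\otimes_R C_2)^{-1}$ both involve the long exact sequence of the $\theta$-triangle, which mixes degrees via the connecting maps, and the telescoping in the construction of $\tau_\theta$ is global in $i$. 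A naive degreewise computation produces extra sign factors of the type $(-1)^{r_is_i}$ (with $r_i=\dim K_i$, $s_i=\dim W_i$) that must be shown to cancel, and it is not at all clear from your sketch that the outcome is $H^i(\theta)^\diamond$ rather than, say, the isomorphism $H^i(\theta)|_{W_i}:W_i\to I_i$ composed with some identification of $I_i$ with $W_i$; you yourself raise this concern without resolving it.

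The paper avoids computing this ratio directly. Instead it manufactures a new exact triangle
\[
C_\theta\oplus{\rm Cone}(\phi') \to P_1\oplus{\rm Cone}(\phi') \to {\rm Cyl}(\theta') \to
\]
in which the middle two complexes now have \emph{identical terms in each degree} (not just identical cone terms). Since $F\otimes_R C_\phi$ is acyclic one has $\chi_\mathfrak{A}(C_\theta\oplus{\rm Cone}(\phi'),\tau_\theta)=\chi_\mathfrak{A}(C_\theta,\tau_\theta)-\chi_\mathfrak{A}(C_\phi,0)$, so this reduces the problem to proving the single-triangle identity $\chi_\mathfrak{A}(C_\theta,\tau_\theta)=\delta_\mathfrak{A}\bigl(\prod_i{\rm Nrd}_A(H^i(\theta)^\diamond)^{(-1)^i}\bigr)$ under the additional hypothesis $P_1^i=P_2^i$. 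The paper then modifies $\theta$ by a homotopy so that it is injective on boundaries in each degree, and finishes by an induction on the number of non-zero terms of $P_1$, mimicking the argument of \cite[Prop.~3.1]{Bu4}. These two reductions --- the cylinder/cone trick to force $P_1^i=P_2^i$, and the homotopy plus induction --- are the missing ingredients in your proposal, and without them (or an equivalent device) the computation you gesture at cannot be completed.
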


\begin{proof} If $M$ denotes either an $R$-module or a complex of $R$-modules, then we abbreviate $F\otimes_RM$ to $M_F$.

To construct a morphism $\tau_\theta$ as in claim (i) we note first that the long exact cohomology sequence of the left hand exact triangle in (\ref{triangles}) gives in each degree $i$ a short exact sequence of $\mathfrak{A}$-modules 
\[  0 \to {\rm cok}(H^{i-1}(\theta)) \to H^i(C_\theta) \to \ker(H^i(\theta)) \to 0.\]

Then, upon tensoring these exact sequences with $F$ (over $R$), applying the determinant functor ${\rm Det}_A$ and then taking account of the isomorphisms given in (b) one obtains isomorphisms of (non-abelian) determinants
\begin{align}\label{passage} {\rm Det}_A(H^i(C_\theta)_F) &\cong {\rm Det}_A({\rm cok}(H^{i-1}(\theta))_F)\cdot {\rm Det}_A({\rm ker}(H^{i}(\theta))_F)\\
                            &\cong {\rm Det}_A({\rm cok}(H^{i-1}(\theta))_F)\cdot {\rm Det}_A({\rm cok}(H^{i}(\theta))_F).\notag\end{align}

We then define the morphism $\tau_\theta$ in claim (i) to be the composite
\begin{align*}  {\rm Det}_A((C_\theta)_F) &\cong {\prod}_{i \in \ZZ}{\rm Det}_A(H^i(C_\theta)_F)^{(-1)^i}\\
 &\cong {\prod}_{i \in \ZZ}[{\rm Det}_A({\rm cok}(H^{i-1}(\theta))_F)\cdot {\rm Det}_A({\rm cok}(H^{i}(\theta))_F)]^{(-1)^i}\\
 &\cong {\prod}_{i\in \ZZ}[{\rm Det}_A({\rm cok}(H^{i}(\theta))_F)^{-1}\cdot {\rm Det}_A({\rm cok}(H^{i}(\theta))_F)]^{(-1)^i}\\
 &\cong {\prod}_{i\in \ZZ}{\rm Det}_A(0)^{(-1)^i}\\
 &= {\rm Det}_A(0).\end{align*}
Here the first map is the canonical `passage to cohomology' map, the second is induced by the maps (\ref{passage}) in each degree $i$, the third by the obvious rearrangement of terms and the fourth from the canonical morphisms 
\[ {\rm Det}_A({\rm cok}(H^{i}(\theta))_F)^{-1}\cdot {\rm Det}_A({\rm cok}(H^{i}(\theta))_F) \cong {\rm Det}_A(0).\]

Claim (ii) is a straightforward consequence of the condition (b) and claim (iii) follows directly upon combining the long exact cohomology sequence of the second triangle in (\ref{triangles}) with the condition (c).

Finally, to prove claim (iv) we fix bounded complexes of finitely generated projective $\mathfrak{A}$-modules $P_1$ and $P_2$ that are respectively isomorphic in $D(\mathfrak{A})$ to $C_1$ and $C_2$. Then the morphisms $\theta$ and
$\phi$ are represented by morphisms of complexes of $\mathfrak{A}$-modules of the form $\theta':P_1\to P_2$ and $\phi': P_1\to P_2$.

The key to our argument is then to consider the exact triangle
\begin{equation}\label{better triangle} C_\theta\oplus {\rm Cone}(\phi') \xrightarrow{(\kappa,{\rm id})} P_1\oplus {\rm Cone}(\phi') \xrightarrow{(\kappa',0)} {\rm Cyl}(\theta')\to (C_\theta\oplus {\rm Cone}(\phi'))[1]\end{equation}
in $D(\mathfrak{A})$ where $\kappa$ is the morphism $C_\theta\to P_1$ induced by the first triangle in (\ref{triangles}) and $\kappa'$ the morphism $P_1 \to {\rm Cyl}(\theta')$ induced by $\theta'$ and the natural quasi-isomorphism ${\rm Cyl}(\theta') \cong P_2$.

We note first that this triangle satisfies the analogues of conditions (a) and (b) (with $C_1$, $C_2$ and $\theta$ replaced by $P_1\oplus {\rm Cone}(\phi')$, ${\rm Cyl}(\theta')$ and $(\kappa',0)$) and, in addition, that in each degree $i$ one has $(P_1\oplus {\rm Cone}(\phi'))^i = {\rm Cyl}(\theta')^i$.

%
Further, the acyclicity of $F\otimes_R C_\phi$ implies that
\[ \chi_{\mathfrak{A}}(C_\theta\oplus {\rm Cone}(\phi'),\tau_\theta) = \chi_{\mathfrak{A}}(C_\theta,\tau_\theta) + \chi_{\mathfrak{A}}(C_\phi[1],0) =  \chi_{\mathfrak{A}}(C_\theta,\tau_\theta) - \chi_{\mathfrak{A}}(C_\phi,0),\]
where the first equality is true because ${\rm Cone}(\phi')$ is isomorphic to $C_\phi[1]$.

In particular, after replacing the first triangle in (\ref{triangles}) by (\ref{better triangle}), we are reduced to proving that if $C_1$ and $C_2$ are represented by bounded complexes of finitely generated projective $\mathfrak{A}$-modules $P_1$ and $P_2$ with $P_1^i = P_2^i$ in each degree $i$, then the conditions (a), (b) and (c) combine to imply an equality
\begin{equation}\label{new version} \chi_{\mathfrak{A}}(C_\theta,\tau_\theta) = \delta_{\mathfrak{A}}({\prod}_{i \in \ZZ}{\rm Nrd}_{A}(H^i(\theta)_F^\diamond)^{(-1)^i}),\end{equation}
where $\delta_{\mathfrak{A}}$ denotes the composite $\partial_{\mathfrak{A},F}\circ ({\rm Nrd}_{A})^{-1}: \im({\rm Nrd}_{A}) \to K_0(\mathfrak{A},A)$.

To do this we note first that, under these conditions, an easy downward induction on $i$ (using hypothesis (c)) implies that in each degree $i$ the $F$-spaces spanned by the groups of boundaries $B^i(P_1)$ and $B^i(P_2)$ have the same dimension.

If necessary, we can then also change $\theta$ by a homotopy (without changing conditions (b)) in order to ensure that, in each degree $i$, the restriction of
$\theta^{i+1}$ is injective on $B^i(P_1)$ and hence induces an isomorphism $F\otimes_R B^i(P_1) \cong F\otimes_R B^i(P_2)$ (for details of such an argument see, for example, the proof of  \cite[Lem. 7.10]{ckps}).

Having made these constructions, one can then simply mimic the argument of \cite[Prop. 3.1]{Bu4} in order to prove the required equality (\ref{new version}) by using induction on the number of non-zero terms in $P_1$. \end{proof}

\subsection{Consistency checks}\label{consistent sect}

\begin{proposition}\label{classical bsd} If $L=K$, then Conjecture \ref{conj:ebsd} recovers the classical Birch and Swinnerton-Dyer conjecture for $A$.
 \end{proposition}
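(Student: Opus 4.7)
My plan is as follows. When $L=K$ the group $G$ is trivial, so $\RR[G]=\RR$ is commutative, the reduced norm is the identity, and $K_0(\ZZ,\RR)$ is canonically identified with $\RR^\times/\{\pm 1\}$ via $\partial_{\ZZ,\RR}$. Under this identification the left hand side of Conjecture \ref{conj:ebsd}(iii) becomes the class of $L^*_U(A, 1)$, while the sign term $\chi_G^{\rm sgn}(A)$ vanishes, since each of its $\ell$-components is the class of $\pm 1\in\ZZ_\ell^\times$ in $K_0(\ZZ_\ell,\QQ_\ell)\cong\QQ_\ell^\times/\ZZ_\ell^\times$ and so is zero. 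It thus suffices to compute the scalar representing $\chi_G^{\rm BSD}(A, V_K)-\chi_G^{\rm coh}(A, V_K)$ and compare it with the classical expression for $L^*_U(A, 1)$.

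I would then analyse $\chi_G^{\rm BSD}(A, V_K)$ using Proposition \ref{prop:perfect}(ii), which describes the cohomology of ${\rm SC}_{V_K}(A,K/K)$ as $A^t(K)$ in degree zero, a finitely generated group of rank $r:=\rank_{\ZZ}(A(K))$ containing $X_{\ZZ}(A/K)$ with finite index in degree one, and a finite group in degree two. Combining Remark \ref{eq:sel1} with the assumed finiteness of $\sha(A/K)$, the alternating product of the torsion orders of these cohomology groups evaluates to $|\sha(A/K)|/(|A(K)_{\rm tors}|\cdot|A^t(K)_{\rm tors}|)$, corrected by the finite kernel and cokernel of the map $A(K)_{\rm tors}\to\bigoplus_{v\notin U}A(K_v)/V_v$ appearing in Proposition \ref{prop:arithcoho}; meanwhile the trivialisation $h_{A,K}^{{\rm NT},{\rm det}}$ contributes $|\det(h^{\rm NT}_{A,K})|=R(A/K)$. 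Assembling these via the non-abelian determinant formalism, I expect the scalar associated to $\chi_G^{\rm BSD}(A, V_K)$ to equal, modulo sign,
$$\frac{R(A/K)\cdot|\sha(A/K)|\cdot\prod_{v\notin U}|A(K_v)/V_v|}{|A(K)_{\rm tors}|\cdot|A^t(K)_{\rm tors}|}.$$

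The coherent term $\chi_G^{\rm coh}(A, V_K)$ is represented by the perfect complex $R\Gamma(X,\mathcal{L})^*$ of $\FF_p$-vector spaces, whose image in $\RR^\times/\{\pm 1\}$ is simply $p^{-\chi(X,\mathcal{L})}$. The main obstacle is then to verify that the quotient of the product $\prod_{v\notin U}|A(K_v)/V_v|$ by $p^{\chi(X,\mathcal{L})}$ coincides, up to sign, with the classical combination $q^{-\chi(X,{\rm Lie}(\mathcal{A}))}\cdot\prod_{v\notin U}c_v\cdot P_v(A,1)^{-1}$ appearing, alongside $R(A/K)|\sha(A/K)|/(|A(K)_{\rm tors}|\cdot|A^t(K)_{\rm tors}|)$, in Tate's formulation of BSD for $U$-truncated $L$-series over function fields. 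This identification would follow by combining the defining properties of the families $V_K$ and $W_K=\mathcal{L}_K$ from Lemma \ref{lem:KT6.4} --- in particular their mutual compatibility under the formal-group exponential --- with the sheaf-theoretic comparison between $\mathcal{L}$ and ${\rm Lie}(\mathcal{A})$ on $X$, following the computations of Kato and Trihan in \cite[\S 6]{KT} that extract the classical BSD formula from their Tamagawa-type expression. Once assembled, the resulting scalar identity in $\RR^\times/\{\pm 1\}$ is precisely the classical Birch and Swinnerton-Dyer conjecture for $A$ over $K$.
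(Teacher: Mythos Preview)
Your overall structure matches the paper's proof closely: trivialise $G$, identify $K_0(\ZZ,\RR)\cong\RR^\times/\{\pm 1\}$, note that $\chi_G^{\rm sgn}(A)$ vanishes, and compute $\chi_G^{\rm BSD}(A,V_K)$ via the cohomology description of Propositions \ref{prop:arithcoho} and \ref{prop:perfect}(ii). Your expression
\[
\frac{R(A/K)\cdot|\sha(A/K)|\cdot\prod_{v\notin U}[A(K_v):V_v]}{|A(K)_{\rm tors}|\cdot|A^t(K)_{\rm tors}|}
\]
is exactly what the paper obtains, and $p^{-\chi(X,\mathcal{L})}$ is indeed the scalar representing $\chi_G^{\rm coh}(A,V_K)$.

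Where you diverge is in the final step. You frame as the ``main obstacle'' matching $\prod_{v\notin U}[A(K_v):V_v]\cdot p^{\chi(X,\mathcal{L})}$ to Tate's combination $q^{-\chi(X,{\rm Lie}(\mathcal{A}))}\prod_{v}c_v\,P_v(A,1)^{-1}$, and you point to \cite[\S6]{KT}. This is an unnecessary detour, and the reference is off: the relevant computation is \cite[3.7.3]{KT}, which shows directly that
\[
\frac{\#H^1(X,\mathcal{L})}{\#H^0(X,\mathcal{L})} \;=\; \frac{\prod_{v\notin U}[A(K_v):V_v]}{{\rm vol}\bigl(\prod_{v\notin U}A(K_v)\bigr)}.
\]
The factors $\prod_{v\notin U}[A(K_v):V_v]$ then cancel between $\chi_G^{\rm BSD}$ and $\chi_G^{\rm coh}$, leaving the volume term in the final formula, which is precisely the form of BSD stated in \cite[\S1.8]{KT}. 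No comparison with the Tamagawa-number formulation is needed, and \S6 of \cite{KT} (which treats equivariant descent) plays no role here.
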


\begin{proof} We assume $\sha(A/K)$ is finite and abbreviate ${\rm SC}_{V_K}(A,K/K)$ to ${\rm SC}_{V_K}$.

Now, if $L=K$, then $G$ is the trivial group ${\rm id}$ and $K_0(\ZZ[G],\RR[G])$ identifies with the multiplicative group $\mathbb{R}^\times/\{\pm 1\}$. In addition, upon unwinding the definition of Euler characteristic one finds that, with respect to the latter identification, there is an equality
\begin{equation}\label{explicit trivial} \chi_{\rm id}^{\rm BSD}(A,V_K) \equiv {\rm disc}(h^{\rm NT}_{A,K})\cdot {\prod}_{i \in \ZZ}\#(H^i({\rm SC}_{V_K})_{\rm tor})^{(-1)^{i+1}} \,\,\text{(mod }\,\,\pm 1)\end{equation}
where ${\rm disc}(h^{\rm NT}_{A,K})$ denotes the discriminant of the pairing $h^{\rm NT}_{A,K}$.

To compute the above product we write $\theta$ for the natural map $A(K)_{\rm tor} \rightarrow {\bigoplus}_{v \notin U} A(K_v)/V_v$. Then, from Propositions \ref{prop:arithcoho} and \ref{prop:perfect}, one finds that there are equalities $H^0({\rm SC}_{V_K}) = A^t(K)$ and $H^2({\rm SC}_{V_K}) = \ker(\theta)^*$ and a short exact sequence of the form
\[
0 \rightarrow X_{\mathbb{Z}}(A/K) \rightarrow H^1({\rm SC}_{V_K}) \rightarrow {\rm cok}(\theta)^*\rightarrow 0.
\]

Upon combining these observations with the natural exact sequences
\[ 0 \to \ker(\theta) \to A(K)_{\rm tor} \rightarrow {\bigoplus}_{v \notin U} A(K_v)/V_v \to \cok(\theta) \to 0\]
and
\[
0 \rightarrow \sha(A/K)^* \rightarrow X_{\mathbb{Z}}(A/K) \rightarrow \Hom_{\mathbb{Z}}(A(K), \mathbb{Z}) \rightarrow 0
\]
one computes that
\begin{equation}\label{explicit trivial 2} {\rm disc}(h^{\rm NT}_{A,K})\cdot {\prod}_{i \in \ZZ}\#(H^i({\rm SC}_{V_K})_{\rm tor})^{(-1)^{i+1}} = \frac{\# \sha(A/K){\rm disc}
(h^{\rm NT}_{A,K})}{\#A(K)_{\rm tor}\#A^t(K)_{\rm tor}} {\prod}_{v \notin U} [A(K_v):V_v].\end{equation}

On the other hand, from \cite[3.7.3]{KT}, one finds that
\begin{equation}\label{explicit trivial 3} \chi^{\rm coh}_{\rm id}(V_K, K/K) \equiv \frac{\#H^1(X,\mathcal{L})}{\#H^0(X,\mathcal{L})} \equiv \frac{{\prod}_{v \notin U}[A(K_v):V_v]}{{\rm vol}({\prod}_{v \notin U}A(K_v))}\,\,\text{(mod }\,\,\pm 1),\end{equation}
where the `volume term' here is as defined in \cite[\S1.7]{KT}.

Thus, since $\chi^{\rm sgn}_{\rm id}(A)$ is clearly trivial, the expressions (\ref{explicit trivial}), (\ref{explicit trivial 2}) and (\ref{explicit trivial 3}) combine to show the equality in Conjecture \ref{conj:ebsd}(iii) is equivalent to an equality
\[ L^*_U(A,1) \equiv \pm\frac{\# \sha(A/K) {\rm disc}(h^{\rm NT}_{A,K})}{\#A(K)_{\rm tor}\#A^t(K)_{\rm tor}} {\rm vol}({\prod}_{v \notin U} A(K_v)).\]

Since $L^*_U(A,1)$ is known to be a strictly positive real number (by Proposition \ref{prop:positive}(ii)), this equality is precisely the form of the Birch and Swinnerton-Dyer Conjecture that is discussed in \cite[\S1.8]{KT}. \end{proof}

\begin{proposition}\label{V independent} The validity of Conjecture \ref{conj:ebsd}(iii) is independent of the choice of the family of subgroups $V_L$. \end{proposition}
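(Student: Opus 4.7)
The plan is to show that $\chi_G^{\rm BSD}(A,V_L)$ and $\chi_G^{\rm coh}(A,V_L)$ both depend on $V_L$ in exactly the same way, so that their difference, which (together with $\chi_G^{\rm sgn}(A)$, manifestly independent of $V_L$) is the quantity appearing in Conjecture \ref{conj:ebsd}(iii), is invariant. Given two choices $V_L=(V'_w)$ and $\widetilde V_L=(\widetilde V'_w)$ with corresponding Lie-theoretic families $W_L=(W'_w)$ and $\widetilde W_L=(\widetilde W'_w)$ as in Lemma \ref{lem:KT6.4}, I would first reduce to the case $\widetilde V'_w\subseteq V'_w$ and $\widetilde W'_w\subseteq W'_w$ for every $w\notin U_L$ by replacing each pair of families with a common refinement. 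This uses the intersections $V'_w\cap\widetilde V'_w$ and $W'_w\cap\widetilde W'_w$, and requires choosing the underlying divisor $E$ large enough (so that its coefficient at each $v$ majorizes those attached to $V_L$ and $\widetilde V_L$) to ensure that the cohomological triviality of \ref{lem:KT6.4:CohTriv} is preserved.

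Next, applying the octahedral axiom to the inclusion of the local summands in the mapping fibre (\ref{fibre morphism}), together with the elementary computation $M\otimes_{\ZZ}^{\mathbb L}\QQ/\ZZ\simeq M[1]$ for a finite abelian group $M$, produces a canonical exact triangle in $D^{\rm perf}(\ZZ[G])$ of the form $P^{\rm ar}\to {\rm SC}_{V_L}\to {\rm SC}_{\widetilde V_L}\to P^{\rm ar}[1]$ with $P^{\rm ar}$ concentrated in degree $2$ and $H^{2}(P^{\rm ar})$ canonically isomorphic to $\big(\bigoplus_{v\notin U}\prod_{w\mid v}V'_w/\widetilde V'_w\big)^*$. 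Similarly, the short exact sequence of coherent sheaves $0\to\widetilde{\mathcal L}_L\to\mathcal L_L\to\overline{\mathcal L}_L\to 0$, in which $\overline{\mathcal L}_L$ is a skyscraper supported on $X\setminus U$, yields a triangle $P^{\rm coh}\to R\Gamma(X,\mathcal L_L)^*\to R\Gamma(X,\widetilde{\mathcal L}_L)^*\to P^{\rm coh}[1]$ in $D^{\rm perf}(\mathbb{F}_p[G])$ with $P^{\rm coh}$ concentrated in degree $0$ and $H^{0}(P^{\rm coh})\cong\big(\bigoplus_{v\notin U}\prod_{w\mid v}W'_w/\widetilde W'_w\big)^*$. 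Since $\QQ\otimes_{\ZZ}P^{\rm ar}=0$, the rational scalar extension of the first triangle gives a canonical identification $\RR\otimes_{\ZZ}{\rm SC}_{V_L}\cong\RR\otimes_{\ZZ}{\rm SC}_{\widetilde V_L}$ through which $h^{{\rm NT},\det}_{A,L}$ is transported unchanged. Additivity of Euler characteristics therefore gives
\[
\chi_G^{\rm BSD}(A,V_L)-\chi_G^{\rm BSD}(A,\widetilde V_L)=\chi_G(P^{\rm ar},0),\qquad \chi_G^{\rm coh}(A,V_L)-\chi_G^{\rm coh}(A,\widetilde V_L)=\chi_G(P^{\rm coh},0).
\]

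The final step is to establish the equality $\chi_G(P^{\rm ar},0)=\chi_G(P^{\rm coh},0)$ in $K_0(\ZZ[G],\QQ[G])$. Since both classes are local contributions place-by-place and Pontryagin duality is an exact functor preserving $K_0$-classes, it suffices to show that $\prod_{w\mid v}V'_w/\widetilde V'_w$ and $\prod_{w\mid v}W'_w/\widetilde W'_w$ have the same class in $K_0(\ZZ_p[G])$ for each $v\notin U$. I would prove this by d\'evissage along the filtrations of $V'_w$ and $W'_w$ by $\mathcal A'(\mathfrak m_w^k)$ and ${\rm Lie}(\mathcal A')(\mathfrak m_w^k)$ respectively: on each graded piece at sufficiently deep filtration level, the formal-group logarithm provides a $G_w$-equivariant isomorphism, and property \ref{lem:KT6.4:exp} of Lemma \ref{lem:KT6.4} matches the pieces of $V'_w$ (resp.\ $\widetilde V'_w$) with those of $W'_w$ (resp.\ $\widetilde W'_w$) at each level. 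Inducing up to $G$ over the places $w\mid v$ and summing the contributions over the filtration levels then yields the required equality. The main obstacle is precisely this last step, namely carrying out the d\'evissage $\ZZ[G]$-equivariantly, so that the $K_0$-classes are genuinely well-defined in $K_0(\ZZ[G],\QQ[G])$ rather than in a coarser invariant. For this, property \ref{lem:KT6.4:CohTriv} of Lemma \ref{lem:KT6.4} is essential: it guarantees that the graded pieces involved are $G$-cohomologically trivial in a compatible fashion, so that the alternating-sum identification of $K_0$-classes survives the passage from the graded pieces back to the ungraded modules.
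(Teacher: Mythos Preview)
Your approach is correct and essentially the same as the paper's: reduce to nested families $\widetilde V_L\subseteq V_L$, extract exact triangles whose third terms are the duals of $\bigoplus_w V'_w/\widetilde V'_w$ and $\bigoplus_w W'_w/\widetilde W'_w$ sitting in a single (even) degree, and then identify their Euler characteristics by a d\'evissage along the filtrations by $\cala'(\mathfrak m_w^k)$ and ${\rm Lie}(\cala')(\mathfrak m_w^k)$, using the exp/log correspondence and the cohomological triviality from Lemma~\ref{lem:KT6.4}. One minor slip: in your final paragraph ``same class in $K_0(\ZZ_p[G])$'' should read $K_0(\ZZ_p[G],\QQ_p[G])$ (the class of any finite module in the former group vanishes), but your d\'evissage argument establishes precisely this refined equality.
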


\begin{proof} It is clearly enough to show that the difference $\chi_G^{\rm BSD}(A,V_L) - \chi^{\rm coh}_G(A,V_L)$ is independent of the choice of $V_L$.

In addition, it suffices to consider replacing $V_L$ by a family of subgroups $V_L' = (V'_w)_{w \notin U_L}$ that satisfies $V_w' \subseteq V_w$ for all $w \notin U_L$.

In this case, the definition of the complexes ${\rm SC}_{V_L'}(A, L/K)$ and ${\rm SC}_{V_L}(A,L/K)$ via the (dual of the) mapping fibre of the respective morphisms (\ref{fibre morphism}) leads naturally to an exact triangle in $D^{\rm perf}(\ZZ[G])$ of the form
%
\[
{\rm SC}_{V_L}(A, L/K) \rightarrow {\rm SC}_{V_L'}(A,L/K) \rightarrow Q_1^*[-1] \rightarrow
\]
with $Q_1:= {\bigoplus}_{w \notin U_L} (V_w/V_w')$, and hence to an equality in $K_0(\ZZ[G],\RR[G])$
\begin{equation}\label{first eq} 
\chi^{\rm BSD}_G(A,V_L) - \chi^{\rm BSD}_G(A,V_L') = \chi_G(Q_1^*[0], 0).
\end{equation}

On the other hand, if $\mathcal{L}_L'$ and $\mathcal{L}_L$ are the coherent sheaves that correspond (as in \S\ref{zariski section}) to the collections $V_L'$ and $V_L$ respectively, then there is a natural short exact sequence 
\[ 0 \rightarrow \mathcal{L}' \rightarrow \mathcal{L} \rightarrow Q_2 \rightarrow 0\]
with $Q_2:= {\bigoplus}_{w \notin U_L} W_w/W'_w$. This sequence gives rise to an exact triangle in $D^{\rm perf}(\mathbb{F}_p[G])$ of the form
%
\[
R\Gamma(X, \mathcal{L}_L)^* \rightarrow R\Gamma(X, \mathcal{L}_L')^* \rightarrow Q_2^*[1] \rightarrow
\]
and hence to an equality
\begin{equation}\label{second eq} 
\chi^{\rm coh}_G(A,V_L) - \chi^{\rm coh}_G(A,V_L') = \chi_{\ZZ_p[G]}(Q_2^*[0], 0).
\end{equation}

Now, given the explicit construction of the groups $V_w$ and $V_w'$ from $W_w$ and $W_w'$, it is straightforward to show that, for both $i=1$ and $i=2$ there exists a (finite length) decreasing filtration $(Q_{i,j})_{j \ge 0}$ of the finite $\ZZ_p[G]$-module $Q_i$ such that each module $Q_{i,j}$ is c-t for $G$ and the graded modules ${\rm gr}(Q_i) := {\bigoplus}_{j \ge 0}(Q_{i,j}/Q_{i,j+1})$ are both c-t for $G$ and mutually isomorphic. This fact in turn implies that 
\begin{multline*} \chi_{\ZZ_p[G]}(Q_1^*[0], 0) = \iota_{\ZZ_p[G]}^\#\bigl(\chi_{\ZZ_p[G]}(Q_1[0], 0)\bigr) = 
\iota_{\ZZ_p[G]}^\#\bigl(\chi_{\ZZ_p[G]}({\rm gr}(Q_1)[0], 0)\bigr)\\ = \iota_{\ZZ_p[G]}^\#\bigl(\chi_{\ZZ_p[G]}({\rm gr}(Q_2)[0], 0)\bigr) = \iota_{\ZZ_p[G]}^\#\bigl(\chi_{\ZZ_p[G]}(Q_2[0], 0)\bigr) = \chi_{\ZZ_p[G]}(Q_2^*[0], 0)\end{multline*}
where the first and last equalities follow from the general result (\ref{dual eq}) and the second and fourth from a standard d\'evissage argument. These equalities then combine with (\ref{first eq}) and (\ref{second eq}) to imply the required result.
\end{proof}

\begin{proposition}\label{U independent} The validity of Conjecture \ref{conj:ebsd}(iii) is independent of the choice of $U$. \end{proposition}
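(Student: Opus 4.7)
The plan is to iterate and reduce to the case where $U' := U \setminus \{v\}$ differs from $U$ by a single place, where by assumption $v$ is unramified in $L/K$ and $A/K$ has good reduction at $v$. Since the element $\chi_G^{\rm sgn}(A)$ does not depend on $U$, only the remaining three terms of the equality in Conjecture \ref{conj:ebsd}(iii) need comparison. First, I would invoke Proposition \ref{V independent} to choose the families $V_L$ and $V_{L'}$ compatibly: their components agree at every $w \notin U_L$, and at each $w \mid v$ the component of $V_{L'}$ is taken to be $\mathcal{A}(\mathfrak{m}_w)$, as permitted by Remark \ref{good unramified}.

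With these compatible choices, the defining mapping fibre (\ref{fibre morphism}) should produce an exact triangle
\[
R\Gamma_{ar, V_{L'}}(U'_L, \mathcal{A}_{\rm tors}) \to R\Gamma_{ar, V_L}(U_L, \mathcal{A}_{\rm tors}) \to C_v
\]
in $D(\hat\ZZ[G])$, in which $C_v$ is supported at the places $w \mid v$ and is built from $\bigoplus_{w \mid v} R\Gamma_{\rm fl}(L_w, \mathcal{A}_{\rm tors})$ and $\prod_{w \mid v}\mathcal{A}(\mathfrak{m}_w) \otimes^{\mathbb{L}}_{\ZZ} \QQ/\ZZ$. Dualising, shifting and descending via Lemma \ref{prop:zzhatcom} then yields a corresponding triangle of Selmer complexes in $D^{\rm perf}(\ZZ[G])$, and hence the identity $\chi_G^{\rm BSD}(A, V_L) - \chi_G^{\rm BSD}(A, V_{L'}) = \chi_G(C_v^*[-2], 0)$. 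Analogously, since $\mathcal{L}_L$ and $\mathcal{L}_{L'}$ differ only at places above $v$, the short exact sequence $0 \to \mathcal{L}_{L'} \to \mathcal{L}_L \to Q_v \to 0$ should give $\chi_G^{\rm coh}(A, V_L) - \chi_G^{\rm coh}(A, V_{L'}) = \chi_G(Q_v^*[0], 0)$.

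On the analytic side, for each $\chi \in {\rm Ir}(G)$ the ratio $L_U(A, \chi, s)/L_{U'}(A, \chi, s)$ is precisely the local Euler factor at $v$, which is non-zero at $s = 1$ by the Deligne weight estimates already used in Proposition \ref{prop:positive}. Packaging these ratios as in Theorem \ref{lt def} produces a canonical element $\mathcal{P}_v \in K_1(\RR[G])$, in fact lying in the image of $K_1(\QQ[G])$, with $L^*_U(A_{L/K}, 1) = L^*_{U'}(A_{L/K}, 1) \cdot \mathcal{P}_v$. The proof of the proposition then reduces to verifying the identity
\[
\partial_G(\mathcal{P}_v) = \chi_G(C_v^*[-2], 0) - \chi_G(Q_v^*[0], 0)
\]
in $K_0(\ZZ[G], \RR[G])$, and by (\ref{decomp}) this may be decomposed into a sum of $\ell$-primary assertions, one for each prime $\ell$.

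For each $\ell \neq p$, by Remark \ref{flat-etale} the $\ell$-primary part of $C_v$ identifies with the compactly supported étale cohomology of $\mathcal{A}\{\ell\}$ localised at $v$, whose Euler characteristic is explicitly computed by the characteristic polynomial of Frobenius on the $\ell$-adic Tate module; a direct comparison with Grothendieck's formula (\ref{deligne}) then matches this to the $\ell$-primary part of $\partial_G(\mathcal{P}_v)$. The hard part will be the case $\ell = p$, since the $p$-part of the flat local cohomology is not directly expressible in terms of Frobenius eigenvalues. Here the auxiliary term $\chi_G(Q_v^*[0], 0)$ should enter decisively: by Lemma \ref{lem:KT6.4}(\ref{lem:KT6.4:exp}) the pro-$p$ subgroup at each $w \mid v$ is canonically linked to the Lie-algebra lattice $W_w'$, and the resulting Hodge-style comparison, following the local analysis of Kato and Trihan in \cite[\S6]{KT} that also underpins the verification of the classical case in Proposition \ref{classical bsd}, should show that the $p$-primary discrepancy on the Selmer side is exactly cancelled by the coherent contribution, producing the $p$-primary part of $\partial_G(\mathcal{P}_v)$.
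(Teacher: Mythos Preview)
Your overall strategy coincides with the paper's: reduce to removing a single place $v_0$, choose $V'_w=\mathcal{A}(\mathfrak{m}_w)$ for $w\mid v_0$ via Remark~\ref{good unramified}, obtain exact triangles comparing the two Selmer complexes and the two coherent complexes, and match the resulting local Euler characteristic against $\partial_G$ of the local Euler factor, prime by prime. The paper makes your local complexes explicit: the Selmer discrepancy is $\bigoplus_{w\mid v_0}A(k(w))^*[-1]$ and the coherent one is $\bigoplus_{w\mid v_0}{\rm Lie}(A)(k(w))[0]$, and for $\ell\neq p$ it proceeds exactly as you indicate, via the short exact sequence with $1-\varphi_p^{\deg(v_0)}$ on $T_\ell(A)\otimes_{\ZZ_\ell}\ZZ_\ell[G]$.

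The genuine gap is at $\ell=p$. Your appeal to a ``Hodge-style comparison'' from \cite[\S6]{KT} is not the right pointer: \S6 of \cite{KT} is about constructing the families $V_w,W_w$, not about computing local $p$-parts of Euler factors. What the paper actually uses is threefold. First, the Katz--Messing theorem \cite{KM} to rewrite the $p$-part of the Euler factor as ${\rm Nrd}_{\QQ_p[G]}(1-\varphi^{\deg(v_0)})$ on the crystalline cohomology $H^0_{\crys}(k(v_0)/\ZZ_p,D_{v_0})\otimes\QQ_p[G]$. Second, the syntomic identification from \cite[5.14.6]{KT}, which realises $\bigoplus_{w\mid v_0}A(k(w))\{p\}[-1]$ as the mapping fibre of $1-\varphi^{\deg(v_0)}$ between $R\Gamma_{\crys}(k(v_0)/\ZZ_p,D_{v_0}^0)\otimes\ZZ_p[G]$ and $R\Gamma_{\crys}(k(v_0)/\ZZ_p,D_{v_0})\otimes\ZZ_p[G]$; there is a parallel triangle with ${\bf 1}$ in place of $1-\varphi^{\deg(v_0)}$ whose fibre is exactly $\bigoplus_{w\mid v_0}{\rm Lie}(A)(k(w))[-1]$. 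Third, and crucially, Proposition~\ref{prop:triangle} is applied to this pair of triangles to produce the required equality (\ref{p case}). Without naming these three ingredients your $p$-part remains a hope rather than an argument; in particular the K-theoretic result Proposition~\ref{prop:triangle} is precisely the mechanism that turns the two crystalline triangles into the desired identity in $K_0(\ZZ_p[G],\QQ_p[G])$, and nothing in \cite[\S6]{KT} or in the proof of Proposition~\ref{classical bsd} substitutes for it.
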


\begin{proof} It suffices to fix $v_0$ in $U$ and consider the effect of replacing $U$ by the set $U' := U \setminus \{v_0\}$.

We fix a family $V_L = (V'_w)_{w \notin U'_L}$ of subgroups as in Lemma \ref{lem:KT6.4} and assume, following Remark \ref{good unramified}, that for each place $w$ above $v_0$ one has $V'_w = \mathcal{A}(\mathfrak{m}_w)$. We also write $V^\dagger_L$ for the associated family $(V'_w)_{w \notin U_L}$.

Then, setting $\mathcal{E}_{v_0} := L_U^*(A_{L/K}, 1)\cdot L_{U'}^*(A_{L/K}, 1)^{-1}$, it is enough for us to prove that 
%
\begin{equation}\label{needed eq} 
\partial_{G, \mathbb{R}}(\mathcal{E}_{v_0}) = (\chi_G^{\rm BSD}(A, V_L^\dagger) - \chi^{\rm BSD}_G(A,V_L))  -  (\chi_G^{\rm coh}(A,V^\dagger_L)  - \chi_G^{\rm coh}(A,V_L)) 
\end{equation}
in $K_0(\ZZ[G],\RR[G])$. In addition, $\mathcal{E}_{v_0}$ belongs to the subgroup $K_1(\QQ[G])$ of $K_1(\RR[G])$ and we claim that ${\rm Nrd}_{\QQ[G]}(\mathcal{E}_{v_0})$ is equal to the evaluation at $u=1$ of the expression 
\begin{multline*}
 {\rm Nrd}_{\mathbb{Q}_{\ell}[G]}\big(1- u^{{\rm deg}(v_0)} \varphi_p^{{\rm deg}(v_0)}: \big(\mathbb{Q}_\ell[G]\otimes_{\ZZ_\ell}T_{\ell}(A)\big)^\vee \big) \\
= {\rm Nrd}_{\mathbb{Q}_p[G]}\big(1- u^{{\rm deg}(v_0)} \varphi^{{\rm deg}(v_0)} : \big(\mathbb{Q}_p[G]\otimes_{\QQ_p}H^0_{\rm crys}(k(v_0)/\ZZ_p, D_{v_0})\big)^\vee \big).\end{multline*}
Here $\ell$ is any choice of prime different from $p$ and $\varphi_p$ is the geometric $p$-th power Frobenius map on $T_{\ell}(A)$, the endomorphism $\varphi$ is such that $p\varphi$ is induced by the crystalline Frobenius on the fibre $D_{v_0}$ at $v_0$ of the covariant Dieudonn\'e crystal $D$ and the above displayed equation follows from the result \cite[Th.~1]{KM} of Katz and Messing. 
 To verify this claim about ${\rm Nrd}_{\QQ[G]}(\mathcal{E}_{v_0})$ it is enough to fix an arbitrary $\chi\in{\rm Ir}(G)$, with a corresponding realisation $V_\chi$ over $\QQ_\ell^c$ with $\ell\ne p$, and then note that  
\begin{align*}
e_\chi\bigl({\rm Nrd}_{\QQ[G]}(\mathcal{E}_{v_0})\bigr)
&= {\rm det}\big(1-  \varphi_p^{{\rm deg}(v_0)} : V_\chi\otimes_{\bz_\ell} T_\ell(A) \big)\\
&= {\rm det}\big(1-  \varphi_p^{{\rm deg}(v_0)} : \Hom_{\QQ_\ell^c[G]}(V_{\check\chi}, \QQ_\ell^c[G]\otimes_{\bz_\ell}T_\ell(A) ) \big)\\
&= e_{\check\chi}\bigl({\rm Nrd}_{\Q_\ell[G]}\big(1-  \varphi_p^{{\rm deg}(v_0)} : \QQ_\ell[G]\otimes_{\bz_\ell}T_\ell(A) \big)\\
&= e_{\chi}\bigl({\rm Nrd}_{\Q_\ell[G]}\big(1-  \varphi_p^{{\rm deg}(v_0)} : (\QQ_\ell[G]\otimes_{\bz_\ell}T_\ell(A))^\vee \big).
\end{align*}
Here the second equality follows from Remark~\ref{contra justification} and all others are clear. 

In addition, our assumption that $V'_w = \mathcal{A}(\mathfrak{m}_w)$ for places $w$ above $v_0$ implies  there are exact triangles in $D^{\rm perf}(\ZZ[G])$ of the form
%
\[
\begin{cases} &{\rm SC}_{V^\dagger_L}(A,L/K) \to {\rm SC}_{V_L}(A, L/K) \to {\bigoplus}_{w | v_0} A(k(w))^*[-1] \to \\
&R\Gamma(X, \mathcal{L}_L^\dagger)^* \rightarrow R\Gamma(X, \mathcal{L}_L)^* \to {\bigoplus}_{w | v_0} {\rm Lie}(A)(k(w))^*[-1] \to. \end{cases}
\]

These triangles in turn imply that there are equalities in $K_0(\ZZ[G],\QQ[G])$
%
\[ \begin{cases} &\chi^{\rm BSD}_G(A,V_L^\dagger) - \chi_G^{\rm BSD}(A,V_L) = -\chi_{G}({\bigoplus}_{w | v_0} A(k(w))^\ast[-1],0)\\
&\chi_G^{\rm coh}(A,V^\dagger_L)  - \chi_G^{\rm coh}(A,V_L)  = -\chi_{\ZZ_p[G]}({\bigoplus}_{w | v_0} {\rm Lie}(A)(k(w))^\ast[-1],0).
\end{cases} \]
To prove the required equality (\ref{needed eq}) it is thus enough to show 
\begin{equation}\label{ell case} 
\delta_{G,\ell}({\rm Nrd}_{\mathbb{Q}_{\ell}[G]}(1- \varphi_p^{{\rm deg}(v_0)}: \big(\mathbb{Q}_\ell[G]\otimes_{\ZZ_\ell}T_{\ell}(A)\big)^\vee))\! =\! \chi_{\ZZ_\ell[G]}({\bigoplus}_{w | v_0} A(k(w))\{\ell\}^\ast[-1],0)
\end{equation}
for every prime $\ell\not= p$, and also that 
\begin{multline}\label{p case} 
\delta_{G,p}({\rm Nrd}_{\mathbb{Q}_p[G]}(1- (p^{-1}\varphi)^{{\rm deg}(v_0)} : \big(\mathbb{Q}_p[G]\otimes_{\QQ_p}H^0_{\rm crys}(k(v_0), \overline{D})\big)^\vee)) \\
 = \chi_{\ZZ_p[G]}({\bigoplus}_{w | v_0} A(k(w))\{p\}^\ast[-1],0) - \chi_{\ZZ_p[G]}({\bigoplus}_{w | v_0} {\rm Lie}(A)(k(w))^\ast[-1],0).
\end{multline}
Here, for each prime $q$, we write $\delta_{G,q}$ for the composite homomorphism
\[ \partial_{\ZZ_q[G],\QQ_q}\circ ({\rm Nrd}_{\QQ_q[G]})^{-1}: \zeta(\QQ_q[G])^\times \to K_0(\ZZ_q[G],\QQ_q[G]).\]

Now, if $\ell \neq p$, then the complex $R\Gamma(k(v_0), T_{\ell}(A) \otimes \mathbb{Z}[G]) \cong {\bigoplus}_{w\mid v_0}R\Gamma(k(w), T_{\ell}(A))$ is acyclic outside degree one and has cohomology ${\bigoplus}_{w|v_0} A(k(w))\{\ell\}$ in that degree. This gives rise to a short exact sequence of
$\ZZ_\ell[G]$-modules
%
\[
0 \to \Hom_{\bz_\ell}(T_{\ell}(A),\bz_\ell[G])  \xrightarrow{1- \varphi_p^{\deg(v_0)}}\Hom_{\bz_\ell}(T_{\ell}(A),\bz_\ell[G])  \rightarrow {\bigoplus}_{w|v_0} A(k(w))^\ast\{\ell\} \to 0,
\]
which leads directly to the equality (\ref{ell case}).

We next note that, by Kato-Trihan \cite[5.14.6]{KT}, for each $w$ dividing $v_0$ the complex $A(k(w))\{p\}[-1]$ identifies with $R\Gamma(k(w), \mathcal{S}_{D_w})$, where $\mathcal{S}_{D_w}$ is the syntomic complex over $k(w)$ (obtained as a fiber of the syntomic complex over $U$), and hence that there is an exact triangle in $D^{\rm perf}(\ZZ_p[G])$ of the form
\begin{multline*}
{\bigoplus}_{w|v_0} A(k(w))\{p\}[-1] \to  \mathbb{Z}_p[G]\otimes_{\ZZ_p}R\Gamma_{\rm crys}(k(v_0)/\ZZ_p, D_{v_0}^0) \\ \xrightarrow{1-\varphi^{{\rm deg}(v_0)}} \mathbb{Z}_p[G]\otimes_{\ZZ_p}R\Gamma_{\rm crys}(k(v_0)/\ZZ_p, D_{v_0}) \rightarrow .\end{multline*}

There is also a natural exact triangle in $D^{\rm perf}(\ZZ_p[G])$
\begin{multline*} {\bigoplus}_{w | v_0} {\rm Lie}(A)(k(w))[-1] \to \mathbb{Z}_p[G]\otimes_{\ZZ_p}R\Gamma_{\rm crys}(k(v_0)/\ZZ_p, D_{v_0}^0) \\ \xrightarrow{1}  \mathbb{Z}_p[G]\otimes_{\ZZ_p} R\Gamma_{\rm crys}(k(v_0)/\ZZ_p, D_{v_0})  \to .
\end{multline*}

The required equality (\ref{p case}) now follows directly upon applying Proposition \ref{prop:triangle} with $R = \ZZ_p[G]$ and the triangles in (\ref{triangles}) taken to be the images of the above two triangles under the exact linear duality functor $R\Hom_{\ZZ_p}(-,\ZZ_p)$ on $D^{\rm perf}(\ZZ_p[G])$.
(These triangles are easily seen to satisfy the hypotheses of Proposition \ref{prop:triangle} since the modules $A(k(w))\{p\}$ and ${\rm Lie}(A)(k(w))$ are both finite.)   \end{proof}

\begin{remark}{\em The results of Propositions \ref{V independent} and \ref{U independent} will play a key role in later arguments. In Proposition \ref{group independent} below we will also establish a further consistency property of Conjecture \ref{conj:ebsd} with respect to changes of field extension $L/K$. }\end{remark}

\subsection{A reformulation}\label{sect:reform} In this section we establish a useful reformulation of the equality in Conjecture \ref{conj:ebsd}(iii).

In \cite[p. 509]{schneider} Schneider shows that the pairing $h^{\rm NT}_{A,L}$ can be factored in the form
\begin{equation} \label{height triv rat} h^{\rm NT}_{A,L} = {\rm log}(p)\cdot h_{A,L}\end{equation}
for a certain non-degenerate skew-symmetric bilinear form $h_{A,L}: A(L)\times A^t(L) \to \QQ$.

We write
\begin{equation*}
h^{{\rm det}}_{A,L}: {\rm Det}_{\QQ[G]}(\QQ \otimes_{\ZZ} {\rm SC}_{V_L}(A, L/K)) \cong {\rm Det}_{\QQ[G]}(0).
\end{equation*}
for the isomorphism induced by $h_{A.L}$ and then define an element of $K_0(\ZZ[G],\Q[G])$ by setting
\[ \chi^{\rm BSD}_{G,\QQ}(A,V_L) := \chi_{G}({\rm SC}_{V_L}(A,L/K),h^{\rm det}_{A,L}).\]

For each $\chi$ in ${\rm Ir}(G)$ we define a function of the $t := p^{-s}$ by setting
\[ Z_U(A,\chi,t) := L_U(A,\chi,s)\]
and 
 normalise its leading term at $t = p^{-1}$ as follows
\begin{equation}\label{eq:leading-term-Z}
Z^*_U(A,\chi,p^{-1}) := \lim_{t\to p^{-1}} (1-pt)^{-r_{\rm an}(\chi)}\cdot Z_U(A,\chi,t).
\end{equation}

\begin{proposition}\label{bsd-schneider} The following claims are valid.

\begin{itemize}
\item[(i)] There exists a unique element $Z^*_U(A_{L/K},p^{-1})$ of $K_1(\QQ[G])$ with the property that
${\rm Nrd}_{\QQ[G]}(Z^*_U(A_{L/K},p^{-1}))_\chi = Z^*_U(A,\chi,p^{-1})$ for all $\chi$ in ${\rm Ir}(G)$.

\item[(ii)] If claims (i) and (ii) of Conjecture \ref{conj:ebsd} are valid, then the equality in claim (iii) of Conjecture \ref{conj:ebsd} is valid if and only if in $K_0(\ZZ[G],\QQ[G])$ one has
\[ \partial_{G}(Z^*_{U}(A_{L/K},p^{-1})) = \chi^{\rm BSD}_{G,\QQ}(A,V_L) - \chi_G^{\rm coh}(A,V_L) + \chi^{\rm sgn}_G(A).\]
\end{itemize}
\end{proposition}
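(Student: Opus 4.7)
For part~(i), the plan is to argue as in the proof of Theorem~\ref{lt def}: by the Hasse-Schilling-Maass theorem for $\QQ[G]$ it is enough to check both that each leading term $Z^*_U(A,\chi,p^{-1})$ is algebraic with Galois-equivariant behaviour (so that the tuple lies in $\zeta(\QQ[G])^\times$) and that the $\chi$-component is positive for every symplectic $\chi$. With $q = p^n$, Grothendieck's formula~(\ref{deligne}) translates via $t = p^{-s}$ into $Z_U(A,\chi,t) = \prod_i Q_{\chi,i}(t^n)^{(-1)^{i+1}}\in \QQ^c(t)$, so each $Z^*_U(A,\chi,p^{-1})$ lies in $\QQ^c$ and the required Galois-equivariance follows by an adaptation of the argument of (\ref{first inv})-(\ref{cc fix}). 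Positivity at symplectic $\chi$ I would deduce from Proposition~\ref{prop:positive}(ii) together with the identity
\[
L^*_U(A,\chi,1) \,=\, (-p^{-1}\log p)^m\, Z^*_U(A,\chi,p^{-1}), \qquad m := r_{\rm an}(\chi),
\]
provided one knows that $m$ is even for symplectic~$\chi$; this evenness I propose to establish by revisiting the sign analysis of Proposition~\ref{prop:positive}(ii), where $m = |J'_1|$, and using that the symplectic form on $H^1_{\et,c}(U^c,V_\ell(\mathcal{A}))(-1)\otimes V_\chi$ pairs the relevant Frobenius eigenvalues.

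For part~(ii), the plan is to subtract the two $K$-theoretic equalities, thereby reducing the claimed equivalence to the unconditional identity
\[
\chi^{\rm BSD}_G(A,V_L) - \chi^{\rm BSD}_{G,\QQ}(A,V_L) \,=\, \partial_G\!\bigl(L^*_U(A_{L/K},1)\cdot Z^*_U(A_{L/K},p^{-1})^{-1}\bigr)
\]
in $K_0(\ZZ[G],\RR[G])$. Using the general formula $\chi_G(C,h') = \chi_G(C,h) + \partial_G(h'\circ h^{-1})$ together with Schneider's factorisation $h^{\rm NT}_{A,L} = -\log(p)\cdot h_{A,L}$, I would show that the left-hand side equals $\partial_{G,\RR}(\lambda)$ with ${\rm Nrd}(\lambda)_\chi = (-\log p)^{r_{\rm alg}(\chi)}$, the exponent being the $\chi$-multiplicity of $H^0({\rm SC}_{V_L})\otimes\QQ = A^t(L)_\QQ$. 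The change of variable $t = p^{-s}$, combined with Conjecture~\ref{conj:ebsd}(i) and the identity $r_{\rm alg}(\chi) = r_{\rm alg}(\check\chi)$ (which holds because $A^t(L)_\QQ$ is a $\QQ[G]$-module, so that $\bar\chi = \check\chi$-multiplicities coincide), yields $r_{\rm an}(\chi) = r_{\rm alg}(\chi)$ and hence that the right-hand side equals $\partial_{G,\RR}(\kappa)$ with ${\rm Nrd}(\kappa)_\chi = (-p^{-1}\log p)^{r_{\rm alg}(\chi)}$.

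The main obstacle will be the reconciliation of the residual discrepancy $(p^{-r_{\rm alg}(\chi)})_\chi$ at the level of reduced norms, namely to verify that $\kappa\lambda^{-1}\in K_1(\RR[G])$ is the image of an element of $K_1(\ZZ[G])$, so that $\partial_{G,\RR}(\kappa) = \partial_{G,\RR}(\lambda)$. I plan to achieve this by exhibiting $\kappa\lambda^{-1}$ as the class induced by $p^{-1}\cdot\mathrm{id}$ on a suitable $\ZZ[G]$-lattice within the cohomology of ${\rm SC}_{V_L}$ (for instance a lattice inside $A^t(L)_\QQ$), and then applying Proposition~\ref{prop:triangle} to track the resulting Euler characteristic contribution in relative $K$-theory. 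The fact that the remaining integrality matches precisely relies on the careful choice of pro-$p$ subgroups $V_L$ made in \S\ref{selmer section} together with the construction of $\mathcal{L}_L$ in \S\ref{zariski section}, which ensure compatibility between the Selmer complex and the coherent complex entering $\chi^{\rm coh}_G$.
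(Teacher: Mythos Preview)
Your strategy for both parts coincides with the paper's. For~(i) the paper argues exactly as you propose (Galois equivariance via (\ref{first inv}) places the tuple in $\zeta(\QQ[G])^\times$, then Hasse--Schilling--Maass with positivity at symplectic $\chi$), except that it simply asserts ``the same proof'' as Proposition~\ref{prop:positive}(ii) yields that positivity, without isolating the parity of $m$. For~(ii) the paper also subtracts the two equalities, writing $Z^*_U(A_{L/K},p^{-1}) = \varepsilon_{L/K}\cdot L^*_U(A_{L/K},1)$ and $\chi^{\rm BSD}_{G,\QQ} = \chi^{\rm BSD}_G + \partial_G(\varepsilon'_{L/K})$ with $\varepsilon'_{L/K}$ the class of multiplication by $(-\log p)^{-1}$ on $A^t(L)\otimes\RR$; it records $\bigl((d/ds)p^{-s}\bigr)\big|_{s=1}$ as $-\log p$ (rather than your $-p^{-1}\log p$), obtains ${\rm Nrd}(\varepsilon_{L/K})_\chi = (-\log p)^{-r_\chi}$, and concludes $\varepsilon_{L/K} = \varepsilon'_{L/K}$ once $r_{\rm an} = r_{\rm alg}$, with no further reconciliation step.

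The genuine gap in your plan is the proposed resolution of the residual factor $(p^{-r_{\rm alg}(\chi)})_\chi$. The element $\kappa\lambda^{-1}$ you describe is precisely the class $[p^{-1}\cdot\mathrm{id},\, A^t(L)\otimes\QQ]\in K_1(\QQ[G])$, and this lies in the image of $K_1(\ZZ[G])$ only when $A^t(L)$ has rank zero: already for trivial $G$ one has $K_0(\ZZ,\QQ)\cong\QQ^\times/\{\pm 1\}$ and $\partial(p^{-r}) = p^{-r}\bmod\{\pm 1\}\neq 1$ for $r>0$. Neither Proposition~\ref{prop:triangle} nor the choices of $V_L$ and $\mathcal{L}_L$ can absorb this, since the difference $\chi^{\rm BSD}_G(A,V_L) - \chi^{\rm BSD}_{G,\QQ}(A,V_L)$ depends only on the change of trivialisation $h^{{\rm NT},{\rm det}}_{A,L}\to h^{{\rm det}}_{A,L}$ and is entirely independent of $V_L$, while $\chi^{\rm coh}_G(A,V_L)$ appears identically on both sides of the claimed equivalence and therefore cancels. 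The final paragraph of your plan is thus a dead end: either the $p^{-1}$ is absorbed at the level of the change-of-variable computation (as the paper does), or the issue lies in the statement rather than in anything the Selmer-complex constructions can repair.
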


\begin{proof} The argument of Proposition \ref{prop:positive} implies, via the equalities (\ref{deligne}) and (\ref{first inv}), that one has $\omega(Z^*_U(A,\chi,q^{-1}))= Z^*_U(A,\omega\circ\chi,q^{-1})$ for all $\chi$ in ${\rm Ir}(G)$ and all automorphisms $\omega$ of $\CC$, and hence that the element $(Z^*_U(A,\chi,p^{-1}))_{\chi}$ of $\zeta(\CC[G])^\times = {\prod}_{\chi \in {\rm Ir}(G)}\CC^\times$ belongs to the subgroup $\zeta(\QQ[G])^\times$.

Given this, claim (i) follows from the Hasse-Schilling-Maass Norm Theorem and the fact the same proof also shows $Z^*_U(A,\chi,p^{-1})$ is a strictly positive real number for $\chi$ in ${\rm Ir}^{\rm s}(G)$.

To prove claim (ii) we set $r_\chi := r_{\rm an}(\chi)$ and $r'_\chi := r_{\rm alg}(\chi)$. Then the order of vanishing of $Z_{U}(A,\chi,t)$ at $t=p^{-1}$ is equal to $r_\chi$ and hence, since the leading term of $(1-p^{1-s})^{r_\chi}$ at $s=1$ is equal to $({\rm log}(p))^{r_\chi}$, it follows that 
\[Z^*_U(A,\chi,p^{-1}) = ({\rm log}(p))^{-r_\chi}\cdot L^*_U(A,\chi,1).\]

Thus, writing $\varepsilon_{L/K}$ for the unique element of $K_1(\RR[G])$ with 
\[ {\rm Nrd}_{\RR[G]}(\varepsilon_{L/K})_{\chi} = ({\rm log}(p))^{-r_\chi}\]
for all $\chi$ in ${\rm Ir}(G)$, one has
\[ Z^*_{U}(A_{L/K},p^{-1}) = \varepsilon_{L/K} \cdot L^*_{U}(A_{L/K},1).\]

On the other hand, the equality (\ref{height triv rat}) implies that
\[ \chi^{\rm BSD}_{G,\QQ}(A,V_L) = \chi_G^{\rm BSD}(A,V_L) + \partial_{G}(\varepsilon'_{L/K})\]
where $\varepsilon'_{L/K}$ is the element of $K_1(\RR[G])$ that is represented by the automorphism of the $\RR[G]$-module $ \RR\otimes_\ZZ H^0({\rm SC}_{V_L}(A,L/K)) = \RR\otimes_\ZZ A^t(L)$ given by multiplication by ${\rm log}(p)^{-1}$.

Given the last two displayed formulas, the claimed equivalence will follow if one can show that the assumed validity of Conjecture \ref{conj:ebsd}(i) implies $\varepsilon'_{L/K} = \varepsilon_{L/K}$. But this is true since, for every $\chi$ in ${\rm Ir}(G)$, one has
\begin{align*} {\rm Nrd}_{\RR[G]}(\varepsilon'_{L/K})_\chi =&\, {\rm det}_\CC\left({\rm log}(p)^{-1}\mid \Hom_{\CC[G]}(V_\chi, \CC\otimes_\ZZ A^t(L))\right)\\
=&\, ({\rm log}(p))^{-r'_\chi}\\
 =&\, ({\rm log}(p))^{-r_\chi}\\
 =&\, {\rm Nrd}_{\RR[G]}(\varepsilon_{L/K})_\chi .\end{align*}
Here the first equality follows directly from an explicit computation of reduced norm, the second from the fact $r_\chi'$ is (by its definition) equal to ${\rm dim}_\CC\bigl( \Hom_{\CC[G]}(V_\chi, \CC\otimes_\ZZ A^t(L))\bigr)$, the third from the assumption that Conjecture \ref{conj:ebsd}(i) is valid (and hence $r_\chi' = r_\chi$) and the last directly from the explicit definition of  $\varepsilon_{L/K}$ given above. \end{proof}

\section{Syntomic cohomology}\label{sec:syncom} In this section we recall relevant facts concerning the complexes of syntomic cohomology with compact supports that are constructed by Kato and Trihan in \cite{KT}.

At the outset we fix a finite Galois extension $K'$ of $K$ over which $A\otimes_KK'$ is semistable at all places, write $L'$ for the compositum of $L$ and $K'$ and set $G' := \Gal(L'/K)$. Taking advantage of Proposition \ref{U independent} we shrink $U$ (if necessary) in order to assume that no point on $U$ ramifies in $L'/K$.

We also fix a Galois extension of fields $F'/F$ with 
\[ K'\subseteq F \subseteq F' \subseteq L'\]
and set $Q := \Gal(F'/F)$. (Whilst the use of this auxiliary extension $F'/F$ adds a degree of notational complexity to the results in this section, it provides results that we can then directly apply in the proof of Theorem \ref{thm:thm1} given in \S\ref{sect:proof}.)

Then, with $N$ denoting either $F'$ or $F$ we set $A_N:= A\otimes_KN$ and write $X_N$ and $U_N$ for the integral closures of $X$ and $U$ in $N$ and $\mathcal{A}_N/X_N$ for the N\'{e}ron model of $A_N$ over $N$. Let $\pi_N:X_N\to X$ denote the natural map. Let $X^{\sharp}_{F'}$ be the log scheme with underlying scheme $X_{F'}$ equipped with the log structure associated to the divisor $X_{F'}-U_{F'}$, and we abbreviate to $\mathcal{O}_{\langle N\rangle}$ the structure sheaf $\mathcal{O}_{(X_N)^\sharp/\ZZ_p}$ for the small \'etale log crystalline topos $((X_{N})^{\sharp}/\mathbb{Z}_p)_{\rm crys}$.

Since $A$ is semistable over $N$, the construction in \cite[\S4.8]{KT} gives a Dieudonn\'e crystal 
\begin{equation}\label{crystal notation} D_N := D_{\rm log}(A_{N})\end{equation}
on $((X_{N})^{\sharp}/\mathbb{Z}_p)_{\rm crys}$. We then write $D_N^0$ for the kernel of the surjective morphism of sheaves  $D_N \rightarrow i_{(X_{N})^{\sharp}/\mathbb{Z}_p, *}({\rm Lie}(D_N))$  in  $((X_{N})^{\sharp}/\mathbb{Z}_p)_{\rm crys}$  described at the beginning of \cite[\S5.5]{KT}.

%
We fix a $\Gal(L'/K)$-equivariant $\mathcal{O}_X$-submodule $\mathcal{L}_{L'}$ of $\pi_{L',*}{\rm Lie}(D_{L'})$ that is associated to $(W'_w)_{w\notin U_{L'}}$ following \S\ref{zariski section}, and set $\mathcal{L}_{F'}:= (\mathcal{L}_{L'})^{\Gal(L'/F')}$, which is a $Q$-equivariant $\mathcal{O}_{X}$-submodule  of $\pi_{F',*}{\rm Lie}(D_{F'})$. For simplicity, we write 
\begin{equation}\label{cL' def} \mathcal{L}':=\mathcal{L}_{F'}.\end{equation}
(In the intended setting, we will assume that $\mathcal{L}_{L'}$, and hence $\mathcal{L}'$, satisfies the conclusion of Lemma~\ref{zar lemma}. As noted in Remark~\ref{KT issue}, it may not be possible to arrange $\mathcal{L}'$ to be the pushforward of a vector bundle on $X_{F'}$ or  $X_{F}$.) 

We furthermore assume that for some positive integers $n(w)$ for each place $w$ of $X_{F'}$ not in $U_{F'}$, we have 
\[ {\rm Lie}(D_{F'})(\mathfrak{m}_w^{2n(w)}) \subset W'_w \subset {\rm Lie}(D_{F'})(\mathfrak{m}^{n(w)}).\]
(This can be arranged by shrinking $W'_w$ if necessary.) We set  
\[E := {\sum}_{w\notin U_{F'}}n(w)w,\]
which turns out to be a $Q$-stable divisor of $X_{F'}$ since $n(w)=n(w')$ if $w$ and $w'$ are above the same place in $X_F$ by construction. Then by the condition on $(W'_w)_{w}$ we have
\[ \pi_{F',*}{\rm Lie}(D_{F'}))(-2E) \subset \mathcal{L}' \subset \pi_{F',*}{\rm Lie}(D_{F'}))(-E).\]

Let us write $\mathcal{O}_{\langle F'\rangle}(-E)$ for the crystal on $((X_{F'})^\sharp/\ZZ_p)_{\rm crys}$ that is obtained as the twist of $\mathcal{O}_{\langle F'\rangle}$ by $-E$ and then set $D(-E)_{F'} :=
D_{F'}\otimes_{\mathcal{O}_{\langle F' \rangle}}\mathcal{O}_{\langle F'\rangle}(-E)$. 

By \cite[5.5.2]{KT}, we have a distinguished triangle of $Q$-equivariant (small) \'etale sheaves on $X_{F'}$:
\[Ru'_*D(-E)_{F'}^{(0)} \to Ru'_*D(-E)_{F'}\to {\rm Lie}(D_{F'})(-E) \to,\]
where $u': ((X_{F'})^{\sharp}/\mathbb{Z}_p)_\crys \rightarrow X_{F',\et}$ is the natural morphism of topoi. 

Now, we would like to modify $Ru'_*D(-E)_{F'}$ using the $Q$-equivariant $\mathcal{O}_X$-submodule $\mathcal{L}'$ of $\pi_{F',*}{\rm Lie}(D_{F'})(-E)$, and for this to make sense we need to apply the pushforward $\pi_{F',*}$ to the above distinguished triangle. 
To alleviate the notation, let us write
\[\Ru{F'} := \pi_{F',*}Ru'_*\]
sending a crystalline sheaf of $\mathcal{O}_{\langle F'\rangle}$-modules to a complex of \'etale sheaves on $X$ (viewed in a suitable derived category).

We can now define a complex $\Ru{F'}D(-E)_{F'}^{(\mathcal{L}')}$ viewed in the derived category of $Q$-equivariant \'etale sheaves on $X$ so that it fits in the following distinguished triangle
\[
\Ru{F'}D(-E)_{F'}^{(\mathcal{L}')} \to \Ru{F'}D(-E)_{F'} \to \pi_{F',*}{\rm Lie}(D_{F'})(-E)/\mathcal{L}' \to. \
\] 
(For technical reasons, we directly define $\Ru{F'}D(-E)_{F'}^{(\mathcal{L}')}$ via the distinguished triangle above without defining the crystalline subsheaf $D(-E)_{F'}^{(\mathcal{L}')}$ of $D(-E)_F$. Note that $D(-E)_{F'}^{(\mathcal{L}')}$ can be defined if $\mathcal{L}'$ is the pushforward of a vector bundle on $X_{F'}$, in which case the above construction recovers $\Ru{F'}(D(-E)_{F'}^{(\mathcal{L}')})$; \emph{cf.} \cite[\S5.12]{KT}.)

Following \cite[\S5.12]{KT} there are canonical morphisms of complexes of \'etale sheaves on $X$
\begin{multline*}
\Ru{F'}D(-E)_{F'}^{(0)} \xrightarrow{{\bf 1}} \Ru{F'}D(-E)_{F'}^{(\mathcal{L}')}\,\,\text{ and }\,\, 
\Ru{F'}D(-E)_{F'}^{(0)} \xrightarrow{\varphi} \Ru{F'}D(-E)_{F'}^{(\mathcal{L}')},
\end{multline*}
(In fact, since all the above objects can be explicitly represented by choosing \emph{good embeddings} locally, the argument in \cite[\S5.12]{KT} can be directly applied to these complexes of \'etale sheaves instead of crystalline sheaves on $((X_{F'})^\sharp/\ZZ_p)_{\rm crys}$.


 They then define the syntomic complex with compact supports $\mathcal{S}_{D_{F'}^{(E,\mathcal{L}')}}$ to be the mapping fibre of the morphism
\begin{equation}\label{compact sheaf}
  \Ru{F'}D(-E)_{F'}^{(0)} \xrightarrow{{\bf 1}-\varphi} \Ru{F'}D(-E)_{F'}^{(\mathcal{L}')},
\end{equation}
which is an object in the derived category of $Q$-equivariant \'etale $\ZZ_p$-sheaves on $X$.

If furthermore $F'/M$ is Galois for some intermediate field $M$ of $F/K$, then by choosing $E$ to be $\Gal(F'/M)$-stable we may give a natural $\Gal(F'/M)$-action on $\Ru{F'}D(-E)_{F'}^{(0)}$, $\Ru{F'}D(-E)_{F'}^{(\mathcal{L}')}$, and $\mathcal{S}_{D_{F'}^{(E,\mathcal{L}')}}$. Recall that $\mathcal{L'} = (\mathcal{L}_{L'})^{\Gal(L'/F')}$ for some $\Gal(L'/K)$-equivariant $\mathcal{O}$-submodule of $\pi_{L',*}{\rm Lie}(D_{L'})$, so the $Q$-action on $\mathcal L'$ naturally extends to the action of $\Gal(F'/M)$.

If we have $\mathcal{L}' = \pi_{F',*}\widetilde{\mathcal{L}}'$ for some $Q$-equivariant $\mathcal O_{X_{F'}}$-submodule $\widetilde{\mathcal{L}}'$ of ${\rm Lie}(\mathcal{A}_{F'})$, then the above constructions can be carried out over $X'$ as in \cite[\S5.12]{KT}. To explain, we can define an $\mathcal{O}_{\langle F'\rangle}$-submodule $D(-E)_{F'}^{(\widetilde{\mathcal{L}}')}$ of $D(-E)_{F'}$, and define $\widetilde{\mathcal{S}}_{D_{F'}^{(E,\widetilde{\mathcal{L}}')}}$ to be the mapping fibre of 
\begin{equation}\label{compact sheaf KT}
 Ru'_*D(-E)_{F'}^{(0)}\xrightarrow{{\bf 1}-\varphi}Ru'_*D(-E)_{F'}^{(\widetilde{\mathcal{L}}')}.
 \end{equation} 
(See loc.~cit. for details.) Furthermore, we have a $Q$-equivariant quasi-isomorphism $\mathcal{S}_{D_{F'}^{(E,\mathcal{L}')}} = \pi_{F',*}\widetilde{\mathcal{S}}_{D_{F'}^{(E,\widetilde{\mathcal{L}}')}}$. On the other hand, in the presence of wild ramification it seems difficult to find $\mathcal{L}'$ coming from a $Q$-equivariant $\mathcal{O}_{X_{F'}}$-submodule that satisfies the conclusion of Lemma~\ref{zar lemma}.

\begin{lemma}\label{lem:KT5.13}
Let $E$, $\mathcal{L}'$ and $(W'_w)_{w\notin U_{F'}}$ be as above, and write $E = {\sum}_{w\notin U_{F'}}n(w)w$.
 For each $w\notin U_{F'}$ write $V'_w$ for the unique subgroup of $\mathcal{A}_{F'}(\mathcal{O}_w)$ with
$\mathcal{A}_{F'}(m_w^{2n(w)}) \subset V'_w \subset \mathcal{A}_{F'}(m_w^{n(w)})$ and whose image in $\mathcal{A}_{F'}(m_w^{n(w)})/\mathcal{A}_{F'}(m_w^{2n(w)}) \cong {\rm Lie}(\mathcal{A}_{F'})(m_w^{n(w)})/{\rm Lie}(\mathcal{A}_{F'}(m_w^{2n(w)}))$ coincides with the image of $W'_w$. Write $V'_{F'}$ for the family $(V'_w)_w$.

Then there are natural isomorphisms in $D(\ZZ_p[Q])$ of the form
\begin{equation} \label{eq:syn}
R\Gamma(X, \mathcal{S}_{D_{F'}^{(E,\mathcal{L}')}}\otimes^{\mathbb{L}} \Q_p/\Z_p) \cong
R\Gamma_{{\rm ar}, V'_{F'}}(U_{F'},\mathcal{A}_{\rm tor})_p.
\end{equation}
In addition, if $M$ is any intermediate field of $F/K$ over which $F'$ is Galois 
and $E$ is chosen to be $\Gal(F'/M)$-equivariant, then the above isomorphism is well-defined in $D(\ZZ_p[\Gal(F'/M)])$.
\end{lemma}
This lemma is a generalisation of  \cite[Prop. 5.13]{KT} in that the isomorphism~\eqref{eq:syn} is proven to be Galois equivariant and $\mathcal{L}'$ is not required to come from a vector bundle over $X_{F'}$.

\begin{proof}

Using the definition of $R\Gamma_{{\rm ar}, V_{F'}}(U_{F'},\mathcal{A}_{\rm tor})_p$ \eqref{fibre morphism} and \cite[Th.~1.1]{trihanvauclair1}, one can reduce the isomorphism \eqref{eq:syn} to the following local statement: \emph{For any $v\in X\setminus U$, we have a natural isomorphism
\begin{equation}\label{eq:synloc}
R\Gamma(\Spec \mathcal{O}_v,\mathcal{S}_{D_{F'}^{(E,\mathcal{L}')}})\cong {\prod}_{w|v}V'_w[-1]
\end{equation}
equivariant for the $Q$-action (respectively, for the $\Gal(F'/M)$-action if $F'/M$ is Galois for some intermediate extension $M$ of $F/K$).} 

Note that this local claim is a slight generalisation of  \cite[Lem. 5.14]{KT} in that the isomorphism \eqref{eq:synloc} is required to be Galois equivariant and $W'_w$ is not required to be an $\mathcal{O}_w$-module.

It remains to verify the local claim. Observe that for fixed $D$ the restriction of $\mathcal{S}_{D_{F'}^{(E,\mathcal{L}')}}$ to $\Spec \mathcal{O}_v$ only depends on $n(w)$ and $W'_w$ for $w|v$, and $n(w)$ is independent of $w|v$. So let us write
 \[
\mathcal{S}^{n,(W'_w)}_{D,v}:= \left . \mathcal{S}_{D_{F'}^{(E,\mathcal{L}')}} \right |_{\Spec O_v}
\]
where $n=n(w)$ for any $w|v$. To simplify the notation, for any positive integer $n$ we write 
\[W^{(n)\prime}_w := {\rm Lie}(\mathcal{A}_{F'})(\mathfrak{m}_{w}^n). \]
Note that the choice $(W^{(n(w))\prime}_w)_{w\notin U_{F'}}$ corresponds to  $\pi_{F',*}{\rm Lie}(\mathcal{A}_{F'})(-E)$, which contains $\mathcal{L}'$.

Let us first show that the local claim (\ref{eq:synloc}) is implied by the special case for $W'_w = W_w^{(n)\prime}$. 
For this, we construct a distinguished triangle in the suitable derived category of equivariant \'etale $\ZZ_p$-sheaves on $\Spec \mathcal{O}_v$
\begin{equation}\label{eq:synloc:ind1}
\xymatrix@1{\mathcal{S}^{n,(W'_w)}_{D,v} \ar[r]&
\mathcal{S}^{n,(W^{(n)\prime}_v)}_{D,v} \ar[r] &
{\prod}_{w|v}(W^{(n)\prime}_w/W'_w)[-1]\ar[r] &
 .}
\end{equation}
Indeed, this can be obtained from the following commutative diagram where each row is a distinguished triangle
\[
\xymatrix{
\mathcal{S}^{n,(W'_w)}_{D,v} \ar[r]\ar[d]&
\Ru{F'}D(-E)^{(0)}|_{\mathcal{O}_v} \ar[r]^-{{\bf 1}-\varphi} \ar[d]^-{=}&
\Ru{F'}D(-E)^{(\mathcal{L}')}|_{\mathcal{O}_v} \ar[d] \ar[r]& \\
\mathcal{S}^{n,(W_w^{(n)\prime})}_{D,v} \ar[r]&
\Ru{F'} D(-E)^{(0)}|_{\mathcal{O}_v} \ar[r]^-{{\bf 1}-\varphi}&
\Ru{F'}D(-E)|_{\mathcal{O}_v} \ar[r]& ,}
\]
together with the fact that the mapping cone of the rightmost vertical arrow is isomorphic to $W^{(n)\prime}_w/W'_w$.

Let $\widehat{\mathcal{A}}_{F'}(\mathfrak{m}_v^n)\subset \mathcal{A}_{F'}(\mathcal{O}_w)$ denote the kernel of reduction modulo $\mathfrak{m}_w^n$. Then by \eqref{eq:synloc:ind1} and the natural isomorphism 
\begin{equation}\label{eq:synloc:ind2}
W^{(n)\prime}_w/W^{(n+1)\prime}_w\cong\widehat{\mathcal{A}}_{F'}(\mathfrak{m}_v^n)/\widehat{\mathcal{A}}_{F'}(\mathfrak{m}_v^{n+1}),
\end{equation}
the general case of the local claim is reduced to obtaining the Galois equivariant isomorphism \eqref{eq:synloc} when $W'_w = W^{(n(w))\prime}_w$ and $V'_w =  \widehat{\mathcal{A}}_{F'}(\mathfrak{m}_v^{n(w)})$.

We have thus reduced the proof of the lemma to the case when $\mathcal{L}' = \pi_{F',*}{\rm Lie}(\mathcal{A}_{F'})(-E)$. We will proceed by induction, for which it is convenient to allow $\mathcal{L}' = \pi_{F',*}{\rm Lie}(\mathcal{A}_{F'})(-E')$ where $E'$ is a $Q$-equivariant divisor such that $E'-E$ and $2E-E'$ are either effective or trivial. Since $\mathcal{L'}$ is the the pushforward of a $Q$-equivariant $\mathcal{O}_{X_{F'}}$-module $\widetilde{\mathcal{L}}':={\rm Lie}(\mathcal{A}_{F'})(-E')$, we also have a `syntomic complex' $\widetilde{\mathcal{S}}_{D_{F'}^{(E,\widetilde{\mathcal{L}}'})}$ over $X_{F'}$, constructed as the mapping fibre of the map (\ref{compact sheaf KT}). For any $w\in X_{F'}\setminus U_{F'}$, let us write
\[
\widetilde{\mathcal{S}}^{n,W^{(m)\prime}}_{D,w}:=\left. \widetilde{\mathcal{S}}_{D_{F'}^{(E,\widetilde{\mathcal{L}}')}} \right |_{\Spec \mathcal{O}_w},
\]
where $n=n(w)$ and $m$ are the coefficients of $w$ in $E$ and $E'$, respectively. As we have $\mathcal{S}_{D_{F'}^{(E,\mathcal{L}')}} = \pi_{F',*}\widetilde{\mathcal{S}}_{D_{F'}^{(E,\widetilde{\mathcal{L}}')}}$ the left hand side of (\ref{eq:synloc}) also decomposes in terms of $\widetilde{\mathcal{S}}^{n,W^{(m)\prime}}_{D,w}$. Therefore, to complete the proof, it suffices to show that for any positive integer $n$ and $w\in X_{F'}\setminus U_{F'}$  we have a natural isomorphism
\begin{equation}\label{eq:synloc2}
R\Gamma(\Spec \mathcal{O}_w,\widetilde{\mathcal{S}}^{n,W^{(n)\prime}}_{D,w})\cong \widehat{\mathcal{A}}(\mathfrak{m}_w^{n})
\end{equation}
equivariant for the $Q_w$-action (respectively, for the $\Gal(F'_w/M_v)$-action where $v$ is the place under $w$ if $F'/M$ is Galois for some intermediate extension $M$ of $F/K$).

Let us prove (\ref{eq:synloc2}) by induction on $n$. 
If $n=1$ then the isomorphism  \eqref{eq:synloc2} can be deduced by inspecting the distinguished triangle \eqref{fibre morphism} using Theorems 1.1 and 1.2 in \cite{trihanvauclair1}, where the Galois equivariance follows since the comparison maps in loc.~cit. are constructed naturally. Although the results were obtained only for $p>2$ in loc.~cit., there is an alternative proof that works for any $p$ via the prismatic Dieudonn\'e theory \cite{ALB23}. To give further details, \cite[Th.~1.2]{trihanvauclair1} holds for $p=2$ if \cite[Th.~1.1]{trihanvauclair1} does, and (the projective limit  of) \cite[Th.~1.1]{trihanvauclair1} can be deduced from \cite[Prop. ~4.83 and  Rem.~4.85]{ALB23} if we show that the prismatic and crystalline constructions of the syntomic complex in  \cite{ALB23} and \cite{trihanvauclair1} coincide. For this we may (and do) pass to some complete intersection semiperfect ring by $p$-power root extraction to represent the two syntomic complexes by explicit two-term complexes of modules, and the desired isomorphism follows from the comparison between crystalline and prismatic Dieudonn\'e theory over quasisyntomic bases in characteristic $p$ \cite[Th.~4.44]{ALB23} as well as the comparison of the Hodge and Nygaard filtrations.   (The Nygaard filtration for prismatic Dieudonn\'e crystals is defined at the beginning of \cite[\S4.8]{ALB23}, and the claimed compatibility with filtrations can be read off from the proof of \cite[Th.~4.44]{ALB23} using \cite[Lem.~4.40 and Lem.~4.43]{ALB23}. Note also that in \cite[Rem.~4.85]{ALB23} we have $\tilde\xi=p$ for characteristic $p$ base rings, so the comparison in \cite[Th.~4.44]{ALB23} is compatible with the divided Frobenius maps.) 

Assume that we have a natural isomorphism \eqref{eq:synloc2} for $n$, and we shall deduce \eqref{eq:synloc2} for $n+1$. By \eqref{eq:synloc:ind1} applied to $W'_w=W_w^{(n+1)\prime}$, we get
\begin{equation}\label{eq:synloc:ind3}
\xymatrix@1{\widetilde{\mathcal{S}}^{n,W^{(n+1)\prime}}_{D,w} \ar[r]&
\widetilde{\mathcal{S}}^{n,W^{(n)\prime}}_{D,w} \ar[r] &
{\rm Lie}(\mathcal{A}_{F'}) \otimes (\mathfrak{m}_w^{n}/\mathfrak{m}_w^{n+1})[-1]\ar[r] &
 .}
\end{equation}

We now claim that the  map 
\begin{equation}\label{eq:synloc:ind4}
\widetilde{\mathcal{S}}^{n+1,W^{(n+1)\prime}_w}_{D,w} \xrightarrow{\sim} \widetilde{\mathcal{S}}^{n,W^{(n+1)\prime}_w}_{D,w},
\end{equation}
induced by the natural map $D(-(n+1)w) \to D(-nw)$, is an isomorphism. To verify this assertion we may ignore the Galois action, which enables us to represent the syntomic complexes explicitly following \cite[5.14.1]{KT}. 

Let $S^{\sharp}$ be $\Spec\mathcal{O}_w$ equipped with the divisorial log structure for the closed point.
Choose an isomorphism $\mathcal{O}_w \cong k_w[[T]]$, and write $P^{\sharp}$ to be $\Spec W(k_w)[[T]]$ equipped with the divisorial log structure given by the ideal $(T)$. Then the natural closed immersion $S^{\sharp}\hookrightarrow P^{\sharp}$ is a \emph{good embedding} in the sense of \cite[\S~5.6]{KT}. Let $\sigma$ be the lift of Frobenius on $\mathcal{O}_{\widehat P}=W(k_w)[[T]]$ given by $\sigma(T):=T^p$.

For any  integers $n,m$ with $0<n\leqslant m <2n$, we can represent $\widetilde{\mathcal{S}}^{n,W^{(m)\prime}_w}_{D,w}$  as the total complex of the following double complex

\[
\xymatrix{
T^{n} D^{(0)}_{(S^{\sharp},P^{\sharp})} \ar[rr]^-{1-p^{-1}\Fr{\bf1}} \ar[d] &&
T^m D_{(S^{\sharp},P^{\sharp})}+ T^n D^{(0)}_{(S^{\sharp},P^{\sharp})} \ar[d]\\
\mathcal{O}_{\widehat P} \frac{dt}{t}\otimes T^{n} D_{(S^{\sharp},P^{\sharp})} \ar[rr]^{1-p^{-1}\sigma\otimes\Fr{\bf1}} &&
\mathcal{O}_{\widehat P} \frac{dt}{t}\otimes T^n D_{(S^{\sharp},P^{\sharp})}
}.
\]
This shows that the mapping cone of \eqref{eq:synloc:ind4} is quasi-isomorphic to the the total complex of the following double complex
\[\xymatrix{
\frac{T^{n} D^{(0)}_{(S^{\sharp},P^{\sharp})}}{T^{n+1} D^{(0)}_{(S^{\sharp},P^{\sharp})}} \ar[rrr]^-{1-p^{-1}\Fr{\bf1}}  \ar[d] &&&
\frac{T^{n+1} D_{(S^{\sharp},P^{\sharp})}+ T^n D^{(0)}_{(S^{\sharp},P^{\sharp})}}{T^{n+1} D_{(S^{\sharp},P^{\sharp})}} \ar[d]\\
\mathcal{O}_{\widehat P}\frac{dt}{t}\otimes\frac{T^{n} D_{(S^{\sharp},P^{\sharp})}}{T^{n+1} D_{(S^{\sharp},P^{\sharp})}} \ar[rrr]^-{1-p^{-1}\sigma\otimes\Fr{\bf1}} &&& 
\mathcal{O}_{\widehat P}\frac{dt}{t}\otimes\frac{T^{n} D_{(S^{\sharp},P^{\sharp})}}{T^{n+1} D_{(S^{\sharp},P^{\sharp})}}
}.
\]
It suffices to show that both horizontal maps are isomorphisms. Indeed, since $p^{-1}\Fr{\bf 1}$ takes $T^nD^{(0)}_{(S^{\sharp},P^{\sharp})}$ into $T^{np}D_{(S^{\sharp},P^{\sharp})}$, the top horizontal map coincides with the map induced by the natural inclusion ${\bf 1}$, which is an isomorphism since $T^{n+1} D^{(0)}_{(S^{\sharp},P^{\sharp})} = T^{n+1} D_{(S^{\sharp},P^{\sharp})} \cap T^n D^{(0)}_{(S^{\sharp},P^{\sharp})}$. Similarly, the bottom horizontal map coincides with the identity map.

Now combining \eqref{eq:synloc:ind2}, \eqref{eq:synloc:ind3} and \eqref{eq:synloc:ind4}, we verify the desired equivariant isomorphism \eqref{eq:synloc2}, which the lemma was reduced to.
\end{proof}

Recalling that $Q = \Gal(F'/F)$, we now define objects in $D(\ZZ_p[Q])$, respectively in $D(\ZZ_p[\Gal(F'/K)])$ if $F'/K$ is Galois, by setting
\[ I_{F'} := R\Gamma(X, \Ru{F'}D(-E)_{F'} ^{(0)} \otimes^{\mathbb{L}} \Q_p/\Z_p)^*[-2] \] 
and
\[ P_{F'} := R\Gamma(X, \Ru{F'}D(-E)_{F'}^{(\mathcal{L}')}\otimes^\mathbb{L} \Q_p/\Z_p)^*[-2],\]
where $\mathcal{L}'$ is as defined in (\ref{cL' def}). The following result describes the connection between these objects and the constructions made in earlier sections.

\begin{lemma}\label{KT triangles} 
Let $E$, $V_{F'}$ and $M$ be as in Lemma~\ref{lem:KT5.13}.
Then there are canonical exact triangles in $D^-(\ZZ_p[\Gal(F'/M)])$ of the form
\begin{equation} \label{eq:syntriangle}
P_{F'} \xrightarrow{{\bf 1}-\varphi} I_{F'} \xrightarrow{\theta} R\Gamma_{{\rm ar}, V_{F'}
}(U_{F'},\mathcal{A}_{\rm tor})^*_p[-2] \xrightarrow{\theta'} P_{F'}[1]
\end{equation}
and
\begin{equation} \label{eq:lietriangle}
P_{F'} \xrightarrow{{\bf 1}} I_{F'} \to R\Gamma(X, \mathcal{L}')^*[-2] \to P_{F'}[1].
\end{equation}
\end{lemma}

\begin{proof} 
By the result of Lemma~\ref{lem:KT5.13}, the triangle (\ref{eq:syntriangle}) is obtained by applying the exact composite  functor $R\Gamma(X, -\otimes^{\mathbb{L}} \Q_p/\Z_p)$ to the exact triangle (of complexes of sheaves) that results from the definition of $\mathcal{S}_{D_{F'}^{(E,\mathcal{L})}}$ as the mapping fibre of (\ref{compact sheaf}).

The exact triangle (\ref{eq:lietriangle}) results in a similar way by using the canonical exact triangle
\[ 
\xymatrix@1{
\mathcal{L}' \ar[r] & \Ru{F'}D(-E)_{F'}^{(0)}\otimes^{\mathbb{L}} \Q_p/\Z_p \ar[r]^{{\bf 1}}& \Ru{F'}D(-E)_{F'}^{(\mathcal{L}')}\otimes^{\mathbb{L}} \Q_p/\Z_p\ar[r]&}\]
described by Kato and Trihan in \cite[\S6.7]{KT}. 
\end{proof}

The complexes $I_{F'}$ and $P_{F'}$ are not known, in general, to belong to $D^{\rm perf}(\ZZ_p[\Gal(F'/F)])$ and hence, for our purposes, we must adapt the triangles (\ref{eq:syntriangle}) and (\ref{eq:lietriangle}), as per the following result.

\begin{proposition}\label{perfect adapt} Let $M$ be any extension of $K$ over which $F'$ is Galois.
Set $J:=\Gal(F'/M)$, and let $\mathfrak{N}$ be an order in $\QQ_p[J]$ that contains $\ZZ_p[J]$ and is such that the complex $\tau_{\ge -1}(\mathfrak{N}\otimes_{\ZZ_p[J]}^\mathbb{L}I_{F'})$ can be represented by a bounded complex of projective $\mathfrak{N}$-modules.

Then the triangles (\ref{eq:syntriangle}) and (\ref{eq:lietriangle}) induce exact triangles in $D^{\rm perf}(\mathfrak{N})$ of the form
\[ \tau_{\ge -1}(\mathfrak{N}\otimes_{\ZZ_p[J]}^\mathbb{L}P_{F'}) \xrightarrow{{\bf 1}-\varphi} \tau_{\ge -1}(\mathfrak{N}\otimes_{\ZZ_p[J]}^\mathbb{L}I_{F'}) \to \mathfrak{N}\otimes_{\ZZ_p[J]}^\mathbb{L}R\Gamma_{{\rm ar},
V_{F'}}(U_{F'},\mathcal{A}_{\rm tor})^*_p[-2] \rightarrow \]
and

\[ \tau_{\ge -1}(\mathfrak{N}\otimes_{\ZZ_p[J]}^\mathbb{L}P_{F'}) \xrightarrow{{\bf 1}} \tau_{\ge -1}(\mathfrak{N}\otimes_{\ZZ_p[J]}^\mathbb{L}I_{F'}) \to \mathfrak{N}\otimes_{\ZZ_p[J]}^\mathbb{L}R\Gamma(X, \mathcal{L}')^*[-2]\rightarrow.\]
\end{proposition}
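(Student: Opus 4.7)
The plan is to apply the derived tensor product $\mathfrak{N}\otimes^\mathbb{L}_{\ZZ_p[J]}(-)$ (regarded via restriction of scalars from $\ZZ_p[\Gal(F'/M)]$) to the two triangles of Lemma \ref{KT triangles}, and then truncate by $\tau_{\ge -1}$ to extract the desired triangles. The first step is to verify that the third terms $R\Gamma_{{\rm ar},V_{F'}}(U_{F'},\mathcal{A}_{\rm tor})^*_p[-2]$ and $R\Gamma(X_{K'},\mathcal{L}')^*[-2]$ of (\ref{eq:syntriangle}) and (\ref{eq:lietriangle}) belong to $D^{\rm perf}(\ZZ_p[J])$: this follows from Proposition \ref{prop:perfect}(i) and Lemma \ref{zar lemma} respectively, combined with the fact that a perfect complex over $\ZZ_p[\Gal(F'/M)]$ is also perfect over $\ZZ_p[J]$ (since $\ZZ_p[\Gal(F'/M)]$ is free of finite rank over $\ZZ_p[J]$). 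Consequently, their derived tensor products with $\mathfrak{N}$ already belong to $D^{\rm perf}(\mathfrak{N})$.

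Next, I would show that these tensor products are cohomologically concentrated in degrees $\ge -1$, so that $\tau_{\ge -1}$ acts as the identity on them. Granting this, the octahedral axiom, applied to the canonical truncation triangles $\tau_{\le -2}(-)\to(-)\to\tau_{\ge -1}(-)$ for both $\mathfrak{N}\otimes^\mathbb{L}_{\ZZ_p[J]}P_{F'}$ and $\mathfrak{N}\otimes^\mathbb{L}_{\ZZ_p[J]}I_{F'}$, forces the induced maps $\tau_{\le -2}(\mathbf{1}-\varphi)$ and $\tau_{\le -2}(\mathbf{1})$ to be quasi-isomorphisms (their cones being $\tau_{\le -2}$ of the third terms, which are acyclic in this range). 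Truncating the original triangles by $\tau_{\ge -1}$ then produces the two desired exact triangles in $D(\mathfrak{N})$. The perfectness of the left-hand terms $\tau_{\ge -1}(\mathfrak{N}\otimes^\mathbb{L}_{\ZZ_p[J]}P_{F'})$ then follows from the two-out-of-three principle applied to these triangles, since by hypothesis $\tau_{\ge -1}(\mathfrak{N}\otimes^\mathbb{L}_{\ZZ_p[J]}I_{F'})$ is perfect over $\mathfrak{N}$ and the third term is perfect by the first step.

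The main obstacle is the cohomological vanishing in degrees $\le -2$ of the tensored third terms. Via the Tor spectral sequence
\[
E_2^{s,t} = \Tor^{\ZZ_p[J]}_{-s}(\mathfrak{N},\, H^t(-)) \;\Longrightarrow\; H^{s+t}\bigl(\mathfrak{N}\otimes^\mathbb{L}_{\ZZ_p[J]}(-)\bigr),
\]
this amounts to controlling the higher $\Tor$-groups of $\mathfrak{N}$ (as a $\ZZ_p[J]$-module) against the cohomology groups of $R\Gamma_{{\rm ar},V_{F'}}(U_{F'},\mathcal{A}_{\rm tor})^*_p[-2]$ and $R\Gamma(X_{K'},\mathcal{L}')^*[-2]$ (which are concentrated in $[0,2]$ and $[1,2]$ respectively). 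This step will use specific features of the $\ZZ_p[J]$-module structure on $\mathfrak{N}$ -- most directly, bounded $\Tor$-amplitude of $\mathfrak{N}$ over $\ZZ_p[J]$, or, alternatively, short projective resolutions of the relevant cohomology groups as $\ZZ_p[J]$-modules. It is precisely to accommodate the obstruction that the statement is formulated with $\tau_{\ge -1}$ applied only to $P_{F'}$ and $I_{F'}$ and not to the third term.
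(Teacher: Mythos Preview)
Your overall strategy---apply $\mathfrak{N}\otimes^{\mathbb{L}}_{\ZZ_p[J]}(-)$ to the triangles of Lemma~\ref{KT triangles}, truncate, and use two-out-of-three---is exactly what the paper does. There is, however, one genuine gap and one place where your proposed justification is misdirected.

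\textbf{The gap (perfectness of the $I_{F'}$-term).} You write that ``by hypothesis $\tau_{\ge -1}(\mathfrak{N}\otimes^{\mathbb{L}}_{\ZZ_p[J]}I_{F'})$ is perfect over $\mathfrak{N}$''. The hypothesis does \emph{not} say this: it only says the complex can be represented by a bounded complex of projective $\mathfrak{N}$-modules, with no finite-generation condition. This is strictly weaker than perfectness. The paper closes this gap by invoking \cite[Prop.~5.15(i)]{KT} to show that the cohomology groups of $I_{F'}$ are finitely generated over $\ZZ_p$, hence over $\mathfrak{N}$, and then appealing to a standard argument that a bounded complex of projectives with finitely generated cohomology lies in $D^{\rm perf}$. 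Without this step your two-out-of-three argument does not establish perfectness.

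\textbf{The misdirection (concentration in degrees $\ge -1$).} You correctly identify this as the crux, but both of your proposed mechanisms fail. The module $\mathfrak{N}$ typically has \emph{infinite} projective dimension over $\ZZ_p[J]$ when $p$ divides $|J|$ (for example when $\mathfrak{N}$ is a maximal order and $J=\ZZ/p$), so ``bounded Tor-amplitude of $\mathfrak{N}$'' is unavailable; and the individual cohomology groups $H^t(C)$ need not have finite projective dimension either. The point is rather about the Tor-amplitude of the \emph{perfect complex} $C$ itself. If $C\in D^{\rm perf}(\ZZ_p[J])$ has cohomology concentrated in $[0,2]$, represent it by finitely generated projectives $P^a\to\cdots\to P^2$; the acyclicity below degree $0$ shows $B^0:=\im(P^{-1}\to P^0)$ has a finite projective resolution, hence is cohomologically trivial, and being a submodule of $P^0$ it is $\ZZ_p$-torsion-free, hence projective. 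Thus $C\cong[B^0\to P^0\to P^1\to P^2]$ in degrees $[-1,2]$, and $\mathfrak{N}\otimes^{\mathbb{L}}_{\ZZ_p[J]}C$ is computed by the ordinary tensor product of this complex, which lies in degrees $[-1,2]$. This is the implicit content of the paper's assertion that the tensored third terms ``are isomorphic to their respective truncations in degrees at least $-1$''.
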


\begin{proof} The results of Proposition \ref{prop:perfect}(i) and Lemma \ref{zar lemma} imply that both of the complexes $C_1 := R\Gamma_{{\rm ar}, V_{F'}
}(U_{F'},\mathcal{A}_{\rm tor})^*_p[-2]$ and $C_2 := R\Gamma(X, \mathcal{L}')^*[-2]$ belong to $D^{\rm perf}(\ZZ_p[J])$ and are acyclic outside degrees $0$ and $1$ and $2$. 

In addition, a finitely generated, torsion-free, $\ZZ_p[J]$-module of finite projective dimension is itself projective (by \cite[Th. 8]{aw}). By a standard resolution argument (as in the proof of Lemma \ref{prop:zzhatcom}(iii)), it therefore follows that the complexes $C_1$ and $C_2$ are both   represented by complexes of finitely generated projective $\ZZ_p[J]$-modules, all terms of which are zero in every degree less than $-1$ and every degree greater than $2$.  

This in turn implies that $\mathfrak{N}\otimes_{\ZZ_p[J]}^\mathbb{L}C_1$ and $\mathfrak{N}\otimes_{\ZZ_p[J]}^\mathbb{L}C_2$ belong to $D^{\rm perf}(\mathfrak{N})$ and are both acyclic in all degrees less than $-1$.  

Given this last fact, one obtains exact triangles in $D^-(\mathfrak{N})$ of the stated form by simply applying the exact functor $\mathfrak{N}\otimes^{\mathbb{L}}_{\ZZ_p[J]}-$ to the triangles (\ref{eq:syntriangle}) and (\ref{eq:lietriangle}).

To prove that these respective triangles belong to $D^{\rm perf}(\mathfrak{N})$ (rather than just $D^-(\mathfrak{N})$) it is then enough, since $\mathfrak{N}\otimes_{\ZZ_p[J]}^\mathbb{L}C_1$ and $\mathfrak{N}\otimes_{\ZZ_p[J]}^\mathbb{L}C_2$ both belong to $D^{\rm perf}(\mathfrak{N})$, to prove that the complex $C := \tau_{\ge -1}(\mathfrak{N}\otimes_{\ZZ_p[J]}^\mathbb{L}I_{F'})$ also belongs to $D^{\rm perf}(\mathfrak{N})$.

To do this we note that, by assumption, $C$ is represented by a bounded complex of projective $\mathfrak{N}$-modules and, by \cite[Prop. 5.15(i)]{KT}, all cohomology groups of $C$ are finitely generated over $\mathfrak{N}$. Taken together, these facts combine with a standard construction of resolutions to imply $C$ belongs to $D^{\rm perf}(\mathfrak{N})$, as required.\end{proof}

Since $\tau_{\ge -1}(\mathfrak{N}\otimes_{\ZZ_p[J]}^\mathbb{L}I_{F'})$ is acyclic outside finitely many degrees  the stated condition in Proposition \ref{perfect adapt} is automatically satisfied if the order $\mathfrak{N}$ is hereditary (and hence, by \cite[Th. (26.12)]{curtisr}, if it is a maximal order).

With Theorem \ref{thm:thm1} in mind, in the next section we will show that, under suitable conditions on $A_{M}$ and $F'/M$  the condition in Proposition \ref{perfect adapt} can also be satisfied by orders that are not maximal.

Then, in \S\ref{sec:lfunctions}, we shall study in greater detail the long exact cohomology sequences of the exact triangles in Proposition \ref{perfect adapt}.

\section{Crystalline cohomology and tame ramification}\label{sect:tame}

We continue to use the general notation of \S\ref{sec:syncom}. We also assume that the extension $F'/F$ is tamely ramified and write $\pi:X_{F'}\to X_F$ for the corresponding cover of smooth projective curves. 
We fix a log structure on $X_{F'}$ associated to the divisor $X_{F'}- U_{F'}$, write $X_{F'}^\sharp$ for the associated log scheme and note that the natural map $\pi^\sharp:X_{F'}^\sharp\to X_F^\sharp$ is Kummer-\'etale (in the sense of \cite[Def.~2.13]{Niziol:LogK-1}).

We write $u:(X_F^\sharp/\Zp)_\crys \to X_{F,\et}$ and $u':(X_{F'}^\sharp/\Zp)_\crys \to (X_{F'})_\et$ for the natural morphism of topoi.


In this section we shall construct certain complexes of $Q$-equivariant \'etale $\Zp$-modules that represent $Ru_*D(-E_F)^{(0)}$ and $Ru'_* D(-E)^{(0)}_{F'}$, where $E$ is the pull back of a suitable divisor $E_F$ of $X_F$ supported exactly at $X_F\setminus U_F$. This construction will play an important role in the proof of Theorem \ref{thm:thm1}.

\subsection{Digression on log de~Rham complexes} The main result of this section is the following general observation concerning crystalline sheaves.

\begin{proposition}\label{prop:perfection2} Let $\cale$ be a locally free crystal of $\mathcal{O}_{\langle F\rangle}$-modules (with $\mathcal{O}_{\langle F\rangle}:=\mathcal{O}_{X_F^\sharp/\Zp}$).
\begin{itemize}
\item[(i)] There exists a bounded below complex $C(\pi^{\sharp,*}\cale)$ of torsion free $\ZZ_p[Q]$-modules that has both of the following properties.
\begin{itemize}
\item[(a)] Each term of $C(\pi^{\sharp,*}\cale)$ is an \emph{induced} $\ZZ_p[Q]$-modules; in other words, in each degree $i$ there is an isomorphism $\ZZ_p[Q]$-modules
\[ C^i(\pi^{\sharp,*}\cale)\cong {\rm Ind}_{\{e\}}^Q(C^i(\pi^{\sharp,*}\cale)^Q),\]
where $e$ denotes the identity element of $Q$.
\item[(b)] For each normal subgroup $J$ of $Q$ there is an isomorphism in $D(\ZZ_p[Q/J])$
\[ \Hom_{\ZZ_p[J]}(\ZZ_p,C(\pi^{\sharp,*}\cale)) \cong R\Gamma_{\crys}(X_{F'^J}^\sharp/\Zp,\pi_J^{\sharp,*}\cale),\]
where $\pi^\sharp_J:X_{F'^J}^\sharp\to X^\sharp_F$ is the natural projection.
\end{itemize}
\item[(ii)] If there is a short exact sequence of sheaves 
\[ 0 \to \cale^0 \to \cale \to i_{X_F^\sharp/\Zp,*}\calf \to 0\]
for a vector bundle $\calf$ on $X_F$, then claim (i) is also true with $\cale$ replaced by $\cale^0$.
\end{itemize}
\end{proposition}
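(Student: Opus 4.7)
The plan is to realize $C(\pi^{\sharp,*}\cale)$ as the total complex of a $Q$-equivariant \Cech--de~Rham bicomplex attached to a formal log smooth lift of a suitable affine Kummer-\'etale cover of $X_{F'}^\sharp$, chosen so that each component is a free $\Zp[Q]$-module. First I would choose a finite affine open cover $\{U_\alpha\}_\alpha$ of $X_F^\sharp$ together with formal log smooth lifts $\widetilde U_\alpha$ over $\Spf\Zp$, which exist \'etale locally because $X_F^\sharp$ is log smooth over $\F_p$. Setting $V_\alpha := U_\alpha\times_{X_F^\sharp}X_{F'}^\sharp$, the Kummer-\'etale deformation theory produces a unique $Q$-equivariant Kummer-\'etale lift $\widetilde V_\alpha$ of $V_\alpha$ over $\widetilde U_\alpha$. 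The complex $C(\pi^{\sharp,*}\cale)$ is then defined as the total complex of the \Cech bicomplex, with respect to $\{\widetilde V_\alpha\}_\alpha$, of the log de~Rham complex $\Omega^\bullet_{\widetilde V_\alpha/\Zp}\otimes \pi^{\sharp,*}\cale$. The comparison between crystalline and log de~Rham cohomology on a log smooth lift, combined with the appendix result that coherent cohomology over a formal fs log scheme can be computed via \Cech on an affine Kummer-\'etale cover, ensures that this complex represents $R\Gamma_{\crys}(X_{F'}^\sharp/\Zp,\pi^{\sharp,*}\cale)$.

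The key input for property (a) is the tameness of the ramification in $F'/F$: after \'etale base change to ensure that each $\widetilde U_\alpha$ contains the requisite roots of unity, the local model of $\widetilde V_\alpha\to \widetilde U_\alpha$ is $\Spec R[t^{1/n}]\to \Spec R[t]$ with $n$ coprime to $p$ and $\mu_n\subset R$ acting by $t^{1/n}\mapsto \zeta\, t^{1/n}$. The idempotent decomposition $R[\mu_n]=\bigoplus_{\chi}R\cdot e_\chi$ identifies $R[t^{1/n}]=\bigoplus_{i=0}^{n-1}R\cdot t^{i/n}$ with $R[t][\mu_n]$ as an $R[t][\mu_n]$-module via $t^{i/n}\mapsto e_{\chi_i}$, exhibiting $\mathcal O(\widetilde V_\alpha)$ as a rank-one free module over $\mathcal O(\widetilde U_\alpha)[Q]$. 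Combined with the local freeness of $\cale$ and of $\Omega^\bullet_{\widetilde V_\alpha/\Zp}$, each term of the \Cech--de~Rham complex becomes a free $\Zp[Q]$-module, hence induced from the trivial subgroup, which proves (a).

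Property (b) then follows from the freeness established in (a): $\Zp[Q]$-freeness implies $\Zp[J]$-freeness for every normal subgroup $J\subset Q$, so the functor $\Hom_{\Zp[J]}(\Zp,-)=(-)^J$ acts termwise and exactly on $C(\pi^{\sharp,*}\cale)$. Moreover, by tameness of the intermediate quotient cover $\widetilde V_\alpha\to \widetilde V_\alpha/J$, the $J$-invariants of each coefficient module coincide with the corresponding sections over the Kummer-\'etale quotient cover $\{\widetilde V_\alpha/J\}$ of $\widetilde U_\alpha$, which is itself a formal lift of $\{U_\alpha\times_{X_F^\sharp}X_{F'^J}^\sharp\}$. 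Consequently the invariants recover the analogous \Cech--de~Rham complex representing $R\Gamma_{\crys}(X_{F'^J}^\sharp/\Zp,\pi_J^{\sharp,*}\cale)$.

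For part (ii), given the short exact sequence $0\to \cale^0\to \cale\to i_{X_F^\sharp/\Zp,*}\calf\to 0$, I would realize $C(\pi^{\sharp,*}\cale^0)$ as the mapping fibre of the natural morphism from $C(\pi^{\sharp,*}\cale)$ to the analogous \Cech complex computing $R\Gamma(X_{F'},\pi^*\calf)$, whose terms are again $\Zp[Q]$-free by the same tame-freeness argument applied to the coherent sheaf $\pi^*\calf$ (with the appendix supplying the required \Cech comparison in the coherent setting). The mapping fibre of free complexes is again termwise free and compatible with $J$-invariants, giving properties (a) and (b) for $\cale^0$. The main obstacle is the joint verification of $\Zp[Q]$-freeness on every component of the \Cech--de~Rham complex together with the compatibility of the construction with passage to $J$-invariants; both issues hinge crucially on the tameness of the ramification and on the possibility of trivializing the local monodromy by a sufficiently fine \'etale-local choice of cover.
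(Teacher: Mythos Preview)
Your strategy---a \v Cech--de~Rham bicomplex for a formal lift of an affine cover---is correct in outline and close in spirit to the paper, but the paper makes one choice that sidesteps the hardest part of your argument. You take an affine cover $\{U_\alpha\}$ of the \emph{downstairs} curve $X_F^\sharp$, pull it back to $\{V_\alpha\}$ upstairs, and then argue that tameness forces $\mathcal{O}(\widetilde V_\alpha)$ to be free of rank one over $\mathcal{O}(\widetilde U_\alpha)[Q]$. The paper instead first lifts the whole cover $\gotx_{F'}^\sharp\to\gotx_F^\sharp$ globally (this is Lemma~\ref{lem:lift}) and then chooses a $Q$-stable affine open cover $\gotu_F^\sharp$ of the \emph{upstairs} lift $\gotx_{F'}^\sharp$, regarding it as a Kummer-\'etale cover of $\gotx_F^\sharp$. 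The \v Cech complex for $\pi^{\sharp,*}\cale$ is then taken with respect to the pullback covering $\gotu_F^\sharp\times_{\gotx_F^\sharp}\gotx_{F'}^\sharp$, and since $\gotx_{F'}^\sharp\to\gotx_F^\sharp$ is $Q$-Galois and $\gotu_F^\sharp$ already lives over $\gotx_{F'}^\sharp$, one has $\gotu_F^\sharp\times_{\gotx_F^\sharp}\gotx_{F'}^\sharp\cong \gotu_F^\sharp\times Q$. The induced structure~(a) and the descent~(b) then fall out formally from this splitting, with no local computation at all.

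Your approach can be made to work, but the freeness argument as you have sketched it is incomplete. The local model $R[t]\to R[t^{1/n}]$ with $\mu_n=Q$ only covers the case where the inertia at a ramified point is the full group $Q$; in general the inertia is a proper cyclic subgroup $I_v\subset Q$, and $V_\alpha$ decomposes into $|Q/I_v|$ connected pieces over a neighbourhood of $v$. You would need to combine the local computation for $I_v$ with transitivity of induction, and then pass from local freeness of $\pi_*\mathcal{O}_{\widetilde V_\alpha}$ over $\mathcal{O}_{\widetilde U_\alpha}[Q]$ to global freeness on the chosen affine opens---this is a Noether-type normal-basis statement for tame covers that holds but is not immediate. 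You should also either invoke a global formal lift of $X_F^\sharp$ (which exists here, again by Lemma~\ref{lem:lift}) rather than working with independent local lifts $\widetilde U_\alpha$, or else explain how the \v Cech differentials are defined across incompatible local lifts. Your treatment of part~(ii) via a mapping fibre is fine and is essentially equivalent to the paper's direct description of $Ru_*\cale^0$ as a two-term complex with $\cale^0_{\gotx_F^\sharp}$ in degree zero.
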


\subsubsection{}As preparation for the proof of this result we start with the following technical result.

\begin{lemma}\label{lem:lift} There exists a formal scheme $\gotx_F^\sharp$ over $\Z_p$ that is a smooth lift of $X_F^\sharp$. Furthermore, for any finite Kummer-\'etale covering $X_{F'}^\sharp \to X_F^\sharp$, there exists a finite Kummer-\'etale  covering $\tilde\pi^\sharp:\gotx_{F'}^\sharp\to\gotx_F^\sharp$ that lifts $\pi^\sharp:X_{F'}^\sharp\to X_F^\sharp$.
\end{lemma}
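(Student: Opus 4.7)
The plan is to prove Lemma \ref{lem:lift} in two stages: first construct a smooth formal lift $\gotx_F^\sharp$ of the log scheme $X_F^\sharp$ over $\Spf\Zp$, and then appeal to the deformation theory of Kummer-\'etale morphisms to lift the cover $\pi^\sharp$.

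For the first stage, I would begin by lifting the underlying scheme $X_F$ alone. Since $X_F$ is a smooth projective curve over $\F_p$, the obstructions to successively lifting it from $\Spec(\Z/p^n)$ to $\Spec(\Z/p^{n+1})$ lie in $H^2(X_F,T_{X_F})$, which vanishes for dimension reasons, producing a smooth $p$-adic formal lift $\gotx_F \to \Spf\Zp$. The log structure on $X_F^\sharp$ is the divisorial log structure attached to the reduced divisor $D:=X_F\setminus U_F$, which is a disjoint union of closed points. By smoothness of $\gotx_F$ over $\Spf\Zp$, each point $x\in D$ extends uniquely to a closed formal subscheme $\mathfrak{x}\hookrightarrow\gotx_F$ that is finite \'etale over $\Spf\Zp$, and the sum $\mathfrak{D}:=\sum_{x\in D}\mathfrak{x}$ is a relative Cartier divisor on $\gotx_F$. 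Equipping $\gotx_F$ with the divisorial log structure associated to $\mathfrak{D}$ produces $\gotx_F^\sharp$, which is log smooth over $\Spf\Zp$ (endowed with the trivial log structure) and reduces to $X_F^\sharp$ modulo~$p$.

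For the second stage, I would invoke the standard log-\'etale deformation theorem: for any exact closed immersion $T_0\hookrightarrow T$ of fine saturated log schemes defined by a nilpotent ideal, pullback along $T_0\hookrightarrow T$ induces an equivalence between the categories of finite Kummer-\'etale covers of $T$ and of $T_0$. Since $F'/F$ is tame, $\pi^\sharp:X_{F'}^\sharp\to X_F^\sharp$ is indeed finite Kummer-\'etale, and by applying this equivalence inductively to the thickenings $\gotx_{F,n}^\sharp := \gotx_F^\sharp\otimes_{\Zp}\Z/p^n$ we obtain a unique compatible family of finite Kummer-\'etale lifts $\gotx_{F',n}^\sharp \to \gotx_{F,n}^\sharp$ of $\pi^\sharp$. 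Taking the formal colimit over $n$ (equivalently, the $p$-adic completion of the structure sheaves) defines a morphism $\tilde\pi^\sharp:\gotx_{F'}^\sharp\to\gotx_F^\sharp$ of $p$-adic formal fs log schemes lifting $\pi^\sharp$.

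The main obstacle is one of bookkeeping rather than of substance: one must verify that the inverse system of lifts assembles into a genuine morphism of formal log schemes and that the resulting $\tilde\pi^\sharp$ is still finite and Kummer-\'etale. Finiteness can be checked modulo $p$, where it holds by hypothesis, and the Kummer-\'etale property passes through the $p$-adic completion by construction. Once the deformation-theoretic input is in place, the remaining verifications are routine.
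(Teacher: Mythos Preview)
Your proposal is correct and, for the second stage, matches the paper's argument almost exactly: the paper also lifts $\pi^\sharp$ step by step along the thickenings $\gotx_{F,n}^\sharp$, invoking \cite[Prop.~3.14]{Kato:Fontaine-Illusie} and the vanishing of $\omega_{X_{F'}^\sharp/X_F^\sharp}$, which is precisely the infinitesimal form of the log-\'etale deformation equivalence you cite.

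The genuine difference is in the first stage. The paper works entirely inside log deformation theory: it lifts $X_F^\sharp$ as a log scheme directly, using that the obstruction to extending a log-smooth lift $\gotx_{F,n}^\sharp$ over $\Z/p^{n+1}$ lies in $H^2(X_F,\omega_{X_F^\sharp}^\vee)$, which vanishes because $X_F$ is a curve. You instead decouple the two ingredients: first lift the bare curve $X_F$ (obstruction in $H^2(X_F,T_{X_F})$), then lift the boundary divisor $D$ point by point using smoothness, and finally re-impose the divisorial log structure. Both routes work; the paper's is more uniform and stays within Kato's log framework throughout, while yours is slightly more elementary and makes explicit that the log structure here is nothing more than a choice of relative Cartier divisor supported over $X_F\setminus U_F$. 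Either way one arrives at the same $\gotx_F^\sharp$, and indeed the paper later exploits exactly your description when it remarks that $\gotx_F^\sharp$ is the $p$-adic completion of a smooth proper $\widetilde X_F/\Zp$ equipped with the divisorial log structure along a lift $\widetilde Z$ of $X_F\setminus U_F$.
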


\begin{proof} This lemma is obtained from the infinitesimal deformation theory for smooth log schemes (cf. \cite[Prop.~3.14]{Kato:Fontaine-Illusie}). More precisely, if $\gotx_{F,n}^\sharp$ is a (flat) lift of $X_F$ over $\Z_p/p^n$, then it is easy to see that $\gotx_{F,n}^\sharp$ is log smooth over  $\Z_p/p^n$ (where $\Z_p/p^n$ is given the trivial log structure). To see this, one applies Kato's criterion \cite[Th.~3.5]{Kato:Fontaine-Illusie}. By \cite[Prop. ~3.14(4)]{Kato:Fontaine-Illusie}, the obstruction class for lifting $\gotx_{F,n}^\sharp$ over $\Z_p/p^{n+1}$ lies in $H^2(X_F,\omega_{X_F^\sharp}^\vee) = 0$, where $\omega_{X_F^\sharp}$ is the sheaf of  differentials with log poles at $X_F-U_F$. We write  $\gotx_F^\sharp$ for the natural inverse limit $\varprojlim_n\gotx_{F,n}^\sharp$.

Since the sheaf of relative log differentials $\omega_{X_{F'}^\sharp/X^\sharp}$ is trivial, it follows that the finite Kummer-\'etale  covering $\pi^\sharp:X_{F'}^\sharp\to X_F^\sharp$ canonically lifts to $\tilde\pi^\sharp:\gotx_{F',n}^\sharp\to\gotx_{F,n}^\sharp$ (cf. \cite[Prop.~3.14]{Kato:Fontaine-Illusie}). This produces the desired finite Kummer-\'etale  covering $\tilde\pi^\sharp:\gotx_{F'}^\sharp\to\gotx_F^\sharp$.
\end{proof}

We use this lemma to obtain some complexes representing $Ru_*\cale$ and $Ru'_*(\pi^{\sharp,*}\cale)$ for a  locally free crystal $\cale$  of $\mathcal{O}_{\langle F\rangle}$-modules. Given such $\cale$, we obtain a vector bundle $\cale_{\gotx_{F}^\sharp}$ that is equipped with an integrable connection with log poles $\nabla:\cale_{\gotx_{F}^\sharp}\to\cale_{\gotx_{F}^\sharp}\widehat\otimes_{\mathcal{O}_{{\gotx}_F}}\widehat\omega_{\gotx_F^\sharp}$.

Furthermore, since $X_F^\sharp\hookrightarrow \gotx_F^\sharp$ is a good embedding in the sense of \cite[\S~5.6]{KT}, it follows that $\cale$ is functorially determined by $(\cale_{\gotx_F^\sharp},\nabla)$ by \cite[Th.~6.2]{Kato:Fontaine-Illusie}. The same holds for any locally free crystal $\cale'$  of $\mathcal{O}_{\langle F'\rangle}$-modules, and the associated vector bundle with integrable connection with log poles  $(\cale'_{\gotx^\sharp_{F'}},\nabla)$.

Recall that the map $\tilde \pi:\gotx_{F'}\to\gotx_F$ is flat\footnote{It suffices to verify the flatness at the formal neighbourhood of any closed point. And by Abhyankar's lemma (cf. \cite[A.11]{FreitagKiehl}), the map of completed local rings induced by $\tilde\pi$ is of the form $W(\FF_q)[[t]]\to W(\FF_{q'}[[t^{1/e}]])$ for some $e$ not divisible by $p$.} and we have $\tilde\pi^*\omega_{\gotx^\sharp_F}\riso\omega_{\gotx_{F'}^\sharp}$ by \cite[Prop. ~3.12]{Kato:Fontaine-Illusie}, so we can define pull back and push forward by $\tilde\pi$ for vector bundles with connection with log poles (just as the unramified case).

Furthermore, by unwinding the proof of \cite[Th.~6.2]{Kato:Fontaine-Illusie}, one can see that the construction $\cale\rightsquigarrow (\cale_{\gotx_F^\sharp},\nabla)$ (and the same construction for $\cale'$) respects the pull back and push forward by $\pi^\sharp$ so that one has both $((\pi^{\sharp,*}\cale)_{\gotx^\sharp_{F'}},\nabla) = \tilde\pi^*(\cale_{\gotx_F^\sharp},\nabla)$ and $((\pi^{\sharp}_*\cale')_{\gotx_F^\sharp},\nabla) = \tilde\pi_*(\cale'_{\gotx^\sharp_{F'}},\nabla)$.

In particular, both $(\pi^{\sharp,*}\cale)_{\gotx^\sharp_{F'}}$ and $(\pi^{\sharp}_*\cale')_{\gotx_F^\sharp}$ have natural horizontal actions of $Q$.

Let $\gotx_{F,n}$ denote the closed subscheme of $\gotx_F$ cut out by the ideal generated by $p^n$. Then a coherent  $\mathcal{O}_{\gotx_{F,n}}$-modules $\calf_n$ can be seen as a torsion \'etale sheaf on $X_F$, where for any \'etale morphism $f:Y\to\gotx_{F,n}$ we have $\calf_n(Y) := \Gamma(Y, f^*\calf_n)$. Similarly, any coherent $\mathcal{O}_{\gotx_F}$-module $\calf$ can be viewed as a $\Zp$-\'etale sheaf on $X_F$; namely, the inverse system of torsion \'etale sheaves $\{\calf|_{\gotx_{n}}\}$.

Now, for any  locally free crystal $\cale$  of $\mathcal{O}_{\langle F\rangle}$-module, the complex $Ru_{*}\cale$ can be computed via the complex of $\Zp$-\'etale sheaves on $X_F$ given by $\cale_{\gotx_F^\sharp}\xrightarrow\nabla\cale_{\gotx_F^\sharp}\widehat\otimes_{\mathcal{O}_{{\gotx}_F}}\widehat\omega_{\gotx_F^\sharp},$ where the first term is placed in degree zero (cf. \cite[\S~5.6]{KT}). One also obtains a similar expression for $Ru'_*(\pi^{\sharp,*}\cale)$ as a complex of `$\Zp[Q]$-\'etale sheaves' on $X_{F'}$.

Given a short exact sequence
\[
0 \to \cale^0 \to \cale \to \calf \to 0,
\]
where $\calf$ is a vector bundle on $X_F$ viewed as a log crystalline sheaf, we have a short exact sequence $0\to Ru_{*}\cale^0\to Ru_{*}\cale \to \calf\to 0$, where $\calf$ is viewed as a torsion \'etale sheaf on $X_F$. Therefore, we may express
\begin{equation}
Ru_{*}\cale^0 =  [\cale^0_{\gotx_F^\sharp}\xrightarrow\nabla\cale_{\gotx_F^\sharp}\widehat\otimes_{\mathcal{O}_{{\gotx}_F}}\widehat\omega_{\gotx_F^\sharp}],
\end{equation}
where $\cale^0_{\gotx_F^\sharp}$ denotes the kernel of $\cale_{\gotx_F^\sharp} \twoheadrightarrow i_{X^\sharp_F/\Zp,*}\calf$.

\begin{remark} {\em Note that  $\gotx_F^\sharp$ can be obtained as a $p$-adic completion of a proper smooth log scheme $\widetilde X_F^\sharp$ over $\Z_p$, where the underlying scheme $\widetilde X_F$ is a smooth lift of $X_F$ and the log structure is given by relative divisor $\widetilde Z\subset \widetilde X_F$ smoothly lifting $Z:=|X_F-U_F|$.

Let us now give some examples of $Ru_{*}\cale$ for some $\cale$. When $\cale = \mathcal{O}_{\langle F\rangle}$ then $Ru_{*}\mathcal{O}_{\langle F\rangle}$ is the log de~Rham complex of $\gotx_F^\sharp$; that is, the $p$-adic completion of the de~Rham complex of $\widetilde X_F$ with log poles along $\widetilde Z$.

Given any divisor $E_F$ of $X_F$ supported in $Z$, one obtains a rank one locally free crystal of  $\mathcal{O}_{\langle F\rangle}$-modules $\cale:=\mathcal{O}_{\langle F\rangle}(E_F)$.

Let us now describe $Ru_{*}\mathcal{O}_{\langle F\rangle}(E_F)$. Viewing $\gotx_F^\sharp$ as the $p$-adic completion of the log scheme $\widetilde X_F^\sharp$ with divisorial log structure associated to $\widetilde Z$, we can find a relative divisor $\widetilde E_F$ of $\widetilde X_F$ that lifts $E_F$ and is supported in $\widetilde Z$.
 Then  from the definition of $\mathcal{O}_{\langle F\rangle}(E_F)$ (cf. \cite[\S~5.12]{KT}), one can check that 
\[
Ru_{*}\mathcal{O}_{\langle F\rangle}(E_F)  = [\mathcal{O}_{\widetilde X_F}(\widetilde E_F) \xrightarrow\nabla \mathcal{O}_{\widetilde X_F}(\widetilde E_F)\otimes \omega_{\widetilde X_F^\sharp} ] \otimes_{\mathcal{O}_{\widetilde X_F}} O_{\gotx_F},
\]
where $\nabla$ is induced by the universal derivation $d:\mathcal{O}_{\widetilde X_F}\to\omega_{\widetilde X_F^\sharp}=\Omega_{\widetilde X_F}(\log \widetilde Z)$. (Here, $\nabla$ is well defined since $\widetilde E_F$ is supported in $\widetilde Z$, where $\omega_{\widetilde X_F^\sharp}$ is allowed to have log poles.)}
\end{remark}

\subsubsection{}We are now ready to prove Proposition \ref{prop:perfection2}.

We shall, for brevity, only prove claim (ii) since this is directly relevant to the proof of Theorem \ref{thm:thm1} and claim (i) can be proved by exactly the same argument.

Our strategy is to use Proposition~\ref{prop:Cech} to construct a complex $C(\pi^{\sharp,*}\cale^0)$ of \emph{induced} $\ZZ_p[Q]$-modules that represents $R\Gamma_{\crys}(X_{F'}^\sharp/\Zp,\pi^{\sharp,*}\cale^0)$ in such a way that $C(\cale^0):=C(\pi^{\sharp,*}\cale^0)^Q$ is naturally isomorphic in $D(\ZZ_p)$ to $R\Gamma_{\crys}(X_{F}^\sharp/\Zp,\cale^0)$. (Since each term of $C(\pi^{\sharp,*}\cale^0)$ is  an induced $\ZZ_p[Q]$-module, the complex $C(\cale^0)$ of term-wise $Q$-invariants of $C(\pi^{\sharp,*}\cale^0)$ represents $R\Hom_{\ZZ_p[Q]}(\ZZ_p,C(\pi^{\sharp,*}\cale^0))$.)

We recall that $R\Gamma_{\crys}(X_{F}^\sharp/\Zp,\cale^0)$ identifies with $R\Gamma_{\et}(X_{F},Ru_*\cale^0)$ and that $Ru_*\cale^0$ is equal to the complex $\cale^0_{\gotx_F^\sharp}\xrightarrow\nabla\cale_{\gotx_F^\sharp}\otimes_{\mathcal{O}_{\gotx_{F}}}\widehat\omega_{\gotx_{F}^\sharp}$.
In particular, since all the terms of $Ru_*\cale^0$ are `coherent $\mathcal{O}_{\gotx_{F}}$-modules', we can compute $R\Gamma_{\et}(X_{F},Ru_*\cale^0)$ via Zariski topology on $\gotx_{F}$ (viewing  $Ru_*\cale^0$ as a complex of coherent $\mathcal{O}_{\gotx_{F}}$-modules with additive differential). Note that the same properties hold for  $Ru'_*(\pi^{\sharp,*}\cale^0)$ as well.

We now choose the disjoint union of some $Q$-stable finite affine open covering $\gotu_F^\sharp$ of $\gotx_{F'}^\sharp$ and regard it as a Kummer-\'etale covering of $\gotx_F^\sharp$. We then let $C(\cale^0)$ denote the total complex associated to the {\Cech} resolution of $Ru_*\cale^0$ with respect to $\gotu_F^\sharp$. Similarly, we let $C(\pi^{\sharp,*}\cale^0)$ denote the total complex associated to the {\Cech} resolution of $Ru'_*(\pi^{\sharp,*}\cale^0)$ with respect to the Kummer-\'etale covering $\gotu_F^\sharp\times_{\gotx_F^\sharp}\gotx_{F'}^\sharp$ of $\gotx_{F'}^\sharp$, which is a complex of $\ZZ_p[Q]$-modules where the $Q$-action is induced from the $Q$-action on $\gotx_{F'}$. Then, by Proposition~\ref{prop:Cech}, we know that $C(\cale^0)$ is isomorphic in $D(\ZZ_p)$ to $R\Gamma_{\crys}(X_F^\sharp/\Zp,\cale^0)$ and that $C(\pi^{\sharp,*}\cale^0)$ is isomorphic in $D(\ZZ_p[Q])$ to $R\Gamma_{\crys}(X_{F'}^\sharp/\Zp,\pi^{\sharp,*}\cale^0)$.

In addition, one has $\gotu_F^\sharp\times_{\gotx_F^\sharp}\gotx_{F'}^\sharp \cong \gotu_F^\sharp\times Q$ and so in each degree $i$ there is an isomorphism of $\ZZ_p[Q]$-modules
\[ C^i(\pi^{\sharp,*}\cale^0)\cong \Hom_{\ZZ_p}(\ZZ_p[Q],C^i(\cale^0)) = {\rm Ind}_{\{e\}}^Q C^i(\cale^0),\]
where $C^i(\pi^{\sharp,*}\cale^0)$ and $C^i(\cale^0)$ denote the $i$-th term of $C(\pi^{\sharp,*}\cale^0)$ and $C(\cale^0)$, respectively. (Indeed, we have that  each term of $Ru'_{*}(\pi^{\sharp,*}\cale^0)$ is obtained by the pull back of the terms of $Ru_*\cale^0$ as coherent sheaves, using the isomorphism $\tilde\pi^*\widehat\omega_{\gotx_F^\sharp}\riso \widehat\omega^1_{\gotx_{F'}^\sharp}$  obtained in  \cite[Prop.~3.12]{Kato:Fontaine-Illusie}.) Therefore, we have $C(\cale^0) = C(\pi^{\sharp,*}\cale^0)^Q.$
(To see that the {\Cech} differentials on both sides match, we note that the {\Cech} resolution $C(\pi^{\sharp,*}\cale^0)$ is constructed with respect to the pull back $\gotu^\sharp_F\times_{\gotx_F}\gotx_{F'}$ of the Kummer-\'etale covering $\gotu^\sharp_F$ of $\gotx_F$, which was used for constructing the {\Cech} resolution $C(\cale^0)$.)

It remains to show that for any subgroup $J$ of $Q$ the complex $C(\pi^{\sharp,*}\cale^0)^J$ represents $R\Gamma_{\crys}(X^\sharp_{F'^J}/\Zp,\pi_J^{\sharp,*}\cale^0)$. Note that we have
\[\gotu^\sharp_{F}\times_{\gotx^\sharp_F} \gotx^\sharp_{F'^J} = \gotu^\sharp_{F}\times_{\gotx^\sharp_F} (\gotx^\sharp_{F'}/J) \cong \gotu^\sharp_{F} \times (Q/J),
\]
So it follows that $C(\pi^{\sharp,*}\cale^0)^J$ is the total complex of the {\Cech} resolution of $Ru_{F'^J,*}(\pi_J^{\sharp,*}\cale^0)$ with respect to the Kummer-\'etale covering $\gotu^\sharp_{F}\times_{\gotx^\sharp_F} \gotx^\sharp_{F'^J}$ of $\gotx^\sharp_{F'^J}$, and so $C(\pi^{\sharp,*}\cale^0)^J$ represents $R\Gamma_{\crys}(X^\sharp_{F'^J}/\Zp,\pi_J^{\sharp,*}\cale^0)$ by Proposition~\ref{prop:Cech}.

This completes the proof of Proposition \ref{prop:perfection2}.

\subsection{The complex $I_{F'}$}\label{subsec:pfperf2} The following consequence of Proposition \ref{prop:perfection2} regarding the complex $I_{F'}$ constructed in \S\ref{sec:syncom} will play an important role in the proof of Theorem \ref{thm:thm1}.

\begin{proposition}\label{I tame} If the extension $F'/F$ is tamely ramified, then $I_{F'}$ lies in $D^{\rm perf}(\ZZ_p[Q])$ and is acyclic in all degrees outside $0, 1$ and $2.$ \end{proposition}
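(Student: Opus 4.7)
The plan is to apply Proposition~\ref{prop:perfection2}(ii) to the locally free crystal $\cale := \pi_{K'/F}^{\sharp,*}D(-E)$ of $\mathcal{O}_{\langle F\rangle}$-modules, together with its subsheaf $\cale^{(0)} := \pi_{K'/F}^{\sharp,*}D(-E)^{(0)}$; the required short exact sequence of crystals over $X_F^\sharp/\Zp$ is obtained by pulling back along $\pi_{K'/F}$ the defining sequence for $D^{(0)}$ recalled in Section~\ref{sec:syncom}. The tameness of $F'/F$, running throughout this section, is exactly what is needed to invoke Proposition~\ref{prop:perfection2}, via the Kummer-\'etale lift produced in Lemma~\ref{lem:lift}.

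First, I would identify the complex supplied by Proposition~\ref{prop:perfection2} with the one appearing in the definition of $I_{F'}$. Since $\pi_{F'}:X_{F'}\to X_{K'}$ is finite, the higher direct images $R^i\pi_{F',*}^\sharp$ vanish for $i>0$; combined with left-exactness of $\pi_{F',*}^\sharp$ this gives $D(-E)_{F'}^{(0)} = \pi_{F',*}^\sharp\pi_{F'}^{\sharp,*}D(-E)^{(0)}$, so adjunction yields
\[
R\Gamma(X_{K'}, Ru_*D(-E)_{F'}^{(0)}) \cong R\Gamma_\crys(X_{F'}^\sharp/\Zp,\, \pi^{\sharp,*}\cale^{(0)}).
\]

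By Proposition~\ref{prop:perfection2}(ii), the right-hand side is represented by a bounded complex $C(\pi^{\sharp,*}\cale^{(0)})$ each of whose terms has the form ${\rm Ind}_{\{e\}}^Q(M^i) \cong \ZZ_p[Q]\otimes_{\ZZ_p}M^i$ for a torsion-free $\ZZ_p$-module $M^i$. Because the {\Cech} resolution in the proof of Proposition~\ref{prop:perfection2} uses a \emph{finite} affine open cover of $\gotx_F^\sharp$, each $M^i$ is also finitely generated over $\ZZ_p$, hence a finitely generated free $\ZZ_p$-module; each ${\rm Ind}_{\{e\}}^Q(M^i)$ is therefore a finitely generated free $\ZZ_p[Q]$-module, so the complex defines an object $P^\bullet$ of $D^{\rm perf}(\ZZ_p[Q])$.

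Finally, to pass from $P^\bullet$ to $I_{F'} = (P^\bullet\otimes^{\mathbb L}\QQ_p/\ZZ_p)^*[-2]$, I would observe that flatness of the $P^i$ makes the tensor product underived, while the standard identification $\Hom_{\ZZ}(\ZZ_p[Q]\otimes\QQ_p/\ZZ_p,\QQ/\ZZ) \cong \Hom_{\ZZ_p}(\ZZ_p[Q],\ZZ_p) \cong \ZZ_p[Q]$ (the contragredient of the regular representation being again the regular representation) shows that the Pontryagin dual of each term remains a finitely generated free $\ZZ_p[Q]$-module; the shift $[-2]$ preserves perfectness. For the cohomological range, the log de Rham complex on the projective curve $X_{F'}^\sharp$ has length $2$, so $R\Gamma_\crys$ is concentrated in degrees $0,1,2$, and tensoring with $\QQ_p/\ZZ_p$ preserves this range; Pontryagin duality reverses it to $-2,-1,0$, and the shift $[-2]$ returns cohomology to degrees $0,1,2$, as required. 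The main obstacle, though essentially a careful bookkeeping exercise, is the sheaf-theoretic identification of $D(-E)_{F'}^{(0)}$ with $\pi_{F',*}^\sharp\pi_{F'}^{\sharp,*}D(-E)^{(0)}$: only once this is established can the question over $X_{K'}$ be converted into one over $X_F^\sharp$ to which Proposition~\ref{prop:perfection2} directly applies.
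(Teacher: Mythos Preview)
Your overall strategy matches the paper's: both arguments invoke Proposition~\ref{prop:perfection2}(ii) for $\cale^{(0)}=D(-E)_F^{(0)}$ to produce an explicit complex of induced $\ZZ_p[Q]$-modules representing $R\Gamma_\crys(X_{F'}^\sharp/\ZZ_p,\pi^{\sharp,*}\cale^{(0)})$, and then dualise and shift to obtain $I_{F'}$.

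However, there is a genuine gap in your perfectness step. You assert that ``because the \Cech{} resolution \ldots\ uses a finite affine open cover of $\gotx_F^\sharp$, each $M^i$ is also finitely generated over $\ZZ_p$''. This is false: the terms $C^i(\cale^{(0)})$ are sections of coherent sheaves over affine opens of the \emph{formal} scheme $\gotx_F$, and such sections are finitely generated modules over the affine coordinate ring (something like $\ZZ_p\langle t\rangle$), not over $\ZZ_p$. They are torsion-free and hence flat over $\ZZ_p$, but they are of infinite rank. Consequently the induced modules ${\rm Ind}_{\{e\}}^Q(M^i)$ are flat $\ZZ_p[Q]$-modules but not finitely generated, and your direct conclusion that the complex lies in $D^{\rm perf}(\ZZ_p[Q])$ does not follow.

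The paper closes this gap by exploiting property~(b) of Proposition~\ref{prop:perfection2} rather than trying to make the terms themselves perfect. Since each term is of the form $\ZZ_p[Q]\otimes_{\ZZ_p}(-)$, the complex representing $I_{F'}$ has flat $\ZZ_p[Q]$-module terms, so for every subgroup $J\subseteq Q$ the derived coinvariants $\ZZ_p\otimes^{\mathbb L}_{\ZZ_p[J]}I_{F'}$ are computed termwise and identify with $I_{F'^J}$. One then invokes the fact (from \cite[Prop.~5.15(i)]{KT}) that each $I_{F'^J}$ is acyclic outside degrees $0,1,2$ with finitely generated cohomology, and applies the standard criterion for perfectness already used in the proof of Proposition~\ref{prop:perfect}(i). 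Your argument for the cohomological range is fine; it is only the finite-generation claim that needs to be replaced by this descent argument.
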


\begin{proof}
Throughout this proof we use the notation introduced at the beginning of \S\ref{sec:syncom} with $K'=F$.  By applying  Proposition~\ref{prop:perfection2} to  $\cale^0 = D(-E_F)^{(0)}$ (so we have $\pi^{\sharp,*}\cale^0 = D(-E)_F^{(0)}$ as $E=\pi^*E_F$), we obtain a complex of torsion-free induced $\ZZ_p[Q]$-modules $C_{F'}$ representing
\[R\Gamma_{\crys}(X_{F'}^\sharp/\Zp,D(-E)_{F'}^{(0)})\cong R\Gamma_{\et}(X_{F'},Ru'_*(D(-E)_{F'}^{(0)}))\]
such that for any subgroup $J$ of $Q$ the complex $C_{F'}^J$ represents $R\Gamma_{\crys}(X_{F}^\sharp/\Zp,D(-E_{F'^J})_{F'^J}^{(0)})$ where $E_{F'^J}$ is the pull back of $E_F$ to $X_{F'^J}$. In particular,  in each degree $i$ there is an isomorphism of $\ZZ_p[Q]$-modules $C^i_{F'} \cong \Hom_{\ZZ_p}(\ZZ_p[Q],(C^i_{F'})^Q)$.

Since  $(C^i_{F'})^J$ is $\ZZ_p$-flat in all degrees $i$, for any normal subgroup $J$ of $Q$ there is an isomorphism in $D(\ZZ_p[Q/J])$
\[ I_{F'^J} \cong (C_{F'}^J\otimes_{\ZZ_p} \QQ_p/\ZZ_p)^*\]
where the complexes on the right hand side are defined by the term-wise operations.

If we set $I^i_{F'}:=(C^i_{F'}\otimes_{\ZZ_p}\QQ_p/\ZZ_p)^*$ and  $I^i_F:=((C^i_{F'})^Q\otimes_{\ZZ_p}\QQ_p/\ZZ_p)^*$ for any $i$, then we have
\[
I^i_{F'} \cong \ZZ_p[Q]\otimes_{\ZZ_p}I^i_F,
\]
which is a flat $\ZZ_p[Q]$-module. Therefore for any subgroup $J$ of $Q$ the derived coinvariants $\ZZ_p\otimes^{\mathbb{L}}_{\ZZ_p[J]} I_{F'}$ can be represented by the following complex defined by term-wise operations:
\[
\ZZ_p\otimes_{\ZZ_p[J]}(C_{F'}\otimes_{\ZZ_p} \QQ_p/\ZZ_p)^* \cong ((C_{F'})^J\otimes_{\ZZ_p}\QQ_p/\ZZ_p)^*.
\]
This implies, in particular, that $\ZZ_p\otimes^{\mathbb{L}}_{\ZZ_p[J]} I_{F'}$ is isomorphic in $D(\ZZ_p[Q/J])$ to $I_{F'^J}.$

Thus, since each complex $I_{F'^J}$ is acyclic outside degrees $0,1$ and $2$ and each cohomology group of $I_{F'}$ is finitely generated over $\ZZ_p$, a standard argument (as already used at the beginning of the proof of Proposition \ref{prop:perfect}) implies that $I_{F'}$ belongs to $D^{\rm perf}(\ZZ_p[Q])$, as claimed.
\end{proof}

\section{Crystalline cohomology, semisimplicity and vanishing orders} \label{sec:lfunctions}

As further preparation for the proof of Theorem \ref{thm:thm1}, in this section we establish a link between the long exact cohomology sequences of the exact triangles constructed in Lemma \ref{KT triangles} and the rational height pairing of Schneider and then use it to study the orders of vanishing of Hasse-Weil-Artin $L$-series.

Throughout we use the notation of Lemma \ref{KT triangles}. For convenienc, we also set 
\[ Q_M := \Gal(F'/M)\]
and $Y_{\QQ_p} := \QQ_p\otimes_{\ZZ_p}Y$ for each $\ZZ_p$-module $Y$.

\subsection{Height pairings and semisimplicity}\label{ssec:conslfun} At the outset we recall that, by the general discussion given at the beginning of \cite[\S4.3]{KT}, for each intermediate field $M$ of $L'/K$ the Dieudonn\'e isocrystal $D(\mathcal{A}_M|_{U_M})\otimes\mathbb{Q}_p$ on $(U_{M/\ZZ_p})_{\rm crys}$ comes from an overconvergent $F$-isocrystal on $U_M$ that we shall denote by 
\[ D^{\dagger}_M = D^{\dagger}(A_M).\]

%

We further recall that, by \cite[\S4.9 and Prop.~5.15]{KT}, there are natural identifications
\begin{equation}\label{missing complexes}
\Q_p \otimes_{\Z_p} I_{F'} = \Q_p \otimes_{\Z_p} P_{F'} = R\Hom_{\Q_p} \big(R\Gamma_{{\rm rig}, c}(U_{F'}, D^{\dagger}_{F'}),\Q_p\big)[-2] \end{equation}
with respect to which the morphism ${\bf 1}$ in the exact triangle (\ref{eq:lietriangle}) corresponds to the identity endomorphism on $R\Gamma_{{\rm rig}, c}(U_{F'}, D^{\dagger}_{F'})$. 

Upon combining these identifications with the long exact cohomology sequence of the exact triangle (\ref{eq:syntriangle}) we obtain a composite homomorphism
\begin{equation}\label{semi second step}
 \beta_{A,F',p}: \QQ_p\otimes_\ZZ A^t(F') \xrightarrow{H^0(\theta')} H^1(P_{F'})_{\QQ_p} = H^1(I_{F'})_{\QQ_p}  \xrightarrow{H^1(\theta)}\QQ_p\otimes_\ZZ \Hom_\ZZ(A(F'),\ZZ).\end{equation}

We also write
\[ h_{A,F',p,*}: \QQ_p\otimes_\ZZ A^t(F') \to \QQ_p\otimes_\ZZ \Hom_\ZZ(A(F'),\ZZ)\]
for the isomorphism of $\QQ_p[Q_M]$-modules that is induced by the algebraic height pairing $h_{A,F'}$ that occurs in \S\ref{sect:reform}.

\begin{proposition}\label{prop:htfrobcomp} If $\sha(A/F')$ is finite, then the following claims are valid.
\begin{itemize}
\item[(i)] One has $\beta_{A,F',p} = (\pm 1)^{a_{A,F',p}}\times h_{A,F',p,*}$ for a computable integer $a_{A,F',p}$ in $\{0,1\}$.
\item[(ii)] The homomorphisms $H^i(\hat\varphi)_{\QQ_p}$ are bijective for all $i \not= 1$, where $\hat\varphi:= 1-\varphi$.
\item[(iii)] The $\QQ_p[Q_M]$-module $\ker(H^1(\hat\varphi))_{\QQ_p}$ is naturally isomorphic to $\QQ_p\otimes A^t(F')$.
\item[(iv)] The composite map $\ker(H^1(\hat\varphi))_{\QQ_p} \subseteq H^1(P_{F'})_{\QQ_p} = H^1(I_{F'})_{\QQ_p} \to {\rm cok}(H^1(\hat\varphi))_{\QQ_p}$ is bijective.
\end{itemize}
\end{proposition}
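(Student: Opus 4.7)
The plan is to deduce all four claims from the long exact cohomology sequence of the triangle (\ref{eq:syntriangle}) after tensoring with $\QQ_p$, combined with the Le Stum--Trihan identification $\QQ_p\otimes_{\ZZ_p} P_{F'}^* = \QQ_p\otimes_{\ZZ_p} I_{F'}^* = R\Gamma_{{\rm rig},c}(U_{F'}, D^\dagger_{F'})$ and Schneider's description of the algebraic height pairing as a Bockstein map. First, I would identify the rational cohomology of the middle term. Using Proposition \ref{prop:arithcoho}, Remark \ref{eq:sel1}, and the assumption that $\sha(A/F')$ is finite, one sees that the complex $R\Gamma_{{\rm ar}, V_{F'}}(U_{F'},\calA_{\tors})^*_p[-2]$ has cohomology rationally concentrated in degrees $0$ and $1$, with canonical identifications of $\QQ_p[Q_M]$-modules
\begin{equation*}
H^0(-)_{\QQ_p} \cong \QQ_p \otimes_\ZZ A^t(F'), \qquad
H^1(-)_{\QQ_p} \cong \QQ_p \otimes_\ZZ \Hom_\ZZ(A(F'), \ZZ),
\end{equation*}
the vanishing in degree $2$ following because $H^0$ of the arithmetic complex is finite.

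Next I would prove claim (ii). Under the Le Stum--Trihan identification, this reduces to showing that $1 - \varphi$ acts bijectively on $H^i_{{\rm rig}, c}(U_{F'}, D^\dagger_{F'})$ for $i \ne 1$. An argument analogous to the one in the proof of Proposition \ref{prop:positive}, but using the rigid-cohomological analogue of Deligne's weight bounds, will show that the eigenvalues of $\varphi$ on $H^0$ and $H^2$ have absolute values strictly different from $1$, and hence that $1-\varphi$ is invertible in these degrees. Given (ii), the long exact sequence of (\ref{eq:syntriangle}) collapses rationally to the four-term exact sequence
\begin{equation*}
0 \to \QQ_p \otimes_\ZZ A^t(F') \xrightarrow{H^0(\theta')} H^1(P_{F'})_{\QQ_p} \xrightarrow{H^1(\hat\varphi)} H^1(I_{F'})_{\QQ_p} \xrightarrow{H^1(\theta)} \QQ_p \otimes_\ZZ \Hom_\ZZ(A(F'), \ZZ) \to 0.
\end{equation*}
This immediately yields (iii), since $\ker H^1(\hat\varphi)_{\QQ_p}$ is the image of $H^0(\theta')$, canonically identified with $\QQ_p \otimes A^t(F')$. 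It also identifies $\cok H^1(\hat\varphi)_{\QQ_p}$ with $\QQ_p \otimes \Hom_\ZZ(A(F'), \ZZ)$ via $H^1(\theta)$, so that, under these identifications, the composite appearing in (iv) is precisely the map $\beta_{A,F',p}$ of (\ref{semi second step}).

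Consequently (iv) will follow from (i), since $h_{A,F'}$ is non-degenerate by Schneider's theorem and the equality $\rank A(F') = \rank A^t(F')$ ensures that source and target of $\beta_{A,F',p}$ have the same $\QQ_p$-dimension. The main obstacle is therefore claim (i). The composite $\beta_{A,F',p}$ is, by construction, a Bockstein-type pairing arising from the fact that while the morphism $\mathbf{1}$ of (\ref{eq:lietriangle}) becomes the identity endomorphism of $R\Gamma_{{\rm rig},c}(U_{F'}, D^\dagger_{F'})$, the morphism $\mathbf{1}-\varphi$ of (\ref{eq:syntriangle}) becomes $1-\varphi$; the induced boundary map between the kernel and cokernel of $1-\varphi$ on $H^1_{{\rm rig},c}$ is exactly $\beta_{A,F',p}$. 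A theorem of Schneider in \cite{schneider} identifies precisely this Bockstein map with the algebraic pairing $h_{A,F'}$ up to an explicit sign. The delicate step in the proof is translating Kato--Trihan's syntomic complex into this rigid-cohomological framework and then carefully tracking Frobenius, duality and sign conventions in order to extract the explicit value of $a_{A,F',p} \in \{0,1\}$.
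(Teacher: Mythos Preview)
Your overall plan is sound and in fact close in spirit to the paper, but the logical order and one key attribution differ in ways worth noting. The paper proves claim (i) \emph{first}, citing not Schneider but the argument of Kato and Trihan in \cite[3.3.6.2]{KT}: working in the quotient category of $\ZZ_p[Q_M]$-modules by finite modules, they compare the composite map (your $\beta_{A,F',p}$, rewritten on the Pontryagin-dual side) with the map induced by $h_{A,F'}$, and it is Kato--Trihan who establish that these coincide up to a computable sign in the $p$-adic/syntomic framework. Schneider's result in \cite{schneider} covers only the $\ell$-adic analogue for $\ell\neq p$ (this is exactly how the paper uses it in \S\ref{sect:proof}). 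So your reference for (i) is misplaced.

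More interestingly, the paper then \emph{deduces} (ii) from (i), rather than from a weight argument. From the long exact sequence of (\ref{eq:syntriangle}) and the rational cohomology of the arithmetic complex one gets only that $H^i(\hat\varphi)_{\QQ_p}$ is bijective for $i\notin\{0,1,2\}$, that $\ker H^0(\hat\varphi)_{\QQ_p}$ and $\cok H^2(\hat\varphi)_{\QQ_p}$ vanish, and the two short exact sequences (\ref{semi first step}). The bijectivity of $\beta_{A,F',p}$ (equivalently of $h_{A,F',p,*}$, via (i)) then forces $\cok H^0(\hat\varphi)_{\QQ_p}=0$ and $\ker H^2(\hat\varphi)_{\QQ_p}=0$, completing (ii); claims (iii) and (iv) follow immediately. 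Your proposed route, establishing (ii) directly via a rigid-cohomological analogue of Deligne's weight bounds, is in principle viable but would require invoking the $p$-adic Weil~II machinery (Kedlaya) for the overconvergent $F$-isocrystal $D^\dagger_{F'}$, together with a purity statement for it---non-trivial extra input that the paper's more economical argument avoids entirely.
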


\begin{proof} Write $\mathfrak{C}$ for the quotient of the category of $\ZZ_p[Q_M]$-modules by the category of finite $\ZZ_p[Q_M]$-modules.

Then, since $\sha(A/F')$ is assumed to be finite, the (non-degenerate) pairing $h_{A,F'}$ induces an isomorphism in $\mathfrak{C}$ of the form
\begin{equation}\label{first map}
A(F') \otimes_\ZZ \QQ_p/\ZZ_p \rightarrow \Hom_\ZZ(A^t(F'), \QQ_p/\ZZ_p).\end{equation}

Next we set $C' := R\Gamma_{{\rm ar},V_{F'}}(U_{F'},\mathcal{A}_{\rm tor})$. Then, since the kernel of the homomorphism $
H^1(C') \to {\rm Sel}_{\QQ/\ZZ}(A_{F'})$ in Proposition \ref{prop:arithcoho} is finite the natural map $A(F') \otimes_\ZZ \QQ/\ZZ \to {\rm Sel}_{\QQ/\ZZ}(A_{F'})$ factors through a map  $A(F') \otimes_\ZZ \QQ/\ZZ \rightarrow H^1(C')$ in $\mathfrak{C}$. This homomorphism then gives rise to a composite homomorphism in $\mathfrak{C}$ of the form
\begin{multline}\label{second map} A(F') \otimes_\ZZ \QQ/\ZZ \rightarrow H^1(C')_p \rightarrow H^1(I^*_{F'}) \xrightarrow{\bf 1} H^1(P^*_{F'}) \\
 \rightarrow H^2(C')_p \rightarrow \Hom_\ZZ({\rm Sel}_{\hat{\ZZ}}(A^t), \QQ_p/\ZZ_p) \rightarrow \Hom_\ZZ(A^t(F'), \QQ_p/\ZZ_p),
\end{multline}
where the second and fourth maps are induced by the exact triangle (\ref{eq:syntriangle}) and the fifth by Proposition \ref{prop:arithcoho}.

To prove claim (i) it is sufficient, after taking Pontryagin duals, to show that the morphisms (\ref{first map}) and (\ref{second map}) in $\mathfrak{C}$ coincide up to a computable sign and this is precisely what is established by the argument of Kato and Trihan in \cite[3.3.6.2]{KT}.

To prove the other claims we note that the long exact cohomology sequence of the exact triangle (\ref{eq:syntriangle}) combines with the descriptions in Proposition \ref{prop:perfect}(ii) to imply that $H^i(\hat\varphi)_{\QQ_p}$ is bijective for all $i \notin \{0,1\}$, that ${\rm ker}(H^0(\hat\varphi))_{\QQ_p}$ and ${\rm cok}(H^2(\hat\varphi))_{\QQ_p}$ vanish and that there are exact sequences of $\QQ_p[Q_M]$-modules
\begin{equation}\label{semi first step} \begin{cases} &0 \to {\rm cok}(H^0(\hat\varphi))_{\QQ_p} \xrightarrow{H^0(\theta)} \QQ_p\otimes A^t(F') \xrightarrow{H^0(\theta')} {\rm ker}(H^1(\hat\varphi))_{\QQ_p} \to 0,\\
                 &0 \to {\rm cok}(H^1(\hat\varphi))_{\QQ_p} \xrightarrow{H^1(\theta)}\QQ_p\otimes_\ZZ\Hom_\ZZ(A^t(F'),\ZZ) \xrightarrow{H^1(\theta')}
                 {\rm ker}(H^2(\hat\varphi))_{\QQ_p} \to 0.\end{cases}\end{equation}

Now, since $h^*_{A,F'}$ is bijective, claim (i) implies the same is true of the map $\beta_{A,F}$ and this fact combines with the above exact sequences to imply that the spaces ${\rm cok}(H^0(\hat\varphi))_{\QQ_p}$ and ${\rm ker}(H^2(\hat\varphi))_{\QQ_p}$ vanish, as required to complete the proof of claim (ii), and hence that the upper sequence in (\ref{semi first step}) gives an isomorphism of the sort required by claim (iii).

Finally, claim (iv) is true because the bijectivity of $\beta_{A,F'}$ combines with the upper sequence in (\ref{semi first step}) to imply $\ker(H^1(\hat\varphi))_{\QQ_p}$ is disjoint from $\ker(H^1(\theta))_{\QQ_p}$ whilst the lower sequence in (\ref{semi first step}) implies that $\ker(H^1(\theta))_{\QQ_p}$ is equal to ${\rm im}(H^1(\hat\varphi))_{\QQ_p}$.
\end{proof}

\subsection{Orders of vanishing and leading terms} \label{ssec:interpolation} We now derive from Proposition \ref{prop:htfrobcomp} the following result about the order of vanishing $r_{A,M}(\chi)$ at $t=p^{-1}$ of the functions $Z_{U_M}(A_M,\chi,t)$ that are defined in \S\ref{sect:reform} for each character $\chi$ in ${\rm Ir}(Q_M)$.

We fix (and do not in the sequel explicitly indicate) an isomorphism of fields $\CC\cong \CC_p$ and hence do not distinguish between ${\rm Ir}(Q_M)$ and the set of irreducible $\CC_p$-valued characters of $Q_M$.

In particular, for $\chi$ in ${\rm Ir}(Q_M)$ we may then fix a representation
 $Q_M \rightarrow {\rm Aut}_{\CC_p}(V_{\chi})$ (that we also denote by $\chi$) of character $\chi$, where $V_\chi$ is a finite dimensional vector space over $\CC_p$.

If $R$ denotes either $\ZZ_p[Q_M]$ or $\QQ_p[Q_M]$, then for each finitely generated $R$-module $W$ and each $\chi$ in ${\rm Ir}(Q_M)$ we define a $\CC_p$-vector space by setting
\[ W^\chi:= \Hom_{\CC_p[Q_M]}(V_{\chi},\CC_p[Q_M]\otimes_{R}W).\]

\begin{theorem}\label{order-leading} For each $\chi$ in ${\rm Ir}(Q_M)$ the following claims are valid.
\begin{itemize}
\item[(i)] $r_{A,M}(\chi) = {\rm dim}_{\CC}((\CC\otimes_\ZZ A^t(F'))^{\chi}) = \chi(1)^{-1}\cdot{\rm dim}_{\CC}(e_{\chi}(\CC\otimes_\ZZ A^t(F')))$.
\item[(ii)] In each degree $i$ the homomorphism $H^i({\bf 1}-\varphi)$ induces an automorphism $H^i({\bf 1}-\varphi)_\chi^{\diamond}$ of any fixed complement to $\ker(H^i({\bf 1}-\varphi))^\chi$ in $H^1(P_{F'})^{\chi}$.
\item[(iii)] $Z^*_{U_M}(A_M,\chi,p^{-1}) = {\prod}_{i=0}^{i=2}\, {\rm det}(H^i({\bf 1}-\varphi)_{\chi}^{\diamond})^{(-1)^{i+1}}$,   where the leading term is normalised as in \eqref{eq:leading-term-Z}.
\end{itemize}
\end{theorem}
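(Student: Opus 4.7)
The strategy is to combine the rigid cohomological expression of the $L$-function (due to Le~Stum--Trihan \cite{LST} and Trihan \cite{trihan02}) with the detailed information about the action of ${\bf 1}-\varphi$ encoded in Proposition~\ref{prop:htfrobcomp}. Under the identification $\QQ_p\otimes_{\ZZ_p}I_{F'}^*\cong R\Gamma_{{\rm rig},c}(U_{F'},D^\dagger_{F'})$ recorded at the opening of \S\ref{ssec:conslfun}, the rigid Frobenius (suitably twisted so as to match the map $\hat\varphi$ in \eqref{eq:syntriangle}) gives a factorisation
\[
Z_{U_M}(A_M,\chi,t) = \prod_{i=0}^{2} P_i(t,\chi)^{(-1)^{i+1}}, \qquad P_i(t,\chi) := \det\bigl(1-\varphi\cdot t \mid H^i_{{\rm rig},c}(U_{F'},D^\dagger_{F'})^\chi\bigr),
\]
in such a way that the specialisation at $t=p^{-1}$ of the factor $1-\varphi t$ becomes the operator $H^i({\bf 1}-\varphi)$.

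For claim (i), the order of vanishing at $t=p^{-1}$ is the alternating sum $\sum_i(-1)^{i+1}\dim_{\CC_p}\ker(H^i({\bf 1}-\varphi))^\chi$. Proposition~\ref{prop:htfrobcomp}(ii) annihilates the contributions in degrees $i\neq1$, and Proposition~\ref{prop:htfrobcomp}(iii) identifies $\ker(H^1({\bf 1}-\varphi))_{\QQ_p}$ with $\QQ_p\otimes_{\ZZ}A^t(F')$ as $\QQ_p[Q_M]$-modules. This gives $r_{A,M}(\chi)=\dim_{\CC_p}(\QQ_p\otimes_\ZZ A^t(F'))^\chi$, and the second equality in (i) is the standard identity $\dim_{\CC_p}W^\chi = \chi(1)^{-1}\dim_{\CC}e_{\check\chi}W$ valid for any finite-dimensional $\QQ_p[Q_M]$-module $W$.

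For claim (ii), the assertion is exactly that $H^i({\bf 1}-\varphi)_{\QQ_p}$ is semisimple at zero, i.e., $\ker\cap\im=0$. In degrees $i\neq 1$ this is automatic from the bijectivity in Proposition~\ref{prop:htfrobcomp}(ii), and in degree one it is precisely the content of Proposition~\ref{prop:htfrobcomp}(iv), which furnishes the direct sum decomposition $\ker\oplus\im = H^1$. Picking a complement $V_i^\diamond$ to the kernel, the composite $V_i^\diamond\hookrightarrow V_i\xrightarrow{H^i({\bf 1}-\varphi)}V_i\twoheadrightarrow V_i/\ker\cong V_i^\diamond$ is then bijective, and a change of complement conjugates $H^i({\bf 1}-\varphi)^\diamond$ by a linear automorphism, so its determinant is well-defined independent of the choice.

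For claim (iii), factor $P_i(t,\chi) = \det(1-\varphi t\mid K_i)\cdot \det(1-\varphi t\mid V_i^\diamond)$ with $K_i:=\ker(H^i({\bf 1}-\varphi))^\chi$. The first factor is a scalar polynomial in $t$ vanishing to order $\dim K_i$ at $t=p^{-1}$, whilst the second evaluates at $t=p^{-1}$ to $\det(H^i({\bf 1}-\varphi)^\diamond_\chi)$. Dividing by $(t-p^{-1})^{r_{A,M}(\chi)}$ and passing to the limit yields (iii) up to a scalar depending on the normalisation of $\varphi$. The main obstacle, and the step requiring the greatest care, is pinning down this normalisation: one must verify that the Tate twist relating the rigid Frobenius to the operator ${\bf 1}-\varphi$ from \eqref{eq:syntriangle} is such that the leading coefficients of the vanishing $K_i$-factors, combined with the alternating signs and the exponent $r_{A,M}(\chi)$ computed in (i), collapse to exactly $1$, so that no residual scalar constant pollutes the final formula.
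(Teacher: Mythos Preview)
Your approach is the same as the paper's, and your deductions of (i) and (ii) from Proposition~\ref{prop:htfrobcomp} are correct. There is, however, one genuine gap in your opening factorisation. The Etesse--Le~Stum trace formula expresses $Z_{U_M}(A_M,\chi,\cdot)$ via rigid cohomology \emph{over $U_M$} of the twisted isocrystal $D_M^\dagger(\chi)$, not via the $\chi$-component of cohomology over $U_{F'}$; identifying $H^i_{{\rm rig},c}(U_M,D_M^\dagger(\chi))$ with $H^i_{{\rm rig},c}(U_{F'},D^\dagger_{F'})^\chi$ is precisely the content of Lemma~\ref{lem:rigdes}, which combines Poincar\'e duality with \'etale descent for overconvergent $F$-isocrystals over curves (\cite[Prop.~1.3]{CrewKloosterman}) and the Galois-descent isomorphism of \cite[Prop.~4.6]{trihan02}. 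You cite \cite{trihan02} but do not isolate this step, and it is not automatic.

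As for your closing concern about normalisation, it is largely an artefact of having written the trace formula in the variable $t$ rather than $p^{-1}t$. With the shift built in as in the paper's display~\eqref{eq:lfun}, the operator $1-\varphi t$ specialises at $t=1$ directly to ${\bf 1}-\varphi$, so no Tate twist needs to be chased; the paper then simply asserts that (iii) follows from (i), (ii) and~\eqref{eq:lfun} by the limit computation you outline, and is no more explicit than you are about the residual scalar.
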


\begin{proof} 
We fix a finite Galois extension $\Lambda$ of $\QQ_p$ such that for any $\chi$ in ${\rm Ir}(Q_M)$ the $\CC_p[Q_M]$-module $V_\chi$ descends to a $\Lambda[Q_M]$-module $V_{\chi,\Lambda} $. We write $k_\Lambda$ for the residue field of $\Lambda$ and set $q :=\#(k_\Lambda)$. 
Then, for each $\chi$ in ${\rm Ir}(Q_M)$, we fix a $\Lambda[Q_M]$-module $V_{\chi,\Lambda}$ such that $\CC_p\otimes_\Lambda V_{\chi,\Lambda} \cong V_\chi$ and, for any $\QQ_p[Q_M]$-module $W$, we set
\[
W^\chi_\Lambda := \Hom_{\Lambda[Q_M]}(V_{\chi,\Lambda},\Lambda\otimes_{\QQ_p}W). 
\]

We now give an alternative description of $H^i_{{\rm rig}, c}(U_{F'}, D^{\dagger}_{F'})_\Lambda^\chi$ and $H^i({\bf 1}-\varphi)_{\chi,\Lambda}^{\diamond}$ in terms of the rigid cohomology of \emph{overconvergent $\Lambda$-$F$-isocrystal}; \emph{cf.} \cite[(7.1)]{tsuzuki98}. (In fact, we will work with overconvergent $\Lambda_0$-$F$-isocrystal for some suitable subfield $\Lambda_0$ of $\Lambda$.) 
We recall that an  $\Lambda$-$F$-isocrystal is, roughly speaking, an isocrystal with scalars in $\Lambda$ (instead of $\QQ_p$) equipped with $\Lambda$-linear $q$-Frobenius operator (denoted by $\varphi^{(\Lambda)}$). In particular, given an overconvergent $F$-isocrystal $(\mathcal{E},\varphi)$ over $U_{F'}$, one can `extend scalars' to obtain an overconvergent $\Lambda$-$F$-isocrystal $\mathcal{E}_\Lambda$ in the following way: if $F'$ contains $k_\Lambda$ and we set $r:=[k_\Lambda:\FF_p]$, then $(\mathcal{E},\varphi^r)$ is an overconvergent $\QQ_{q}$-$F$-isocrystal and so one can set $(\mathcal{E}_\Lambda, \varphi^{(\Lambda)}):=(\mathcal{E}\otimes_{\QQ_q}\Lambda, \varphi^r\otimes\Lambda)$. In addition, there is the following base change result (\emph{cf.} \cite[Th.~11.8.1]{CT:Descent}): for any overconvergent isocrystal $\mathcal{E}$ over $U_{F'}$, there is in each degree $i$ a natural isomorphism 
\[H^i_{{\rm rig},c}(U_{F'},\mathcal{E})\otimes_{\QQ_q}\Lambda \cong H^i_{{\rm rig},c}(U_{F'},\mathcal{E}_\Lambda ) \] 
(and similarly for the rigid cohomology without support condition).

Now, following the above discussion, if we are to construct overconvergent $\Lambda$-$F$-isocrystals, we should assume that the base field contains $k_\Lambda$. 
 To do this, we shall, if necessary, replace $F'$ by $F'':=F'\cdot \FF_{p^r}$. Then $F''/M$ is a Galois extension and, setting $Q_{M}'':=\Gal(F''/M)$, we regard ${\rm Ir}(Q_M)$ as a subset of ${\rm Ir}(Q_{M}'')$ in the natural way. Now, if $W''$ is a finitely generated module over either $\ZZ_p[Q_{M}'']$ or $\QQ_p[Q_{M}'']$  then $(W'')^\chi = W^\chi$ with $W:=(W'')^{\Gal(F''/F')}$. Hence, to prove the claimed result, we can  assume without loss of generality that $k_\Lambda\subset F'$.

In this case,  $H^i_{{\rm rig},c}(U_{F'},  D^{\dagger}_{F'})$ is a $\Lambda_0$-vector space and there is a natural isomorphism  
\[ H^i_{{\rm rig},c}(U_{F'},  D^{\dagger}_{F'})\otimes_{\QQ_p}\QQ_q \cong {\prod}_{\Gal(\QQ_q/\QQ_p)}H^i_{{\rm rig},c}(U_{F'},  D^{\dagger}_{F'}), 
 \] 
 with respect to which the endomorphism $\varphi\otimes\varphi$ of the left hand side corresponds to the following block matrix on the right hand side
\[
\left(
\begin{array}{ccccc}
	0 & {\bf1} & 0 & \cdots & 0\\
	0 & 0 & {\bf1} & \cdots & 0\\
	\vdots&\vdots&\ddots &\ddots &\vdots\\
	0&0& \cdots&0& {\bf1}\\
	\varphi^r&0& \cdots & 0& 0
\end{array} 
\right).
\]
One therefore obtains $Q_M$-equivariant isomorphisms
\[
{\prod}_{{\lambda'}^r=\lambda^r}H^i_{{\rm rig},c}(U_{F'},  D^{\dagger}_{F'})^{(\lambda')}\otimes_{\QQ_p}\QQ_q
\cong  H^i_{{\rm rig},c}(U_{F'},  D^{\dagger}_{F',\QQ_q})^{(\lambda^r)}, \]
where the left hand side is the product of generalised $\lambda'$-eigenspace for $\varphi$ 
and the right hand side is the  generalised $\lambda^r$-eigenspace for $\varphi^{(\QQ_q)}:=\varphi^r$. (Note that the underlying isocrystal for $D^{\dagger}_{F',\QQ_q}$ is $D^\dagger_{F'}$, equipped with the $\QQ_q$-linear $q$-Frobenius $\varphi^r$.)

Extending scalars from $\QQ_q$ to $\Lambda$, we now obtain an isomorphism 
\[
{\prod}_{\lambda'^r = \lambda^r} H^i_{{\rm rig},c}(U_{F'},  D^{\dagger}_{F'})^{(\lambda')}\otimes_{\QQ_p}\Lambda
\cong H^i_{{\rm rig},c}(U_{F'},  (D^{\dagger}_{F'})_\Lambda)^{(\lambda^r)} ,
\]
where the right hand side is the generalised $\lambda^r$-eigenspace for $\varphi^{(\Lambda)}:=\varphi^r\otimes\Lambda$, and so 
%
\[
{\rm det} _{\QQ_p} (H^i({\bf 1}-t\cdot\varphi)) = {\rm det} _\Lambda(H^i({\bf 1}-t^{[k_\Lambda:\FF_p]}\cdot\varphi^{(\Lambda)})).
\]

Now, by the main theorem of Tsuzuki \cite[Th.~7.2.3]{tsuzuki98}, there exists an overconvergent unit-root $F$-isocrystal $\mathcal{O}^{\dagger}(\chi)$ over $U_{M}$ with monodromy given by $V_{\chi,\Lambda}$, viewed as a $\QQ_p[Q_M]$-module.  Furthermore, $\mathcal{O}^{\dagger}(\chi)$ has a natural action of $\Lambda$ commuting with the $p$-Frobenius operator $\varphi$ and the connection; that is, $\mathcal{O}^{\dagger}(\chi)$ is a $\QQ_p$-$F$-isocrystal with $\Lambda$-action in the sense of Definition~\ref{def:oc-Lambda} for $\Lambda_0=\QQ_p$. (Indeed, the $\Lambda$-action on the level of \emph{convergent} $\Lambda_0$-$F$-isocrystal is clear by construction since the $\Lambda$-action on $V_{\chi,\Lambda}$ commutes with the $\QQ_p[Q_M]$-action, and the $\Lambda$-action extends by the full faithfulness result \cite[5.1.1]{tsuzuki96}.)

We then obtain another $\QQ_p$-$F$-isocrystal $D^{\dagger}_{M}(\chi) := \mathcal{O}^{\dagger}(\chi)\otimes_{\QQ_p}D^\dagger_{M} $ with $\Lambda$-action and, in each degree $i$, we regard  
\[ H^i_{M}(\chi):= H^i_{{\rm rig},c}(U_{M}, D_{M}^{\dagger}(\chi)),\] 
as a $\Lambda$-vector space equipped with $\Lambda$-linear $p$-Frobenius operator $\varphi$.  
We then claim that there is an identity of functions
\begin{equation}\label{eq:lfun} 
Z_{U_M}(A_M,\chi, pt) = {\prod}_{i=0}^{i=2} {\rm det}_\Lambda(1-pt\cdot\varphi \mid H^i_{M}(\chi))^{(-1)^{i+1}}.
\end{equation}
Indeed, this identity is a standard consequence of Lefschetz trace formula for rigid cohomology of $\QQ_p$-$F$-isocrystals with $\Lambda$-action; \emph{cf.} Theorem~\ref{thm:Lefschetz}. (Its proof is a straightforward adaptation of the Lefschetz trace formula for $\Lambda$-$F$-isocrystals in \cite[Th.~6.3]{ELS}. In fact, in the special case that $M$ contains $k_\Lambda$, one can directly construct a $\Lambda$-$F$-isocrystal on $U_M$ that computes $Z_{U_M}(A_M,\chi, pt)$ via the more classical Lefschetz trace formula in \emph{loc. cit.})

Now, from Proposition \ref{prop:htfrobcomp}(ii) we know that, for both $i =0$ and $i =2$, the endomorphism $H^i(1-\varphi)$ is invertible on the $\Q_p$-linear dual $H^i_{M}(\chi)^\vee$ of $H^i_M(\chi)$. From the identity (\ref{eq:lfun}), we can therefore deduce that 
\begin{align}\label{key string}  r_{A,M}(\chi) =&\, {\rm dim}_{\Lambda}\bigl( \ker( 1-\varphi \mid H^1_{M}(\chi))\bigr) \\
=&\, {\rm dim}_{\Lambda}\bigl( \ker( 1-\varphi \mid H^1_{M}(\chi)^\vee )\bigr)\notag \\
 =&\, {\rm dim}_{\Lambda}\bigl( \ker( 1-\varphi \mid (H^1_{{\rm rig}, c}(U_{F'},(D_{F'}^{\dagger})_\Lambda)^\vee)^{\chi}\bigr)\notag\\
 =&\, {\rm dim}_{\Lambda}\bigl( \ker( 1-\varphi \mid H^1(\Lambda\otimes_{\ZZ_p}P_{F'})^{\chi}\bigr)\notag\\
 =&\, {\rm dim}_{\CC}(A^t(F')^{\chi}).\notag
\end{align} 
Here the second equality is clear, the third follows from the isomorphism in Lemma \ref{lem:rigdes} below, the fourth from (\ref{missing complexes}) and the first and fifth from Proposition \ref{prop:htfrobcomp}(iii) and (iv). This proves claim (i). 

Claim (ii) follows directly from Proposition \ref{prop:htfrobcomp}(iv) and the fact (already noted above) that $H^i(1-\varphi)$ is invertible on $H^i_{M}(\chi)^\vee $ for $i=0$ and $i =2$.

Next we note the equality (\ref{key string}) implies that 
\[ {\rm det}\bigl(1 - pt\cdot \varphi \mid \ker( 1-\varphi \mid H^1(P_{F'})^{\chi})\bigr) = (1-pt)^{r_{A,M}(\chi)}.\] 
Given this equality, and our chosen normalisation of leading terms, the formula in claim (iii) follows directly upon combining claim (ii) with the identity (\ref{eq:lfun}). 
\end{proof}

\begin{lemma} \label{lem:rigdes} For every absolutely irreducible representation $\chi: Q_M \rightarrow {\rm Aut}_{\Lambda}(V_{\chi,\Lambda})$ as above, and every degree $i$, there is a natural $\Lambda$-linear, Frobenius equivariant isomorphism 
\[ 
H^i_{{\rm rig}, c}(U_M, D_M^{\dagger}(\chi))^\vee \cong (H^i_{{\rm rig}, c}(U_{F'}, (D_{F'}^{\dagger})_\Lambda)^\vee)^{\chi}.
\]
%
\end{lemma}

\begin{proof} All isomorphisms in the proof below can be checked to be Frobenius equivariant. Poincar\'e duality identifies the $\Lambda$-modules $H^i_{{\rm rig}, c}(U_M, D_M^{\dagger}(\chi))^\vee$ and $H^{i}_{{\rm rig}, c}(U_{F'}, (D_{F'}^{\dagger})_\Lambda)^\vee$ with $H^{2-i}_{\rm rig}(U_M, D_M^{\dagger}(\chi)^{\vee})$ and $H^{2-i}_{\rm rig} (U_{F'}, (D_{F'}^{\dagger,\vee})_\Lambda)$ respectively, where $D_M^{\dagger}(\chi)^{\vee}$ and $(D_{F'}^{\dagger,\vee})_\Lambda$ denote the dual as an overconvergent $F$-isocrystal and an overconvergent $\Lambda$-$F$-isocrystal respectively. It therefore suffices to prove there exists a natural isomorphism 
\begin{equation}\label{post pd}
H^i_{\rm rig}(U_{F'}, (D_{F'}^{\dagger,\vee})_{\Lambda})^{\chi} \cong H^i_{\rm rig}(U_M, D_M^{\dagger}(\chi)^{\vee}).
\end{equation}

To show this we use the canonical isomorphism $\pi_{F'/M}^*(D^{\dagger}_{M}) \cong D^{\dagger}_{F'}$, where $\pi_{F'/M}$ denotes the natural morphism $X_{F'} \rightarrow X_M$. We also note that the proof of \cite[Prop. 1.3]{CrewKloosterman} implies the overconvergent vector bundle $D^{\dagger}_{F'}$ has a natural $Q_M$-action that commutes with its natural Frobenius operator. (To see this, note that the natural $Q_M$-action and the Frobenius commute on the log Dieudonn\'e crystal $D_{F'}^{\log}$, and so the same must be true on the associated \emph{convergent} isocrystal. Then one need only note that, by \cite[5.1.1]{tsuzuki96}, the category of overconvergent $F$-isocrystals on $U_{F'}$ is naturally a \emph{full subcategory} of the category of convergent $F$-isocrystals on $U_{F'}$.)

Now, by construction of $D_M^{\dagger}(\chi)$, there is a natural isomorphism of $Q_M$-equivariant overconvergent $F$-isocrystals $\,\pi_{F'/M}^*(D_M^{\dagger}(\chi)^\vee) \cong V_{\check\chi,\Lambda}\otimes_{\Q_p}D_{F'}^{\dagger,\vee},$ 
 where $V_{\check\chi,\Lambda}$ is viewed as a $\Q_p[Q_M]$-module and $Q_M$ acts diagonally on the tensor product.  This isomorphism also respects the natural $Q_M$- and $p$-Frobenius equivariant $\Lambda$-actions on both sides.
 Hence, since the underlying overconvergent isocrystal for $V_{\check\chi,\Lambda}\otimes_{\Q_p}D_{F'}^{\dagger,\vee}$ coincides with that of $V_{\check\chi,\Lambda}\otimes_{\Lambda}(D_{F'}^{\dagger,\vee})_{\Lambda}$, one obtains the required isomorphism (\ref{post pd}) via the induced composite isomorphisms 
\begin{align*} H^i_{\rm rig}(U_M, D_M^{\dagger}(\chi)^{\vee})\xrightarrow\sim&\, H^i_{\rm rig}(U_{F'}, \pi_{F'/M}^\ast(D_M^{\dagger}(\chi)^{\vee}))^{Q_M}\\
\xrightarrow\sim&\, H^i_{\rm rig}(U_{F'}, V_{\check\chi,\Lambda}\otimes_{\Lambda}(D_{F'}^{\dagger,\vee})_\Lambda)^{Q_M}\\
\xrightarrow\sim&\, H^i_{\rm rig}(U_{F'},(D_{F'}^{\dagger,\vee})_\Lambda)^{\chi}.\end{align*}
Here the first map is induced by Shapiro's Lemma and its bijectivity is proved in \cite[Prop.~4.6]{trihan02} and the change from $\check\chi$ to $\chi$ that occurs in the third isomorphism is for the reason outlined in Remark \ref{contra just}. 
\end{proof}

\section{Proof of the main result}\label{sect:proof}  In this section we use results from earlier sections to obtain a proof of Theorem \ref{thm:thm1}. At the outset we note that Theorem \ref{thm:thm1}(i) is proved by Theorem \ref{order-leading}(i) and that Remark \ref{sha finite rem} allows us to assume that $\sha(A/L)$ is finite. We therefore focus on establishing the validity of the equality in Conjecture \ref{conj:ebsd}(iii).

For convenience, for each Galois extension $F'/M$ (as in Proposition \ref{perfect adapt}) we define an element of $K_0(\ZZ[Q_M],\QQ[Q_M])$ by setting
\begin{multline*} \chi (A,F'/M) := \partial_{Q_M}(Z^*_{U_M}((A_M)_{F'/M},p^{-1})) -\chi^{\rm BSD}_{Q_M,\QQ}(A_M,V_{F'})\\ +
\chi_{Q_M}^{\rm coh}(A_M,V_{F'}) - \chi_{Q_M}^{\rm sgn}(A_M),\end{multline*}
where, we recall, the leading term element is normalised via \eqref{eq:leading-term-Z}.

\subsection{A first reduction step} For a finite group $\Gamma$, a prime number $\ell$ and an element $x$ of $K_0(\ZZ[\Gamma],\QQ[\Gamma])$ we write $x_\ell$ for the image of $x$ in $K_0(\ZZ_\ell[\Gamma],\QQ_\ell[\Gamma])$ under the canonical decomposition (\ref{decomp}).

\begin{proposition}\label{reduction lemma} Assume $\sha(A/L)$ is finite. Then the statement of Theorem \ref{thm:thm1} is valid if and only if the following conditions are satisfied.
\begin{itemize}
\item[(i)] If $\mathfrak{M}_p$ is any given maximal $\ZZ_p$-order in $\QQ_p[G]$ that contains $\ZZ_p[G]$, then $\chi (A,L/K)_p$ belongs to the kernel of the homomorphism $K_0(\ZZ_p[G],\QQ_p[G]) \to K_0(\mathfrak{M}_p,\QQ_p[G])$.
\item[(ii)] Assume that the set $\Sigma_1\cup\Sigma_2$ in (\ref{calT def}) is non-empty. Fix a field $K' \in \Sigma_1\cup\Sigma_2$, set $L' = LK'$ and write $P'$ for the normal subgroup of $H' := \Gal(L'/K')$ that is generated by the Sylow $p$-subgroups of the inertia groups of all places that ramify in $L'/K'$. Then $\chi(A, (L')^{P'}/K')_p$ vanishes.
\item[(iii)] For each prime $\ell\not= p$ one has
\[ \partial_{G,\QQ}(Z^*_{U}(A_{L/K},p^{-1}))_\ell = \chi^{\rm BSD}_{G,\QQ}(A,V_L)_\ell - \chi_G^{\rm sgn}(A)_\ell .\]
\end{itemize}
\end{proposition}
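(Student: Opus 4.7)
The strategy is to split the equality in Conjecture~\ref{conj:ebsd}(iii) into its $\ell$-primary components via~(\ref{decomp}) and then to reduce the $p$-primary part to the $G'$-level by lifting along the coinflation $\pi^{G'}_G$.

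By Proposition~\ref{bsd-schneider}, under our standing assumption the equality in Conjecture~\ref{conj:ebsd}(iii) is equivalent to the vanishing of $\chi(A,L/K)$ in $K_0(\ZZ[G],\QQ[G])$, and via~(\ref{decomp}) this is in turn equivalent to $\chi(A,L/K)_\ell = 0$ for every prime $\ell$. Since $\chi_G^{\rm coh}(A,V_L)$ lies in the image of the homomorphism~(\ref{p-subgroup}) and is therefore $p$-primary, and since $\mathcal{T}_{A,L/K}$ is by construction a subgroup of $K_0(\ZZ_p[G],\QQ_p[G])$, the conclusion of Theorem~\ref{thm:thm1}(ii) decouples into the equality $\chi(A,L/K)_\ell = 0$ for each $\ell \neq p$ (which, after unwinding the definition of $\chi(A,L/K)$, is exactly condition~(iii)) together with the membership $\chi(A,L/K)_p \in \mathcal{T}_{A,L/K}$.

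The main work is then to show that conditions~(i) and~(ii) together characterise this membership. To this end I would first establish the following compatibilities of $\chi(A,-/-)_p$ under change of group: coinflation along $G' \twoheadrightarrow G$ sends $\chi(A,L'/K)_p$ to $\chi(A,L/K)_p$; restriction of scalars along $H' \subseteq G'$ sends $\chi(A,L'/K)_p$ to $\chi(A,L'/K')_p$; and coinflation along $H' \twoheadrightarrow H'/P'$ sends $\chi(A,L'/K')_p$ to $\chi(A,(L')^{P'}/K')_p$. Each of these reduces term-by-term to a standard compatibility: for the $Z$-value piece it is character-theoretic inflation of Artin-Hasse-Weil $L$-series; for $\chi^{\rm BSD}_{-,\QQ}$ and $\chi^{\rm coh}$ it follows from the descent isomorphisms~(\ref{KT descent}) and~(\ref{second KT descent}), provided the pro-$p$ subgroup families at each level have been chosen compatibly via Lemma~\ref{lem:KT6.4} (with any dependence on the choices absorbed by Proposition~\ref{V independent}); and for $\chi^{\rm sgn}$ it is intrinsic to the construction. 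Granted these compatibilities, $\chi(A,L'/K)_p$ is a lift of $\chi(A,L/K)_p$ along $\pi^{G'}_G$, and $\pi_{K'} = \pi^{G'}_{H'/P'}$ carries this lift to $\chi(A,(L')^{P'}/K')_p$.

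It follows that $\chi(A,L/K)_p$ lies in $\pi^{G'}_G(\ker(\pi_{K'})_{\rm tor})$ precisely when $\chi(A,L'/K)_p$ is torsion and $\pi_{K'}$ annihilates it. The latter is exactly condition~(ii). For the former, one uses the standard fact that for any finite group $\Gamma$ the kernel of $K_0(\ZZ_p[\Gamma],\QQ_p[\Gamma]) \to K_0(\mathfrak{M}_p,\QQ_p[\Gamma])$ coincides with the full torsion subgroup (since the target is torsion-free) and that coinflation preserves torsion on both sides, so that torsionness of the lift can be tested after applying $\pi^{G'}_G$ and is therefore equivalent to condition~(i). The main obstacle in implementing this plan lies in the bookkeeping required to arrange the pro-$p$ subgroup families at the three levels $L/K$, $L'/K$ and $L'/K'$ (and the corresponding families of $\mathcal{O}_v[G_w]$-lattices $W$) simultaneously compatibly, so that all three change-of-group compatibilities can be verified at once; once this is done, the remainder of the argument reduces to formal manipulations in the long exact sequences of relative $K$-theory.
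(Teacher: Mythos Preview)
Your approach is essentially the paper's: reformulate via Proposition~\ref{bsd-schneider}, split into $\ell$-primary pieces using~(\ref{decomp}), identify the $\ell\neq p$ part with condition~(iii) since $\chi^{\rm coh}_G(A,V_L)_\ell=0$, and handle the $p$-part by lifting to the $G'$-level via the change-of-group compatibilities (which the paper packages as a separate Proposition~\ref{group independent}) and then invoking the fact that the kernel of extension to a maximal order is exactly the torsion subgroup.

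One point to tighten: your claim that ``torsionness of the lift can be tested after applying $\pi^{G'}_G$'' is not right in general, since coinflation need not be injective. What is actually used (and what the paper does) is the reverse: condition~(i) is meant to assert that $\chi(A,L'/K)_p$ itself is torsion at the $G'$-level (the statement as printed has a typo, writing $\chi(A,L/K)_p$ and $K_0(\ZZ_p[G],\QQ_p[G])$ where $\chi(A,L'/K)_p$ and $K_0(\ZZ_p[G'],\QQ_p[G'])$ are intended; compare the proof and the verification in \S\ref{sect:proof}). With that reading, conditions~(i) and~(ii) say precisely that $\chi(A,L'/K)_p\in\ker(\pi_{K'})_{\rm tor}$, and then its image $\chi(A,L/K)_p$ lies in $\pi^{G'}_G(\ker(\pi_{K'})_{\rm tor})\supseteq\mathcal{T}_{A,L/K}$, as required.
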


\begin{proof} It suffices to check that the stated conditions are equivalent to the validity of the equality in Conjecture \ref{conj:ebsd}(iii).

Thus, after taking account of Proposition \ref{bsd-schneider}, the decomposition (\ref{decomp}) combines with the explicit definition of the subgroup $\mathcal{T}_{A,L/K}$ to reduce us to showing that the stated conditions imply the validity of each of the following assertions:
\begin{itemize}
\item[(C$_1$)] $\chi(A,L/K)_p$ has finite order;
\item[(C$_2$)] for every field $K'$ that belongs to either $\Sigma_1$ or $\Sigma_2$, $\chi(A,L/K)_p$ is  the image under $\pi^{G'}_{G}$ of an element of $K_0(\ZZ_p[G'],\QQ_p[G'])$ that belongs to $\ker(\pi^{G'}_{H'/P'})$;
\item[(C$_3$)] $\chi(A,L/K)_\ell$ vanishes if $\ell \not= p$.
\end{itemize}

To check this, we first recall (from \cite[\S4.5, Lem. 11(d)]{BF_Tamagawa}) that  $K_0(\ZZ_p[G],\QQ_p[G])_{\rm tor}$ is equal to the kernel of the scalar extension homomorphism $K_0(\ZZ_p[G],\QQ_p[G]) \to K_0(\mathfrak{M}_p,\QQ_p[G])$. Given this fact,  condition (i) directly implies that $\chi(A,L/K)_p$ has finite order, and hence verifies (C$_1$). 

Next, we note that, for any $K' \in \Sigma_1\cup\Sigma_2$, the result of Proposition \ref{group independent} below implies (in terms of the notation of condition (ii)) that one has both $\chi(A,L/K)_p = \pi^{G'}_{G}(\chi(A,L'/K)_p)$ and $\pi^{G'}_{H'/P'}(\chi(A,L'/K)_p) = \chi(A,(L')^{P'}/K')_p$. In particular, in this case, condition (ii) implies $\chi(A,L'/K)_p$ belongs to $\ker(\pi^{G'}_{H'/P'})$, as required to verify (C$_2$). 

Finally, to verify (C$_3$) we note that if $\ell\not= p$, then $\chi^{\rm coh}_G(A,V_L)_\ell$ vanishes. Thus, in this case, the vanishing of the image in $K_0(\ZZ_\ell[G],\QQ_\ell[G])$ of the equality in Conjecture \ref{conj:ebsd}(iii) is clearly equivalent to the equality stated in condition (iii).
\end{proof}

Before stating the next result we note that if $J$ is a normal subgroup of a subgroup $H$ of $G$, and we set $Q := H/J$, then there is a natural commutative diagram
\begin{equation}\label{func diagram}
\xymatrix{
K_1(\mathbb{Q}[G]) \ar[r]^{\theta^1_{G,H}} \ar[d]_{\partial_{G,\QQ}} & K_1(\mathbb{Q}[H]) \ar[r]^{\theta^1_{H,Q}} \ar[d]^{\partial_{H,\QQ}} & K_1(\mathbb{Q}[Q]) \ar[d]^{\partial_{Q,\QQ}}\\
K_0(\mathbb{Z}[G], \mathbb{Q}[G]) \ar[r]^{\theta^0_{G,H}} & K_0(\mathbb{Z}[H], \mathbb{Q}[H]) \ar[r]^{\theta^0_{H,Q}} & K_0(\mathbb{Z}[Q], \mathbb{Q}[Q])}
\end{equation}
where $\theta^i_{G,H}$ and $\theta^i_{H,Q}$ are the natural restriction and coinflation homomorphisms.

\begin{proposition}\label{group independent} If $J$ is a normal subgroup of a subgroup $H$ of $G$, with $Q = H/J$, then the composite homomorphism $\theta^0_{H,Q}\circ \theta^0_{G,H}$ sends $\chi(A,L/K)_p$ to $\chi(A_{L^H},L^J/L^H)_p$.
\end{proposition}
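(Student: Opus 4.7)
The plan is to decompose $\chi(A,L/K)_p$ into its four natural summands---the contributions of $Z^*_U$, of $\chi^{\mathrm{BSD}}_{G,\QQ}$, of $\chi^{\mathrm{coh}}_{G}$, and of $\chi^{\mathrm{sgn}}_{G}$---and to verify that $\theta^0_{H,Q}\circ\theta^0_{G,H}$ carries each summand to the corresponding summand of $\chi(A_{L^H},L^J/L^H)_p$. The commutative diagram (\ref{func diagram}) reduces the leading-term contribution to a computation at the level of $K_1$, while the remaining three summands already live in $K_0$ and can be treated directly through Galois descent statements for the underlying perfect complexes.

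For the leading term, observe that at the character level $\theta^1_{G,H}$ corresponds to induction and $\theta^1_{H,Q}$ to inflation. For every $\chi$ in $\mathrm{Ir}(Q)$ the inductive and inflation-invariance properties of the Artin--Hasse--Weil $L$-series, which are immediate from the cohomological formula (\ref{deligne}), yield the identity
\[
Z^*_U(A,\mathrm{Ind}^G_H\mathrm{Infl}^H_Q\chi,p^{-1}) = Z^*_{U_{L^H}}(A_{L^H},\chi,p^{-1}).
\]
Coupling this with the characterisation of $Z^*_U(A_{L/K},p^{-1})$ in Proposition \ref{bsd-schneider}(i) gives the desired $K_1$-level identity, and commutativity of (\ref{func diagram}) then transports it to $K_0$.

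For the Euler characteristic terms, we exploit the Galois descent isomorphism (\ref{KT descent}) applied to the normal subgroup $J$ of $H$. Unwinding the construction in Proposition \ref{prop:perfect}(ii), this produces a canonical isomorphism
\[
\ZZ[Q]\otimes^{\mathbb{L}}_{\ZZ[G]} \mathrm{SC}_{V_L}(A,L/K) \cong \mathrm{SC}_{V_{L^J}}(A_{L^H},L^J/L^H)
\]
for the family $V_{L^J}$ naturally derived from $V_L$. Since the N\'eron--Tate pairing on $A(L^J)$ is the restriction of the one on $A(L)$, the trivialisation $h^{\mathrm{det}}$ descends compatibly, so $\theta^0_{H,Q}\circ\theta^0_{G,H}$ sends $\chi^{\mathrm{BSD}}_{G,\QQ}(A,V_L)$ to $\chi^{\mathrm{BSD}}_{Q,\QQ}(A_{L^H},V_{L^J})$. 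Proposition \ref{V independent} ensures the auxiliary choice of $V_{L^J}$ is immaterial. The analogous argument using (\ref{second KT descent}) handles the coherent cohomology contribution.

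For the sign contribution, one must check that the integer $a_{A,L,p}$ defined through Proposition \ref{prop:htfrobcomp}(i) is compatible with passage to $L^J$, so that the $K_1$-element $\langle(-1)^{a_p},\QQ\cdot A(L)\rangle$ transforms correctly under $\theta^1_{H,Q}\circ\theta^1_{G,H}$. The main obstacle anticipated is largely bookkeeping: verifying that the rational height pairing $h_{A,L}$ of Schneider, the cohomological comparison map $\beta_{A,L,p}$ of (\ref{semi second step}), and the various coherent sheaf and pro-$p$ subgroup choices made in Lemma \ref{lem:KT6.4} are all mutually compatible along the tower $L\supset L^J\supset L^H$. Once these compatibilities are confirmed, the required identity $\theta^0_{H,Q}\circ\theta^0_{G,H}(\chi^{\mathrm{sgn}}_G(A)_p)=\chi^{\mathrm{sgn}}_Q(A_{L^H})_p$ follows from the standard behaviour of $\partial_{G,\QQ}$ under restriction and coinflation.
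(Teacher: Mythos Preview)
Your overall plan—decompose $\chi(A,L/K)_p$ into its four constituents and track each through $\theta^0_{H,Q}\circ\theta^0_{G,H}$—is exactly the paper's strategy, and your handling of the leading-term and sign contributions follows the paper's.

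Two points deserve attention in your treatment of the Euler-characteristic summands. First, the displayed expression $\ZZ[Q]\otimes^{\mathbb{L}}_{\ZZ[G]}\mathrm{SC}_{V_L}(A,L/K)$ is ill-formed: since $Q=H/J$ is not a quotient of $G$, the ring $\ZZ[Q]$ has no natural $\ZZ[G]$-module structure. The composite $\theta^0_{H,Q}\circ\theta^0_{G,H}$ is restriction to $H$ followed by $\ZZ[Q]\otimes^{\mathbb{L}}_{\ZZ[H]}(-)$, and the isomorphism (\ref{KT descent}) governs only the latter (coinflation) step.

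Second, the paper handles the restriction step $\theta^0_{G,H}$ differently from what you propose. Rather than treating $\chi^{\rm BSD}$ and $\chi^{\rm coh}$ separately, it sets $E=L^H$, fixes a \emph{second} family $V'_L\supseteq V_L$ (now chosen as in Lemma~\ref{lem:KT6.4} for $L/E$), and compares the two Selmer complexes and the two coherent complexes via exact triangles whose third terms are the finite modules $(V'_L/V_L)[0]$ and $(W'_L/W_L)[0]$. The key observation, parallel to the proof of Proposition~\ref{V independent}, is that these two correction terms have the \emph{same} Euler characteristic; hence only the difference $\chi^{\rm BSD}-\chi^{\rm coh}$ is shown to be preserved under restriction. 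Your separate treatment presupposes that the original family $V_L$ already serves as a valid choice for $L/E$; this can in fact be justified (the descent identities (\ref{KT descent}) and (\ref{second KT descent}) hold for every subgroup of $G$, which is all that the proofs of Proposition~\ref{prop:perfect} and Lemma~\ref{zar lemma} actually use), but you have not made that verification explicit, and Lemma~\ref{lem:KT6.4}(iv) for $G$ does not obviously imply the same statement for $H$. The paper's two-family device sidesteps this issue. Once the restriction step is complete, the coinflation step is carried out in the paper exactly as you describe, term by term via the isomorphisms $\ZZ[Q]\otimes^{\mathbb{L}}_{\ZZ[H]}\mathrm{SC}_{V'_L}(A_E,L/E)\cong\mathrm{SC}_{(V'_L)^J}(A_E,F/E)$ and the analogous one for $R\Gamma(X_E,\mathcal{L}'_L)^*$.
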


\begin{proof} We set $\theta^i_{G,Q} := \theta^i_{H,Q}\circ \theta^i_{G,H}$, $E:= L^H$ and $F := L^J$.

At the outset we note that, by a standard argument using the Artin formalism of $L$-functions, one finds that $\theta^1_{G,Q}(Z^*_{U}(A_{L/K},p^{-1})) = Z^*_{U_E}((A_E)_{F/E},p^{-1})$ and so the commutative diagram (\ref{func diagram}) implies
\begin{equation}\label{artin}\theta^0_{G,Q}(\partial_{G,\Q}(Z^*_{U}(A_{L/K},p^{-1}))) = \partial_{Q,\QQ}(Z^*_{U_E}((A_E)_{F/E},p^{-1})).\end{equation}

It is also clear that $\theta^1_{G,Q}(\langle \QQ\cdot A^t(L),-1\rangle) = \langle  \QQ\cdot A^t(F),-1\rangle$ and, given this, an explicit comparison of the equalities in Proposition \ref{prop:htfrobcomp}(i) with $F'$ equal to $L$ and $F$ implies
\begin{equation}\label{signs} \theta^0_{G,Q}(\chi^{\rm sgn}_G(A)_p) = \chi^{\rm sgn}_Q(A_E)_p.\end{equation}

To proceed we write $\pi, \pi'$ and $\pi''$ for the natural morphisms  $X_L \rightarrow X$, $X_L \rightarrow X_E$ and $X_E \to X$. We fix families of subgroups $V_L$ and $W_L$ for the extension $L/K$ as in \S\ref{selmer section} (the choice of which is, following Proposition \ref{V independent}, unimportant) and write $\mathcal{L}_L$ for the associated coherent $\mathcal{O}_X[G]$-submodule of
$\pi_*{\rm Lie}(\mathcal{A}_{X_L})$. In the same way we fix families of subgroups $V'_L$ and $W'_L$ for the extension $L/E$ and write $\mathcal{L}_L'$ for the associated coherent $\mathcal{O}_{X_E}[H]$-submodule of $\pi'_*{\rm Lie}(\mathcal{A}_{X_L})$.

We assume, as we may, that $V_L \subseteq V_L'$, and hence also $W_L \subseteq W_L'$. This implies that there are exact triangles in $D^{\rm perf}(\ZZ[H])$ of the form
%
\[{\rm SC}_{V_L'}(A,L/E) \to {\rm SC}_{V_L}(A_E, L/K) \to (V_L'/V_L)^*[-1] \to \]
and
\[ R\Gamma(X_E, \mathcal{L}_L')^* \rightarrow R\Gamma(X, \mathcal{L}_L)^* \to (W_L'/W_L)^*[1]\to ,\]
where, in the latter case, we have used the fact that the complexes $R\Gamma(X, \pi''_*\mathcal{L}_L')$ and $R\Gamma(X_E, \mathcal{L}_L')$ are canonically isomorphic since $\pi_*''$ is exact. These triangles in turn give rise to equalities in $K_0(\ZZ[H],\QQ[H])$
%
\begin{align}\label{restrict} 
\theta^0_{G,H}(\chi^{\rm BSD}_{G,\QQ}(A,V_L) - \chi^{\rm coh}_G(A,V_L))
= \,\, &\chi^{\rm BSD}_{H,\QQ}(A,V'_L) + \chi_{\ZZ_p[H]}((V_L'/V_L)^*[-1],0) \\
&\hskip 0.2truein - (\chi^{\rm coh}_{H}(A,V'_L) + \chi_{\ZZ_p[H]}((W_L'/W_L)^*[-1],0))\notag\\
= \,\, &\chi^{\rm BSD}_{H,\QQ}(A,V'_L)) - \chi_H^{\rm coh}(A,V'_L),\notag\end{align}
where the last equality is valid since $\chi_{\ZZ_p[H]}((V_L'/V_L)^*[-1],0) = \chi_{\ZZ_p[H]}((W_L'/W_L)^*[-1],0)$ (by the same argument as used in the proof of Proposition \ref{V independent}).

Upon combining the equalities (\ref{artin}), (\ref{signs}) and (\ref{restrict}) one finds that the proof is reduced to showing that there are equalities
\[ \begin{cases} &\theta^0_{H,Q}(\chi^{\rm BSD}_{H,\QQ}(A_E,V'_L)) = \chi_{Q,\QQ}^{\rm BSD}(A_E,(V'_L)^J),\\
&\theta^0_{H,Q}(\chi^{\rm coh}_H(A_E,V'_L)) = \chi_Q^{\rm coh}(A_E,(V'_L)^J).\end{cases}\]

These equalities follow directly from the isomorphisms in $D^{\rm perf}(\ZZ[Q])$
\begin{equation}\label{key descent}\begin{cases} &\mathbb{Z}[Q]\otimes_{\mathbb{Z}[H]}^{\mathbb{L}} {\rm SC}_{V'_L}(A_E, L/E) \cong {\rm SC}_{(V'_L)^J}(A_E, F/E),\\
&\mathbb{Z}[Q]\otimes_{\mathbb{Z}[H]}^{\mathbb{L}} R\Gamma(X_E, \mathcal{L}'_L)^* \cong R\Gamma(X_E, (\mathcal{L}'_L)^J)^*,\end{cases}
\end{equation}
that are respectively used in the proofs of Proposition \ref{prop:perfect} and Lemma \ref{zar lemma}.\end{proof}

\subsection{The case $\ell = p$}\label{p section} In this section we verify that the conditions (i) and (ii) in Proposition \ref{reduction lemma} are satisfied.

The key observation we shall use in this regard is provided by the following result. In this result we use the notation and hypotheses of Proposition \ref{reduction lemma}(ii).

\begin{lemma}\label{stronger prop} Fix a field $K'$ in $\Sigma_1\cup \Sigma_2$ (so that, by assumption, $\sha(A/L')$ is finite) and a Galois extension of fields $M_2/M_1$ with  $K\subseteq M_1\subseteq M_2\subseteq L'$. Set $J:=\Gal(M_2K'/M_1)$ and $Q := \Gal(M_2/M_1)$.
Also fix a $\ZZ_p$-order  $\mathfrak{N}$ in $\QQ_p[J]$ as in Proposition \ref{perfect adapt} with $F'= M_2K'$ and $M= M_1$, and write $\overline{\mathfrak{N}} $ for the image of $\mathfrak{N}$ in $\QQ_p[Q]$.

Then $\chi (A,M_2/M_1)_p$ belongs to the kernel of the natural scalar extension homomorphism $K_0(\ZZ_p[Q],\QQ_p[Q]) \to K_0(\overline{\mathfrak{N}},\QQ_p[Q])$.  
\end{lemma}

\begin{proof} Under the present hypotheses, the exact triangles in Proposition \ref{perfect adapt} lie in $D^{\rm perf}(\mathfrak{N})$. Hence, after taking account of the relevant cases of the isomorphisms (\ref{key descent}), the exact functor $\Delta(-) := \overline{\mathfrak{N}} \otimes_{\mathfrak{N}}^{\mathbb{L}} -$ takes these triangles to exact triangles in $D^{\rm perf}(\overline{\mathfrak{N}})$ of the form 
\[ \Delta\bigl(\tau_{\ge -1}(\mathfrak{N}\otimes_{\ZZ_p[J]}^\mathbb{L}P_{F'})\bigr) \xrightarrow{{\bf 1}-\varphi} \Delta\bigl(\tau_{\ge -1}(\mathfrak{N}\otimes_{\ZZ_p[J]}^\mathbb{L}I_{F'})\bigr) \to \overline{\mathfrak{N}}\otimes_{\ZZ_p[Q]}^\mathbb{L}R\Gamma_{{\rm ar},V_{M_2}}(U_{M_2},\mathcal{A}_{\rm tor})^*_p[-2] \rightarrow \]
\[ \Delta\bigl(\tau_{\ge -1}(\mathfrak{N}\otimes_{\ZZ_p[J]}^\mathbb{L}P_{F'})\bigr) \xrightarrow{{\bf 1}} 
\Delta\bigl(\tau_{\ge -1}(\mathfrak{N}\otimes_{\ZZ_p[J]}^\mathbb{L}I_{F'})\bigr) \to \overline{\mathfrak{N}}\otimes_{\ZZ_p[Q]}^\mathbb{L}R\Gamma(X_{M_1}, (\mathcal{L}')^{H})^*[-2]\rightarrow,\]
with $H := \Gal(M_2K'/M_2)$. These triangles satisfy all of the conditions  (a), (b) and (c) of Proposition \ref{prop:triangle}: in fact, the only condition that is not straightforward to check in this case is (b) and this follows from the results of Proposition \ref{prop:htfrobcomp}(ii) and (iv).

In particular, by applying Proposition \ref{prop:triangle} in this context, and taking account of the equality in Proposition \ref{prop:htfrobcomp}(i), one finds that the image of $\chi(A,M_2/M_1)_p$ in $K_0(\overline{\mathfrak{N}},\QQ_p[Q])$ is equal to the image under the natural connecting homomorphism $K_1(\QQ_p[Q])\to K_0(\overline{\mathfrak{N}},\QQ_p[Q])$ of the product element 
\begin{equation}\label{prod element} Z^*_{U_{M_1}}((A_{M_1})_{M_2/M_1},p^{-1}) \cdot {\prod}_{i=0}^{i=2} (H^i({\bf 1}-\varphi_{M_2/M_1})_{\QQ_p}^{\diamond})^{(-1)^i} \in K_1(\QQ_p[Q]).\end{equation}
Here we write ${\bf 1}-\varphi_{M_2/M_1}$ for the morphism denoted by ${\bf 1}-\varphi$ in the first of the exact triangles displayed above, and identify each automorphism $H^i({\bf 1}-\varphi_{M_2/M_1})_{\QQ_p}^{\diamond}$ with the induced element of $K_1(\QQ_p[Q])$.  

It is thus enough to prove that the element (\ref{prod element}) vanishes, or equivalently, that its image under the  (injective) map ${\rm Nrd}_{\QQ_p[Q]}: K_1(\QQ_p[Q])\to \zeta(\QQ_p[Q])^\times$ is trivial. In addition, given the  characterisation of 
$Z^*_{U_{M_1}}((A_{M_1})_{M_2/M_1},p^{-1})$ in Proposition \ref{bsd-schneider}(i), the required triviality is deduced  directly from the formula of Theorem \ref{order-leading}(iii) (with $F'/M$ replaced by $M_2/M_1$) for every $\chi\in {\rm Ir}(Q)$ and the fact that, in terms of the notation of the corresponding case of Theorem \ref{order-leading}(ii), for every $i \in \{0,1,2\}$ and $\chi\in {\rm Ir}(Q)$ one has 
\[ {\rm Nrd}_{\QQ_p[Q]}(H^i({\bf 1}-\varphi_{M_2/M_1})_{\QQ_p}^{\diamond})_\chi = {\rm det}(H^i({\bf 1}-\varphi_{M_2/M_1})_\chi^{\diamond}).\]
\end{proof}

Turning now to consider the conditions in Proposition \ref{reduction lemma}, we first fix a maximal $\ZZ_p$-order $\mathfrak{N}$ in $\QQ_p[G']$ that contains $\ZZ_p[G']$. Then $\mathfrak{N}$ is regular and so satisfies the conditions of Proposition \ref{perfect adapt} with $F'=L'$ and $M = K$ (so $J= G'$). From Lemma \ref{stronger prop} (with $M_2=L$ and $M_1 = K$, so $Q=G$), it therefore follows that $\chi (A,L/K)_p$ belongs to the kernel of the scalar extension $K_0(\ZZ_p[G],\QQ_p[G]) \to K_0(\mathfrak{M},\QQ_p[G])$, where $\mathfrak{M}$ denotes the image of $\mathfrak{N}$ in $\QQ_p[G]$. In particular, since $\mathfrak{M}$ is a maximal $\ZZ_p$-order in $\QQ_p[G]$ that contains $\ZZ_p[G]$, this shows that the condition of Proposition \ref{reduction lemma}(i) is satisfied.

Next we consider condition (ii) of Proposition \ref{reduction lemma}. To do this we note that, by our assumption on $K'$, the group $\sha(A/L')$ is finite. In addition, the field $F' := (L')^{P'}$ is a tamely ramified Galois extension of $K'$ and so Proposition \ref{I tame} implies that the conditions of Proposition \ref{perfect adapt} are satisfied by the data $J = \Gal(F'/K')$ and $\mathfrak{N} = \ZZ_p[J]$. In this case, therefore, Lemma \ref{stronger prop} implies that $\chi(A,(F')^{P'}/K')_p$ vanishes, and hence that condition (ii) of Proposition \ref{reduction lemma} is satisfied.

\begin{remark}\label{stronger main}{\em A close reading of the above argument shows that we actually prove a (possibly) finer version of Theorem \ref{thm:thm1}(ii). Specifically, the validity of the equality in Conjecture \ref{conj:ebsd}(iii) is proved modulo the subgroup of $\mathcal{T}_{A,L/K}$ that is obtained by replacing the group $K_0(\ZZ_p[G],\QQ_p[G])_{\rm tor}$ in the intersection (\ref{calT def}) by its subgroup  
\[ \ker \bigl(K_0(\ZZ_p[G],\QQ_p[G])_{\rm tor} \xrightarrow{ (\lambda_{\mathfrak{N}})_{\mathfrak{N}}} {{\bigoplus}}_{\mathfrak{N}} K_0(\overline{\mathfrak{N}},\QQ_p[G])\bigr).\]
Here in the intersection $\mathfrak{N}$ runs over all $\ZZ_p$-orders of $\QQ_p[G']$ that contain $\ZZ_p[G']$ and satisfy the hypotheses of Proposition \ref{perfect adapt} (with $F' = L'$ and $M = K$), $\overline{\mathfrak{N}}$ is the image of $\mathfrak{N}$ in $\QQ_p[G]$ and each $\lambda_{\mathfrak{N}}$ is the scalar extension map that arises from the inclusion $\ZZ_p[G]\subseteq \overline{\mathfrak{N}}$. We recall that the hypotheses of Proposition \ref{perfect adapt} are automatically satisfied if the order $\mathfrak{N}$ is hereditary but that, aside from this, finding other interesting, and explicit, examples of such orders (beyond those that are used in the above argument) seems  difficult.}\end{remark}

\subsection{The case $\ell \ne p$.} In this section we verify that condition (iii) in Proposition \ref{reduction lemma} is satisfied, and thereby complete the proof of Theorem \ref{thm:thm1}.

To do this we fix a prime $\ell\not= p$, write $T_\ell(\mathcal{A})$ for the $\ell$-adic Tate module of $\mathcal{A}$ and set $V_\ell(\mathcal{A}) = \QQ_\ell\otimes_{\ZZ_\ell}T_\ell(\mathcal{A})$. We also write $\F^c_p$ for its algebraic closure of $\F_p$ and $\varphi_p$ for the Frobenius automorphism at $p$ and set $U^c_L := U_L\times_{\F_p} \F^c_p$.

For each $\chi\in {\rm Ir}(G)$ we fix an associated representation space $V_\chi$ over $\CC_\ell$. For each finitely generated $\Q_\ell[G]$-module $W$, we set
\[
W^\chi := \Hom_{\Q_\ell[G]}(V_\chi,\CC_\ell[G]\otimes_{\Q_\ell[G]}W).
\] 
Then by repeating the proof of Lemma~\ref{lem:rigdes} for $\ell$-adic cohomology in place of rigid cohomology, we obtain isomorphisms 
\[ H^i_{\et , c}(U^c,V_{\chi}\otimes_{\QQ_\ell}V_\ell(\mathcal{A}))^\vee 
\cong \big( H^i_{\et , c}(U_L^c,V_\ell(\mathcal{A}))^\vee\big)^\chi \cong 
H^i_{\et}(U_L^c,V_\ell(\mathcal{A}^t))^\chi ,
\]
where the second isomorphism is induced by the Poincar\'{e} duality theorem (as stated, for example, in \cite[Chap. VI, Cor. 11.2]{milne:etale}). Therefore the identity (\ref{deligne}) implies that 
\begin{align}\label{groth} Z_U(A, \chi, p^{-1}t) = &{\prod}_{i\in \bz}
{\rm det}\big(1-\varphi_p\cdot t: H^i_{\et , c}(U^c,V_{\chi}\otimes_{\QQ_\ell}V_\ell(\mathcal{A}))\big)^{(-1)^{i+1}}\\
= &{\prod}_{i\in \bz}
{\rm det}\big(1-\varphi_p\cdot t: H^i_{\et , c}(U^c,V_{\chi}\otimes_{\QQ_\ell}V_\ell(\mathcal{A}))^\vee \big)^{(-1)^{i+1}}\notag\\
= &{\prod}_{i\in \bz}{\rm det}\big(1-\varphi_p\cdot t:  H^i_{\et}(U_L^c,V_\ell(\mathcal{A}^t))^\chi\big)^{(-1)^{i+1}}.\notag
\end{align}

We now set ${\rm SC}_\ell := \ZZ_\ell\otimes_\ZZ {\rm SC}_{V_L}(A,L/K)$. Then the result of Proposition \ref{prop:perfect}(ii)(b) combines with Remark \ref{flat-etale} and the Artin-Verdier duality theorem to imply there are natural isomorphisms
\[ {\rm SC}_\ell\cong \ZZ_\ell\otimes_\ZZ R\Gamma_{{\rm ar},V_L}(U_L, \calA\{\ell\})^*[2] \cong R\Gamma_{\et,c}(U_L, \calA\{\ell\})^*[2] \cong R\Gamma_{\et}(U_L, T_\ell(\mathcal{A}^t))\]
and hence also a natural exact triangle in $D^{\rm perf}(\ZZ_\ell[G])$ of the form
\begin{equation}\label{semi tri}  {\rm SC}_\ell \to R\Gamma_{\et}(U^c_L, T_{\ell}(\mathcal{A}^t)) \xrightarrow{1-\varphi_p} R\Gamma_{\et}(U^c_L, T_{\ell}(\mathcal{A}^t)) \to {\rm SC}_\ell[1].\end{equation}

We consider the composite homomorphism
\[ \beta_{A,L,\ell}: \QQ_\ell\otimes_\ZZ A^t(L) \cong H^0({\rm SC}_\ell)_{\QQ_\ell} \to H^0_{\et}(U^c_L, V_{\ell}(\mathcal{A}^t))
\to H^1({\rm SC}_\ell)_{\QQ_\ell} \cong \QQ_\ell\otimes_\ZZ \Hom_\ZZ (A(L),\ZZ),\]
where the isomorphisms are from Proposition \ref{prop:perfect}(ii)(a) and the other maps are induced by the long exact cohomology sequence of (\ref{semi tri}).

Then it is shown by Schneider in \cite{schneider} (and also noted at the beginning of \cite[\S6.8]{KT}) that there exists a computable integer $ a_{A,L,\ell}\in \{0,1\}$ such that
\begin{equation}\label{ell sign} \beta_{A,L,\ell} = (-1)^{a_{A,L,\ell}}\cdot h_{A,L,\ell,*}\end{equation}
where $h_{A,L,\ell,*}$ is the isomorphism $\QQ_\ell\otimes_\ZZ A^t(L)\cong \QQ_\ell\otimes_\ZZ \Hom_\ZZ (A(L),\ZZ)$ induced by the height pairing $h_{A,L}$.

Taken in conjunction with the same argument used in Proposition \ref{prop:htfrobcomp} this observation implies firstly that the endomorphism $H^i(1-\varphi_p)_{\QQ_\ell}$ is bijective for $i \not= 1$, secondly that (\ref{semi tri}) satisfies all of the hypotheses  of Proposition \ref{prop:triangle} (with $\mathfrak{A} = \ZZ_\ell[G])$ regarding the left hand triangle in (\ref{triangles}), and thirdly (in view of (\ref{groth})) that 
\[ {\rm ord}_{t = p^{-1}}\bigl(Z_U(A,\chi, t)\bigr) = {\rm dim}_{\CC_\ell}\bigl(\ker \bigl(H^1(1-\varphi_p)\mid \Hom_{\CC_\ell[G]}(V_\chi,\CC_\ell\otimes_{\QQ_\ell} H^i_{\et}(U_L^c,V_\ell(\mathcal{A}^t))\bigr)\bigr).\]
 %

By applying Proposition \ref{prop:triangle} with the left and right hand triangles in (\ref{triangles}) taken to be (\ref{semi tri}) and the zero triangle respectively we can therefore deduce that
\begin{align*} \iota_{G,\ell}(\chi^{\rm BSD}_{G,\QQ}(A,V_L) + \chi^{\rm sgn}_G(A)) &= \chi_{\ZZ_\ell[G]}({\rm SC}_\ell,h^{\rm det}_{A,L,\ell,*}) + \partial_{\ZZ_\ell[G],\QQ_\ell}(\beta_{A,L,\ell}\circ h^{-1}_{A,L,\ell,*})\\
&= \chi_{\ZZ_\ell[G]}({\rm SC}_\ell,\tau_{1-\varphi_p}) \\
&= \partial_{\ZZ_\ell[G],\QQ_\ell}(H^1(1-\varphi_p)_{\QQ_\ell}^\diamond)\\
&= \partial_{\ZZ_\ell[G],\QQ_\ell}(({\rm Nrd}_{\QQ_\ell[G]})^{-1}((Z^\ast_U(A,\chi, p^{-1}))_{\chi\in {\rm Ir}(G)} ))\\
&= \partial_{\ZZ_\ell[G],\QQ_\ell}(Z^*_{U}(A_{L/K},p^{-1}))\\
 &= \iota_{G,\ell}(\partial_{G,\QQ}(Z^*_{U}(A_{L/K},p^{-1}))).
\end{align*}
Here the first equality follows directly from the definition of $\chi^{\rm sgn}(A,L/K)_\ell$ in terms of the integer $a_{A,L,\ell}$, the equality (\ref{ell sign}) and the result of Lemma \ref{sign lemma}. In addition, the fourth equality follows from (\ref{groth}), the fifth directly from the definition of the term $Z^*_{U}(A_{L/K},p^{-1})$ and all remaining equalities are clear.

This argument completes the proof that condition (iii) in Proposition \ref{reduction lemma} is satisfied and hence also, when combined with the observations made in \S\ref{p section}, completes the proof of Theorem \ref{thm:thm1}.

\subsection{The proof of Propositions \ref{cons prop} and \ref{cons prop 2}}\label{cons props proof}

Throughout this section, we shall use the notation of \S\ref{cons2}. 

\subsubsection{The proof of Proposition \ref{cons prop}} As a first step, we recall that Proposition \ref{prop:perfect}(i) implies ${\rm SC}_{V_L}$ belongs to $D^{\rm perf}(\ZZ[G])$ and is acyclic outside degrees $0, 1$ and $2$. In this case, therefore, the construction of resolutions used in the proofs of Lemma \ref{prop:zzhatcom}(iii) and Proposition \ref{perfect adapt} implies ${\rm SC}_{V_L}$ is isomorphic in $D(\ZZ[G])$ to a complex 
\begin{equation}\label{explicit comp} P_{-1}\xrightarrow{d^{-1}} P_0 \xrightarrow{d^{0}} P_1 \xrightarrow{d^1} P_2\end{equation}
in which $P_{-1}$ is a finitely generated projective $\ZZ[G]$-module that is placed in degree $-1$ and all other modules $P_i$ are finitely generated and free. By taking the direct sum with complexes of the form $\ZZ[G]\xrightarrow{1}\ZZ[G]$, with the first term placed in appropriate degrees, one can also assume that the $G$-rank ${\rm rk}_G(P_{i})$ of $P_{i}$ is greater than $1$ for every $i$. 

To prove claim (i) it is therefore enough to show that the $G$-module $P_{-1}$ is free, or equivalently (by the Bass Cancellation Theorem \cite[Th. (41.20)]{curtisr}, since ${\rm rk}_G(P_{-1}) > 1$) that the Euler characteristic $\chi_G({\rm SC}_{V_L})$ of ${\rm SC}_{V_L}$ in $K_0(\ZZ[G])$ vanishes. In addition, writing $\partial'_{G}$ for the connecting homomorphism  $K_0(\ZZ[G],\RR[G]) \to K_0(\ZZ[G])$, the (assumed) equality in Conjecture \ref{conj:ebsd}(iii) implies that   
\begin{align*}\chi_G({\rm SC}_{V_L}) =&\,\, \partial_G'\bigl(\chi_G^{\rm BSD}(A,V_{L}))\\
                           =&\,\, \partial_G'\bigl(\partial_{G}( L^*_{U}(A_{L/K}, 1))\bigr) + \partial_G'\bigl(\chi_G^{\rm coh}(A,V_{L})\bigr) - 
                           \partial_G'\bigl(\chi_G^{\rm sgn}(A)\bigr)\\
                           =&\,\, \partial_G'\bigl(\chi_G^{\rm coh}(A,V_{L})\bigr) - 
                           \partial_G'\bigl(\chi_G^{\rm sgn}(A)\bigr),\end{align*}
where the final equality follows directly from the fact that $\partial'_G\circ\partial_G$ is the zero map.

To prove claim (i), we are therefore reduced to showing that if $G$ has $p$-power order, then the last two terms in the above expression vanish. However, in this case, every finite projective $\mathbb{F}_p[G]$-module is free so that the image of the homomorphism (\ref{p-subgroup}) belongs to the kernel of $\partial'_{G}$ and hence $\partial_G'\bigl(\chi_G^{\rm coh}(A,V_{L})\bigr)$ automatically vanishes. In addition, the term $\partial_G'\bigl(\chi_G^{\rm sgn}(A)\bigr)$ vanishes since Lemma \ref{sign lemma} below implies that $\chi_G^{\rm sgn}(A)$ is equal to 
\begin{equation}\label{sign p equality} \partial_{G,\QQ}(\langle \QQ\cdot A^t(L),(-1)^{a_p}\rangle)_p = \partial_{G,\QQ}(\langle \QQ\cdot A^t(L),(-1)^{a_p}\rangle) = \partial_{G}(\langle \RR\cdot A^t(L),(-1)^{a_p}\rangle).\end{equation} 
This proves claim (i). 
 
To prove claim (ii) we note that $A(K)[p] = A(L)[p]^G$ and $A^t(K)[p] = A^t(L)[p]^G$. Hence, if $A(K)[p]$ and $A^t(K)[p]$ vanish, then $A(L)[p]$ and $A^t(L)[p]$ also vanish since $G$ is a $p$-group. In this case, therefore, Proposition \ref{prop:perfect}(i) implies ${\rm SC}_{V_L,(p)}$ is acyclic outside degrees $0$ and $1$ and $H^0({\rm SC}_{V_L,(p)})$ is torsion-free. This in turn implies that ${\rm SC}_{V_L,(p)}$ is isomorphic in $D(\ZZ_{(p)}[G])$ to a complex of projective $\ZZ_{(p)}[G]$-modules of the form (\ref{explicit comp}) in which $P_2$ vanishes and so there are exact sequences of $\ZZ_{(p)}[G]$-modules 
\[ 0 \to P_{-1} \to P_0 \to \cok(d^{-1})_{(p)} \to 0\]
and
\[ 0 \to H^0({\rm SC}_{V_L,(p)}) \to {\rm cok}(d^{-1})_{(p)} \xrightarrow{d^0} P_{1} \to H^1({\rm SC}_{V_L,(p)}) \to 0.\]
The first of these sequences implies $\cok(d^{-1})_{(p)}$ is a c-t $G$-module and the second implies it is torsion-free. These two properties combine to imply ${\rm cok}(d^{-1})_{(p)}$ is a projective $\ZZ_{(p)}[G]$-module (by \cite[Th. 8]{aw}). 

At this stage we therefore know that ${\rm SC}_{V_L,(p)}$ is isomorphic in $D(\ZZ_{(p)}[G])$ to a complex of $\ZZ_{(p)}[G]$-modules $\cok(d^{-1})_{(p)}\to P_1$ in which the first term is projective and the second is free (and of rank greater than $1$). To see that this is a complex of the required form it is then enough to note that, since the Euler characteristic of ${\rm SC}_{V_L,(p)}$ in $K_0(\ZZ_{(p)}[G])$ vanishes, the Bass Cancellation Theorem implies that the module $\cok(d^{-1})_{(p)}$ is isomorphic to $P_1$. 

This completes the proof of Proposition \ref{cons prop}. 

\begin{lemma}\label{sign lemma} If $\ell$ is any prime that does not divide $\#G$, then $\partial_{G,\QQ}(\langle \QQ\cdot A^t(L),-1\rangle)_\ell$ vanishes. \end{lemma}

\begin{proof} If $\ell$ does not divide $\# G$, then the $\ZZ_\ell$-order $\ZZ_\ell[G]$ is maximal and so $\QQ_\ell\otimes_\ZZ A^t(L)$ has a full sublattice that is a projective $\ZZ_\ell[G]$-module. This implies $\langle  \QQ_\ell\otimes_\ZZ A^t(L),-1\rangle$ belongs to the image of the natural map $K_1(\ZZ_\ell[G]) \to K_1(\QQ_\ell[G])$ and hence that the element $\partial_{G,\QQ}(\langle  \QQ\cdot A^t(L),-1\rangle)_\ell = \partial_{\ZZ_\ell[G],\QQ_\ell}(\langle  \QQ_\ell\otimes_\ZZ A^t(L),-1\rangle)$ vanishes as a consequence of the long exact sequence of relative $K$-theory (see (\ref{E:kcomm})). \end{proof}

\subsubsection{The proof of Proposition \ref{cons prop 2}} We abbreviate the connecting homomorphism $\partial_{\ZZ_{(p)}[G],\RR}$ to $\partial_p$ and use the natural scalar extension map
\[ \iota_p= \iota_{G,p}: K_0(\ZZ[G],\RR[G]) \to K_0(\ZZ_{(p)}[G],\RR[G]).\]

Then, as a first step, we note that there are equalities
\begin{equation}\label{first equalities} \iota_p\bigl(\chi_G^{\rm sgn}(A)\bigr) = \partial_p\bigl( \langle \RR\cdot A^t(L),(-1)^{a_p}\rangle\bigr)\,\,\text{ and }\,\, \iota_p\bigl(\chi_G^{\rm BSD}(A,V_{L}))= \partial_p\bigl( \langle \RR[G]^t, \iota_{A,L}^{\rm NT}\rangle\bigr).\end{equation}
The first of these follows directly from (\ref{sign p equality}) and the second from a routine comparison of the  definition of the automorphism $\iota_{A,L}^{\rm NT}$ with the explicit computation of $\chi_G^{\rm BSD}(A,V_{L})$ in terms of the non-abelian determinant of the representative of ${\rm SC}_{V_L,(p)}$ fixed in Proposition \ref{cons prop}(ii). 

We next claim that 
\begin{equation}\label{next equality} \iota_p\bigl(\chi_G^{\rm coh}(A,V_{L})\bigr) = \partial_p\bigl( \langle\RR[G], p^{\chi (\mathcal{L})}\rangle\bigr).\end{equation}
To show this we recall from (the proof of) Lemma \ref{zar lemma} that the complex $C := R\Gamma(X,\mathcal{L}_L)^\ast$ belongs to $D^{\rm perf}(\mathbb{F}_p[G])$ and is acyclic outside degrees $0$ and $1$. Since $G$ is a $p$-group, $C$ is therefore isomorphic in $D(\mathbb{F}_p[G])$ to a complex of the form $\mathbb{F}_p[G]^{n_0} \to \mathbb{F}_p[G]^{n_1}$, where the first term is placed in degree $0$ and $n_0$ and $n_1$ are suitable natural numbers. By using this representative, one computes that 
\[ \chi_G^{\rm coh}(A,V_L) := \chi_{G}(R\Gamma(X, \mathcal{L}_L)^*,0) = \partial_p\bigl( \langle \RR[G], p^{n_0-n_1}\rangle\bigr). \]
%
To deduce (\ref{next equality}) it is now enough to note that a computation of Euler characteristics in $K_0(\mathbb{F}_p) \cong \ZZ$ implies that 
\begin{align*} \chi(\mathcal{L}) :=\chi_{\mathbb{F}_p} (R\Gamma(X, \mathcal{L})^*) =&\,\,\chi_{\mathbb{F}_p}\bigl(R\Hom_{\mathbb{F}_p[G]}(\mathbb{F}_p,R\Gamma(X, \mathcal{L}_L)^\ast)\bigr)\\
  =&\,\,\chi_{\mathbb{F}_p}\bigl( R\Hom_{\mathbb{F}_p[G]}(\mathbb{F}_p,\mathbb{F}_p[G]^{n_0} \to \mathbb{F}_p[G]^{n_1})\bigr)\\
  =&\,\,\chi_{\mathbb{F}_p}\bigl(\mathbb{F}_p^{n_0} \to \mathbb{F}_p^{n_1}\bigr)\\
  =&\,\, n_0-n_1,\end{align*}
where the first aligned equality follows from the isomorphism (\ref{second KT descent}). 

Thus, if one defines an element of $K_1(\RR[G])$ by setting  
\[ \mathscr{L} := L^*_{U}(A_{L/K}, 1)\times \langle\RR[G]^t, \iota_{A,L}^{\rm NT}\rangle^{-1}\times\langle \RR[G], p^{\chi(\mathcal{L})}\rangle\times  \langle \RR\cdot A^t(L),(-1)^{a_p}\rangle^{-1} \]
then the equalities (\ref{first equalities}) and (\ref{next equality}) imply that 
\[ \partial_p(\mathscr{L}) = \iota_p\bigl(\partial_{G}( L^*_{U}(A_{L/K}, 1)) - \chi_G^{\rm BSD}(A,V_{L}) + \chi_G^{\rm coh}(A,V_{L}) - \chi_G^{\rm sgn}(A)\bigr). \]
%
In addition, an explicit computation of reduced norm combines with the definition (\ref{normalise def}) of each term $\mathscr{L}(A,\psi)$ to imply that   
\[ {\rm Nrd}_{\RR[G]}(\mathscr{L}) = {\sum}_{\psi \in {\rm Ir}(G)}\mathscr{L}(A,\psi)e_\psi \in \zeta(\CC[G])^\times.\]

This equality implies the conditions stated in Proposition \ref{cons prop 2}(i) are equivalent to asserting ${\rm Nrd}_{\RR[G]}(\mathscr{L})$ belongs to $\zeta(\QQ[G])^\times$. Hence, since $K_1(\QQ[G])$ is the full pre-image under $\partial_p$  of the subgroup $K_0(\ZZ_{(p)}[G],\QQ[G])$ of $K_0(\ZZ_{(p)}[G],\RR[G])$,  
these conditions are true if the equality in Conjecture \ref{conj:ebsd}(iii) is valid modulo $K_0(\ZZ[G],\QQ[G])$. Their validity thus follows directly from the assumed validity of Conjecture \ref{conj:ebsd}(i) and (ii) and the argument of \S\ref{sect:reform}.  

In a similar way, if $G$ is abelian, then the above computation shows that Conjecture \ref{conj:ebsd}(iii) implies ${\rm Nrd}_{\RR[G]}(\mathscr{L})$ belongs to $\ZZ_{(p)}[G]^\times$. In addition, since $\ZZ_{(p)}[G]$ is a local ring (as $G$ is a $p$-group), the latter containment is valid if and only if  ${\rm Nrd}_{\RR[G]}(\mathscr{L})$ belongs to $\ZZ_{(p)}[G]$ and its image under the projection $\ZZ_{(p)}[G] \to \ZZ_{(p)}$ belongs to $\ZZ_{(p)}^\times$. These conditions are in turn equivalent to requiring $\mathscr{L}(A,{\bf 1}_G)$ belongs to $\ZZ_{(p)}^\times$ and, also, for all $g\in G$ one has 
\[ {\sum}_{\psi\in {\rm Ir}(G)} \psi(g^{-1})\mathscr{L}(A,\psi) \in |G|\cdot \ZZ_{(p)}.\]
To deduce the result of Proposition \ref{cons prop 2}(ii) it is thus enough to note that, for each abelian subquotient $Q = H/J$ of $G$,  the arguments of Propositions \ref{bsd-schneider} and \ref{group independent} combine to imply that the validity of Conjecture \ref{conj:ebsd}(iii) for the data $(A,L/K)$ implies the validity modulo $\ker(\iota_{Q,p})$ of the equality in Conjecture \ref{conj:ebsd}(iii) for the data $(A_{L^H},L^J/L^H)$.  

Finally, to prove Proposition \ref{cons prop 2}(iii) we assume the validity of Conjecture \ref{conj:ebsd} and hence that the element $\mathscr{L}$ belongs to the image of $K_1(\ZZ_{(p)}[G])$ in $K_1(\RR[G])$. Thus, if we fix an embedding of fields $j: \RR \to \CC_p$, then the image of $\mathscr{L}$ under the induced map $K_1(\RR[G]) \to K_1(\CC_p[G])$ belongs to the image of the natural map $K_1(\ZZ_p[G])\to K_1(\CC_p[G])$. 

Given this containment, the equalities in claim (iii) follow from the general result of \cite[Th. 2.1]{bk} (with $\Lambda = \ZZ_p$) and the fact that, for each subgroup $H$ of $G$, the argument of Proposition \ref{group independent} implies ${\sum}_{\psi \in {\rm Ir}(H^{\rm ab})}\mathscr{L}(A_{L^H},\psi)e_\psi$ is equal to the image of ${\rm Nrd}_{\RR[G]}(\mathscr{L})$ under the upper composite map in the diagram
\[ \begin{CD} \zeta(\RR[G])^\times @> \subset >> \zeta(\CC[G])^\times @> \varrho_H >> \zeta(\CC[H])^\times @> \varrho'_H >> \zeta(\CC[H^{\rm ab}])^\times = \CC[H^{\rm ab}]^\times\\
& & @A {\rm Nrd}_{\CC[G]} A \cong A @A\cong A {\rm Nrd}_{\CC[H]} A\\
& & K_1(\CC[G]) @> \theta_{G,H}^1 >> K_1(\CC[H])\end{CD}\]
Here $\theta_{G,H}^1$ is the natural restriction of scalars map, $\varrho_H$ is defined by the requirement that the square commutes and $\varrho_H'$ is the natural projection map.

This completes the proof of Proposition \ref{cons prop 2}. 

\appendix

\section{Kummer-\'etale descent for coherent cohomology}
In this first appendix, we show that the coherent cohomology over a `separated'  formal fs log scheme can be computed via the \v{C}ech resolution with respect to an affine \emph{Kummer-\'etale} covering (not necessarily a Zariski open covering). Whilst this result seems to be well known to experts, we have not been able to locate a good reference for it in the literature.

\subsection{Fs log schemes and their fibre products}
The main purpose of this section is to review the construction of fibre products for fs log schemes, which we need for the sheaf theory on Kummer-\'etale sites and the construction of {\Cech} complexes. We will briefly recall some definitions of monoids and log schemes needed for the construction of fibre products. We do not give a complete review of basic definitions on monoids and log geometry but rather refer readers to  \cite{Kato:Fontaine-Illusie} and \cite{Niziol:LogK-1} for basic definitions in log geometry and to \cite{Ogus:LogGeom} for a more comprehensive reference.

Recall that a (always commutative) monoid $P$ is said to be \emph{fine} if it is finitely generated and the natural map $P\to P^{\rm gp}$ is injective (where $P^{\rm gp}$ is the commutative group obtained by adjoining the inverse of each element of $P$). A fine monoid $P$ is said to be \emph{saturated} if for any $\alpha\in P^{\rm gp}$, we have $\alpha^n\in P$ for some $n>0$ if and only if $\alpha\in P$. By a \emph{fs} monoid, we mean a fine and saturated monoid.

For each monoid $P$ we define a saturation $P^{\rm sat}:=\{\alpha\in P^{\rm gp};\ \alpha^n\in P \text{ for some }n>0\}$.

\begin{lemma}\label{lem:Gordon}
If $P$ is finitely generated, then the monoid $P^{\rm sat}$ is fs.
\end{lemma}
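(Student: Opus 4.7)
The plan is to recognize Lemma~\ref{lem:Gordon} as a version of Gordan's lemma on rational polyhedral cones, preceded by a short reduction to handle torsion in $P^{\rm gp}$. The only non-trivial point is finite generation: saturation of $P^{\rm sat}$ is immediate from the definition (if $\beta^n\in P^{\rm sat}$ then $(\beta^n)^m=\beta^{nm}\in P$ for some $m>0$, so $\beta\in P^{\rm sat}$), and $P^{\rm sat}$ is cancellative as a submonoid of the group $P^{\rm gp}$, so it injects into its own groupification. Note that $P\subseteq P^{\rm sat}\subseteq P^{\rm gp}$ forces $(P^{\rm sat})^{\rm gp}=P^{\rm gp}$, which is a finitely generated abelian group.

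Next I would reduce to the torsion-free case. Decompose $P^{\rm gp}=G_{\rm tf}\oplus G_{\rm tor}$ with $G_{\rm tf}\cong\ZZ^r$ and $G_{\rm tor}$ finite, and let $\pi\colon P^{\rm gp}\to G_{\rm tf}$ denote the projection. Setting $Q:=\pi(P)\subseteq G_{\rm tf}$, I claim that $P^{\rm sat}=\pi^{-1}(Q^{\rm sat})$, where $Q^{\rm sat}$ denotes the saturation of $Q$ inside $G_{\rm tf}$. The inclusion $\subseteq$ is immediate from $\pi(\alpha)^n=\pi(\alpha^n)\in Q$ whenever $\alpha^n\in P$. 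For $\supseteq$, given $\alpha\in P^{\rm gp}$ with $\pi(\alpha)^n\in Q$ for some $n>0$, choose $\beta\in P$ with $\pi(\beta)=\pi(\alpha^n)$; then $\alpha^n\beta^{-1}\in\ker\pi=G_{\rm tor}$, and raising to the power $m:=|G_{\rm tor}|$ gives $\alpha^{nm}=\beta^m\in P$. Since $\ker\pi$ is finite, finite generation of $Q^{\rm sat}$ implies finite generation of $P^{\rm sat}$: lift a finite generating set of $Q^{\rm sat}$ to $P^{\rm sat}$ and throw in the (finitely many) elements of $G_{\rm tor}$.

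Finally I would invoke Gordan's lemma in the torsion-free setting. Since $P$ is finitely generated, so is $Q$, and the rational polyhedral cone $C\subseteq G_{\rm tf}\otimes_\ZZ\QQ$ generated by the (finite) image of a set of generators of $P$ satisfies $Q^{\rm sat}=C\cap G_{\rm tf}$: the inclusion $\subseteq$ comes from the fact that $n\pi(\alpha)\in Q\subseteq C$ implies $\pi(\alpha)\in C$ (as $C$ is a cone), while $\supseteq$ follows by clearing denominators in any non-negative rational combination expressing an element of $C\cap G_{\rm tf}$ in terms of generators of $Q$. Gordan's lemma then gives finite generation of $C\cap G_{\rm tf}$, which completes the argument. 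The only substantive ingredient is Gordan's lemma itself, which is classical (see, e.g., Ogus's treatment of log geometry); the rest is routine bookkeeping with groupifications and torsion.
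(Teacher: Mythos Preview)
Your proof is correct and follows essentially the same approach as the paper: both reduce the claim to finite generation of $P^{\rm sat}$ and invoke Gordon's lemma, with the paper citing the ready-made version in Ogus (Cor.~2.3.20) directly while you unpack the reduction to the classical cone form of Gordan's lemma by hand. The extra torsion-splitting step you supply is exactly the kind of detail absorbed into the cited reference, so there is no substantive difference in strategy.
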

\begin{proof}
It suffices to show that $P^{\rm sat}$ is finitely generated, which is  a direct consequence of Gordon's Lemma (cf. \cite[Ch.~I, Th.~2.3.19]{Ogus:LogGeom}). \end{proof}

A log scheme $X^\sharp$ is called \emph{fs} (i.e., fine and saturated) if \'etale locally on the underlying scheme $X$, the log structure is generated by a map of monoids $P\to \mathcal{O}_X$ where $P$ is a fs monoid.
Our log schemes and formal schemes are always assumed to be fs (i.e., fine saturated).

Let $X^\sharp$ and $Y^\sharp$ be fs log schemes over $S^\sharp$ (with underlying schemes denoted as $X$, $Y$, and $S$). We want to construct a fs log scheme $X^\sharp\times_{S^\sharp} Y^\sharp$ satisfying the universal property of fibre product (cf. \cite[Ch.~III, Cor.~2.1.6]{Ogus:LogGeom}).

By replacing the formal log schemes with suitable \'etale coverings, we choose charts $P\to \mathcal{O}_X$, $Q\to \mathcal{O}_Y$ and $M\to \mathcal{O}_S$ defining the log structures (where $P$, $Q$ and $M$ are fs monoids, viewed as constant sheaves), such that there exist maps $M\to P, Q$ giving rise to the structure morphism $X^\sharp, Y^\sharp \to S^\sharp$. (The existence of such local charts follows from \cite[Lem.~2.10]{Kato:Fontaine-Illusie}.) 

The most natural candidate is to endow $X\times_S Y$ with the log structure associated to the chart $P\oplus_MQ\to \mathcal{O}_X\otimes_{\mathcal{O}_S}\mathcal{O}_Y$, where $P\oplus_MQ$ is the amalgamated sum of monoids. But this may not always work as $P\oplus_MQ$ may not be fine nor saturated.

Writing $P\oplus_M^{\rm sat} Q $ for the \emph{saturation} of $P\oplus_M Q $ we can define the following fs log scheme
\[
X^\sharp\times_{S^\sharp}Y^\sharp := (X\times_S Y)\times_{\Spec\ZZ[P\oplus_M Q]}\Spec\ZZ[P\oplus_M^{\rm sat}Q]
\]
with the log structure given by the chart $P\oplus_M^{\rm sat}Q \to \mathcal{O}_{X^\sharp\times_{S^\sharp}Y^\sharp}$ naturally extending $P\oplus_M Q \to \mathcal{O}_{X\times_S Y}$. By glueing this \'etale-local construction, we obtain the fibre products for any fs log schemes. We repeat this construction to obtain fibre products of formal fs log schemes.

Note that this notion of fibre product may not be compatible with fibre products of (formal) schemes without log structure, as we can see from the explicit \'etale-local construction. Instead, we have the following lemma, which is a consequence of Lemma~\ref{lem:Gordon}. (See \cite[Ch.~III, Cor.~2.1.6]{Ogus:LogGeom} for the proof.)
\begin{lemma}\label{lem:log-closed}
The underlying scheme for $X^\sharp\times_{S^\sharp}Y^\sharp $ is finite over  $X\times_S Y$. The same holds for formal fs log schemes.
\end{lemma}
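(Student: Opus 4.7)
The plan is to reduce the assertion to a monoid-theoretic finiteness statement and then deduce it from the integrality of the saturation over the original monoid.

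First, since finiteness is \'etale-local on the target, I would choose \'etale-local fs monoid charts $M\to\mathcal{O}_S$, $P\to\mathcal{O}_X$ and $Q\to\mathcal{O}_Y$ compatible with the structure morphisms, so that the fibre product is given by the explicit formula recalled in the excerpt, namely
\[
X^\sharp\times_{S^\sharp}Y^\sharp = (X\times_S Y)\times_{\Spec\ZZ[P\oplus_M Q]}\Spec\ZZ[P\oplus_M^{\rm sat}Q].
\]
Stability of finiteness under base change then reduces the claim to showing that the canonical map $\Spec\ZZ[N^{\rm sat}]\lr\Spec\ZZ[N]$ is finite, where $N:=P\oplus_M Q$ is the amalgamated sum of monoids and $N^{\rm sat}$ is its saturation inside $N^{\rm gp}$.

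The key input is then that every element $\alpha\in N^{\rm sat}$ satisfies $\alpha^n\in N$ for some $n\geqslant1$ and so is a root of the monic polynomial $T^n-\alpha^n\in\ZZ[N][T]$; in other words, $\ZZ[N^{\rm sat}]$ is integral over $\ZZ[N]$. On the other hand, Lemma~\ref{lem:Gordon} shows that $N^{\rm sat}$ is finitely generated as a monoid, so that $\ZZ[N^{\rm sat}]$ is finitely generated as a $\ZZ$-algebra and a fortiori as a $\ZZ[N]$-algebra. Combining integrality with finite algebraic generation yields finiteness as a $\ZZ[N]$-module, which proves the scheme-theoretic assertion.

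For the formal fs log case, the analogous local construction of the fibre product goes through with $\Spf$ in place of $\Spec$ and the appropriate completion of the monoid algebras; since finiteness is preserved by completion and base change, the formal assertion is then a formal consequence of the scheme-theoretic one. Neither step presents a serious obstacle: the only point beyond formalities is the integrality input, which is immediate from the definition of the saturation, so that the work really is done by Lemma~\ref{lem:Gordon} together with the elementary monic equation above.
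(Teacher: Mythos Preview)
Your proof is correct and follows essentially the same route as the paper's: reduce \'etale-locally to charts, use the explicit description of the fs fibre product, and deduce that $\ZZ[N^{\rm sat}]$ is finite over $\ZZ[N]$ via Gordon's lemma. The paper is terser (it simply asserts the finiteness ``follows from Gordon's lemma''), whereas you spell out the standard mechanism---integrality from the definition of saturation plus finite generation of $N^{\rm sat}$ as a monoid---but this is exactly the intended argument.
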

%
%
\begin{remark}{\em To give a concrete example in which the underlying scheme for $X^\sharp\times_{S^\sharp}Y^\sharp$ differs from $X\times_SY$ we fix a finite Galois Kummer-\'etale cover $\pi: X_L^\sharp \xrightarrow{\pi^\sharp}X^\sharp$ of group $G$. In this case one has $X_L^\sharp\times_{X^\sharp}X_L^\sharp \cong G\times X_L^\sharp$ whereas $X_L\times_X X_L \cong G\times X_L$ only if $\pi$ is unramified.} 
\end{remark}

The following corollary of  lemma~\ref{lem:log-closed} will be used later.

\begin{corollary}\label{cor:affineness}
Let $X^\sharp$ be a fs log scheme, such that the underlying scheme is separated. Let $U^\sharp$ and $U^{\prime\sharp}$ be fs log  schemes over $X^\sharp$, such that the underlying schemes $U$ and $U'$ are affine. Then $U^\sharp\times_{X^\sharp}U^{\prime\sharp}$ is also affine. The same holds for formal fs log schemes.
\end{corollary}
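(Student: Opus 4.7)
The plan is to deduce the statement directly from Lemma~\ref{lem:log-closed} together with standard facts about separated and affine morphisms.

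First I would reduce to showing that the underlying scheme of $U^\sharp\times_{X^\sharp}U'^\sharp$ is affine, since the assertion is purely about the underlying (formal) scheme. By Lemma~\ref{lem:log-closed}, the natural morphism
\[
U^\sharp\times_{X^\sharp}U'^\sharp \longrightarrow U\times_{X}U'
\]
is finite, so its source is affine as soon as $U\times_{X}U'$ is. Thus the problem reduces to showing that $U\times_X U'$ is affine.

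Next I would use the hypothesis that $X$ is separated: this means the diagonal $\Delta_X\colon X\to X\times_{\Spec\ZZ} X$ (or more generally into $X\times_S X$ for a base $S$, but one may take $S=\Spec\ZZ$) is a closed immersion. Base changing along $U\times U'\to X\times X$ gives that the natural morphism
\[
U\times_X U' \longrightarrow U\times U'
\]
is a closed immersion. Since $U$ and $U'$ are affine, so is $U\times U'$, and a closed subscheme of an affine scheme is affine. Hence $U\times_X U'$ is affine, and combining with the previous paragraph the underlying scheme of $U^\sharp\times_{X^\sharp}U'^\sharp$ is affine, as required.

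For the formal case the argument is identical once one invokes the formal analogue of Lemma~\ref{lem:log-closed} and the fact that a finite morphism of formal schemes whose target is affine has affine source (Chevalley's theorem applies in the formal setting); the separatedness of the underlying formal scheme $X$ similarly ensures the diagonal is a closed immersion. No step is genuinely difficult here — the only thing to be careful about is invoking Lemma~\ref{lem:log-closed} to pass from the log fibre product to the scheme-theoretic fibre product, since these a priori differ (as noted in the remark following that lemma), but the finiteness of the natural comparison map is exactly what lets the argument go through.
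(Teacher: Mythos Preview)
Your proposal is correct and follows essentially the same approach as the paper: both arguments use the cartesian diagram over the diagonal $\Delta_X$ to show $U\times_X U'$ is affine (as a closed subscheme of the affine $U\times U'$), and then invoke Lemma~\ref{lem:log-closed} to conclude that the underlying scheme of $U^\sharp\times_{X^\sharp}U'^\sharp$, being finite over $U\times_X U'$, is affine.
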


\begin{proof}

Under the hypotheses, the scheme $U\times_X U'$ is affine, which follows from the cartesian diagram below:
\[
\xymatrix{
U\times_{X}U' \ar@{^{(}->}[r] \ar[d] &
U\times U' \ar[d]\\
X\ar@{^{(}->}[r] ^-{\Delta_{X}}&
X\times X,
}\]
Now by lemma~\ref{lem:log-closed}, the underlying scheme of $U^\sharp\times_{X^\sharp}U^{\prime\sharp}$ is finite over an affine scheme $U\times_X U'$. This proves the corollary.
\end{proof}

\subsection{{\Cech}-to-derived functor spectral sequence for Kummer-\'etale cohomology}

For a log formal scheme $\gotx$, we write $\gotx^\sharp_\ket$ for the associated Kummer-\'etale site (as per \cite[Def.~2.13]{Niziol:LogK-1}).

We quickly recall the definition of \v{C}ech complex and \v{C}ech-to-derived functor spectral sequences in this setting.

\begin{definition}{\em
Let $\gotu^\sharp$ be an Kummer-\'etale covering of $\gotx^\sharp$ (i.e., the structure morphism $\gotu^\sharp\to\gotx^\sharp$ is Kummer-\'etale and surjective), and let $\calf$ be a sheaf of abelian groups on the Kummer-\'etale site $\gotx^\sharp_\ket$. Then we can form a \v{C}ech complex
\[
C^\bullet(\gotu^\sharp,\calf):= [\Gamma(\gotu^\sharp,\calf) \to \Gamma(\gotu^\sharp\times_{\gotx^\sharp}\gotu^\sharp,\calf) \to \Gamma(\gotu^\sharp\times_{\gotx^\sharp}\gotu^\sharp\times_{\gotx^\sharp}\gotu^\sharp,\calf)\to\cdots],
\]
with differentials defined in a standard way.

(The usual definition of Cech complexes for the case without log structure, \emph{cf.} \cite[Ch.~III, \S2]{milne:etale}, formally goes through.)  For any bounded-below complexes $\calf^\bullet$, we define the \v{C}ech complex $C^\bullet(\gotu^\sharp,\calf^\bullet)$ as the total complex of the double complex obtained from \v{C}ech complex of each term of $\calf^\bullet$.}
\end{definition}

Whilst the \v{C}ech complex $C^\bullet(\gotu^\sharp,\calf)$ does not necessarily represent $R\Gamma(\gotx^\sharp_\ket,\calf)$, there exists a natural `\Cech-to-derived functor spectral sequence'
\begin{equation}
E_1^{i,j}: H^j (\gotu^{\sharp}_{i,\ket}, \calf) \Rightarrow H^{i+j} (\gotx^\sharp_{\ket},\calf),
\end{equation}
where $\gotu^{\sharp}_i$ is the $(i+1)$-fold self fibre product of $\gotu^\sharp$ over $\gotx^\sharp$.
One way to read off this spectral sequence from the literature is via the technique of cohomological descent for (simplicial) topoi associated to the Kummer-\'etale sites $\gotu^\sharp_{\ket}$ and $\gotx^\sharp_{\ket}$ (cf. SGA~4$_\text{II}$, Exp.~Vbis. \cite{SGA4-2}).
Indeed, since it admits a local section, $\gotu^\sharp\to\gotx^\sharp$ is a `morphism of universal cohomological descent' by [loc. cit., Prop.~(3.3.1)] and so the above spectral sequence is just a special case of the descent spectral sequence from [loc. cit., Prop.~(2.5.5)]).

\begin{remark}\label{rmk:SpecSeq}{\em  The complex $(E_1^{i,0}, d^{i,0})$ coincides with $C^\bullet(\gotu^\sharp,\calf)$ and so the above spectral sequence implies $C^\bullet(\gotu^\sharp,\calf) = R\Gamma(\gotx^\sharp_\ket,\calf)$ if $E_1^{i,j}$ vanishes for all $j>0$ and $i\geqslant 0$.}
\end{remark}

\subsection{Coherent cohomology}  We first recall Kummer-\'etale descent theory for coherent sheaves on schemes and formal schemes.

Let $\gotx^\sharp$ be a log scheme over $\Z/p^n$ for some $n$ and $\calf$ a quasi-coherent $\mathcal{O}_\gotx$-module. Then, by Kato's unpublished result (cf. \cite[Prop.~2.19]{Niziol:LogK-1}) the presheaf $\gotu^\sharp\in\gotx^\sharp_\ket \rightsquigarrow \Gamma(\gotu, \calf_{\gotu})$ is a sheaf on $\gotx^\sharp_\ket $, where $\calf_{\gotu}$ denotes the pull back of  $\calf$ via the structure morphism $\gotu\to\gotx$ of the underlying schemes. We use the same notation $\calf$ to denote the Kummer-\'etale sheaf associated to a quasi-coherent sheaf $\calf$.

Now, if $\gotx^\sharp$ be a locally noetherian formal fs log scheme  over $\Spf\Zp$, we can associate, to a coherent $\mathcal{O}_\gotx$-module $\calf$,  a Kummer-\'etale $\Zp$-sheaf $\calf$ by extending the construction for coherent sheaves on schemes via projective limit.
(We restrict to coherent sheaves to avoid technicalities regarding completion.)

Now, we are interested in $C^\bullet(\gotu^\sharp,\calf)$ when $\calf$ is  a vector bundle on $\gotx$ (viewed as a Kummer-\'etale sheaf), while $\gotu^\sharp$ remains a Kummer-\'etale covering of $\gotx^\sharp$.

\begin{proposition}\label{prop:Cech}
Let $\gotx^\sharp$ be a noetherian formal fs log scheme over $\Spf R$ (for some noetherian adic ring $R$, with trivial log structure), and assume that $\gotx$ is separated.
Then for any coherent $\mathcal{O}_\gotx$-module $\calf$ there is a natural isomorphism $R\Gamma(\gotx^\sharp_\ket,\calf) \riso R\Gamma(\gotx,\calf)$.

Furthermore, for any Kummer-\'etale covering $\gotu^\sharp\to \gotx^\sharp$ where $\gotu$ is affine, the \Cech{} complex $C^\bullet(\gotu^\sharp,\calf)$ represents $R\Gamma(\gotx,\calf)$.

The same holds if we replace $\calf$ with a bounded-below complex $\calf^\bullet$ of coherent sheaves of $\mathcal{O}_\gotx$-modules, such that the differential maps $d^i:\calf^i\to\calf^{i+1}$ are additive morphisms of Kummer-\'etale sheaves. 
\end{proposition}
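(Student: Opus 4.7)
The plan is to establish both statements by combining the \Cech-to-derived functor spectral sequence of Remark \ref{rmk:SpecSeq} with the following key technical claim, which we refer to as the \emph{affine vanishing}: for every affine formal fs log scheme $\gotv^\sharp$ and every quasi-coherent sheaf $\mathcal{G}$ on $\gotv$, viewed as a sheaf on $\gotv^\sharp_\ket$ via Kato's result \cite[Prop.~2.19]{Niziol:LogK-1}, one has $H^j(\gotv^\sharp_\ket, \mathcal{G}) = 0$ for all $j > 0$.

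Granting the affine vanishing, both statements of the proposition follow together. Specifically, given any affine Kummer-\'etale covering $\gotu^\sharp\to\gotx^\sharp$, Corollary \ref{cor:affineness} together with the separatedness of $\gotx$ shows that each iterated self fs-fibre product $\gotu^\sharp_i$ has affine underlying scheme; applying the affine vanishing to each such $\gotu^\sharp_i$ collapses the \Cech-to-derived spectral sequence and yields a quasi-isomorphism $R\Gamma(\gotx^\sharp_\ket,\calf)\riso C^\bullet(\gotu^\sharp,\calf)$. In the special case in which $\gotu^\sharp$ arises from a Zariski affine open cover of $\gotx$ equipped with the pulled-back log structure, the fs-fibre products coincide with the scheme-theoretic ones and the resulting \Cech complex is the ordinary Zariski \Cech complex, which computes $R\Gamma(\gotx,\calf)$ by Serre's theorem for quasi-coherent sheaves on separated schemes; this yields the first statement. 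The second then follows by chaining this with the isomorphism $R\Gamma(\gotx^\sharp_\ket,\calf)\riso C^\bullet(\gotu^\sharp,\calf)$ for an arbitrary affine Kummer-\'etale cover.

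The main obstacle is proving the affine vanishing itself. My strategy is to refine any Kummer-\'etale cover of $\gotv^\sharp$ by a \emph{standard Galois Kummer cover} associated to a choice of fs chart $P\to\Gamma(\gotv,\mathcal{O}_\gotv)$ together with an integer $n$ invertible on $\gotv$: extracting $n$-th roots produces an inclusion of charts $P\hookrightarrow P^{1/n}$ and an associated Kummer-\'etale Galois cover $\gotu^\sharp\to\gotv^\sharp$ with group $G=\Hom(P^{1/n}/P,\mu_n)$, for which $\gotu^\sharp\times_{\gotv^\sharp}\gotu^\sharp\cong \gotu^\sharp\times G$ in the fs category. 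Under this identification the \Cech complex $C^\bullet(\gotu^\sharp,\mathcal{G})$ becomes the standard cobar complex computing $H^*(G,\Gamma(\gotu,\mathcal{G}))$, which is $\Gamma(\gotv,\mathcal{G})$ in degree zero by Kato's descent and vanishes in higher degrees by a Hilbert~90 type argument for the finite flat extension $\Gamma(\gotv,\mathcal{O}_\gotv)\to \Gamma(\gotu,\mathcal{O}_\gotu)$ produced by the fs saturation. A colimit over such standard covers then yields the general affine vanishing. The Hilbert~90 input is the principal non-formal point, since the fs saturation that enters in forming the fs fibre product is precisely what distinguishes the fs-fibre product from the scheme-theoretic one and forces one to work more carefully than in the classical \'etale case.

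Finally, the extension to a bounded-below complex of quasi-coherent sheaves $\calf^\bullet$ with additive differentials is routine: the double complex $C^i(\gotu^\sharp,\calf^j)$ has its horizontal rows quasi-isomorphic to $R\Gamma(\gotx,\calf^j)$ by the single-sheaf case, so the associated spectral sequence identifies the total complex $C^\bullet(\gotu^\sharp,\calf^\bullet)$ with $R\Gamma(\gotx,\calf^\bullet)$.
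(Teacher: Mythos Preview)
Your overall architecture is exactly the paper's: reduce to an affine vanishing statement, then feed that into the \Cech-to-derived spectral sequence together with Corollary~\ref{cor:affineness}, and compare with a Zariski affine cover to identify the answer with $R\Gamma(\gotx,\calf)$. The extension to bounded-below complexes via the double-complex spectral sequence is also the same.

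The only substantive difference is in how the affine vanishing is obtained. The paper does not prove it from scratch: it cites \cite[Prop.~3.27]{Niziol:LogK-1} for the case $\calf=\mathcal{O}_\gotx$ and then passes to arbitrary quasi-coherent $\calf$ by resolving with free $\mathcal{O}_\gotx$-modules. Your proposal instead sketches a direct argument via standard Kummer covers. That idea is essentially how Nizio\l's result is proved, but your sketch has a few soft spots worth tightening. First, the vanishing of $H^i(G,\Gamma(\gotu,\mathcal{G}))$ for $i>0$ is not really a ``Hilbert~90'' phenomenon: since $n$ is invertible on $\gotv$ and $|G|$ divides a power of $n$, the order of $G$ acts invertibly on $\Gamma(\gotu,\mathcal{G})$, so group cohomology in positive degrees is already zero for any $G$-module of this type. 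Second, Kummer-\'etale covers are \'etale-locally standard Kummer maps, so a general cover also has an \'etale constituent; you need classical fpqc/\'etale descent for quasi-coherent sheaves to handle that part, not just the Kummer part. Third, ``a colimit over such standard covers'' is not by itself enough to pass from vanishing of \Cech cohomology for a cofinal family of covers to vanishing of derived-functor cohomology; you need a Cartan-type criterion (vanishing of \Cech cohomology on a basis stable under fs fibre products) or an inductive use of the \Cech-to-derived spectral sequence. None of these is fatal, but the paper's route of citing \cite[Prop.~3.27]{Niziol:LogK-1} plus free resolution is both shorter and avoids these bookkeeping issues.
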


\begin{proof} By standard argument with hypercohomology spectral sequences, the claim for $\calf^\bullet$ can be reduced to $\calf$.

Let us first assume that $\gotx$ is affine. Then by \cite[Prop.~3.27]{Niziol:LogK-1} (and the theorem on formal functions), we have $R\Gamma(\gotx^\sharp_\ket,\mathcal{O}_\gotx) = \Gamma(\gotx,\mathcal{O}_\gotx)$. Now, by resolving $\calf$ with free $\mathcal{O}_\gotx$-modules, we obtain $R\Gamma(\gotx^\sharp_\ket,\calf) = \Gamma(\gotx,\calf).$

Choose a Kummer-\'etale covering $\gotu^\sharp\to\gotx^\sharp$ with $\gotu$ affine. Then Corollary~\ref{cor:affineness} implies
\[\gotu^\sharp_i:=\underbrace{\gotu^\sharp\times_{\gotx^\sharp}\cdots\times_{\gotx^\sharp}\gotu^\sharp}_{i+1\text{ times}}\]
 has an \emph{affine} underlying formal scheme. Therefore, by the \Cech-to-derived spectral sequence argument it follows that $C^\bullet(\gotu^\sharp,\calf)$ represents $R\Gamma(\gotx^\sharp_\ket,\calf)$ (cf. Remark~\ref{rmk:SpecSeq}). Now if we choose $\gotu^\sharp$ to be the disjoint union of finite affine open covering of $\gotx$ (with the natural log structure induced from $\gotx^\sharp$), then $C^\bullet(\gotu^\sharp,\calf)$ represents $R\Gamma(\gotx,\calf)$, as claimed.
\end{proof}

\begin{remark}{\em We apply Proposition \ref{prop:Cech} to the log de-Rham complex $\calf^\bullet$, where the maps $d^i:\calf^i\to\calf^{i+1}$ are not $\mathcal{O}_\gotx$-linear but are additive morphisms of Kummer-\'etale sheaves.}\end{remark}

\section{Lefschetz trace formula for rigid cohomology}

In this second appendix, our aim is to establish a slight extension of the Lefschetz trace formula for rigid cohomology that is proved in \cite[Th. 6.3]{ELS}. 

As before, we let $U$ be a \emph{smooth affine curve} over a finite field of characteristic $p>0$. Let $\Lambda_0$ be a finite extension of $\QQ_p$, and assume that $\mathcal{O}_U$ contains the residue field $k_0$ of $\Lambda_0$. Set $q_0=p^{r_0}:=\#(k_0)$

Let $\mathcal{O}^{\dagger} _{U/\Lambda_0}$ denote the global section of the overconvergent structure with coefficients $\Lambda_0$. To explain, let $X$ be a smooth compactification of $U$, and let $\mathfrak{X}_{\mathcal{O}_{\Lambda_0}}$ be a formal lift of $X$ over the valuation ring $\mathcal{O}_{\Lambda_0}$ of $\Lambda_0$. Let $\mathcal{X}_{\Lambda_0}$ denote its rigid generic fibre, which contains the tube $]U[_{\Lambda_0}$ of $U$ as an open subspace. 
Then $\mathcal{O}^\dagger_{U/\Lambda_0}$ is the ring of rigid analytic functions defined on some `strict neighbourhood' of $]U[_{\Lambda_0}$. (We often call such rigid analytic functions \emph{overconvergent} along $X\setminus U$.) Note that if $\Lambda'_0$ is a finite extension of $\Lambda_0$ whose residue field is contained in $\mathcal{O}_U$, then we have $\mathcal{O}^{\dagger}_{U/\Lambda_0'} \cong \mathcal{O}^{\dagger}_{U/\Lambda_0}\otimes_{\Lambda_0}\Lambda_0'$.

Since $\mathcal{O}_U$ contains the residue field $k_0$ of $\Lambda_0$, one can define an overconvergent $\Lambda_0$-$F$-isocrystal $\mathcal{E}$ over $U$; see \S\ref{ssec:frob}.  (See also \cite[\S7]{tsuzuki98} for the definition and the natural context where overconvergent $\Lambda_0$-$F$-isocrystals appear. In loc.~cit. it is called an \emph{overconvergent $\Lambda_0$-$F^{r_0}$-isocrystal} where $r_0:=[k_0:\FF_p]$, but let us suppress $r_0$ from the notation.) In our intended application however, we would naturally like to remove the assumption that the residue field of the coefficient field $\Lambda_0$ is not contained in $\mathcal{O}_U$. (\emph{Cf.} the proof of Theorem~\ref{order-leading}.)

Let $\Lambda$ be finite extension of $\QQ_p$. Unless the residue field of $\Lambda$ is contained in $\mathcal{O}_U$, the usual definition of `overconvergent $\Lambda$-$F$-isocrystals' over $U$ does not apply. Instead, let us consider a subextension $\Lambda_0\subset \Lambda$ whose residue field $k_0$ is contained in $\mathcal{O}_U$. (For example, we may choose $k_0$ to be the maximal subfield of the residue field $k_\Lambda$ of $\Lambda$ that can be embedded in $\mathcal{O}_U$.) Instead of defining `overconvergent $\Lambda$-$F$-isocrystals' over $U$, we will work with overconvergent $\Lambda_0$-$F$-isocrystals `equipped with $\Lambda$-action'; \emph{cf.} \S\ref{ssec:frob}. 
The aim of this appendix is to extend the Lefschetz trace formula \cite[6.3]{ELS} for the rigid cohomology with coefficients in such overconvergent $\Lambda_0$-$F$-isocrystals with $\Lambda$-action when $\mathcal{O}_U$ does not contain the residue field of $\Lambda$.

\subsection{Overconvergent modules and duality}
Let $U$ and $\Lambda_0$ be as before and let $\Lambda$ be a finite extension of $\Lambda_0$. We set $U':=U\times_{\Spec k_0}\Spec k_\Lambda$ where $k_\Lambda$ is the residue field of $\Lambda$. 

Set $X':=X\times_{\Spec k_0}\Spec k_\Lambda$, which is a smooth compactification  of $U'$. We also choose a formal $\mathcal{O}_{\Lambda}$-lift $\mathfrak{X}'_{\Lambda}$ of $X'$, and we subsequently obtain its rigid generic fibre $\mathcal{X}'_{\Lambda}$ and the tube $]U'[_{\Lambda}\subset \mathcal{X}'_{\Lambda}$. With this setting, we define $\mathcal{O}^\dagger_{U'/\Lambda}$ to be the ring of rigid analytic functions defined on some strict neighbourhood of $]U'[_{\Lambda}$. Then we have an isomorphism of $\Lambda$-algebras
\begin{equation}\label{eq:oc-ring}
\mathcal{O}^\dagger_{U'/\Lambda} \cong \mathcal{O}^\dagger_{U/\Lambda_0}\otimes_{\Lambda_0}\Lambda.
\end{equation}
By the very construction, $\mathcal{O}^\dagger_{U/\Lambda_0}$ and $\mathcal{O}^\dagger_{U'/\Lambda}$ are \emph{Fr\'echet algebras} over $\Lambda_0$ and $\Lambda$, respectively, so any finite locally free modules over them are $p$-adic Fr\'echet spaces.

Let $\Omega^{\dagger}_{U/\Lambda_0}$ and $\Omega^{\dagger}_{U'/\Lambda}$ denote the modules of overconvergent K\"ahler differentials. Then we also have
\begin{equation}\label{eq:oc-diff}
\Omega^{\dagger}_{U'/\Lambda} := \Omega^{\dagger}_{U/\Lambda_0} \otimes_{\Lambda_0}\Lambda.
\end{equation}

Let $]U'[_{\Lambda}\subset \mathcal{V}'\subset \mathcal{X}'_\Lambda$ be a strict neighbourhood, and let $\mathcal{E}_{\mathcal{V}'}$ be a vector bundle  defined over $\mathcal{V}'$. We set 
\begin{equation}\label{eq:global-sections}
\mathcal{E}:=\varinjlim_{\mathcal{W'}} \Gamma(\mathcal{W}', \mathcal{E}_{\mathcal{V}'}), 
\end{equation}
where the direct limit is taken over all strict neighbourhoods $\mathcal{W}' $ contained in $ \mathcal{V}'$. Then $\mathcal E$ turns out to be a locally free $\mathcal{O}^{\dagger}_{U'/\Lambda}$-module; hence, a Fr\'echet $\Lambda$-space. (The local freeness claim can easily reduced to the case when $U$ is an open subscheme of $\mathbb{P}^1$, which is standard; \emph{cf.} \cite[\S{V}, Th\'eor\`eme~1]{gruson}.) Note that $\Omega^{\dagger}_{U'/\Lambda}$ can also be obtained from the line bundle $\mathcal{E}_{\mathcal{V'}}=\Omega_{\mathcal{X}'_{\Lambda}}$ of K\"ahler differentials over $\mathcal{V}'=\mathcal{X}'_\Lambda$. 

To the `overconvergent vector bundle' $\mathcal{E}_{\mathcal{V}'}$, we define another $\mathcal{O}^{\dagger}_{U'/\Lambda}$-module as follows:
\begin{definition}
For a sufficiently small strict neighbourhood $\mathcal{V}'$ of $]U'[_\Lambda$, we define \[
\mathcal{E}_c:= H^1_{]U'[_{\Lambda}}(\mathcal {V}', \mathcal{E}_{\mathcal{V}'}),
\]
where $H^1_{]U'[_{\Lambda}}(\mathcal {V}', \mathcal{E}_{\mathcal{V}'})$ is the first cohomology of the mapping fibre of 
\[
R\Gamma(\mathcal{V}',\mathcal{E}_{\mathcal{V}'})\to R\Gamma(\mathcal{V}'\cap ]X'\setminus U'[_{\Lambda},\mathcal{E}_{\mathcal{V}'}).
\]
Note that $\mathcal{E}_c$ does not depend on the choice of $\mathcal{V}'$; \emph{cf.} \cite[\S3.2]{ELS}.

If we set $\mathcal{E}_{\mathcal{V}'}:=\mathcal{O}_{\mathcal{V}'}$ (respectively, $\mathcal{E}_{\mathcal{V}'}:=\Omega_{\mathcal{V}'}$), then the corresponding $\mathcal{E}_c$ is denoted as $(\mathcal{O}^{\dagger}_{U'/\Lambda})_c$ (respectively, $(\Omega^{\dagger}_{U'/\Lambda})_c$). 
\end{definition}

By shrinking $\mathcal{V}'$ if necessary, we may assume that $\mathcal{V}'$ is affinoid. In that case $\mathcal{V}'\cap ]X'\setminus U'[_{\Lambda}$ is quasi-Stein, so we can deduce the following
\begin{itemize}
	\item $\mathcal{E}_c$ is a Fr\'echet $\Lambda$-space.
	\item From the same argument as in \cite[\S3.2]{ELS} we can deduce that 
\begin{equation}\label{eq:global-section-with-support}
\mathcal{E}_c \cong \mathcal{E} \otimes_{\mathcal{O}^{\dagger}_{U'/\Lambda}}(\mathcal{O}^{\dagger}_{U'/\Lambda})_c.
\end{equation}
In particular, $\mathcal{E}_c$ depends only on $\mathcal{E}$, not on the choice of strict neighbourhood $\mathcal{V}'$. 

\end{itemize}

\begin{lemma}\label{lem:duality}\ 

\begin{itemize}
\item[(i)] There is a canonical trace map 
\[\tr: (\Omega^{\dagger}_{U'/\Lambda})_c \to \Lambda ,\]
which factors through an isomorphism $H^2_{{\rm rig},c}(U'/\Lambda)\xrightarrow\sim \Lambda$.
\item[(ii)]
 Let $\mathcal{E}_{\mathcal{V}'}$ be a vector bundle  on some strict neighbourhood $\mathcal{V}'$ of $]U'[_{\Lambda}$, and consider $\mathcal{E}$ as above. Then we have the following natural $\Lambda$-bilinear perfect pairing
\begin{align*}
\langle-,-\rangle^0:&\mathcal{E}^{\vee} \times (\mathcal{E}\otimes_{\mathcal{O}^\dagger_{U'/\Lambda}}(\Omega^{\dagger}_{U'/\Lambda})_c)\longrightarrow \Lambda \quad \langle u, m\otimes\omega_c\rangle^0:=\tr(u(m)\cdot \omega_c);\\
\langle-,-\rangle^1:&(\mathcal{E}^{\vee} \otimes_{\mathcal{O}^\dagger_{U'/\Lambda}}\Omega^{\dagger}_{U'/\Lambda})\times \mathcal{E}_c\longrightarrow \Lambda \quad \langle u\otimes\omega,m\otimes f_c\rangle^1:=\tr(u(m)\cdot (\omega\otimes f_c))
\end{align*}
where $u\in\mathcal{E}^{\vee}$, $m\in\mathcal{E}$, $f_c\in(\mathcal{O}^{\dagger}_{U'/\Lambda})_c$, $\omega_c\in(\Omega^{\dagger}_{U'/\Lambda})_c$ and $\omega\in\Omega^{\dagger}_{U'/\Lambda}$.
\end{itemize}
\end{lemma}
\begin{proof}The first assertion is standard (\emph{cf.} the proof of \cite[Lem.~3.4(i)]{ELS}).
If $X' \cong \mathbb{P}^1$ and $U'\cong \mathbb{A}^1$, then the second and third assertions are proved in \cite[\S3.3]{ELS}. In general, one can find a finite covering $f:X'\to\mathbb{P}^1_{k_\Lambda}$ so that the preimage of $\mathbb{A}^1_{k_{\Lambda}}$ is $U'$. Then although $f$ may not admit a formal lift, one can find a strict neighbourhood $\mathcal{W}'$ of $]U'[_{\Lambda}$ contained in $\mathcal{V}'$, such that its image in $\mathbb{P}^1_{\Lambda}$ is a strict neighbourhood of $]\mathbb{A}^1[_{\Lambda}$; \emph{cf.} the proof of  Proposition~5.2.21 in \cite[p.~151]{LS07}. Since the claim can be checked after push-forward by finite covering, the claim is reduced  to the case when $U'=\mathbb{A}^1$, which is already handled.
\end{proof}

\subsection{Overconvergent $F$-isocrystals}\label{ssec:frob}
Let us choose a $\Lambda_0$-linear $q_0$-Frobenius operator 
\begin{equation}\label{eq:frob}
\varphi_{\Lambda_0}:\mathcal{O}^{\dagger}_{U/\Lambda_0}\to \mathcal{O}^{\dagger}_{U/\Lambda_0},
\end{equation}
which is possible by the approximation theorem; \emph{cf.} \cite[Th.~2.4.4]{vdput}.

Let us recall the following standard definition.
\begin{definition}\label{def:oc-Lambda}
An \emph{overconvergent $\Lambda_0$-$F$-isocrystal} over $U$ is a tuple $(\mathcal{E},\varphi_{\mathcal{E}},\nabla_{\mathcal{E}})$, where
\begin{itemize}
\item $\mathcal{E}$ is a finite locally free $\mathcal{O}^{\dagger}_{U/\Lambda_0}$-module. Note that such $\mathcal{E}$ necessarily comes from a vector bundle on some strict neighbourhood $\mathcal{V}_0$ of $]U[_{\Lambda_0}$ via \eqref{eq:global-sections}.
\item $\nabla_{\mathcal{E}}:\mathcal{E}\to\mathcal{E}\otimes\Omega^{\dagger}_{U/\Lambda_0}$ is  a continuous integrable connection on $\mathcal{E}$.
\item $\varphi_{\mathcal{E}}:\mathcal{E}\to \mathcal{E}$ is  a $\varphi_{\Lambda_0}$-semilinear horizontal endomorphism of $\mathcal{E}$.
\end{itemize}
If $\nabla_{\mathcal{E}}$ and $\varphi_{\mathcal{E}}$ are understood, then we simply use $\mathcal{E}$ to denote an overconvergent $\Lambda_0$-$F$-isocrystal.
\end{definition}

For a finite extension $\Lambda$ of $\Lambda_0$, we define a \emph{$\Lambda$-action} on  overconvergent $\Lambda_0$-$F$-isocrystal $\mathcal{E}$ to be a $\Lambda$-action on the underlying module $\mathcal{E}$ that is compatible with the $\Lambda_0$-action and commutes with $\nabla_{\mathcal{E}}$ and $\varphi_{\mathcal{E}}$. To be more explicit, an \emph{overconvergent $\Lambda_0$-$F$-isocrystal over $U$ with $\Lambda$-action} consists of the following data: 
\begin{itemize}
\item $\mathcal{E}$ is  a finite locally free module over $\mathcal{O}^\dagger_{U/\Lambda_0}\otimes_{\Lambda_0}\Lambda$.
\item $\nabla_{\mathcal{E}}:\mathcal{E}\to\mathcal{E}\otimes\Omega^{\dagger}_{U/\Lambda_0}$ is a \emph{$\Lambda$-linear} continuous integrable connection on $\mathcal{E}$.
\item $\varphi_{\mathcal{E}}:\mathcal{E}\to \mathcal{E}$ is  a \emph{$\Lambda$-linear} $\varphi_{\Lambda_0}$-semilinear horizontal endomorphism of $\mathcal{E}$.
\end{itemize}

Since we have $\mathcal{O}^\dagger_{U/\Lambda_0}\otimes_{\Lambda_0}\Lambda\cong \mathcal{O}^\dagger_{U'/\Lambda} $ \eqref{eq:oc-ring}, we may view $\mathcal{E}$ as coming from a vector bundle on some strict neighbourhood $\mathcal{V}'$ of $]U'[_{\Lambda}$. Also from \eqref{eq:oc-diff}, the connection $\nabla_{\mathcal{E}}$ is defined over some strict neighbourhood. On the other hand, $\varphi_{\mathcal{E}}$ can be described more naturally if we view $\mathcal{E}$ as a module over $\mathcal{O}^\dagger_{U/\Lambda_0}\otimes_{\Lambda_0}\Lambda$ (instead of $\mathcal{O}^\dagger_{U'/\Lambda} $). Indeed, $\varphi_{\mathcal{E}}$ is semilinear over 
\[
\varphi_{\Lambda_0}\otimes\Lambda: \mathcal{O}^\dagger_{U/\Lambda_0}\otimes_{\Lambda_0}\Lambda \to \mathcal{O}^\dagger_{U/\Lambda_0}\otimes_{\Lambda_0}\Lambda,
\]
the $\Lambda$-linear extension of $\varphi_{\Lambda_0}$, which cannot be naturally defined for  $\mathcal{O}^\dagger_{U'/\Lambda} $ without going through the isomorphism  \eqref{eq:oc-ring}.

\begin{lemma}\label{lem:duality-frob}\

\begin{itemize}
	\item[(i)] Let $\varphi_{\Lambda_0}$ also denote the endomorphism on $\Omega^{\dagger}_{U/\Lambda_0}$ induced by $\varphi_{\Lambda_0}$. Then we have
\[
\tr (\varphi_{\Lambda_0}(\omega)) = q_0\cdot\tr(\omega) \quad \forall \omega\in\Omega^{\dagger}_{U/\Lambda_0},
\]
where $q_0=\#(k_0)$ and $\tr$ is defined in Lemma~\ref{lem:duality}.
	\item[(ii)] 
	Let $\mathcal{E}$ be an overconvergent $\Lambda_0$-$F$-isocrystal over $U$ with $\Lambda$-action, and let $\langle-,-\rangle^0$ denote the duality pairing in Lemma~\ref{lem:duality}(ii). Then for any $u\in\mathcal E^\vee$, $m\in\mathcal E$ and $\omega_c\in(\Omega^\dagger_{U/\Lambda_0})_c$, we have
\[\langle \varphi_{\mathcal{E}}^\vee(u), \varphi_{\mathcal E}(m)\otimes \varphi_{\Lambda_0}(\omega_c)\rangle^0 = q_0\cdot \langle u,m\otimes\omega_c\rangle^0.\]
\end{itemize}
\end{lemma}
\begin{proof}
As in the proof of Lemma~\ref{lem:duality}, one can reduce the proof of this lemma to the case when $U\cong\mathbb{A}^1$. In that case, the first claim is proved in \cite[Prop.~4.2]{ELS}, and the second claim is proved in \cite[\S4.3]{ELS}.
\end{proof}

\begin{definition}\label{def:psi}
Let $\mathcal{E}$ be an overconvergent $\Lambda_0$-$F$-isocrystal over $U$ with $\Lambda$-action. Then we define the \emph{Dwork operators} $\psi_{\mathcal{E}^\vee}^i$ for $i=0,1$ to be $\Lambda$-linear endomorphisms
\begin{align*}
	\psi^0_{\mathcal{E}^\vee}&:\mathcal{E}^\vee\to\mathcal{E}^\vee\\
	\psi^1_{\mathcal{E}^\vee}&:\mathcal{E}^\vee \otimes_{\mathcal{O}^\dagger_{U'/\Lambda}}\Omega^{\dagger}_{U'/\Lambda}\to\mathcal{E}^\vee \otimes_{\mathcal{O}^\dagger_{U'/\Lambda}}\Omega^{\dagger}_{U'/\Lambda},
\end{align*}
which are respectively the adjoints of $\varphi_{\mathcal{E}}\otimes\varphi_{\Lambda_0}$ and $\varphi_{\mathcal{E}_c}$ with respect to the duality pairings $\langle-,-\rangle^i$ for $i=0,1$; \emph{cf.} Lemma~\ref{lem:duality-frob}(ii).
\end{definition}
Clearly, the Dwork operators $\psi^i_{\mathcal{E}^\vee}$ are $\varphi_{\Lambda_0}$-antilinear; i.e., for any $f\in\mathcal{O}^\dagger_{U/\Lambda_0}$ and $u\in\mathcal{E}^\vee$, we have $\psi^0_{\mathcal{E}^\vee}(\varphi_{\Lambda_0}(f)\cdot u) = f\cdot\psi^0_{\mathcal{E}^\vee}(u)$, and similarly for $\psi^1_{\mathcal{E}}$. 
Furthermore, by Lemma~\ref{lem:duality-frob} it follows that
\begin{equation}
	\psi^0_{\mathcal{E}^\vee} = q_0\cdot\varphi_{\mathcal{E}}^\vee.
\end{equation}

To make $\psi^1_{\mathcal{E}^\vee}$ more explicit, let us consider the case when $\mathcal{E} = \mathcal{O}^\dagger_{U/\Lambda_0}$ (with $\Lambda=\Lambda_0$), equipped with the Frobenius operator $\varphi_{\Lambda_0}$ and the usual connection. Then  the duality pairing $\langle-,-\rangle^1$ in Lemma~\ref{lem:duality}(ii) takes the following form
\[\Omega^\dagger_{U/\Lambda_0} \times (\mathcal{O}^\dagger_{U/\Lambda_0})_c \to \Lambda_0.\]
Let
\begin{equation}\label{eq:psi}
\psi_{\Lambda_0}:	\Omega^\dagger_{U/\Lambda_0} \to \Omega^\dagger_{U/\Lambda_0}
\end{equation}
denote the adjoint of $\varphi_{\Lambda_0}: (\mathcal{O}^\dagger_{U/\Lambda_0})_c \to (\mathcal{O}^\dagger_{U/\Lambda_0})_c$. Identifying $\Omega^\dagger_{U/\Lambda_0} $ with $\Lambda_0$-linear dual of $(\mathcal{O}^\dagger_{U/\Lambda_0})_c $, it follows that $\psi_{\Lambda_0} = q_0\cdot \varphi_{\Lambda_0}^\vee$. We then have
\[
\psi^1_{\mathcal{E}^\vee} = \psi^0_{\mathcal{E}^\vee}\otimes \psi_{\Lambda_0}.
\]

\subsection{Rigid cohomology with coefficients}
Let us recall the definition of rigid cohomology with and without compact support with coefficients in $\Lambda_0$-$F$-isocrystals with $\Lambda$-actions.

\begin{definition}
	Let $\mathcal{E}$ be an overconvergent $\Lambda_0$-$F$-isocrystal over $U$ with $\Lambda$-action. Suppose that $U$ is affine. Then we set
	\[R\Gamma_{\rm rig}(U/\Lambda_0,\mathcal{E} ^\vee):= \left[ \mathcal{E} ^\vee \xrightarrow{\nabla_{\mathcal{E} ^\vee}}\mathcal{E} ^\vee\otimes_{\mathcal{O}^\dagger_{U/\Lambda_0}}\Omega^\dagger_{U/\Lambda_0} \right], \]
	 which is a complex of Fr\'echet $\Lambda$-spaces concentrated in degrees $[0,1]$. 
\end{definition}
Note that this complex represents the rigid cohomology with coefficients in $\mathcal{E}$ viewed as an overconvergent $\Lambda_0$-$F$-crystal. Furthermore, the Dwork operator $\psi_{\mathcal{E}^\vee}^\bullet$ as in Definition~\ref{def:psi} acts on the complex $R\Gamma_{\rm rig}(U/\Lambda_0,\mathcal{E} ^\vee) $ as a \emph{nuclear} operator on each term; \emph{cf.}  \cite[Lem.~5.2]{ELS}.

 To define the compactly supported variant, let us recall that we have a derivation
 \[
 d:\xymatrix@1{(\mathcal{O}^\dagger_{U/\Lambda_0})_c  =   H^1_{]U[_{\Lambda_0}}(\mathcal{X}_{\Lambda_0}, \mathcal{O}_{\mathcal{X}_{\Lambda_0}})\ar[r]& H^1_{]U[_{\Lambda_0}}(\mathcal{X}_{\Lambda_0}, \Omega_{\mathcal{X}_{\Lambda_0}})=(\Omega^\dagger_{U/\Lambda_0})_c}
 \]
induced by the universal derivation $d: \mathcal{O}_{\mathcal{X}_{\Lambda_0}} \to \Omega_{\mathcal{X}_{\Lambda_0}} $.

\begin{definition}
For $\mathcal{E}$ as before, 	we set
\[R\Gamma_{{\rm rig},c}(U/\Lambda_0,\mathcal{E}):= \left[0\to \mathcal{E}\otimes_{\mathcal{O}^\dagger_{U/\Lambda_0}}(\mathcal{O}^\dagger_{U/\Lambda_0})_c \xrightarrow{\nabla_{\mathcal{E}}\otimes d}\mathcal{E}\otimes_{\mathcal{O}^\dagger_{U/\Lambda_0}}(\Omega^\dagger_{U/\Lambda_0})_c \right],\]
which is a complex of Fr\'echet $\Lambda$-spaces concentrated in degrees $[1,2]$. 
\end{definition}
Note that this complex represents the compactly supported rigid cohomology with coefficients in $\mathcal{E}$ viewed as an overconvergent $\Lambda_0$-$F$-crystal. Furthermore, $\varphi_{\mathcal{E}}$
 induces a natural $(\varphi_{\Lambda_0}\otimes\Lambda)$-semilinear operator $\varphi_{\mathcal{E}_c}^\bullet$ on the complex $R\Gamma_{{\rm rig},c}(U/\Lambda_0,\mathcal{E}) $, which is a \emph{nuclear} operator on each term; \emph{cf.}  \cite[Lem.~5.2]{ELS}.

\begin{proposition}
The duality pairing $\langle-,-\rangle^\bullet$ defined in Lemma~\ref{lem:duality}(ii) induces a natural $\Lambda$-linear isomorphism
\[R\Gamma_{\rm rig}(U/\Lambda_0,\mathcal{E}^\vee)\cong R\Hom_{\Lambda}(R\Gamma_{{\rm rig},c}(U/\Lambda_0,\mathcal{E}),\Lambda[2]).\]
Furthermore, the Dwork operator $\psi_{\mathcal{E}^\vee}^\bullet$ corresponds to the $\Lambda$-linear dual of $\varphi_{\mathcal{E}_c}^\bullet$ via this isomorphism.
\end{proposition}
\begin{proof}
By repeating the proof of Lemma~\ref{lem:duality}(ii), the first claim can be reduced to the case when $U=\mathbb{A}^1$ and $X=\mathbb{P}^1$, which is proved in \cite[\S3.3]{ELS}. The second claim directly follows from Lemma~\ref{lem:duality-frob}.
\end{proof}

\subsection{$L$-functions and Lefschetz trace formula}

Let $\mathcal{E}$ be an overconvergent $\Lambda_0$-$F$-isocrystal over $U$ with $\Lambda$-action as before. Then given any closed point $x$ of $U$ with residue field $k(x)$, we obtain the fibre $\mathcal{E}_x$ at $x$, which is a finite-rank $W(k(x))\otimes_{W(k_0)}\Lambda$-module equipped with a $\Lambda$-linear $q_0$-Frobenius operator $\varphi_{\mathcal{E},x}$.

To such $\mathcal{E}$ we associate the $L$-functions as follows:
\begin{equation}
	Z_U(\mathcal{E},t) := {\prod}_{x\in |U|} {\rm det}_{\Lambda}(1-(t\cdot\varphi_{\mathcal{E},x})^{[k(x):k_0]}|\ \mathcal{E}_x)^{-1}.
\end{equation}

For any positive integer $r$, let $k_0^{(r)}$ be a degree-$r$ extension of $k_0$ and set 
\begin{equation}
	S_r(U,\mathcal{E}) := {\sum}_{x\in U(k_0^{(r)})} {\rm tr}_{\Lambda}(\varphi_{\mathcal{E},x}^r |\ \mathcal{E}_x),
\end{equation}
which is zero when $k$ is not contained in $k_0^{(r)}$.

One can check that
\begin{equation}\label{eq:gen-series}
	Z_U(\mathcal{E},t) = \exp\left({\sum}_{r=1}^\infty S_r(U,\mathcal{E})\cdot t^r/r
	\right);
\end{equation}
\emph{cf.} \cite[2.3]{ELS}.

The main goal of the appendix is to prove the following slight generalisation of the Lefschetz trace formula \cite[Th.~6.3]{ELS}.

\begin{theorem}\label{thm:Lefschetz}
Let $\mathcal{E}$ be an overconvergent $\Lambda_0$-$F$-isocrystal over $U$ with $\Lambda$-action. Then we have
\begin{align*}
Z_U	(\mathcal{E},t) &= \left({\rm det}_\Lambda(1 - t\cdot\varphi^\bullet_{\mathcal{E}_c}|\ R\Gamma_{{\rm rig},c}(U,\mathcal{E})\right)^{-1}\\
&= \left({\rm det}_\Lambda(1 - t\cdot\psi^\bullet_{\mathcal{E}^\vee}|\ R\Gamma_{\rm rig}(U,\mathcal{E}^\vee)\right)^{-1},
\end{align*}
where for $\theta^\bullet= \varphi^\bullet_{\mathcal{E}_c}$ or $\psi^\bullet_{\mathcal{E}^\vee}$ we let ${\rm det}_{\Lambda}(1-t\theta^\bullet)\colon={\prod}_{i}{\rm det}_{\Lambda}(1-t\theta^i)^{(-1)^i}$  denote the alternating product Fredholm determinants of $\Lambda$-linear nuclear operators on each term.
\end{theorem}
Note that the Lefschetz trace formula proved in \cite[Th.~6.3]{ELS} applies only to overconvergent $\Lambda$-$F$-isocrystals. In particular, it does not apply directly to our setting unless $\mathcal{O}_U$ contains the residue field $k_\Lambda$ of $\Lambda$.
\begin{proof}
 By the Poincar\'e duality it suffices to show the first equality (i.e., the equality via the compactly supported rigid cohomology).
	By \eqref{eq:gen-series} it suffices to show
\begin{equation}\label{eq:ELS6.2}
S_r(U,\mathcal{E}) = {\rm tr}_{\Lambda}((\varphi_{\mathcal{E}_c}^\bullet)^r|\ R\Gamma_{{\rm rig},c}(U,\mathcal{E}))
\end{equation}
for any $r$. 
And to verify \eqref{eq:ELS6.2} it suffices to show that \begin{equation}\label{eq:ELS5.3}
	{\rm tr}_{\Lambda}(\varphi^\bullet_{\mathcal{E}_c}|\ R\Gamma_{{\rm rig},c}(U,\mathcal{E})) = 0 \quad \text{if }U(k_0^{(r)}) = \emptyset;
\end{equation}
i.e., the generalisation of \cite[Lem.~5.3]{ELS}. Indeed, if $k$ is not contained in $k_0^{(r)}$ then $S_r(U,\mathcal{E}) = 0$. If $k$ is contained in $k_0^{(r)}$, then we may apply the excision sequence for the compactly supported rigid cohomology to the following setting
\[
U(k_0^{(r)})^{\Gal(k_0^{(r)}/k)} \hookrightarrow U \hookleftarrow U\setminus U(k_0^{(r)})^{\Gal(k_0^{(r)}/k)}
\]
and conclude that \eqref{eq:ELS5.3} implies \eqref{eq:ELS6.2}.

To verify \eqref{eq:ELS5.3} we use the following lemma.
\begin{lemma}
Let $R$ be a Fr\'echet $\Lambda$-algebra equipped with a $\Lambda$-linear lift of $q_0^r$-Frobenius morphism $\varphi:R\to R$.
Let $M$ be a Fr\'echet $R$-module equipped with a nuclear $\varphi$-semilinear endomorphism $\varphi_M:M\to M$. Then for any $f\in R$ 	we have
\[{\rm tr}_\Lambda\left((f-\varphi(f))\cdot\varphi_M \right) = 0\]
\end{lemma}
\begin{proof}
(Compare with the proof of Lemma~5.3 in \cite{ELS}.) Let us consider the following morphism of exact sequences:
\[
\xymatrix{
0 \ar[r] & \ker(f) \ar[r] \ar[d]_-0 & M \ar[r]^-{f} \ar[d]_-{\varphi(f)\cdot\varphi_M} & M \ar[r] \ar[d]^-{f\cdot\varphi_M} & M/fM \ar[r] \ar[d]^-0 & 0\\
0 \ar[r] & \ker(f) \ar[r] & M \ar[r]^-{f}  & M \ar[r]  & M/fM \ar[r] & 0
}.
\]
Therefore it follows that ${\rm tr}_\Lambda(\varphi(f)\cdot\varphi_M) = {\rm tr}_\Lambda(f\cdot\varphi_M) $, hence the lemma.
\end{proof}

We will apply the above lemma to $R=\mathcal{O}^\dagger_{U/\Lambda_0}\otimes_{\Lambda_0}\Lambda$ and $r$th iterated Frobenius operators.
Since $U(k_0^{(r)})=\emptyset$, the graph of the $q_0^r$-Frobenius and the diagonal  do not intersect in $U\times U$. Therefore, there exists $f_j,g_j\in R$ such that
\[{\sum}_j f_jg_j = 1 \quad\text{and}\quad {\sum}_j \varphi^r(f_j)g_j=0,\]
so we have $1 = {\sum}_j(f_j-\varphi^r(f_j))\cdot g_j$.

Apply the above lemma when $M$ is one of the terms in $R\Gamma_{{\rm rig},c}(U/\Lambda_0,\mathcal{E})$ and $\varphi_M = g_j(\varphi^\bullet_{\mathcal{E}_c})^r$ for each $j$, we obtain
\begin{align*}
{\rm tr}_\Lambda\big(\varphi_{\mathcal{E}_c}) &=
	{\rm tr}_\Lambda({\sum}_j(f_j-\varphi^r(f_j))g_j(\varphi^\bullet_{\mathcal{E}_c})^r\big)\\
	&={\sum}_j {\rm tr}_\Lambda\big((f_j-\varphi^r(f_j))g_j(\varphi^\bullet_{\mathcal{E}_c})^r\big)=0,
\end{align*}
which proves \eqref{eq:ELS5.3}, hence the theorem.
\end{proof}


\end{document}